\documentclass{article}
\usepackage{amsmath,amsthm,amsfonts}
\usepackage{graphics,color,epsfig}
\newtheorem{theorem}{Theorem}[section]
\newtheorem{lemma}[theorem]{Lemma}

\newtheorem{corollary}[theorem]{Corollary}
\newtheorem{conjecture}[theorem]{Conjecture}
\newtheorem{question}[theorem]{Question}
\theoremstyle{definition}
\newtheorem{remark}[theorem]{Remark}
\newtheorem{example}[theorem]{Example}
\newtheorem{assumption}[theorem]{Assumption}
\newtheorem*{ack}{Acknowledgement}

\newcommand\noproof{\hfill$\Box$}

\newcommand\webcite[1]{\hfil\penalty0\texttt{\def~{\~{}}#1}\hfill\hfill}
\newcommand\arxiv[1]{\webcite{arXiv:#1.}}

\newcommand\Var{\operatorname{Var}}

\newcommand\ceil[1]{\lceil #1\rceil}
\newcommand\floor[1]{\lfloor #1\rfloor}
\newcommand\norm[1]{||#1||}
\newcommand\bb[1]{\bigl(#1\bigr)}
\newcommand\bm[1]{\bigl|#1\bigr|}
\newcommand\E{{\mathbb E{}}}
\newcommand\de{d_{\mathrm{sub}}}

\newcommand\eps{\varepsilon}

\newcommand\ind{\mathrm{ind}}
\newcommand\emb{\mathrm{emb}}
\newcommand\aut{\mathrm{aut}}

\newcommand\ka{\kappa}
\newcommand\la{\lambda}
\newcommand\La{\Lambda}

\newcommand\A{\mathcal{A}}
\newcommand\C{\mathcal{C}}
\newcommand\D{\mathcal{D}}
\newcommand\F{\mathcal{F}}
\newcommand\M{\mathcal{M}}
\newcommand\Mk{B_k}
\newcommand\Fm{\mathcal{F^{\mathrm m}}}

\newcommand\K{\mathcal{K}}
\newcommand\J{\mathcal{J}}
\newcommand\T{\mathcal{T}}

\newcommand\LL{\mathcal{L}}
\newcommand\dc{d_\mathrm{cut}}

\newcommand\dH{d_\mathrm{H}}

\newcommand\dcG{{\widehat d}_\mathrm{cut}}
\newcommand\dP{d_\mathrm{part}}
\newcommand\cn[1]{||#1||_{\mathrm{cut}}}
\newcommand\on[1]{||#1||_1}
\newcommand\tn[1]{||#1||_2}
\newcommand\cc{{\mathrm c}}

\newcommand\dd{\,d}
\newcommand\PP{{\mathcal P}}
\newcommand\RR{{\mathbb R}}
\newcommand\NN{{\mathbb N}}
\newcommand\Z{{\mathbb Z}}
\newcommand\bx{{\bf x}}
\newcommand\iid{iid}
\newcommand\PR{\mathbb{P}}
\newcommand\tr{\mathrm{tr}}
\newcommand\pto{\overset{\mathrm{p}}{\to}}
\newcommand\IN[1]{1_{#1}}
\newcommand\db{{\bar d}}
\newcommand\BB{\mathcal{B}}

\newcommand\diff{{\Delta}}

\newcommand\infn[1]{\|#1 \|_\infty}

\begin{document}

\title{Metrics for sparse graphs}
\date{December 14, 2008}

\author{B\'ela Bollob\'as%
\thanks{Department of Pure Mathematics and Mathematical Statistics,
Wilberforce Road, Cambridge CB3 0WB, UK and
Department of Mathematical Sciences, University of Memphis, Memphis TN 38152, USA.
E-mail: {\tt b.bollobas@dpmms.cam.ac.uk}.}
\thanks{Research supported in part by NSF grants CCR-0225610 and DMS-0505550
and ARO grant W911NF-06-1-0076}
\and Oliver Riordan%
\thanks{Mathematical Institute, University of Oxford, 24--29 St Giles', Oxford OX1 3LB, UK.
E-mail: {\tt riordan@maths.ox.ac.uk}.}
}
\maketitle

\begin{abstract}
  Recently, Bollob\'as, Janson and Riordan introduced a very general
  family of random graph models, producing inhomogeneous random
  graphs with $\Theta(n)$ edges. Roughly speaking, there is one model
  for each {\em kernel}, i.e., each symmetric measurable function
  from $[0,1]^2$ to the non-negative reals, although the details
  are much more complicated, to ensure the exact inclusion of many of the
  recent models for large-scale real-world networks.

  A different connection between kernels and random graphs arises
  in the recent work of Borgs, Chayes, Lov\'asz, S\'os, Szegedy
  and Vesztergombi. They
  introduced several natural metrics on dense graphs (graphs with $n$
  vertices and $\Theta(n^2)$ edges), showed that these metrics are
  equivalent, and gave a description of the completion of the space of
  all graphs with respect to any of these metrics in terms of {\em
    graphons}, which are essentially bounded kernels.
  One of the most appealing aspects of
  this work is the message that sequences of inhomogeneous
  quasi-random graphs are in a sense completely general: any sequence
  of dense graphs contains such a subsequence. Alternatively, their
  results show that certain natural models of dense inhomogeneous
  random graphs (one for each graphon) cover the space of dense graphs:
  there is one model for each point of the completion, producing
  graphs that converge to this point.

  Our aim here is to briefly survey these results, and then to investigate to what extent
  they can be generalized to graphs with $o(n^2)$ edges. Although
  many of the definitions extend in a simple way, the connections between
  the various metrics, and between the metrics and random graph models, turn
  out to be much more complicated than in the dense case. We shall prove
  many partial results, and state even more conjectures and open problems,
  whose resolution would greatly enhance the currently rather unsatisfactory
  theory of metrics on sparse graphs.
  This paper deals mainly with graphs with $o(n^2)$ but $\omega(n)$ edges: a companion
  paper will discuss the (more problematic still)
  case of {\em extremely sparse} graphs, with $O(n)$ edges.
\end{abstract}

\tableofcontents

\section{Introduction}
In recent years, much work has been done constructing
and analyzing mathematical models of
real-world networks. The random graphs in these
models are inhomogeneous -- in fact, many of them have degree
sequences with power law distributions. 
In~\cite{BJR}, Bollob\'as, Janson and
Riordan defined a very general model of an $n$-vertex
random graph $G(n, \kappa)$ with conditional independence between
the edges which includes as special cases many of the models of
real-world networks that have been studied, and proved numerous
results about the random graphs generated by this model, including
results about their component structure and the point and nature of
the phase transition in them. Here  the {\em kernel} $\kappa$ is a
symmetric measurable function from $[0,1]^2$ to $[0,\infty)$
satisfying some mild conditions. (Some of these conditions arise due
to the very general nature of other parts of the model, and can be
weakened in other contexts; see~\cite{BJRclust} and~\cite{BJRcm} for a discussion of
this.) Just like the real-world graphs that motivated the
construction of the BJR model, the random graphs $G(n, \kappa)$ are
sparse in the sense that the expected number of edges is $O(n)$ (in
fact, $(c+o(1))n$ for some constant $c$). In~\cite{BJR} the kernel
$\kappa$ was used to define a multi-type branching process ${\mathcal
X}_{\kappa}$ whose survival probability is closely related to the
component structure of $G(n, \kappa)$.

In order to decide how well our random graph $G(n, \kappa)$
approximates a given real-world graph $G_n$, it would be desirable
to establish a {\em distance} between a random graph model and a
graph, so that the approximation is judged to be better and better
as the distance tends to $0$. Putting it slightly differently, we
should like to define a metric on the set of sparse finite graphs so
that a Cauchy sequence consists of graphs that are in some sense
`similar', and the limit of such a (not eventually constant)
sequence is naturally identified with a suitable random graph model.
For {\em dense} graphs, graphs with $n$ vertices and at least $cn^2$
edges, such a program has been carried out very successfully in a
series of papers by (various subsets of)
Borgs, Chayes, Lov\'asz, S\'os, Szegedy and Vesztergombi (see
\cite{BCLSV:homcount,BCLSSV:stoc,LSgenQT,LSz1,BCLSV:1,BCLSV:2} and
the references therein). In particular, they introduced several
metrics on the space of dense finite (weighted) graphs and showed
them to be equivalent. The limiting objects, i.e., the additional
points in the completion, turn out to be {\em graphons}, that is,
bounded symmetric measurable functions from $[0,1]^2$ to $\RR$. The
corresponding random graph models, called {\em $W$-random graphs} in~\cite{LSz1},
are the natural dense version of $G(n,\ka)$; see Subsection~\ref{ss_rg}.

The only difference between kernels and graphons is that the
latter are bounded, while the former must be allowed to be unbounded
in order to model, for example, highly inhomogeneous real-world networks.
In many fundamental questions (for example those concerning the phase
transition), this difference is substantial.
The appearance of graphons or kernels
in the two different contexts described above suggests the existence
of interesting connections between these areas. One such connection
is described by Bollob\'as, Borgs, Chayes and Riordan~\cite{BBCR},
who study (sparse) random subgraphs of arbitrary dense graphs;
this has recently been extended by Bollob\'as, Janson and Riordan~\cite{BJRcm}.

We have several aims in this paper. First, we shall
review some of the results of Borgs, Chayes, Lov\'asz,
S\'os, Szegedy and Vesztergombi mentioned above.
Our main aim is then to take the first tentative steps towards a
general theory of metrics on sparse graphs; in particular, we shall
investigate to what extent these ideas can be carried over to the sparse
setting, and what can be said about the connection between the
metrics and the ideas of Bollob\'as, Janson and Riordan. As we shall
see, the difficulties that arise are considerably greater than in
the dense case; in fact, the difficulties increase as the graphs get
sparser. The {\em almost dense} case $e(G_n)=n^{2-o(1)}$ is already
rather different from the dense case; the {\em extremely sparse} case
$e(G_n)=\Theta(n)$, which will be studied in a companion paper~\cite{BRsparse2},
is {\em very} different indeed, having
many novel features. We shall prove
numerous results, but the picture we obtain is much less complete
than that obtained by Borgs et al in the dense case. In fact,
perhaps our most important aim is to identify some of the main
problems and conjectures whose resolution would enhance the theory
of metrics on sparse graphs.

An important tool in the study of metrics on spaces of dense graphs
is Szemer\'edi's Regularity Lemma. While there is a version of
Szemer\'edi's Lemma for sparse graphs (with $o(n^2)$ but $\omega(n)$
edges) satisfying a mild additional condition, there is no
satisfactory counting/embedding lemma for counting (or even finding)
small subgraphs using regular partitions. This is one of the
reasons why sparse graphs are much more difficult to handle than
dense ones. One of our main aims is to prove such a counting lemma
for certain subgraphs, greatly extending a result of Chung and
Graham~\cite{CG}.

The rest of the paper is organized as follows. The next section
is about dense graphs and kernels; we start by
briefly recalling some of the definitions and results of
Borgs, Chayes, Lov\'asz, S\'os, Szegedy and Vesztergombi whose
generalization we shall discuss, focussing in particular
on the cut metric.
Then, in Subsection~\ref{ss_equiv}, we show that these
results are closely connected to the question of when
two kernels are `equivalent'; we shall need this
notion of equivalence when we come to sparse graphs.

The rest of the paper concerns sparse
graphs, i.e., graphs with $n$ vertices and $o(n^2)$ edges:
in Section~\ref{sec_hsp}
we consider subgraph counts in sparse (but mostly not too sparse)
graphs, stating a conjecture that generalizes the main result
of Lov\'asz and Szegedy~\cite{LSz1},
and proving various partial results, concentrating especially 
on the {\em uniform} case, i.e., on sparse quasi-random graphs.
In Section~\ref{sec_Sz} we turn to Szemer\'edi's Lemma for sparse
graphs satisfying an appropriate `bounded density' assumption,
and the consequences for questions of convergence in the cut metric.

Sections~\ref{sec_compar} is the longest and
most important section of the paper. In it we discuss
the relationship between the cut metric and the count metric (to be defined)
in the sparse case. As well as proposing various conjectures extending the 
results of Borgs, Chayes, Lov\'asz, S\'os and Vesztergombi, we prove
several partial results, amounting to `sparse counting lemmas'
with various assumptions; these results, Theorem~\ref{cthm}
and its variants Theorems~\ref{cthm2} and~\ref{dthm},
are the most substantial results in the paper.

In Section~\ref{sec_partition} we briefly discuss another metric
considered by Borgs, Chayes, Lov\'asz, S\'os and Vesztergombi,
showing that for graphs that are sparse, but not too sparse, it is
equivalent to the cut metric. In the {\em extremely sparse}
case, considering graphs with bounded average degree, the partition
metric turns out to be much more useful than the cut metric.
This and a discussion of the many problems and interesting
open questions concerning metrics on extremely sparse graphs
will be the topic of a companion paper~\cite{BRsparse2}.

In Section~\ref{sec_end} we return briefly to the relationship between
metrics and random graph models, and close with some final
remarks summarizing our main results and conjectures.

Throughout the paper we use standard graph theoretic notation
as in~\cite{BB:MGT}. For example, $|G|$ and $e(G)$ denote
respectively the number of vertices and number of edges
of a graph $G$.

\section{Dense graphs}\label{dense}

There are many natural definitions of what it means for two graphs
to be `close', and corresponding metrics and notions of
Cauchy/fundamental sequences. These tend to be particularly natural
for `dense' graphs, with $\Theta(n^2)$ edges. Several of these
metrics have been studied by Borgs, Chayes, Lov\'asz, S\'os and
Vesztergombi~\cite{BCLSV:1,BCLSV:2}, who showed that they are
equivalent, and that there is a natural completion of the space of
graphs under any of these metrics. In this section we briefly recall
some of these definitions and results; we are not aiming to
give a comprehensive survey of
the results of these papers, discussing only those that will be relevant for us
here. Although most of the results mentioned in
Subsections~\ref{ss_sd}--\ref{ss_rg}
will be from Lov\'asz and Szegedy~\cite{LSz1}
and~\cite{BCLSV:1,BCLSV:2}, we shall not always adopt their notation
or terminology, or indeed follow their definitions exactly. 

Borgs, Chayes, Lov\'asz, S\'os and
Vesztergombi~\cite{BCLSV:1,BCLSV:2} consider {\em weighted graphs},
with weights on the edges and on the vertices. For the results
we shall describe, this makes essentially no difference. In what
follows, we consider only unweighted graphs; while much of what we
shall say presumably carries over to suitably weighted graphs, the
definitions for weighted graphs are not as natural in the sparse
case, and are likely to introduce more additional complications than
new insights.

\subsection{The subgraph distance}\label{ss_sd}

The basic starting point is to consider, for each fixed graph $F$,
the number of copies of $F$ in a large graph $G$, i.e., the number
$X_F(G)$ of subgraphs of $G$ isomorphic to $F$.
Recall that a {\em homomorphism} from a graph $F$ to a graph $G$
is a function $\phi:V(F)\to V(G)$ such that $\phi(x)\phi(y)\in E(G)$
whenever $xy\in E(F)$. Although
$X_F(G)$ (for example, the number of triangles in $G$) is the most
natural basic notion in this context, it turns out to be cleaner
to work with $\emb(F,G)$, the number of injective homomorphisms
or {\em embeddings} of $F$ into $G$.
Note that
\[
 \emb(F,G) = \aut(F) X_F(G),
\]
so $X_F(G)$ and $\emb(F,G)$ contain the same information. Working
with the latter avoids constant factors $\aut(F)$ in many formulae.

If $F$ has $k$ vertices, then for $n\ge k$ we have
$\emb(F,K_n)=n_{(k)} = n(n-1)\cdots(n-k+1)$,
so the natural normalization is to work with
\[
 s(F,G) = \frac{\emb(F,G)}{n_{(k)}} = \frac{X_F(G)}{X_F(K_n)} \in [0,1],
\]
where, as usual, $n=|G|$ is the number of vertices of $G$.
If $|F|>|G|$ then the above ratio is not defined, and we set $s(F,G)=0$.

Let $\F$ denote the set of isomorphism classes of finite graphs;
sometimes it will be convenient to enumerate $\F$ in an arbitrary
way, writing $\F=\{F_1,F_2,\ldots\}$. (More formally, we shall
take each $F_i$ to be a representative of an isomorphism class.)
The graph parameters
$s(F,\cdot)$, $F\in \F$, define a natural family of equivalent metrics
on $\F$, by mapping $\F$ into $[0,1]^\infty$ (or into $[0,1]^\F$).
Indeed, for any finite graph $G$, set
\[
 s(G) =(s_i(G))_{i=1}^\infty \in [0,1]^\infty,
\]
where $s_i(G)=s(F_i,G)$.
Let $d$ be any metric on $X=[0,1]^\infty$ which gives the product topology,
for example $d(s,t)=\sum_{i=1}^\infty 2^{-i}|s_i-t_i|$.
We may define the {\em subgraph distance} of two
graphs $G_1$, $G_2$ as
\[
 \de(G_1,G_2) = d(s(G_1),s(G_2)).
\]
It is easy to see that this defines a metric on $\F$: indeed,
given $G\in \F$, among graphs $F$ with $s(F,G)>0$, there is a unique
graph with $|F|+e(F)$ maximal, namely $G$. Thus the map $G\mapsto s(G)$
is injective. Furthermore, considering $s(E_{n+1},G)$, where $E_{n+1}$
is the empty graph with $n+1$ vertices, we see that the distance
between any graph $G$ with $n$ vertices and the set of graphs
with more than $n$ vertices is positive. It follows that the
metric space $(\F,\de)$ is discrete.

A sequence $(G_n)$ of graphs is Cauchy with respect to $\de$
if and only if, for each $F\in \F$, the sequence $s(F,G)$
converges. Such sequences are sometimes called `convergent',
although they do not converge in the metric space $(\F,\de)$.
Note that if $(G_n)$ is Cauchy then, since $(\F,\de)$ is discrete, 
either $(G_n)$ is eventually constant, or $|G_n|\to \infty$.

Many minor variations on the definition of $\de$ are possible. For example,
instead of considering the number of embeddings of $F$ into $G$, one can
consider the number $\hom(F,G)$ of
homomorphisms from $F$ to $G$.
If $|F|=k$ and $|G|=n$, then the number of non-injective homomorphisms from $F$ to $G$
is at most $\binom{k}{2}n^{k-1}=O(n^{k-1})$, so setting
\[
 t(F,G)=\hom(F,G)/n^k
\]
we have
\begin{equation}\label{st}
 t(F,G) = s(F,G) +O(n^{-1})
\end{equation}
for each $F$.
Hence, in this dense case, the parameters $s(F,\cdot)$ and $t(F,\cdot)$
are essentially equivalent. [There is a minor difference that, working
with homomorphisms, one ends up with a pseudo-metric: if $G$ is any graph
and $G^{(r)}$ is the {\em blow-up} of $G$ obtained by making $r$ copies
of each vertex, joined to all copies of its neighbours, then
$t(F,G^{(r)})=t(F,G)$ for all $F\in\F$ and $r\ge 1$.]
Also, one can pass easily
back and forth between subgraph counts and counts of induced subgraphs
using inclusion--exclusion.

One of the key properties of the metric $\de$ is that there is a natural description
of the (clearly compact) completion of $(\F,\de)$, in terms of {\em standard kernels}
(also called {\em graphons}).
Here a {\em kernel} is a symmetric measurable function from $[0,1]^2$
to $[0,\infty)$; a standard kernel is one taking values in $[0,1]$.
In other contexts, one considers more general bounded kernels, taking values in
$[0,M]$ or $[-M,M]$, $M>0$, or general {\em signed kernels} taking
values in $\RR$.
One can extend the definition of $s(F,G)$ (or of $t(F,G)$) to kernels in a natural way:
given a finite graph $F$ with vertex set $\{1,2,\ldots,k\}$,
let
\begin{equation}\label{sfk}
 s(F,\ka) = \int_{[0,1]^k} \prod_{ij\in E(F)}\ka(x_i,x_j)\prod_{i=1}^k \dd x_i.
\end{equation}
(Some authors use the notation $t(F,\ka)$ for the same quantity.)
This formula has a natural interpretation as the normalized
`number' of embeddings
of $F$ into a weighted graph with the uncountable vertex set $[0,1]$,
with edge weights given by $\ka$.
Of course, in this context there is no difference between embeddings
and homomorphisms.

Lov\'asz and Szegedy~\cite{LSz1} proved (essentially) the following result.
\begin{theorem}\label{ctok}
Let $(G_n)$ be a Cauchy sequence in $(\F,\de)$. Then either $(G_n)$
is eventually constant, or there is a standard kernel $\ka$
such that $s(F,G_n)\to s(F,\ka)$.
\noproof
\end{theorem}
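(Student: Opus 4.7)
The plan is to reduce the statement to a compactness result for standard kernels and then extract the limit from a subsequential argument, which the Cauchy hypothesis promotes to convergence of the full sequence. Since $(\F,\de)$ is discrete, a Cauchy sequence is either eventually constant or satisfies $|G_n|\to\infty$; I would assume the latter. To each $G_n$ I associate its standard kernel $W_n=W_{G_n}\colon[0,1]^2\to\{0,1\}$, obtained by partitioning $[0,1]$ into $|G_n|$ equal subintervals indexed by $V(G_n)$ and setting $W_n(x,y)=1$ iff the corresponding vertices are adjacent. A direct counting argument (the only discrepancy comes from repeated-coordinate contributions) gives
\[
 s(F,W_n) = s(F,G_n) + O\bigl(|F|^2/|G_n|\bigr),
\]
so it suffices to produce a standard kernel $\ka$ with $s(F,W_n)\to s(F,\ka)$ for every $F\in\F$.

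The heart of the proof is a compactness statement: the family $\{W_n\}$ is precompact modulo measure-preserving rearrangements of $[0,1]$. For each integer $k\ge 1$ I would apply (the weak version of) Szemer\'edi's Regularity Lemma to obtain, after suitably relabelling the vertices of $G_n$, a step-kernel $W_n^{(k)}$ constant on the cells of an equipartition of $[0,1]$ into $K=K(k)$ parts, with $\cn{W_n-W_n^{(k)}}\le 1/k$. For each fixed $k$ the step-kernel $W_n^{(k)}$ is encoded by at most $K^2$ numbers in $[0,1]$, so a Bolzano--Weierstrass argument (combined with choosing the relabelling along a subsequence) produces a subsequence along which $W_n^{(k)}\to U^{(k)}$ in cut norm for some step-kernel $U^{(k)}$. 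A standard diagonal argument over $k$ then yields a single subsequence along which $(U^{(k)})$ is Cauchy in cut norm, with limit a standard kernel $\ka$.

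The final step is to transfer this convergence to subgraph counts via the dense counting lemma: for any two standard kernels $W,W'$ one has $|s(F,W)-s(F,W')|\le e(F)\,\cn{W-W'}$ (after a best rearrangement). Combining the approximations $\cn{W_n-W_n^{(k)}}\le 1/k$, the cut-norm convergence $W_n^{(k)}\to U^{(k)}\to \ka$ along the chosen subsequence, and a $3\eps$-style argument, gives $s(F,W_n)\to s(F,\ka)$ for every $F$ along the subsequence. Because $(G_n)$ is Cauchy in $\de$, each numerical sequence $s(F,G_n)$ is already Cauchy in $[0,1]$, so the subsequential limits determine the full-sequence limits and the convergence extends to all $n$.

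The main obstacle, and indeed the substantive content of the proof, is the compactness step: one truly needs Szemer\'edi's Regularity Lemma to approximate each $W_n$ uniformly by step-kernels with a bounded number of steps in cut norm, together with the accompanying counting lemma that makes subgraph densities continuous in that norm. Both ingredients are available in the dense regime considered here; their degradation in the sparse setting is precisely the source of the difficulties that motivate the rest of the paper.
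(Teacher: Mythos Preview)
Your outline is correct and follows the Lov\'asz--Szegedy strategy that the paper cites for this theorem (and spells out in the sparse setting as Corollary~\ref{cSz}): regularity lemma to obtain step-kernel approximations, compactness to extract subsequential limits, the counting lemma (Lemma~\ref{cs}) to transfer cut convergence to subgraph densities, and finally the Cauchy hypothesis to upgrade to full-sequence convergence.

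The one place where your write-up is looser than the paper's is the passage from the step-kernels $U^{(k)}$ to a limiting kernel~$\ka$. As you have described it, the regularity lemma is applied independently for each $k$, with its own relabelling of $V(G_n)$; consequently the kernels $W_n$ you compare to $W_n^{(k)}$ in cut norm are different rearrangements of $\ka_{G_n}$ for different $k$. The triangle inequality then only gives that $(U^{(k)})$ is Cauchy in $\dc$, not in $\cn{\cdot}$, and you still owe an argument that a $\dc$-Cauchy sequence of standard kernels has a limit. The paper (following Lov\'asz--Szegedy) sidesteps this by applying the regularity lemma \emph{with refinement}: each $\Pi_{n,k+1}$ refines $\Pi_{n,k}$, and the relabellings are chosen compatibly (permuting only within previous parts). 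Then the limiting step-kernels $\ka_k$ form a bounded martingale on $[0,1]^2$, and the Martingale Convergence Theorem delivers a genuine kernel $\ka$ with $\ka_k\to\ka$ pointwise a.e., hence in $L^1$ and in $\cn{\cdot}$. This is a small but real gap in your sketch; once you insist on nested partitions, everything you wrote goes through, and your argument coincides with the paper's.
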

Let us remark that the result proved in~\cite{LSz1} concerns $t$ rather than $s$, which makes
no difference, except that a separate case for eventually
constant sequences is then not needed. Here, the distinction is informative:
considering the parameters $s(E_k,G_n)$
for each $k$ shows that in the second case above we have $|G_n|\to\infty$.

Of course, \eqref{sfk} allows one to extend
the metric $\de$ to standard kernels, obtaining in the first
instance a pseudo-metric on the set of standard kernels.
There is a natural notion of equivalence for kernels,
which one can think of as a two dimensional version of the equivalence
relation on random variables given by $X\sim Y$ if $X$ and $Y$
have the same distribution; the details are somewhat technical,
and not essential for understanding the metrics discussed here,
so we postpone them to Subsection~\ref{ss_equiv}. We write
$\sim$ for this relation, and $\K$ for the set
of equivalence classes of standard kernels under $\sim$. Borgs, Chayes and Lov\'asz~\cite{BCL:unique}
have shown that
$\ka_1\sim\ka_2$ if and only if $\de(\ka_1,\ka_2)=0$ (see also
Theorem~\ref{th_kequiv}), so $\de$ induces a metric
on $\K$.
The metric space $(\K,\de)$ is complete (the result about Cauchy sequences
of graphs above applies just as well to standard kernels). Hence,
the completion of $(\F,\de)$ is obtained by adding to $\F$ the set $\K$
of all equivalence classes of standard kernels, and using
the map $s:\F\cup \K\to [0,1]^\infty$ to extend $\de$ to $\F\cup\K$.

There is a natural way to associate a standard kernel $\ka_G$ to a graph
$G$ with $n$ vertices: divide $[0,1]$ into $n$ intervals
$I_1,\ldots,I_n$ of equal length (we may and shall ignore the
question of which endpoints are included), and set $\ka_G$ to be $1$
on $I_i\times I_j$ if $ij\in E(G)$, and $0$ otherwise. One slight
advantage of using $t$ rather than $s$ is that
\[
 t(F,G)=s(F,\ka_G)
\]
for all graphs $F$ and $G$. However, the metric
obtained using $t$ is only a pseudo-metric, since graphs on
different numbers of vertices may correspond to the same kernel, for
example if one is a blow-up of the other.

We say that a kernel $\ka$ is of {\em finite type} if there is a
partition of $[0,1]$ into measurable
sets $A_1,\ldots,A_k$ so that $\ka$ is constant on each of the
rectangles $A_i\times A_j$. Note that $\ka_G$ is always of finite
type.

\subsection{The cut distance}

Borgs, Chayes, Lov\'asz, S\'os and Vesztergombi~\cite{BCLSV:1}
considered another natural metric on graphs or kernels, namely, the
{\em cut metric}, based on a norm used by Frieze and Kannan~\cite{FKquick}.
For any integrable
function $\ka:[0,1]^2\to\RR$, its {\em cut norm} $\cn{\ka}$ is defined by
\begin{equation}\label{cndefST}
 \cn{\ka} = \sup_{S,T\subset [0,1]} \left|\int_{S\times T} \ka(x,y) \dd x\dd y\right|,
\end{equation}
where the supremum is over all pairs of measurable
subsets of $[0,1]$.
It is easily seen that this defines a norm on $L^\infty([0,1]^2)$.
In fact, there are several variations of this definition: one can take
\begin{equation}\label{cndefSSc}
 \cn{\ka} = \sup_{S\subset [0,1]} \left|\int_{S\times S^\cc} \ka(x,y) \dd x\dd y\right|,
\end{equation}
where $S^\cc=[0,1]\setminus S$,
or one can take the supremum in \eqref{cndefST}
only over sets $S$, $T$ with $S\cap T=\emptyset$.
It is easy to check that these variations only affect
the norm up to an (irrelevant) constant factor (see~\cite{BCLSV:1}), so we
shall feel free to use whichever definition is most convenient in any given context.

There is yet another definition of $\cn{\ka}$ that is more natural from the point
of view of functional analysis, namely
\[
 \cn{\ka} = \sup_{\infn{f},\infn{g}\le 1} \int_{[0,1]^2} \ka(x,y)f(x)g(y) \dd x \dd y,
\]
where the supremum is taken over all pairs of measurable functions from $[0,1]$ to $[-1,+1]$.
Since the integral above is linear with respect to each of $f$ and $g$, the supremum
is attained at some functions taking values in $\{-1,+1\}$, and it follows immediately
that this version of the cut norm is again within a constant factor of that defined
by \eqref{cndefST}.
As noted in~\cite{BJRcm}, for example,
this last definition is the most natural from the point
of view of functional analysis: it is the dual of the projective tensor product norm in
$L^\infty\hat\otimes L^\infty$, and is thus the injective tensor
product norm in $L^1\check\otimes L^1$.
Equivalently, this is just the norm of the integral operator with kernel $\ka$,
treated as a map from $L^\infty$ to $L^1$.

Before turning to the cut metric we need one further definition.
Given a kernel $\ka$ and a measure-preserving map
$\tau:[0,1]\to [0,1]$,
let $\ka^{(\tau)}$ be the kernel defined by
\begin{equation}\label{pullback}
 \ka^{(\tau)}(x,y) = \ka(\tau(x),\tau(y)).
\end{equation}
If $\tau$ is a bijection, then
we call $\tau$ a {\em rearrangement} of $[0,1]$, and
$\ka^{(\tau)}$ a {\em rearrangement} of $\ka$.
(It is perhaps more natural to consider measure-preserving
bijections between two subsets of $[0,1]$ with measure 1; this makes no difference.)
Two kernels
$\ka_1$ and $\ka_2$ are {\em naively equivalent}
if one is a rearrangement of the
other, more precisely, if there is a rearrangement $\tau$ of $[0,1]$ such that
\begin{equation}\label{kequivN}
 \ka_1(x,y) = \ka_2^{(\tau)}(x,y) \quad \hbox{for a.e. }(x,y)\in [0,1]^2.
\end{equation}
In this case we write $\ka_1\approx\ka_2$, noting that $\approx$ is
an equivalence relation.

The {\em cut metric} $\dc$ on the set of standard kernels may be
defined as follows:
\begin{equation}\label{dcdef1}
 \dc(\ka_1,\ka_2) = \inf_{\ka_2'\approx \ka_2} \cn{\ka_1-\ka_2'}.
\end{equation}
Clearly, this defines a pseudo-metric on standard kernels; in particular,
if $\ka_1\approx\ka_2$, then $\dc(\ka_1,\ka_2)=0$. The
reverse implication does not hold; in fact,
$\dc(\ka_1,\ka_2)=0$ if and only if $\ka_1\sim\ka_2$,
where $\sim$ is the equivalence relation to be defined
in Subsection~\ref{ss_equiv}.
Hence, $\dc$ induces a metric on the set $\K$ of equivalence classes
of standard kernels under the relation~$\sim$.

As noted above, there is a standard
kernel $\ka_G$ naturally associated to each graph $G$,
although the map $G\mapsto \ka_G$ from $\F$ to $\K$ is not injective.
One extends the cut metric to a pseudo-metric on graphs by setting
\begin{equation}\label{dcdef}
 \dc(G_1,G_2) = \dc(\ka_{G_1},\ka_{G_2}),
\end{equation}
and to $\F\cup \K$ similarly.

For graphs $G_1$, $G_2$ on $n$ vertices, there is a much more natural
variant of their cut distance: let
$\dcG(G_1,G_2)$ be the smallest $\eps$ for which we can identify
the vertices of $G_1$ with those of $G_2$ such that for any
bipartition of the vertex set, the corresponding cuts in $G_1$ and
$G_2$ have sizes within $\eps n^2$.  In terms of kernels,
\begin{equation}\label{dcg}
 \dcG(G_1,G_2) = \min_{\ka\approx_n \ka_{G_2}} \cn{\ka_{G_1}-\ka},
\end{equation}
where $\ka_1\approx_n \ka_2$ if \eqref{kequivN} holds for some map
$\tau$ that simply permutes the intervals $I_n$
corresponding to the vertices, and we take \eqref{cndefSSc}
as the definition of the cut norm.
Note that the supremum implied by \eqref{cndefSSc} in the definition
\eqref{dcg}
is over all bipartitions of $[0,1]$, not just those corresponding
to bipartitions of the vertices; it is very easy to see that
this makes no difference: the supremum is attained at a vertex
bipartition.

Comparing \eqref{dcdef} and \eqref{dcg}, since the infimum in the former
is taken over a larger set, one trivially has
$\dc(G_1,G_2)\le \dcG(G_1,G_2)$.
Borgs, Chayes, Lov\'asz, S\'os and Vesztergombi~\cite{BCLSV:1} noted
that strict inequality is possible.
For example, taking \eqref{cndefSSc} as the definition of the cut norm,
let $G_1$ be a triangle, and let $G_2$ be the graph with $3$ vertices and one edge.
For any pairing of the vertices of $G_1$ with those of $G_2$,
the `worst' cut is the one in which the isolated vertex of $G_2$
is placed into one part and the other two vertices into the other part.
This cut has $2$ edges in $G_1$ but no edges in $G_2$,
so $\dcG(G_1,G_2)=2/9$. On the other hand, consider
the blow-ups $G_1^{(2)}$, a complete tripartite graph with two vertices
in each class, and $G_2^{(2)}$, a $C_4$
with two isolated vertices added. Pairing the vertices
of $G_1^{(2)}$ and $G_2^{(2)}$ by placing two opposite vertices of the $C_4$ in one
class of $G_1^{(2)}$, and the other vertices in different classes,
we realize $G_2^{(2)}$ as a subgraph of $G_1^{(2)}$ in such a way
that the $8$ edges of $G_1^{(2)}$ not present in $G_2^{(2)}$ form a non-bipartite
graph, so {\em every} cut cuts at most 7 of these extra edges. It follows that
$\dcG(G_1^{(2)},G_2^{(2)}) \le 7/6^2$. In fact, one can check that
with the vertices paired in this way the maximum difference between the
sizes of corresponding cuts in $G_1^{(2)}$ and $G_2^{(2)}$ is $6$, so
\[
 \dc(G_1,G_2) \le \dcG(G_1^{(2)},G_2^{(2)}) \le 6/6^2 = 1/6 < 2/9 = \dcG(G_1,G_2),
\]
showing that $\dc$ and $\dcG$ do not always agree.
For questions of convergence, however, the two metrics are equivalent:
as shown in~\cite{BCLSV:1},
\[
 \dc(G_1,G_2) \le \dcG(G_1,G_2) \le 32\dc(G_1,G_2)^{1/67}.
\]

At first sight it is not clear why the cut metric should be interesting:
after all, what is the significance of two graphs having almost the same number of
edges in all corresponding cuts? One very important consequence
of this property is that their subgraph counts are close,
as shown by the following simple lemma
from Borgs, Chayes, Lov\'asz, S\'os and Vesztergombi~\cite{BCLSV:1}.

\begin{lemma}\label{cs}
Let $\ka$ and $\ka'$ be two standard kernels. Then for every graph $F$
we have
\[
 |s(F,\ka)-s(F,\ka')|\le e(F) \cn{\ka-\ka'}.
\]
\end{lemma}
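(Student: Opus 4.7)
The natural approach is a \emph{one-edge-at-a-time} telescoping (hybrid) argument. Enumerate $E(F)=\{e_1,\ldots,e_m\}$ with $m=e(F)$, writing $e_p=\{i_p,j_p\}$, and for $0\le \ell\le m$ define the mixed integral
\[
 I_\ell = \int_{[0,1]^k}\prod_{p\le \ell}\ka'(x_{i_p},x_{j_p})\prod_{p>\ell}\ka(x_{i_p},x_{j_p})\prod_{i=1}^k \dd x_i,
\]
so that $I_0=s(F,\ka)$ and $I_m=s(F,\ka')$. Then
\[
 s(F,\ka')-s(F,\ka)=\sum_{\ell=1}^m(I_\ell-I_{\ell-1}),
\]
and it suffices to show $|I_\ell-I_{\ell-1}|\le \cn{\ka-\ka'}$ for each $\ell$, since summing then yields the claimed bound.

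Fix $\ell$ and examine $I_\ell-I_{\ell-1}$. The integrand is $(\ka'-\ka)(x_{i_\ell},x_{j_\ell})$ times a product of $\ka$- and $\ka'$-values on the other edges. Since $F$ is simple, no edge other than $e_\ell$ joins $i_\ell$ and $j_\ell$. Hence, after fixing all $x_i$ with $i\notin\{i_\ell,j_\ell\}$, the remaining product factors as $f(x_{i_\ell})\,g(x_{j_\ell})\,C$, where $f$ collects the factors from edges $e_p\ne e_\ell$ meeting $i_\ell$, $g$ those from edges $e_p\ne e_\ell$ meeting $j_\ell$, and $C$ those from edges meeting neither. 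Crucially, because $\ka$ and $\ka'$ are \emph{standard} kernels with values in $[0,1]$, each of $f$, $g$, $C$ lies in $[0,1]$.

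The key ingredient is then the non-negative version of cut-norm duality: for measurable $f,g:[0,1]\to[0,1]$ and any integrable $\phi:[0,1]^2\to\RR$,
\[
 \left|\int_{[0,1]^2}\phi(x,y)f(x)g(y)\dd x\dd y\right|\le \cn{\phi}.
\]
This follows by writing $f(x)=\int_0^1 \IN{\{f>t\}}(x)\dd t$ and similarly for $g$, applying Fubini, and using definition \eqref{cndefST} pointwise in $(t,u)$. Applied with $\phi=\ka'-\ka$ and the $f$, $g$ above, the inner integral over $(x_{i_\ell},x_{j_\ell})$ is bounded in modulus by $\cn{\ka-\ka'}$; integrating over the remaining $k-2$ variables, using $C\in[0,1]$, gives $|I_\ell-I_{\ell-1}|\le \cn{\ka-\ka'}$, completing the proof after summation.

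There is no substantive obstacle; the only things to verify are that $F$'s simplicity yields the clean bilinear factorization in the two variables $x_{i_\ell},x_{j_\ell}$, and that standardness of $\ka,\ka'$ makes the auxiliary factors non-negative and bounded by $1$ (so the $\sup$-over-rectangles form of $\cn{\cdot}$ applies directly, without the factor-of-$4$ loss inherent in the general $L^\infty$ duality).
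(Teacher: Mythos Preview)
Your proof is correct and follows essentially the same approach as the paper's: both use the one-edge-at-a-time telescoping (the paper phrases it via the multi-kernel quantity $s(F;\ka_1,\ldots,\ka_m)$, but your hybrid $I_\ell$ is exactly the same thing), and both bound each increment by fixing the variables away from the current edge, factoring the leftover product into $[0,1]$-valued functions of the two endpoint variables separately, and invoking the bilinear characterization of the cut norm. Your exposition is slightly more explicit (the layer-cake justification of the $[0,1]$-valued duality and the remark on simplicity of $F$), but there is no substantive difference.
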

\begin{proof}
Before we embark on the proof,
we extend the definition of $s(F,\ka)$ slightly.
Fix the graph $F$, taking its vertex set to be $[k]=\{1,2,\ldots,k\}$,
as usual, and list the edges of $F$ as $\{i_1j_1,\ldots,i_mj_m\}$.
Given a sequence $(\ka_1,\ldots,\ka_m)$ of standard kernels,
set
\[
 s(F;\ka_1,\ldots,\ka_m) =
 \int_{[0,1]^k} \prod_{r=1}^m \ka_r(x_{i_r},x_{j_r})\prod_{i=1}^k \dd x_i.
\]
Thus $s(F,\ka)=s(F;\ka,\ldots,\ka)$. We claim that for any graph $F$
with $m$ edges and any standard kernels
$\ka_1,\ka_2,\ldots,\ka_m$ and $\ka_1'$, we have
\begin{equation}\label{dsmall}
 |s(F;\ka_1,\ka_2,\ldots,\ka_m) - s(F;\ka_1',\ka_2,\ldots,\ka_m)| \le \cn{\ka_1-\ka_1'}.
\end{equation}
Applying this $m=e(F)$ times, changing one kernel from $\ka$ to $\ka'$ each time,
the lemma follows.

It remains to prove \eqref{dsmall}, which is easy. Suppose without loss of generality
that the first edge is $12$, so $i_1=1$ and $j_1=2$. Our task is to bound
\[
 \Delta = \int_{[0,1]^k}  (\ka_1(x_1,x_2)-\ka_1'(x_1,x_2)) 
 \prod_{r=2}^m \ka_r(x_{i_r},x_{j_r}) \prod_{i=1}^k \dd x_i
\]
Collecting the terms in the product
that involve $x_1$ or $x_2$, we may write this product as $f_0(\bx) f_1(x_1,\bx)f_2(x_2,\bx)$,
where $\bx=(x_3,\ldots,x_k$) and each $f_i$ (being a product of standard kernels evaluated
at certain places)
takes values in $[0,1]$.
Now from \eqref{cndefST}, it is immediate that if $f$ and $g$ take values in $[0,1]$,
then $\left|\int \ka(x,y)f(x)g(y) \dd x \dd y\right|\le \cn{\ka}$.
Applying this with $\bx$ fixed, and then integrating over $\bx$, it follows that
$|\Delta|\le \cn{\ka_1-\ka_1'}$, as required.
\end{proof}
\begin{corollary}\label{cor_dcde}
Let $(G_n)$ be a sequence of graphs with $|G_n|\to\infty$, and let
$\ka$ be a standard kernel.
If $\dc(G_n,\ka)\to 0$ then $\de(G_n,\ka)\to 0$.
\end{corollary}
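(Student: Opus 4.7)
The plan is to combine Lemma~\ref{cs} with the relationship between $s(F,G)$, $t(F,G)$ and $s(F,\ka_G)$ to deduce coordinate-wise convergence of the parameters $s(F,\cdot)$, which is enough since $\de$ is defined via a metric giving the product topology on $[0,1]^\infty$.

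First I would note that for any rearrangement $\tau$ of $[0,1]$, a change of variables (using that $\tau$ is measure-preserving) yields $s(F,\ka^{(\tau)}) = s(F,\ka)$ for every graph $F$ and every standard kernel $\ka$. Combining this observation with Lemma~\ref{cs}, for any $\ka' \approx \ka$ we have
\[
 |s(F,\ka_{G_n}) - s(F,\ka)| = |s(F,\ka_{G_n}) - s(F,\ka')| \le e(F)\,\cn{\ka_{G_n}-\ka'}.
\]
Taking the infimum over $\ka' \approx \ka$ on the right and using the definitions \eqref{dcdef1} and \eqref{dcdef}, this gives
\[
 |s(F,\ka_{G_n}) - s(F,\ka)| \le e(F)\,\dc(G_n,\ka) \to 0
\]
for each fixed $F \in \F$.

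Next I would pass from $\ka_{G_n}$ back to $G_n$ itself. As observed in the text, we have $t(F,G) = s(F,\ka_G)$ for every $G$, and the estimate \eqref{st} gives $t(F,G) = s(F,G) + O(1/|G|)$ with a constant depending only on $F$. Since $|G_n|\to\infty$, it follows that
\[
 s(F,G_n) = s(F,\ka_{G_n}) + O(1/|G_n|) \to s(F,\ka)
\]
for each fixed $F$.

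Finally, since $\de$ is (by definition) the pullback of a metric on $[0,1]^\infty$ inducing the product topology via the map $G \mapsto s(G) = (s(F_i,G))_{i\ge 1}$, and since we have just shown that $s(F_i,G_n) \to s(F_i,\ka)$ coordinate-wise for every $i$, we conclude $\de(G_n,\ka) \to 0$, as required. There is no serious obstacle here; the only mildly delicate point is remembering to convert between $s(F,G_n)$ and $s(F,\ka_{G_n})$ via \eqref{st}, which is precisely where the hypothesis $|G_n|\to\infty$ is used.
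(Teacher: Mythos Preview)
Your proof is correct and follows essentially the same approach as the paper's: apply Lemma~\ref{cs} to deduce $s(F,\ka_{G_n})\to s(F,\ka)$, then use $s(F,\ka_{G_n})=t(F,G_n)$ together with \eqref{st} to conclude $s(F,G_n)\to s(F,\ka)$. Your version is slightly more explicit in spelling out the rearrangement-invariance of $s(F,\cdot)$ needed to pass from the cut norm bound of Lemma~\ref{cs} to a bound in terms of $\dc$, a step the paper leaves implicit.
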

\begin{proof}
Let $\ka_n=\ka_{G_n}$, so by definition $\dc(G_n,\ka)=\dc(\ka_n,\ka)$.
By Lemma~\ref{cs}, for every $F$ we have
$s(F,\ka_n)\to s(F,\ka)$. But $s(F,\ka_n)=t(F,G_n)$, while
from \eqref{st} we have $s(F,G_n)=t(F,G_n)+o(1)$.
Thus $s(F,G_n)\to s(F,\ka)$ for each $F$, i.e., $\de(G_n,\ka)\to 0$.
\end{proof}

We have just seen that convergence in $\dc$ implies convergence
in $\de$; one of the main results of
Borgs, Chayes, Lov\'asz, S\'os and Vesztergombi, namely Theorem 2.6
in~\cite{BCLSV:1}, gives a converse of this.
This result states 
that the metrics $\de$ (defined using $t$ rather than $s$)
and $\dc$ are equivalent, in the sense that
$(G_n)$ is a Cauchy sequence for $d_{\rm sub}$ if and
only if it is a Cauchy sequence for $d_{\rm cut}$.
In the light of the various other results of Lov\'asz and
Szegedy~\cite{LSz1} and Borgs, Chayes, Lov\'asz, S\'os and
Vesztergombi~\cite{BCLSV:1}, this statement may be reformulated
in our notation as follows.
\begin{theorem}\label{dedc}
Let $(G_n)$ be a sequence of graphs or standard kernels with $|G_n|\to\infty$,
where we take $|G_n|=\infty$ if $G_n$ is a kernel, and let
$\ka$ be a standard kernel.
Then $\de(G_n,\ka)\to 0$ if and only if $\dc(G_n,\ka)\to 0$.
\noproof
\end{theorem}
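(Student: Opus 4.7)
The plan is to prove the nontrivial direction by a standard compactness argument, since the forward implication $\dc\to 0\Rightarrow \de\to 0$ is already Corollary~\ref{cor_dcde} (applied to $G_n$ replaced by the kernel $\ka_{G_n}$, and noting that one can simply regard kernels as the ``infinite'' case, dropping the $o(1)$ error from \eqref{st}). So I assume $\de(G_n,\ka)\to 0$ and aim to show $\dc(G_n,\ka)\to 0$.

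First I would argue by contradiction: if $\dc(G_n,\ka)\not\to 0$, then there is a $\delta>0$ and a subsequence $(G_{n_j})$ with $\dc(G_{n_j},\ka)\ge\delta$ for all $j$. The key tool I would invoke is the compactness of the metric space $(\K,\dc)$ of equivalence classes of standard kernels under $\sim$. This compactness, proved by Lov\'asz and Szegedy (and reviewed earlier in the paper), is precisely the statement that any sequence of standard kernels has a subsequence Cauchy in $\dc$, with limit another standard kernel. Applied to the sequence of kernels $\ka_{G_{n_j}}$ associated to the graphs $G_{n_j}$ (interpreting a kernel $G_{n_j}$ as itself when it is already one), I extract a further subsequence, which I relabel as $(G_n)$ for convenience, such that $\dc(G_n,\ka')\to 0$ for some standard kernel $\ka'$.

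Next, I apply the easy direction already established: since $\dc(G_n,\ka')\to 0$, Corollary~\ref{cor_dcde} yields $\de(G_n,\ka')\to 0$, i.e.\ $s(F,G_n)\to s(F,\ka')$ for every $F\in\F$. But by hypothesis $\de(G_n,\ka)\to 0$, so $s(F,G_n)\to s(F,\ka)$ for every $F$ as well. Hence $s(F,\ka)=s(F,\ka')$ for all finite graphs $F$. At this point I invoke the uniqueness result of Borgs, Chayes and Lov\'asz~\cite{BCL:unique} (Theorem~\ref{th_kequiv} in the present paper): two standard kernels with identical subgraph densities are equivalent under $\sim$, and hence satisfy $\dc(\ka,\ka')=0$. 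Therefore $\dc(G_n,\ka)\le \dc(G_n,\ka')+\dc(\ka',\ka)\to 0$, contradicting $\dc(G_n,\ka)\ge\delta$ along the original subsequence.

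The argument is essentially a routine ``compactness + uniqueness of limits'' packaging, and the two places where real content is used are both imported as black boxes: compactness of $(\K,\dc)$ and the implication $s(F,\ka_1)=s(F,\ka_2)\ \forall F\Rightarrow\ka_1\sim\ka_2$. The main conceptual obstacle, then, is not in the logical structure of this proof but in those two inputs; the second in particular is genuinely nontrivial and depends on the machinery of Szemer\'edi-type partitions that underlies the equivalence of the metrics in the dense case. Once these are granted, the deduction requires only the triangle inequality, Corollary~\ref{cor_dcde}, and the observation that the hypothesis $|G_n|\to\infty$ is harmless: it ensures that the distinction between $s$ and $t$ is irrelevant in the limit, so subgraph-density limits along $(G_n)$ are genuinely kernel subgraph densities and can be compared with those of $\ka$ and $\ka'$.
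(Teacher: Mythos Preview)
Your proposal is correct and follows essentially the same approach as the paper's own deduction (given in the proof block labelled ``Theorem~\ref{th_kequiv} $\implies$ Theorem~\ref{dedc}''): one direction is Corollary~\ref{cor_dcde}, and the other is a compactness-plus-uniqueness argument using the Lov\'asz--Szegedy subsequence result (equivalently, compactness of $(\K,\dc)$) together with Theorem~\ref{th_kequiv}. The only cosmetic difference is that you phrase the compactness step as a proof by contradiction starting from a bad subsequence, whereas the paper uses the equivalent ``every subsequence has a further subsequence converging to $\ka$'' formulation.
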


An immediate consequence of this result is the following,
Corollary~3.10 in~\cite{BCLSV:1}.
\begin{corollary}\label{csk}
Let $\ka$ and $\ka'$ be two bounded
kernels. Then $s(F,\ka)=s(F,\ka')$ for
every $F$ if and only if $\dc(\ka,\ka')=0$.
\noproof
\end{corollary}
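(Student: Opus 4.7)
The plan is to deduce both directions from results already in the excerpt: the forward implication from Lemma~\ref{cs} together with the rearrangement-invariance of $s(F,\cdot)$, and the reverse implication from Theorem~\ref{dedc} applied to a constant sequence. The only mild wrinkle is that Lemma~\ref{cs} and Theorem~\ref{dedc} are stated for \emph{standard} kernels, whereas the corollary concerns bounded kernels, so a trivial rescaling is needed at the outset: fix $M$ with $\infn\ka,\infn{\ka'}\le M$ and replace $\ka,\ka'$ by $\ka/M,\ka'/M$. The definition \eqref{sfk} gives $s(F,\ka/M)=M^{-e(F)}s(F,\ka)$, and analogously $\cn{\,\cdot\,}$ and $\dc$ scale linearly, so the equivalence of the two statements is unchanged by this normalization. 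Henceforth assume $\ka,\ka'$ are standard.

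For the forward direction, suppose $\dc(\ka,\ka')=0$. By \eqref{dcdef1} there exist rearrangements $\ka_n'\approx\ka'$ of $\ka'$ with $\cn{\ka-\ka_n'}\to 0$. Fix a graph $F$. A change of variables in the integral \eqref{sfk}, using that any rearrangement $\tau$ of $[0,1]$ is a measure-preserving bijection, shows $s(F,\ka_n')=s(F,\ka')$ for every $n$. Applying Lemma~\ref{cs} gives
\[
 |s(F,\ka)-s(F,\ka')|=|s(F,\ka)-s(F,\ka_n')|\le e(F)\cn{\ka-\ka_n'}\to 0,
\]
hence $s(F,\ka)=s(F,\ka')$ for every $F$.

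For the reverse direction, suppose $s(F,\ka)=s(F,\ka')$ for every $F\in\F$. Consider the constant sequence $G_n=\ka$ of standard kernels. By the convention stated in Theorem~\ref{dedc}, $|G_n|=\infty$, so the hypothesis $|G_n|\to\infty$ is satisfied trivially. Since $s_i(G_n)=s(F_i,\ka)=s(F_i,\ka')=s_i(\ka')$ for every $i$, the definition of $\de$ yields $\de(G_n,\ka')=0$ for all $n$, and in particular $\de(G_n,\ka')\to 0$. Theorem~\ref{dedc} then gives $\dc(G_n,\ka')\to 0$, i.e., $\dc(\ka,\ka')=0$.

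There is no real obstacle here; the statement really is an immediate corollary. The only point that requires a moment's care is verifying that Theorem~\ref{dedc} applies with a constant sequence consisting of a single kernel, which is why the convention $|G_n|=\infty$ for kernels is useful. Everything else reduces to rescaling and rearrangement-invariance, both of which follow directly from the definitions.
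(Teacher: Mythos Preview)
Your proof is correct and follows essentially the paper's approach: the paper presents this corollary as an immediate consequence of Theorem~\ref{dedc}, which is precisely what you invoke for the reverse direction, and your forward direction via Lemma~\ref{cs} and rearrangement-invariance is just the easy half of Theorem~\ref{dedc} spelled out. The rescaling to reduce from bounded to standard kernels is a valid and necessary preliminary that the paper leaves implicit.
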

We shall return to a discussion of kernels at cut distance 0 shortly.

\subsection{Kernels and (quasi-)random graphs}\label{ss_rg}
As well as going from graphs to kernels, one can go from kernels to {\em random}
graphs in a very natural way, as in Section 2.6 of Lov\'asz and Szegedy~\cite{LSz1},
or as in Bollob\'as, Janson and Riordan~\cite{BJR} for the sparse case.
Indeed, given a standard kernel $\ka$
and an $n\ge 1$, let $G(n,\ka)$ be the random graph on $[n]$ defined as follows:
first let $x_1,\ldots,x_n$ be \iid\ with the uniform distribution
on $[0,1]$. Given the $x_i$, join each pair of vertices independently,
joining $i$ and $j$ with probability $\ka(x_i,x_j)$. 
The resulting graph is called a {\em $\ka$-random graph} by Lov\'asz and Szegedy~\cite{LSz1},
although they use $W$ as their default symbol for a kernel.
It is easy to check, for example by the second moment method,
that, for each $F$, the random variable $s(F,G(n,\ka))$ converges
(in probability and in fact almost surely) to $s(F,\ka)$ as $n\to\infty$.
Thus the sequence $G(n,\ka)$ converges almost surely to $\ka$ in the metric
$\de$ or $\dc$. Note that if $\ka$ is constant and takes the value $p$,
then we recover the usual Erd\H os--R\'enyi model $G(n,p)$: no confusion
should arise between the notation for the two models.
(In fact, it was Gilbert~\cite{Gilbert_Gnp} who introduced $G(n,p)$, while
Erd\H os and R\'enyi~\cite{ER_RG1} introduced a model, $G(n,m)$, that is essentially equivalent  
for many purposes. Since it was they who founded the theory of random graphs,
both models are often referred to as Erd\H os--R\'enyi models.)

It is natural to view a sequence $(G_n)$ converging to $\ka$ in $\de$
as a sequence of `inhomogeneous quasi-random graphs': when $\ka$
is constant, the convergence condition is equivalent to the standard
notion of quasi-randomness, introduced by Thomason~\cite{Tho} in 1987
(although he called it pseudo-randomness) and studied in great detail
by Chung, Graham and Wilson~\cite{CGW89} and many others.
The convergence of $G(n,\ka)$ to $\ka$ in $\de$ establishes
that sequences generated by the natural inhomogeneous random
model are also quasi-random, as one would hope. One of the most
pleasing features of this whole subject area is the interpretation
that inhomogeneous quasi-random graphs are completely general:
any sequence of (dense) graphs has such a subsequence.

To take an alternative viewpoint, we may think of standard kernels as 
uncountable infinite graphs, and a `typical'
random graph $G(n,\ka)$ as a good finite approximation to $\ka$. Then
the completion of $\F$ is obtained by adding these infinite graphs,
and the approximations $G(n,\ka)$ ($n$ large) are examples
of finite graphs close to a given infinite graph. Taking this viewpoint
it is natural {\em not} to identify a finite graph with a kernel.
For another, slightly different, point of view,
see Diaconis and Janson~\cite{SJ209}, where connections to certain
infinite random graphs are described.

\subsection{Equivalent kernels}\label{ss_equiv}
In the light of Corollary~\ref{csk}, it is clearly important
to understand which pairs of kernels have $\dc(\ka_1,\ka_2)=0$;
this is also important for understanding $\dc$ itself.
Fortunately, it turns that there is a natural notion of equivalence
for kernels which gives the answer.
Since this topic is only touched on in passing in
Borgs, Chayes, Lov\'asz, S\'os and Vesztergombi~\cite{BCLSV:1},
we shall go into some detail here.

Roughly speaking, we would like to say that two kernels are
equivalent if one is obtained from the other simply by relabelling
the `types' in $[0,1]$. It would seem that 
the notion $\approx$ of naive equivalence defined in \eqref{kequivN}
is thus the right one,
but a little thought shows that this is not the case; for this,
the random viewpoint is very helpful.

So far, as in~\cite{BCLSV:1}, we defined kernels only on $[0,1]^2$.
In view of the connection to random graphs discussed in the previous
subsection, it is {\em a priori} more natural to work with
a general probability space $(\Omega,\F,\mu)$ rather than $[0,1]$
with Lebesgue measure, defining a standard kernel
as a symmetric measurable function from the square of a probability space
to $[0,1]$.
(This is the approach taken in the sparse case
by Bollob\'as, Janson and Riordan~\cite{BJR}.)
However, almost all the time, we shall consider only kernels
on $[0,1]$; there are two reasons for doing so: firstly, graphs with $n$
vertices correspond
to kernels on the discrete space with $n$ equiprobable elements,
and $[0,1]$ is the natural limit of these spaces. Secondly,
all probability spaces that one would ever wish to work
with (all so-called `standard' probability spaces) are isomorphic to Lebesgue
measure on an interval, combined with (possibly) a finite or countable
number of atoms.
When studying kernels, the presence of atoms makes no difference:
for example, a kernel on a finite measure space corresponds in a natural
way to a piecewise constant kernel on $[0,1]$. Hence it makes very good sense
to consider only kernels on $[0,1]$. For a formal reduction
to the case of kernels on $[0,1]$ in the context of random graphs,
see Janson~\cite{SJ210}.

We may think of kernels as two-dimensional versions of random variables
(not to be confused with vector valued random variables). Two
random variables are equivalent if they have the same distribution. 
Equivalently, they are equivalent if they may be coupled so as to
agree with probability $1$. This is the definition we shall use
for kernels.

Working, for the moment, on general (standard) probability spaces,
and suppressing the $\sigma$-field of measurable
sets in the notation, let $(\Omega_1,\mu_1)$ and $(\Omega_2,\mu_2)$ be two
probability spaces. A {\em coupling}
of $(\Omega_1,\mu_1)$ and $(\Omega_2,\mu_2)$ is simply a probability
space $(\Omega,\mu)$ together with measure-preserving maps
$\sigma_i:\Omega\to\Omega_i$, $i=1,2$. Thus, if $X$ is a uniformly
random point of $(\Omega,\mu)$, then $\sigma_1(X)$ and $\sigma_2(X)$
are uniform on $(\Omega_1,\mu_1)$ and $(\Omega_2,\mu_2)$, respectively.
Let $\ka_i$ be a kernel on $(\Omega_i,\mu_i)$, $i=1,2$.
Then $\ka_1$ and $\ka_2$ are {\em equivalent} if there
is a coupling $(\Omega,\mu)$ of the underlying probability spaces such that
\[
 \ka_1(\sigma_1(x),\sigma_1(y)) = \ka_2(\sigma_2(x),\sigma_2(y))
\hbox{ for $(\mu\times\mu)$-almost every }(x,y)\in \Omega^2.
\]
In other words, extending the notation in \eqref{pullback} to arbitrary
spaces, we require $\ka_1^{(\sigma_1)}=\ka_2^{(\sigma_2)}$ a.e.;
we write $\sim$ for the corresponding relation. Although this definition
may seem a little complicated, as explained above it is in fact
very natural.

Note that $\ka_1\approx\ka_2$ implies $\ka_1\sim\ka_2$:
if $\ka_1=\ka_2^{(\tau)}$, then one couples $x\in [0,1]=\Omega_1$
with $\tau(x)\in \Omega_2$. (More formally, we may take
$\Omega=\Omega_1$, with $\sigma_1$ the identity and $\sigma_2=\tau$.)
It is easy to see that the reverse implication does not hold: for example,
consider the random variables $\La_1$, $\La_2$ on $[0,1]$ given
by $\La_1(x)=x$ and $\La_2(x)=2x-\floor{2x}$; these both have
the uniform distribution, but since one is $1$-to-$1$ and
the other $2$-to-$1$, there is no measure-preserving
bijection from one ground space to the other
transforming one into the other. Setting $\ka_i(x,y)=\La_i(x)\La_i(y)$,
one obtains kernels with $\ka_1\sim \ka_2$ but $\ka_1\not\approx\ka_2$.
(Recently, Borgs, Chayes and Lov\'asz~\cite{BCL:unique}
have shown that if one excludes this phenomenon of `twins',
then $\sim$ and $\approx$ are equivalent; we refer the reader there
for a precise statement.)

Returning to the special case
of kernels on $[0,1]$, essentially equivalent
to the general case, couplings have a very simple description.
All that matters is that, for a uniform point $X$
of $(\Omega,\mu)$, the distribution of $(\sigma_1(X),\sigma_2(X))$
should have uniform marginals. Thus, couplings correspond to
{\em doubly stochastic measures},
i.e., Borel measures $\mu$ on $[0,1]^2$ with
both marginals Lebesgue measure. In other words, we have $\ka_1\sim\ka_2$
if and only if there is a doubly stochastic measure $\mu$ such that
\begin{equation}\label{ed2}
 \ka_1(x,y)=\ka_2(u,v)\hbox{ for $(\mu\times\mu)$-a.e. $(x,u,y,v)\in [0,1]^4$}.
\end{equation}

At first sight, $[0,1]^2$ is the most natural space to use to couple two kernels
on $[0,1]$, but there is another natural choice. Since $[0,1]^2$ is isomorphic
as a probability space to $[0,1]$, we may construct the coupling on $[0,1]$!
Hence, $\ka_1\sim \ka_2$ if and only if there are measure preserving maps
$\sigma_1,\sigma_2:[0,1]\to[0,1]$ such that $\ka_1^{(\sigma_1)}=\ka_2^{(\sigma_2)}$
for (Lebesgue) a.e. $(x,y)\in [0,1]^2$. Putting this a little more symmetrically,
we see that $\ka_1\sim\ka_2$ if and only if
\begin{equation}\label{kup}
 \exists \ka,\sigma_1,\sigma_2\hbox{ such that }
\ka=\ka_1^{(\sigma_1)} \hbox{ a.e \quad and\quad} \ka=\ka_2^{(\sigma_2)} \hbox{ a.e},
\end{equation}
where $\ka$ is a kernel on $[0,1]$ and $\sigma_1$ and $\sigma_2$ are 
measure-preserving maps from $[0,1]$ to itself. Note that $\ka\sim\ka^{(\sigma)}$
for any kernel $\ka$ on $[0,1]$ and any measure-preserving map from $[0,1]$ to itself.

Since couplings rather than rearrangements give the proper notion of equivalence
for two kernels, it is natural to use couplings rather than rearrangements
in the definition of the cut metric. Indeed, 
Borgs, Chayes, Lov\'asz, S\'os and Vesztergombi~\cite{BCLSV:1} define
the cut metric on standard (or simply bounded) kernels as follows:
\begin{equation}\label{dcdef2}
\dc(\ka_1,\ka_2) = \inf_{\mu\in\M}
 \sup_{S,T}\left| \int_{S\times T}
 \bb{\ka_1(x,y)-\ka_2(u,v)}\dd\mu(x,u)\dd\mu(y,v) \right|,
\end{equation}
where $\M$ is the set of doubly stochastic measures on $[0,1]^2$, $S$ and $T$
run over measurable subsets of $[0,1]^2$, and the integral
is over $(x,u)\in S$ and $(y,v)\in T$.
As shown in~\cite{BCLSV:1}, the definitions \eqref{dcdef1} and \eqref{dcdef2}
coincide. (This is not hard to see -- either formula defines a function
that is continuous, indeed Lipschitz with constant~1, with respect to the
cut norm, and hence continuous with respect to the $L^1$ norm.
Since the finite-type kernels are dense in $L^1$,
it suffices to
check the equality of the two definitions
for finite-type kernels, which is straightforward. For
the details, see~\cite{BCLSV:1}.) Since \eqref{dcdef1} is much easier
to work with than \eqref{dcdef2}, we shall take
the former as our definition of $\dc$.

Although \eqref{dcdef1} is more convenient, there is a sense in which
\eqref{dcdef2} is the `right' definition. For example,
as we shall now show, the infimum in \eqref{dcdef2}
is always attained, unlike that in \eqref{dcdef1}. This is not
discussed in~\cite{BCLSV:1}, where it is of no particular significance.
Here, as in the bulk of the paper, unless otherwise specified,
all kernels are kernels on $[0,1]$,
i.e., symmetric Lebesgue-measurable functions from $[0,1]^2\to [0,\infty)$.
As noted above, it always suffices to consider kernels
on $[0,1]$. Recall that we call a kernel {\em standard} if it takes values in $[0,1]$.

\begin{lemma}\label{infatt}
Let $\ka_1$ and $\ka_2$ be two standard kernels. Then 
there is a doubly stochastic measure $\mu$ achieving the infimum
in \eqref{dcdef2}.
\end{lemma}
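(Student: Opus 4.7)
The plan is to apply a standard compactness plus lower-semicontinuity argument in the weak-$*$ topology on Borel probability measures on $[0,1]^2$. Write $F(\mu)$ for the supremum appearing inside the infimum in \eqref{dcdef2}, so that the claim is that $F$ attains its infimum on $\M$. I would first observe that $\mathcal{P}([0,1]^2)$ is weak-$*$ compact since $[0,1]^2$ is compact, and $\M$ is weak-$*$ closed because the doubly-stochastic condition $\int f(x)\dd\mu(x,u)=\int f(x)\dd x$ for each $f\in C([0,1])$, together with its analogue for the second marginal, is preserved under weak-$*$ limits. Hence $\M$ is weak-$*$ compact, and it suffices to prove that $F$ is weak-$*$ lower semicontinuous on $\M$: a minimizing sequence then has a cluster point at which the infimum is attained.

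Writing $h((x,u),(y,v))=\ka_1(x,y)-\ka_2(u,v)$, the usual linearity argument (with the absolute value on the outside) lets us replace the supremum over pairs of indicators by a supremum over all measurable $f,g\colon[0,1]^2\to[0,1]$, giving
\[
F(\mu)=\sup_{f,g}\left|\int_{[0,1]^4} f(x,u)g(y,v)h((x,u),(y,v))\dd\mu(x,u)\dd\mu(y,v)\right|.
\]
The main obstacle to lower semicontinuity is that neither $f,g$ nor $h$ is continuous, so weak-$*$ convergence $\mu_n\to\mu$ does not directly yield convergence of these integrals.

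I would resolve this by a double approximation. First, approximate $\ka_1$ and $\ka_2$ in $L^1([0,1]^2,\dd x\,\dd y)$ by continuous, $[0,1]$-valued kernels $\ka_i^\eps$, and let $F^\eps$ denote the functional obtained from $F$ by replacing $\ka_i$ with $\ka_i^\eps$. Because every $\mu\in\M$ has Lebesgue marginals, the resulting error $\int|\ka_i-\ka_i^\eps|(x,y)\dd\mu(x,u)\dd\mu(y,v)$ equals $\|\ka_i-\ka_i^\eps\|_{L^1([0,1]^2)}<\eps$, so $|F-F^\eps|\le 2\eps$ uniformly on $\M$. It therefore suffices to show that $F^\eps$ is weak-$*$ lsc for each $\eps$, since then $F$ is a uniform limit of lsc functions and hence lsc itself. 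For $F^\eps$, Lusin's theorem combined with Tietze extension shows that continuous $[0,1]$-valued functions are $L^1(\mu)$-dense in measurable $[0,1]$-valued ones for any $\mu\in\M$, and combined with the bilinear continuity of $(f,g)\mapsto\int fgh^\eps\dd\mu\dd\mu$ on the $L^\infty$-unit ball, this lets us restrict the supremum defining $F^\eps(\mu)$ to continuous $f,g$. For any such fixed continuous pair, the integrand $f(x,u)g(y,v)h^\eps((x,u),(y,v))$ is continuous on $[0,1]^4$, and since $\mu_n\to\mu$ weak-$*$ implies $\mu_n\otimes\mu_n\to\mu\otimes\mu$ weak-$*$, the map $\mu\mapsto\bigl|\int fgh^\eps\dd\mu\dd\mu\bigr|$ is weak-$*$ continuous. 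A supremum of continuous functionals is lower semicontinuous, so $F^\eps$ is weak-$*$ lsc, finishing the argument.
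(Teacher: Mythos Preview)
Your argument is correct and takes a genuinely different route from the paper's. The paper works with a bespoke topology on $\M$ (convergence on all products of intervals), proves sequential compactness by a diagonal argument, and then directly verifies that the integrals $\int_{S\times T}(\ka_1-\ka_2)\,d\mu_n\,d\mu_n$ converge along the minimizing subsequence for every $S,T$ that is a finite union of rectangles; the doubly-stochastic property enters in showing that the marginal measures $\mu_n^I$ are dominated by Lebesgue measure, which upgrades convergence on intervals to convergence on all Borel sets. Your approach is more abstract and more economical: you use weak-$*$ compactness plus lower semicontinuity, and the doubly-stochastic hypothesis is exploited in a cleaner place, namely to show that $\int |\ka_i-\ka_i^\eps|\,d\mu\,d\mu$ equals the Lebesgue $L^1$-error uniformly in $\mu\in\M$, so that replacing $\ka_i$ by continuous approximants perturbs $F$ uniformly. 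This reduces everything to the continuous-integrand case, where the product-measure fact $\mu_n\otimes\mu_n\to\mu\otimes\mu$ (via Stone--Weierstrass) does the work. The paper's argument is more explicit and avoids invoking standard functional-analytic machinery; yours is shorter and makes the role of the marginal condition more transparent.
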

\begin{proof}
For $\mu\in\M$ set
\begin{equation}\label{dmu}
 d_\mu(\ka_1,\ka_2)=  \sup_{S,T}\left| \int_{S\times T}
 \bb{\ka_1(x,y)-\ka_2(u,v)}\dd\mu(x,u)\dd\mu(y,v) \right|,
\end{equation}
so our aim is to show that $\inf_{\mu\in \M} d_\mu(\ka_1,\ka_2)$
is attained. Before doing so, let us note that in the supremum one may restrict
the sets $S$ and $T$ in \eqref{dmu} to `nice' sets.
Let $\D$ denote the set of 
finite unions of products of (half-open) intervals.
Since $\mu$ is a finite Borel measure, for any measurable
$S,T\subset[0,1]^2$ and any $\eps>0$, there are
sets $S',T'\in \D$ with $\mu(S\Delta S'), \mu(T\Delta T')<\eps$.
Since $\ka_1-\ka_2$ is bounded by $\pm 1$, replacing $S$, $T$ by
$S'$ and $T'$ changes the value of the integral by at most $2\eps$.
It follows that the supremum in \eqref{dmu} may
be taken over $S$, $T\in \D$ without changing its value,
as claimed.

It is well known that $\M$ is (sequentially) compact in the topology
in which $\mu_n\to\mu$ if and only if $\mu_n(A)\to\mu(A)$
for every set $A\in \D$. Indeed, writing $\D_0$ for the
set of products of intervals with rational endpoints,
since $\D_0$ is countable any sequence in $\M$
has a subsequence $(\mu_n)$ such that $(\mu_n(A))$ converges
for all $A\in \D_0$. Using the doubly stochastic property
to bound the measure of a rectangle with one or more short sides,
convergence for all $A\in \D$ follows easily,
and one can check that the limiting values do define
a measure $\mu$.
Note that one cannot require $\mu_n(A)\to \mu(A)$ for every 
measurable $A$:
it is easy to construct sequences where $\mu$
is concentrated on, for example,
the diagonal $S=\{(x,x)\}$, with $\mu_n(S)=0$ for every $n$.

Let $(\mu_n)$ be a sequence of doubly stochastic measures for which
$d_{\mu_n}(\ka_1,\ka_2)\to \dc(\ka_1,\ka_2)$; such a sequence
exists by the definition \eqref{dcdef2} of $\dc(\ka_1,\ka_2)$.
From the remark above, $(\mu_n)$ has a subsequence
converging to some $\mu\in \M$ in the appropriate topology.
Restricting to this subsequence, we may assume that $\mu_n(A)\to \mu(A)$
for every $A\in \D$.

Let $S=S_1\times S_2$ and $T=T_1\times T_2$, where $S_1,S_2,T_1$ and $T_2$
are all intervals in $[0,1]$. We claim that
\begin{equation}\label{ecl}
 \int_{S\times T}\ka(x,y)\dd\mu_n(x,u)\dd\mu_n(y,v) \to
 \int_{S\times T}\ka(x,y)\dd\mu(x,u)\dd\mu(y,v)
\end{equation}
as $n\to\infty$, for any standard kernel $\ka$. Before proving
this, let us show that the lemma follows.

For any $\nu\in \M$, let
\[
 f(S,T,\nu)= \int_{S\times T}
 \bb{\ka_1(x,y)-\ka_2(u,v)}\dd\nu(x,u)\dd\nu(y,v),
\]
so $d_\nu(\ka_1,\ka_2)=\sup_{S,T} |f(S,T,\nu)|$.
Applying \eqref{ecl} with $\ka=\ka_1$ and $\ka=\ka_2$,
we see that
$f(S,T,\mu_n)\to f(S,T,\mu)$ holds whenever $S$ and $T$ are products
of intervals. By additivity, it thus holds whenever $S$ and $T$
are in $\D$.
Since $d_{\mu_n}(\ka_1,\ka_2)=\sup_{S,T} |f(S,T,\mu_n)|$, for
$S$, $T\in \D$ we thus have
\[
 f(S,T,\mu) =\liminf f(S,T,\mu_n) \le \liminf d_{\mu_n}(\ka_1,\ka_2)=
 \dc(\ka_1,\ka_2).
\]
As noted earlier, when defining $d_\mu(\ka_1,\ka_2)=\sup_{S,T}|f(S,T,\mu)|$,
we may take the supremum instead over $S,T\in \D$, so it follows
that $d_\mu(\ka_1,\ka_2)\le \dc(\ka_1,\ka_2)$.
Since $\dc(\ka_1,\ka_2)=\inf_{\mu'\in \M} d_{\mu'}(\ka_1,\ka_2)$, this
infimum is attained (at $\mu$), as claimed.

It remains to prove \eqref{ecl}. But this is easy: for any interval
$I\subset [0,1]$, let
$\mu_n^I$ be the measure on $[0,1]$ defined by
\[
 \mu_n^I(A)=\mu_n(A\times I),
\]
and define $\mu^I$ from $\mu$ similarly.
Recall that $\mu_n\to \mu$ on products of intervals. Thus
$\mu_n^I(A)\to \mu^I(A)$ whenever $A$ is an interval,
and hence whenever $A$ is a finite union of intervals.
Since $\mu_n,\mu\in \M$, we have that $\mu_n(A)$ and $\mu(A)$ are
both at most the Lebesgue measure of $A$.
It follows that $\mu_n^I(A)\to \mu^I(A)$ for any measurable
$A\subset [0,1]$, since for any $\eps$ we can approximate
$A$ by a finite union of intervals $A'$ whose symmetric
difference from $A$ has Lebesgue measure at most $\eps$.
It also follows that if $I$ and $J$ are two intervals,
and $A\subset [0,1]^2$ is Lebesgue measurable, then
\[
 (\mu_n^I\times \mu_n^J)(A) \to (\mu^I\times \mu^J)(A).
\]
Indeed, this follows by approximating $A$ by a finite union of products of intervals.
Considering level sets, we see that
\[
 \int f(x,y) \dd\mu_n^I(x)\dd\mu_n^J(y) \to
 \int f(x,y) \dd\mu^I(x)\dd\mu^J(y)
\]
for any bounded measurable function $f$. Taking
$f=\ka(x,y)1_{x\in S_1}1_{y\in T_1}$, $I=S_2$ and $J=T_2$,
this is exactly \eqref{ecl}, completing the proof.
\end{proof}

The special case of Lemma~\ref{infatt} where the distance is 0
is of particular interest.
\begin{corollary}\label{0att}
Let $\ka_1$ and $\ka_2$ be two standard kernels. Then $\dc(\ka_1,\ka_2)=0$
if and only if $\ka_1\sim \ka_2$.
\end{corollary}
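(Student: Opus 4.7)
The plan is to use \eqref{ed2} as the working characterization of $\sim$ and to exploit Lemma~\ref{infatt} to replace the infimum in \eqref{dcdef2} by an attained minimum. The forward direction is immediate: if $\ka_1\sim\ka_2$, then by \eqref{ed2} there is a doubly stochastic measure $\mu\in\M$ such that $\ka_1(x,y)=\ka_2(u,v)$ for $(\mu\times\mu)$-a.e.\ $(x,u,y,v)\in[0,1]^4$. Plugging this into \eqref{dmu} gives $d_\mu(\ka_1,\ka_2)=0$, and hence $\dc(\ka_1,\ka_2)=0$.

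For the reverse direction, assume $\dc(\ka_1,\ka_2)=0$. By Lemma~\ref{infatt}, there is a $\mu\in\M$ with $d_\mu(\ka_1,\ka_2)=0$. Setting $h(x,u,y,v)=\ka_1(x,y)-\ka_2(u,v)$, our task is to deduce that $h=0$ for $(\mu\times\mu)$-a.e.\ $(x,u,y,v)$, because then \eqref{ed2} gives $\ka_1\sim\ka_2$.

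The statement $d_\mu(\ka_1,\ka_2)=0$ says exactly that
\[
 \int_{S\times T} h(x,u,y,v)\,\dd\mu(x,u)\,\dd\mu(y,v)=0
\]
for every measurable $S,T\subset[0,1]^2$. The key measure-theoretic step is to pass from vanishing on products of measurable sets to vanishing as an $L^1(\mu\times\mu)$ function. Since products of measurable subsets of $[0,1]^2$ generate the Borel $\sigma$-algebra on $([0,1]^2)^2$, and the signed measure $A\mapsto\int_A h\,\dd(\mu\times\mu)$ is finite (as $|h|\le 1$), linearity extends the vanishing to finite linear combinations of product indicators, and an $L^1$ approximation argument then extends it to every bounded measurable $F$ on $([0,1]^2)^2$. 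Taking $F=\operatorname{sign}(h)$ gives $\int |h|\,\dd(\mu\times\mu)=0$, so $h=0$ $(\mu\times\mu)$-a.e., as required.

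The main obstacle is precisely this last step, since it is where we translate a cut-norm-type vanishing into pointwise (a.e.) vanishing; but it is a fairly standard application of the $\pi$-$\lambda$ theorem (or of the density of simple functions built from product indicators in $L^1(\mu\times\mu)$). Once that is in hand, both implications follow, and the corollary is proved.
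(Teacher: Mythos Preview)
Your proof is correct and follows essentially the same route as the paper's. The paper phrases the key measure-theoretic step slightly differently, defining the signed measure $\dd\nu = h\,\dd(\mu\times\mu)$ and noting that a finite signed Borel measure vanishing on all products $S\times T$ must be the zero measure; your $\pi$--$\lambda$/$L^1$-density argument is just a spelled-out version of the same fact.
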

\begin{proof}
Using \eqref{dcdef2} as the definition of $\dc$, if $\ka_1\sim\ka_2$
then we certainly have $\dc(\ka_1,\ka_2)=0$; see \eqref{ed2}.

Suppose then than $\dc(\ka_1,\ka_2)=0$. From Lemma~\ref{infatt}, there
is a $\mu\in \M$ such that $d_\mu(\ka_1,\ka_2)=0$.
Let $\nu$ be the signed measure on $[0,1]^4$ defined by
\[
 \dd\nu(x,u,y,v) =\bb{\ka_1(x,y)-\ka_2(u,v)}\dd\mu(x,u)\dd\mu(y,v).
\]
Then $d_\mu(\ka_1,\ka_2)=0$ says exactly that $\nu(S\times T)=0$
for all measurable $S,T\subset [0,1]^2$. Since $\nu$ is a signed
Borel measure, it follows immediately that $\nu$ is the zero measure.
Equivalently, 
$\ka_1(x,y)-\ka_2(u,v)=0$ for $(\mu\times\mu)$-a.e.
points $(x,u,y,v)$. Referring to \eqref{ed2} again, we see that
$\ka_1\sim\ka_2$.
\end{proof}

As we have seen, Corollary~\ref{0att} is a simple exercise in measure theory.
Using this corollary, and the equivalence of $\dc$ and $\de$ proved
by Borgs, Chayes, Lov\'asz, S\'os and Vesztergombi~\cite{BCLSV:1},
one obtains the following characterization of equivalent (standard) kernels. 
\begin{theorem}\label{th_kequiv}
Let $\ka_1$ and $\ka_2$ be two standard kernels. Then
$s(F,\ka_1)=s(F,\ka_2)$ holds for every finite graph $F$
if and only if $\ka_1\sim \ka_2$.
\end{theorem}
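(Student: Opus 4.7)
The theorem is a direct combination of two results already in hand, namely Corollary \ref{csk} (that $s(F,\ka_1)=s(F,\ka_2)$ for all $F$ is equivalent to $\dc(\ka_1,\ka_2)=0$) and Corollary \ref{0att} (that $\dc(\ka_1,\ka_2)=0$ is equivalent to $\ka_1\sim\ka_2$). So my plan is simply to chain these two equivalences together, checking that both directions of each have been established in the exact generality we need (standard kernels on $[0,1]$).

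For the forward direction, I would start by assuming $\ka_1\sim\ka_2$. I can proceed in either of two ways. The slicker route is to apply Corollary \ref{0att} to conclude $\dc(\ka_1,\ka_2)=0$ and then invoke Corollary \ref{csk} to conclude $s(F,\ka_1)=s(F,\ka_2)$ for every $F$. Alternatively, one can give a direct proof using the characterization \eqref{kup}: choose $\ka$ and measure-preserving maps $\sigma_1,\sigma_2:[0,1]\to[0,1]$ with $\ka=\ka_i^{(\sigma_i)}$ almost everywhere for $i=1,2$, and observe via the change of variables $x_j\mapsto \sigma_i(x_j)$ applied to \eqref{sfk} that $s(F,\ka_i)=s(F,\ka)$ for $i=1,2$. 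Both arguments are essentially automatic.

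For the reverse direction, I assume $s(F,\ka_1)=s(F,\ka_2)$ for every finite graph $F$. Corollary \ref{csk} immediately yields $\dc(\ka_1,\ka_2)=0$, and then Corollary \ref{0att} gives $\ka_1\sim\ka_2$. This direction is the substantive one; but all the real work has been carried out already, in Theorem \ref{dedc} (equivalence of $\de$ and $\dc$, due to Borgs, Chayes, Lov\'asz, S\'os and Vesztergombi), which powers Corollary \ref{csk}, and in the compactness/attainment-of-infimum argument of Lemma \ref{infatt}, which powers Corollary \ref{0att}.

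Since the proof is really just a composition of two bi-implications, there is no genuine obstacle left to overcome; the only thing to be careful about is to ensure that both statements are invoked with standard (rather than only bounded) kernels, which is indeed the hypothesis of the theorem and matches the hypotheses of Corollaries \ref{csk} and \ref{0att}. I would therefore keep the write-up to a couple of lines, presenting the chain of equivalences $s(F,\ka_1)=s(F,\ka_2)\ \forall F \iff \dc(\ka_1,\ka_2)=0 \iff \ka_1\sim\ka_2$ and citing the two corollaries.
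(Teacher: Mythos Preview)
Your proposal is correct and matches the paper's own proof exactly: the paper simply writes ``Immediate from Corollaries~\ref{csk} and~\ref{0att}.'' Your chain of equivalences $s(F,\ka_1)=s(F,\ka_2)\ \forall F \iff \dc(\ka_1,\ka_2)=0 \iff \ka_1\sim\ka_2$ is precisely what is intended, and your care about the standard-versus-bounded hypothesis is well placed but causes no trouble here.
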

\begin{proof}
Immediate from Corollaries~\ref{csk} and~\ref{0att}.
\end{proof}

The analogue of Theorem~\ref{th_kequiv} for general (i.e., unbounded)
kernels is false,
even for `rank 1' kernels with all counts $s(F,\ka)$ finite.
Indeed, if $\ka(x,y)=f(x)f(y)$ for some
$f:[0,1]\to [0,\infty)$, then the quantities $s(F,\ka)$ are easily seen to
be products of moments of $f$, viewed as a random variable.
As is well known, there are non-negative random variables with
the same finite moments but different distributions; using
two such random variables, one can construct non-equivalent
unbounded kernels $\ka_1,\ka_2$ with $s(F,\ka_1)=s(F,\ka_2)<\infty$
for all $F$.

We have shown that it is not hard to deduce Theorem~\ref{th_kequiv}
from Theorem~\ref{dedc}. In fact, these results are equivalent!
The reverse implication is actually much easier.

\begin{proof}[Theorem~\ref{th_kequiv} $\implies$ Theorem~\ref{dedc}]
We write out the argument for a sequence of graphs; the treatment 
for kernels is essentially the same.
Let $(G_n)$ be a sequence of graphs with $|G_n|\to\infty$, and
let $\ka$ be a standard
kernel. From Corollary~\ref{cor_dcde}, if $\dc(G_n,\ka)\to 0$, then $\de(G_n,\ka)\to 0$;
it remains to prove the reverse implication.

As shown by Lov\'asz and Szegedy~\cite{LSz1} (see their Lemmas 5.1 and 5.2),
repeatedly applying even the weak Frieze--Kannan~\cite{FKquick}
form of Szemer\'edi's Lemma, it is easy
to prove that any sequence $(G_n)$ with $|G_n|\to\infty$ has a subsequence
converging in $\dc$ to some standard kernel $\ka'$. We shall not give the details
of this argument here as we shall prove a corresponding statement
in a more general setting in Corollary~\ref{cSz}.

Suppose then that $\de(G_n,\ka)\to 0$. Then by the observation above
there is a subsequence $(G_{n_k})$ that
converges in $\dc$ to some standard kernel $\ka'$. But then,
by Corollary~\ref{cor_dcde}, we have $\de(G_{n_k},\ka')\to 0$.
Since $\de(G_n,\ka)\to 0$ we must have $\de(\ka,\ka')=0$, i.e., $s(F,\ka)=s(F,\ka')$
for all $F$. Thus, by Theorem~\ref{th_kequiv}, we have $\ka\sim\ka'$,
so $\dc(\ka,\ka')=0$. Thus $\dc(G_{n_k},\ka)\to 0$.

We have shown that $(G_n)$ has a subsequence converging to $\ka$ in $\dc$.
This argument applies equally well to any subsequence of $(G_n)$,
and it follows immediately that the whole sequence converges, i.e.,
$\dc(G_n,\ka)\to 0$, as required.
\end{proof}

As we have just seen, Theorem~\ref{dedc}, one of the main results
of Borgs, Chayes, Lov\'asz, S\'os and Vesztergombi~\cite{BCLSV:1},
is equivalent to Theorem~\ref{th_kequiv}. As far as we are aware,
this observation is new.
Now Theorem~\ref{th_kequiv} is a fundamental
analytic fact about
bounded kernels: it says
that a bounded kernel is characterized up to equivalence by the quantities
$s(F,\ka)$, which are the natural analogues for a kernel of the moments
of a random variable. When the first version of this paper was written,
we thus had the following rather unsatisfactory situation: the only
known proof of the analytic fact Theorem~\ref{th_kequiv} 
was that given above, relying on the hard results of
Borgs, Chayes, Lov\'asz, S\'os and Vesztergombi~\cite{BCLSV:1}
about sequences of graphs. Fortunately, this situation
has now been resolved: Borgs, Chayes and Lov\'asz~\cite{BCL:unique} have given
a very clever direct proof of Theorem~\ref{th_kequiv}.
In fact, they proved a little more.

Recall from \eqref{kup} that $\ka_1\sim\ka_2$ means that
\[
 \exists \ka,\sigma_1,\sigma_2\hbox{ such that }
\ka=\ka_1^{(\sigma_1)} \hbox{ a.e \quad and\quad} \ka=\ka_2^{(\sigma_2)} \hbox{ a.e},
\]
where $\ka$ is a kernel on $[0,1]$ and $\sigma_1$ and $\sigma_2$ are 
measure-preserving maps from $[0,1]$ to itself.
Turning this `upside-down', let us write $\ka_1\sim'\ka_2$ if
\begin{equation}\label{reverse}
 \exists \ka,\sigma_1,\sigma_2\hbox{ such that }
\ka_1=\ka^{(\sigma_1)} \hbox{ a.e \quad and\quad} \ka_2=\ka^{(\sigma_2)} \hbox{ a.e}.
\end{equation}
In \eqref{reverse}, we require $\ka$ to be a kernel on $[0,1]$;
it makes no difference if we allow $\ka$ to be a kernel on an
arbitrary standard probability space.
Note that if $\ka_1\sim'\ka_2$, then
using the observation that $\ka\sim \ka^{(\sigma)}$ twice, we have $\ka_1\sim\ka_2$.

Borgs, Chayes and Lov\'asz~\cite{BCL:unique} proved the following result.
\begin{theorem}\label{unique2}
For two bounded kernels $\ka_1$, $\ka_2$, the following are equivalent.
(a) $s(F,\ka_1)=s(F,\ka_2)$ for every finite graph $F$,
(b) $\ka_1\sim\ka_2$ and (c) $\ka_1\sim'\ka_2$.
\end{theorem}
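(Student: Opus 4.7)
The plan is to prove the chain (c) $\Rightarrow$ (b) $\Rightarrow$ (a) $\Rightarrow$ (b) $\Rightarrow$ (c), which makes all three conditions equivalent. The first three arrows are essentially formal, so all the real work lies in the new implication (b) $\Rightarrow$ (c): starting from an abstract coupling witnessing $\ka_1 \sim \ka_2$, one must extract a single kernel $\ka$ on $[0,1]$ together with measure-preserving maps $\sigma_1,\sigma_2:[0,1]\to[0,1]$ such that $\ka_i = \ka^{(\sigma_i)}$ a.e.

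For the formal directions: (b) $\Rightarrow$ (a) follows by applying the measure-preserving changes of variable $\sigma_i$ to the defining integral \eqref{sfk} for $s(F,\ka_i)$, reducing both to the same integral on the coupling space. For (c) $\Rightarrow$ (b), I would take $\Omega=[0,1]$ with $\tau_1=\mathrm{id}$ and $\tau_2=\sigma_1$ to see that $\ka_1 \sim \ka$ directly from $\ka_1 = \ka^{(\sigma_1)}$; symmetrically $\ka_2 \sim \ka$, and transitivity of $\sim$ (a standard composition of couplings via disintegration) then gives $\ka_1 \sim \ka_2$. The implication (a) $\Rightarrow$ (b) is exactly Theorem~\ref{th_kequiv}, already available.

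For the main step (b) $\Rightarrow$ (c), the natural strategy is a \emph{twin-free reduction}. Call a bounded kernel $\ka$ twin-free if, for a.e.\ pair $x \ne y$, the rows $\ka(x,\cdot)$ and $\ka(y,\cdot)$ disagree on a set of positive measure. For any bounded $\ka$ the relation $x \equiv_\ka y \Leftrightarrow \ka(x,\cdot)=\ka(y,\cdot)$ a.e.\ is a measurable equivalence relation; quotienting produces a standard probability space $X^*_\ka$, a twin-free kernel $\ka^*$ on $X^*_\ka$, and a measure-preserving quotient map $\pi_\ka:[0,1]\to X^*_\ka$ with $\ka=(\ka^*)^{(\pi_\ka)}$ a.e. The strategy then has three parts: first, construct $(X^*_\ka,\ka^*,\pi_\ka)$ carefully and identify $X^*_\ka$ with $[0,1]$ as a standard probability space; second, show that if $\ka_1 \sim \ka_2$ via a doubly stochastic measure $\mu$, then $\mu$ descends to a doubly stochastic measure on $X^*_{\ka_1}\times X^*_{\ka_2}$ that is concentrated on the graph of a measure-preserving bijection, so that the twin-free versions are naively equivalent, $\ka_1^* \approx \ka_2^*$; third, take $\ka$ on $[0,1]$ to be a presentation of the common twin-free quotient and set $\sigma_i$ equal to the composition of $\pi_{\ka_i}$ with the bijection from the previous step.

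The hard step will be the second one. The coupling $\mu$ is \emph{a priori} very flexible and need not respect either twin equivalence. The technical core is a rigidity statement: the constraint $\ka_1(x,y)=\ka_2(u,v)$ for $(\mu\otimes\mu)$-a.e.\ $(x,u,y,v)$ forces that if $(x,u)$ and $(x',u')$ lie in the support of $\mu$ with $u \equiv_{\ka_2} u'$, then $x \equiv_{\ka_1} x'$, and symmetrically. Once this is verified by a Fubini/section argument, the induced quotient measure $\mu^*$ is doubly stochastic on $X^*_{\ka_1}\times X^*_{\ka_2}$; any positive-measure mass off the bijective part would contradict the twin-freeness of $\ka_1^*$ or $\ka_2^*$, so $\mu^*$ must be concentrated on a measure-preserving bijection, which is the desired naive equivalence. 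The remaining bookkeeping (measurability of the quotient, transfer back to Lebesgue measure on $[0,1]$) is routine in the category of standard Borel spaces.
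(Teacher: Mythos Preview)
The paper does not give its own proof of Theorem~\ref{unique2}: the result is quoted from Borgs, Chayes and Lov\'asz~\cite{BCL:unique}, with only the remarks that (c) $\Rightarrow$ (b) $\Rightarrow$ (a) are easy, that (a) $\Rightarrow$ (c) is the substantial implication, and that the proof in~\cite{BCL:unique} is ``direct, but somewhat technical''. So there is no in-paper proof to compare against.

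Your outline is correct, and your twin-free reduction is precisely the strategy of~\cite{BCL:unique}; indeed the paper alludes to this earlier when it notes that Borgs, Chayes and Lov\'asz showed that once one excludes twins, the relations $\sim$ and $\approx$ coincide. Two minor comments. First, routing (a) $\Rightarrow$ (c) through Theorem~\ref{th_kequiv} is legitimate here since that theorem is established independently via~\cite{BCLSV:1}, but it sidesteps part of the motivation for~\cite{BCL:unique}, which was to obtain Theorem~\ref{th_kequiv} \emph{without} invoking~\cite{BCLSV:1}; since your argument for (b) $\Rightarrow$ (c) nowhere uses (a), this is only a matter of packaging. Second, the rigidity step should be phrased via disintegration rather than ``support of $\mu$'': one shows that for a.e.\ $x$ the conditional measure $\mu^*_x$ on the twin-free quotient is a Dirac mass, using your Fubini observation together with twin-freeness. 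Making this precise, together with the measurable construction of the quotient and its identification with $[0,1]$, is exactly where the ``somewhat technical'' work in~\cite{BCL:unique} lies.
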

The important implication is that if $s(F,\ka_1)=s(F,\ka_2)$ for all $F$,
then $\ka_1\sim'\ka_2$. As noted above, this trivially implies $\ka_1\sim\ka_2$,
which in turn easily implies $s(F,\ka_1)=s(F,\ka_2)$.
The proof in~\cite{BCL:unique} is direct, but somewhat technical.

As shown above, Theorem~\ref{unique2}, which trivially implies Theorem~\ref{th_kequiv},
implies Theorem~\ref{dedc}. This gives a proof of Theorem~\ref{dedc} 
that is very different from that given by
Borgs, Chayes, Lov\'asz, S\'os and Vesztergombi~\cite{BCLSV:1}.

\bigskip
Our aim in the rest of this paper is to investigate the extent to which the various
results and observations above carry over to sparse graphs, graphs
with $n$ vertices and $o(n^2)$ edges. As we shall see,
this gives rise to many difficult questions, so we shall present
many more questions than answers.

\section{Subgraph counts for sparse graphs}\label{sec_hsp}

In this section we consider sparse graphs, where the number of edges
is $o(n^2)$ as the number $n$ of vertices goes to infinity. We shall
assume throughout that we have at least $\omega(n)$ edges, i.e., that
the average degree tends to infinity; often, we shall make much
stronger assumptions.
Given a function $p=p(n)$, one can adapt many of the notions
of Section~\ref{dense} to graphs with $\Theta(pn^2)$ edges. Indeed,
let
\[
 s_p(F,G) = \frac{\emb(F,G)}{p^{e(F)}n_{(|F|)}} = \aut(F)\frac{X_F(G)}{p^{e(F)}X_F(K_n)},
\]
noting that
\[
 s_p(F,G) = \frac{\emb(F,G)}{ \E\bb{\emb(F,G(n,p))}}.
\]
Also, let
\[
 t_p(F,G) = \frac{\hom(F,G)}{p^{e(F)}n^{|F|}}.
\]
If $p=1$, then we recover the definitions in Section~\ref{dense}.
Furthermore, if $0<p<1$ is constant,
then we can define a map $s$ as before, but now $s$ maps $\F$ into the compact
space $\prod_{F\in \F}[0,p^{-e(F)}]$, and everything proceeds as before.
More generally, changing $p$ by a constant factor will be irrelevant: just
as we can use $s_c$ for any $c$ to study $G(n,1/2)$, we may use $s_p$ to study $G(n,p/2)$ or $G(n,2p)$,
say, for any $p=p(n)$.

From now on, we suppose that $p=p(n)$ is some given function of $n$, with $p(n)\to 0$
as $n\to\infty$. We wish to work in a compact space, so we shall assume
that there are constants $c_F$, $F\in \F$, such that $s_p(F,G)\le c_F$ for all graphs
$G$ we consider. Enumerating $\F$ as $\{F_1,F_2,\ldots\}$, we may thus
define a map
\begin{equation}\label{ds1}
 s_p:\F\to X=\prod_{i=1}^\infty [0,c_{F_i}], \qquad  G\mapsto (s_p(F_i,G))_{i=1}^\infty,
\end{equation}
and, using any metric $d$ on $X$ giving the product topology,
an associated metric
\begin{equation}\label{ds2}
 \de(G_1,G_2) = d(s_p(G_1),s_p(G_2)).
\end{equation}
We suppress the dependence on $p$ in our notation for the metric to avoid
clutter. As in the dense case, we can extend $\de$ to bounded kernels $\ka$,
setting
\[
 \de(G,\ka) = d(s_p(G),s(\ka)) \hbox{\quad and\quad} \de(s(\ka_1),s(\ka_2))=d(s(\ka_1),s(\ka_2))
\]
for a graph $G$ and bounded kernels $\ka$, $\ka_1$ and $\ka_2$. Here, for
a kernel $\ka$, $s(\ka)$ is the vector with coordinates
defined by \eqref{sfk}.

Much of the time, we think of a sequence $(G_n)$ of finite graphs.
Throughout, we are only interested in sequences with $|G_n|\to\infty$.
For notational convenience we always assume that $|G_n|=n$; this make no difference
to our conjectures and results. As usual, we need not assume that $G_n$ is defined
for every $n\in \NN$, but only for an infinite subset of $\NN$. In this setting,
the assumption described above may be stated as follows.

\begin{assumption}[bounded subgraph counts]\label{AA}
For each fixed graph $F$, we have $\sup_n s_p(F,G_n)<\infty$.
\end{assumption}

In particular, if $(G_n)$ satisfies Assumption~\ref{AA} then, taking
$F=K_2$, we see that $e(G_n)=O(pn^2)$, so our graphs are
sparse. There is a stronger version of Assumption~\ref{AA} that is perhaps
even more natural:

\begin{assumption}[exponentially bounded subgraph counts]\label{AB}
There is a constant $C$ such that, for each fixed $F$,
we have $\limsup s_p(F,G_n)\le C^{e(F)}$ as $n\to\infty$.
\end{assumption}

In this case, changing $p$ by a constant factor, we may take $C=1$ if
we like.  This is not always the most natural normalization, however.
There is a reason for writing $\limsup$ in Assumption~\ref{AB}: for any graph
$G_n$ with $|G_n|=n$ and $n$ large, there will be some $F$ with
$s_p(F,G_n)$ very large.  Indeed, $G_n$ contains at least one
embedding of itself, so $s_p(G_n,G_n)\ge 1/(n!p^{e(G_n)})$, which
typically grows much faster than any constant to the power $e(G_n)$.

Turning to kernels, there is no longer any good reason to restrict
our kernels to take values in $[0,1]$: in the dense case, the maximum
possible `local density' of edges is $1$. Here, if we normalize
so that $G_n$ has $pn^2/2$ edges, say, local densities larger than
$p$ are certainly possible. We shall thus consider
general kernels, i.e., symmetric measurable functions from $[0,1]^2$
to $[0,\infty)$, rather than only standard kernels. We define $s(F,\ka)$ as before,
using \eqref{sfk}; in general, $s(F,\ka)$ may be infinite,
but we shall always assume it is finite for the graphs
$F$ and kernels $\ka$ we consider.

Although we allow unbounded kernels in general, it may be that they
give rise to difficulties (as they do in the general (very) sparse
inhomogeneous model of Bollob\'as, Janson and Riordan~\cite{BJR}).
Assumption~\ref{AB} corresponds to the limiting kernel (if it exists)
being bounded, as shown by Lemma~\ref{lbd} below.

Our main conjecture states that, if $p$ is large enough,
then, under Assumption~\ref{AB}, the equivalent of Theorem~\ref{ctok} holds.

\begin{conjecture}\label{q1a}
Let $p=p(n)=n^{-o(1)}$, and let $C>0$ be constant.
Suppose that $(G_n)$ is a sequence of graphs with $|G_n|=n$ such that, for every
$F$, $s_p(F,G_n)$ converges to some constant $0\le c_F\le C^{e(F)}$.
Then there is a bounded kernel $\ka$ such that $c_F=s(F,\ka)$ for every $F$.
\end{conjecture}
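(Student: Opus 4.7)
My plan is to construct the limit kernel as a subsequential limit of rescaled adjacency kernels, following the structure of Lov\'asz and Szegedy's proof of Theorem~\ref{ctok}.

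First, I would associate to $G_n$ the rescaled kernel $\ka_n := p^{-1}\ka_{G_n}$. Using the identity $s(F,\ka_G) = t(F,G)$ and the fact that $s(F,\cdot)$ scales by $p^{-e(F)}$ under multiplication of the kernel by $p^{-1}$, one has $s(F,\ka_n) = t_p(F,G_n)$; comparing $\emb$ and $\hom$ as in \eqref{st}, then rescaling, gives
\[
 |s(F,\ka_n) - s_p(F,G_n)| = O\bb{n^{-1} p^{-e(F)}} = O\bb{n^{-1+o(1)}},
\]
since $p = n^{-o(1)}$ and $e(F)$ is a fixed constant. Hence $s(F,\ka_n)\to c_F$ for every $F$. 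Furthermore, the exponential growth bound $c_F \le C^{e(F)}$ pins down the values any limit can take: if a kernel $\lambda$ has $\lambda \ge B$ on some $A\times A$ with $|A|=\alpha$, a direct calculation gives $s(K_{t,t},\lambda) \ge B^{t^2}\alpha^{2t}$, and comparing with $c_{K_{t,t}} \le C^{t^2}$ rules out essential values exceeding $C$ on any positive-measure square as $t\to\infty$.

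Next, I would extract a limit using a sparse Szemer\'edi regularity lemma (the version foreshadowed in Section~\ref{sec_Sz} and invoked in Corollary~\ref{cSz}): for each $\ell \in \NN$ and each $n$, a partition of $V(G_n)$ into $k(\ell)$ parts produces a finite-type kernel $\ka_n^{(\ell)}$ that approximates $\ka_n$ to within $1/\ell$ in a sparse cut metric. For fixed $\ell$, the truncations $\min\bb{\ka_n^{(\ell)},2C}$ live in a compact finite-dimensional product of intervals -- truncation is valid because the previous paragraph shows that the mass of $\ka_n$ above the level $2C$ is asymptotically negligible -- so by diagonalization one extracts a subsequence along which $\ka_n^{(\ell)}\to \ka^{(\ell)}$ for every $\ell$. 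The triangle inequality then makes $(\ka^{(\ell)})_\ell$ Cauchy in cut metric, with limit a bounded kernel $\ka$.

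The main obstacle, as I expect, is the final step: verifying $s(F,\ka) = c_F$ for every $F$. This is a sparse counting lemma, and it is genuinely delicate -- unlike Lemma~\ref{cs} in the dense case, closeness in sparse cut distance does not automatically imply closeness of subgraph counts, because small very-dense subregions of $\ka_n$ can inflate $s(F,\ka_n)$ without meaningfully shifting the cut distance. This is precisely the theme of Section~\ref{sec_compar}, and presumably why the statement appears as a conjecture: the partial sparse counting lemmas in Theorem~\ref{cthm} and its variants Theorems~\ref{cthm2} and~\ref{dthm} apply only under additional hypotheses that would need to be verified (or strengthened) to close the argument in full generality.
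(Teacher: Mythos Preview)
The statement you are attempting to prove is stated in the paper as a \emph{conjecture} (Conjecture~\ref{q1a}), not a theorem; the paper contains no proof of it. Your proposal is therefore not a proof to be compared against the paper's, but rather an outline of the natural approach together with a correct diagnosis of where it breaks down. On that score you are entirely right: the first two steps --- establishing bounded density from the exponential count bound via a $K_{t,t}$ argument (this is sketched in the paper immediately after Assumption~\ref{ABp}), and then extracting a cut-metric limit $\ka$ via the sparse regularity machinery of Corollary~\ref{cSz} --- are exactly the route the paper itself follows in Section~\ref{sec_compar}. Your identification of the missing piece, a sparse counting lemma asserting $s_p(F,G_n)\to s(F,\ka)$ from $\dc(G_n,\ka)\to 0$, is precisely the content of Conjecture~\ref{cce}, which the paper leaves open and only partially resolves in Theorems~\ref{cthm}--\ref{dthm}.

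One small correction: your justification for the truncation step is phrased as ``the mass of $\ka_n$ above the level $2C$ is asymptotically negligible'', but $\ka_n$ is $\{0,1/p\}$-valued, so \emph{all} of its nonzero mass lies above $2C$. The correct statement is that the \emph{averaged} kernels $\ka_n^{(\ell)}$ are bounded by $C+o(1)$, which follows from the bounded density property (Assumption~\ref{AC}) applied to the parts of the regularity partition; this in turn is what the $K_{t,t}$ argument delivers. The argument of your ``previous paragraph'' applies to a limit kernel, not to $\ka_n$ itself.
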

As noted above, without loss of generality we may take $C=1$.
As we shall observe later, it is very easy to see
that if $s_p(K_2,G_n)\to 0$ and $s_p(F,G_n)$ is
bounded for every $F$, then $s_p(F,G_n)\to 0$ for every $F$.
Thus we may assume that $s_p(K_2,G_n)$ is bounded away from zero,
and we may normalize in a different way  by assuming that
$s_p(K_2,G_n)=1$, i.e, that $e(G_n)=p\binom{n}{2}$.

Assumption~\ref{AB} is trivially stronger than Assumption~\ref{AA}. Thus, if
$(G_n)$  satisfies Assumption~\ref{AB}, then the sequence $s_p(G_n)$ defined
by \eqref{ds1} lives
in a compact product space, and has a convergent subsequence. Hence
there are real numbers $c_F\ge 0$, $F\in \F$, and a subsequence
$(G_{n_i})$ with $s_p(F,G_{n_i})\to c_F$ for every $F$, to which
Conjecture~\ref{q1a} applies. Conjecture~\ref{q1a} is thus a statement about the possible limit
points of the sequences $s_p(G_n)$.

It may well be that the restriction to bounded kernels is not necessary.

\begin{conjecture}\label{q1}
Let $p=p(n)=n^{-o(1)}$, and let $(G_n)$ be a sequence of graphs with
$|G_n|=n$ such that, for every $F$, we have $s_p(F,G_n)\to c_F$
for some $0\le c_F<\infty$. Then there is a kernel $\ka$ with
$c_F=s(F,\ka)$ for every $F$.
\end{conjecture}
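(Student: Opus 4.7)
The plan is to approach Conjecture~\ref{q1} in two stages: first establish the bounded-kernel version Conjecture~\ref{q1a}, and then bootstrap to the general conjecture by a truncation argument. The strategy for Conjecture~\ref{q1a} follows the dense blueprint of Theorem~\ref{ctok}, replacing the classical Szemer\'edi regularity and counting lemmas by their (much harder) sparse analogues.

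For Conjecture~\ref{q1a}, since $p=n^{-o(1)}$, a sparse regularity lemma applies to each $G_n$ and produces an $\eps$-regular partition $V_1,\ldots,V_{k(n)}$. I would then form the step-kernel $\ka_n$ equal to $e_{G_n}(V_i,V_j)/(p|V_i||V_j|)$ on the blocks $I_i\times I_j$. Assumption~\ref{AB}, applied to $K_{1,t}$ and to the complete bipartite graphs $K_{s,t}$, should force the $\ka_n$ to be essentially uniformly bounded after a negligible part is discarded: otherwise $s_p(K_{1,t},G_n)$ or $s_p(K_{s,t},G_n)$ would grow faster than exponentially in $t$ or $s+t$. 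A diagonal subsequence argument, combined with the cut-metric compactness of bounded step-kernels indicated in Corollary~\ref{cSz}, then yields a bounded limit kernel $\ka$. The critical step is a sparse counting lemma --- the analogue of Lemma~\ref{cs} in this regime --- to upgrade cut-metric convergence $\ka_n\to\ka$ into convergence $s_p(F,G_n)\to s(F,\ka)$ for every $F$.

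For the reduction from Conjecture~\ref{q1} to Conjecture~\ref{q1a}, I would truncate $G_n$ at level $M$: define $G_n^{(M)}$ by retaining only those edges contained in cells of a fixed regular partition whose density is at most $Mp$. These graphs are nested in $M$ and satisfy $s_p(F,G_n^{(M)})\le M^{e(F)}$, so by a further diagonal subsequence we may assume $s_p(F,G_n^{(M)})\to c_F^{(M)}$ for every $F$ and every $M$. Conjecture~\ref{q1a} then supplies bounded kernels $\ka^{(M)}$ with $s(F,\ka^{(M)})=c_F^{(M)}$, and using Theorem~\ref{unique2} I would couple them on a common probability space with $\ka^{(M)}$ pointwise non-decreasing in $M$. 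The finiteness of every $c_F$ should force the tail $c_F-c_F^{(M)}$ (coming from high-density cells) to tend to $0$ as $M\to\infty$; the monotone pointwise limit $\ka=\lim_M\ka^{(M)}$ is then well-defined and finite a.e.\ (because $\int\ka=c_{K_2}<\infty$), and one application of monotone convergence gives $s(F,\ka)=c_F$ for each $F$.

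The principal obstacle is the sparse counting lemma itself. A regular pair of density $p$ can hide radically different small-subgraph counts depending on finer structure, so cut-metric closeness of step-kernels does not in general transfer to closeness of subgraph counts in the sparse setting; this is precisely what Theorems~\ref{cthm}--\ref{dthm} address under various auxiliary hypotheses, and is the reason Conjecture~\ref{q1a} remains open in full generality. A secondary but genuinely delicate point is the coupling inside the truncation step: the kernels $\ka^{(M)}$ are only defined up to the equivalence $\sim$ of Subsection~\ref{ss_equiv}, and realizing them simultaneously on a common probability space with a monotone structure appears to require either using the same regular partition at every truncation level (so that the $\ka^{(M)}$ arise as step-kernels sharing a common step structure) or a careful invocation of the uniqueness statement of Theorem~\ref{unique2}.
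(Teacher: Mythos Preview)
The statement you are attempting to prove is Conjecture~\ref{q1}, which the paper explicitly leaves open; there is no proof in the paper for you to compare against. Indeed, the paper presents Conjectures~\ref{q1a} and~\ref{q1} precisely because the authors cannot prove them, and Section~\ref{sec_compar} is devoted to explaining why.

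Your plan for Conjecture~\ref{q1a} is exactly the natural one, and you have correctly located the gap: the step ``upgrade cut-metric convergence $\ka_n\to\ka$ into convergence $s_p(F,G_n)\to s(F,\ka)$ for every $F$'' is not known. This is not merely a technicality. As the paper's Examples~\ref{vst}--\ref{blowup} show, in the sparse regime cut-metric closeness places essentially no constraint on counts of graphs containing short cycles, even $K_3$; the examples are only excluded by the boundedness hypotheses on \emph{other} counts, and it is completely unclear how a regularity partition can exploit those hypotheses. What you are asking for is exactly the forward implication of Conjecture~\ref{cce}, and the discussion following Question~\ref{qq} shows that this implication, together with Theorem~\ref{kdet}, would already yield Conjecture~\ref{q1a}. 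So your first stage is not a reduction to something easier; it is a restatement of an open problem of the same depth. The partial results the paper does prove (Theorems~\ref{cthm}--\ref{dthm}) only handle graphs in $\F_{\ge\ell}$ for $\ell\ge 3$ and require auxiliary path-distribution hypotheses; even Conjecture~\ref{ctri}, the case $F=K_3$, is open.

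Your second stage, the truncation reduction from Conjecture~\ref{q1} to Conjecture~\ref{q1a}, is not discussed in the paper and has its own serious difficulties. The assertion that ``the finiteness of every $c_F$ should force the tail $c_F-c_F^{(M)}$ to tend to $0$'' is not justified: the discarded high-density cells, though they carry $o(1)$ of the edges once $M$ is large, may carry a non-negligible fraction of the copies of a fixed $F$ (precisely because their density is high), and bounding this contribution would itself require a counting lemma of the type you lack. Moreover, the coupling of the $\ka^{(M)}$ into a monotone family is delicate for the reason you note, and Theorem~\ref{unique2} gives uniqueness up to $\sim$, not a canonical representative compatible across $M$.
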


We have stated the above conjectures under the assumption that
$p=n^{-o(1)}$; we shall call this the {\em almost dense} case. The
reason for this assumption is discussed further below. Let us note
that, in the almost dense case, for each fixed $F$ with $k$ vertices,
the denominator in the formula $\emb(F,G_n)/(p^{e(F)}n_{(k)})$ for $s_p(F,G_n)$
is asymptotically $p^{e(F)}n^k$, which is $n^{k-o(1)}$.
Since there are at most $n^{k-1}$ non-injective homomorphisms from $F$
to $G_n$, it follows that
$t_p(F,G_n)\sim s_p(F,G_n)$ as $n\to\infty$, so it makes no
difference whether we consider $s_p$ or $t_p$.
In general, this is
not true: for example, considering homomorphisms which map
all $t$ vertices on one side of $K_{t,t}$ into a single vertex,
we see that in any graph $G_n$ with $pn^2/2$ edges there are at least
$n(np)^t=(n^{2t}p^{t^2}) / (np^t)^{t-1}$
non-injective embeddings of $K_{t,t}$. If $np^t$ is bounded,
then this is comparable to (or larger than) the denominator
in the definition of $t_p(K_{t,t},G_n)$, and it follows that
$t_p(K_{t,t},G_n)-s_p(K_{t,t},G_n)$ is bounded away from zero.
Thus, for $t_p(K_{t,t},G_n)\sim s_p(K_{t,t},G_n)$ to hold with both quantities
bounded, we need $np^t\to\infty$. This condition holds for every $t$
only in the almost dense case $p=n^{-o(1)}$.

\subsection{Bounded and unbounded kernels}
The following simple observation illuminates the relationship between Conjectures~\ref{q1a} and~\ref{q1}.

\begin{lemma}\label{lbd}
Let $\ka:[0,1]^2\to [0,\infty)$ be a kernel, and $C\ge 0$ a constant.
Then we have $s(F,\ka)\le C^{e(F)}$ for every $F$ if and only
if $\ka\le C$ holds almost everywhere.
\end{lemma}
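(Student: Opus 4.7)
The easy direction is immediate: if $\ka\le C$ almost everywhere, then the integrand in \eqref{sfk} is at most $C^{e(F)}$ pointwise, so $s(F,\ka)\le C^{e(F)}$ for every finite graph $F$. For the converse, suppose $s(F,\ka)\le C^{e(F)}$ for every $F$; the goal is to deduce $\ka\le C$ a.e. The case $C=0$ is immediate, since $F=K_2$ gives $\int\ka=0$ and hence $\ka\equiv 0$ a.e., so assume $C>0$. The plan is to test the hypothesis against the complete bipartite graphs $F=K_{n,n}$ as $n\to\infty$, exploiting the fact that $e(K_{n,n})=n^2$ matches the exponent that will arise naturally from a Jensen-type estimate.

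The key step is the lower bound
\begin{equation*}
s(K_{n,n},\ka)\;\ge\; |S|^n|T|^n\left(\frac{1}{|S||T|}\int_{S\times T}\ka\right)^{n^2},
\end{equation*}
valid for any measurable $S,T\subset[0,1]$ of positive measure. To prove it, I restrict the integral defining $s(K_{n,n},\ka)$ to $S^n\times T^n$, rewrite it as $|S|^n|T|^n$ times $\E\prod_{i,j}\ka(x_i,y_j)$ where the $x_i$ are iid uniform on $S$ and the $y_j$ iid uniform on $T$, and apply Jensen's inequality twice for the convex function $z\mapsto z^n$ on $[0,\infty)$, pulling one $n$-th power outside each time. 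Combined with the hypothesis $s(K_{n,n},\ka)\le C^{n^2}$, this yields
\begin{equation*}
\frac{1}{|S||T|}\int_{S\times T}\ka\;\le\; C\cdot(|S||T|)^{-1/n},
\end{equation*}
and letting $n\to\infty$ gives $\frac{1}{|S||T|}\int_{S\times T}\ka\le C$ for every $S,T$ of positive measure. The proof is then completed by the Lebesgue differentiation theorem for axis-aligned rectangles in $\RR^2$: for almost every $(x,y)\in[0,1]^2$ the rectangular averages above converge to $\ka(x,y)$ along suitable shrinking rectangles containing $(x,y)$, so $\ka\le C$ almost everywhere.

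The main obstacle I expect is conceptual rather than technical: identifying the right test family. Once one focusses on $F=K_{n,n}$, the two-step Jensen estimate falls out automatically and the powers $|S|^n|T|^n$, $(\mathrm{rectangular\ average})^{n^2}$ and $C^{n^2}$ line up exactly, so the limit $(|S||T|)^{-1/n}\to 1$ extracts the desired pointwise bound. With a different test graph the exponents would not match and the limit would give either no information or the wrong constant.
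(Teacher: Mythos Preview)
Your proof is correct and takes essentially the same approach as the paper: both use the complete bipartite graphs $K_{t,t}$ as test graphs, exploiting the same Jensen/convexity lower bound $s(K_{t,t},\ka)\ge |S|^t|T|^t(\text{rectangular average})^{t^2}$. The only cosmetic difference is organizational: the paper argues by contradiction (using the Lebesgue density theorem to locate a single square on which the average exceeds $C$, then exhibiting one $t$ with $s(K_{t,t},\ka)>C^{t^2}$), whereas you argue directly, bounding all square averages and invoking the Lebesgue differentiation theorem.
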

\begin{proof}
The result is trivial if $C=0$. Otherwise, rescaling, we may assume
that $C=1$. If $\ka\le 1$ almost everywhere, then $s(F,\ka)\le s(F,1)=1$
for every $F$. We may thus suppose that $\ka>1$ on a set of positive measure.
It follows that there is some $\eta>0$ such that $\ka>(1+\eta)^2$ on a set
$A$ of positive measure. Applying the Lebesgue Density Theorem to $A$,
there is some $\eps>0$ and some rectangle $R=[a,a+\eps]\times [b,b+\eps]\subset
[0,1]^2$ such that $\mu(A\cap R)\ge \mu(R)/(1+\eta)$. Thus, the average
value of $\ka$ on the set $R$ is at least $1+\eta$.
Let $\ka'$ be the kernel taking the value $1+\eta$ on $R$ and $0$ elsewhere.
Standard arguments from convexity show that, for each $t$,
\[
 s(K_{t,t},\ka) \ge s(K_{t,t},\ka') = \eps^{2t}(1+\eta)^{t^2}.
\]
Taking $t$ large enough, we find an $F=K_{t,t}$ for which $s(F,\ka)>1$.
\end{proof}

Lemma~\ref{lbd} shows that a kernel $\ka$ is bounded if and only if
the counts $s(F,\ka)$ grow at most exponentially in $e(F)$. It also shows
that, in Conjecture~\ref{q1a}, we need only consider kernels $\ka:[0,1]^2\to[0,C]$.

Let us say that a kernel has {\em finite moments} if $s(F,\ka)<\infty$ for all $F$.
There are unbounded kernels with finite moments:
the simplest way to construct such an example is
to consider the `rank 1' case, where $\ka(x,y)=f(x)f(y)$
for some $f:[0,1]\to[0,\infty)$. Indeed,
let $f$ be any function
from $[0,1]$ to $[0,\infty)$ with $\E(f^k)=\int_0^1 f(x)^k\dd x$ bounded
for every $k$; for example, let $f(x)=\log(1/x)$ for $x>0$.
Set $\ka(x,y)=f(x)f(y)$. 
If $F$ is a graph on $\{1,2,\ldots,k\}$ in which vertex $i$ has degree $d_i$,
then
\begin{multline*}
 s(F,\ka) = \int_{[0,1]^k} \prod_{ij\in E(F)} f(x_i)f(x_j) \prod_{i=1}^k\dd x_i \\
 = \int_{[0,1]^k} \prod_{i=1}^k f(x_i)^{d_i} \prod_{i=1}^k\dd x_i
 = \prod_{i=1}^k \E(f^{d_i}) < \infty.
\end{multline*}

The calculation above shows that
a rank one kernel $\ka(x,y)=f(x)f(y)$ has finite moments
if and only if $\norm{f}_p<\infty$ for every $p>1$, and hence
if and only if $\norm{\ka}_p<\infty$ for every $p>1$.  It is tempting to think that this
holds in general.
In one direction, for any kernel $\ka$ and any graph $F$ on $\{1,2,\ldots,k\}$, we may write
\[
 s(F,\ka) = \int_{[0,1]^k} \prod_{ij\in E(F)}\ka_{ij}(x_1,\ldots,x_d) \prod_{i=1}^k\dd x_i,
\]
where $\ka_{ij}(x_1,\ldots,x_d)=\ka(x_i,x_j)$.
Thus, by H\"older's inequality,
\[
 s(F,\ka) = \int \prod_{ij\in E(F)}\ka_{ij}
\le \prod_{ij\in E(F)} \norm{\ka_{ij}}_{e(F)} =
\prod_{ij\in E(F)} \norm{\ka}_{e(F)}.
\]
Hence, if $\norm{\ka}_p<\infty$ for every $p>1$, then $s(F,\ka)<\infty$ for every $F$.
The reverse implication does not hold, however, as shown by the following example.

\begin{example}
{\bf A kernel with finite moments but infinite $2$-norm.}
Let us define a sequence of independent random kernels $\ka_0,\ka_1,\ka_2,\ldots$,
as follows. For $r\ge 0$, let $\PP_r$ be the partition of $[0,1]$ into $2^{2^r}$
equal intervals, and let $\PP_r^2$ be the corresponding partition of $[0,1]^2$:
divide $[0,1]^2$ into $4^{2^r}$ squares in the obvious way,
and take as one part of $\PP_r^2$ the union of a square and its reflection
in the line $x=y$ (which may be the same square).
Our kernel $\ka_r$ will be constant on each element of $\PP_r^2$,
taking the value $2^r$ with probability $2^{-2r}$ and $0$ otherwise, with the values
on different parts independent.
Note that $\ka_0$ is simply the constant kernel with value~1.

Let $\ka(x,y)=\sum_{r=0}^\infty \ka_r(x,y)$. It is easy to see that with probability
1 the sum converges almost everywhere (for example, recalling
that $\mu$ denotes Lebesgue measure, use the fact that
$\E\mu\{\ka_r>0\}=2^{-2r}$ to deduce that, with probability 1,
$\mu\{\exists s>r: \ka_s>0\}$ tends to 0 as $r\to\infty$).
Also, for large $r$, $\norm{\ka_r}_2^2$ is concentrated around its mean of 
$(2^r)^22^{-2r}=1$. Hence, with probability 1 we have $\norm{\ka_r}_2^2\ge 0.99$
for infinitely many $r$. Using $(a+b)^2\ge a^2+b^2$ for $a,b\ge 0$, it follows
that $\norm{\ka}_2^2$ is infinite with probability 1; in particular, $\ka$
does not have all $p$-norms finite.

Turning to the finite moments property, let $F$ be any fixed graph, with $t$
vertices. Since $\ka\ge \ka_0=1$, we have $s(F,\ka)\le s(K_t,\ka)$, so we may
assume without loss of generality that $F=K_t$. Since
$\ka$ is random, $s(K_t,\ka)$ is a random variable. We may write its expectation as
\[
 \E_{\ka} \E_{\bx} \prod_{i<j} \ka(x_i,x_j) =  \E_{\bx} \E_{\ka} \prod_{i<j} \ka(x_i,x_j),
\]
where $\E_{\ka}$ denotes expectation over the random choice of $\ka$, and
$\E_{\bx}$ over the random choice of $(x_1,\ldots,x_t)$, a sequence of $t$ \iid\ uniform
elements of $[0,1]$.
Let us fix $\bx$ for the moment, assuming as we may that $x_i\ne x_j$ for $i\ne j$.
Let $\ell$ be the largest $r$ such that some pair $x_i$, $x_j$ lie in the same part of $\PP_r$,
so $0\le \ell<\infty$.
Let $\sigma=\sum_{r\le \ell}\ka_r$ and $\tau=\sum_{r>\ell}\ka_r$, so $\ka=\sigma+\tau$.
For $r>\ell$, the $\binom{t}{2}$ pairs $(x_i,x_j)$, $i<j$, all lie in different
parts of $\PP_r^2$, so the values of $\ka_r$ on these pairs are independent.
Since different $\ka_r$ are independent, it follows that the values of $\tau$
on the pairs are also independent.
Now $\norm{\sigma}_\infty \le \sum_{r=0}^\ell\norm{\ka_r}_\infty = 2^{\ell+1}-1$.
Thus,
\[
 \E_\ka \prod_{i<j} \ka(x_i,x_j) \le \E_\ka \prod_{i<j} (2^{\ell+1}+\tau(x_i,x_j))
 = \prod_{i<j} (2^{\ell+1}+\E_\ka \tau(x_i,x_j)).
\]
For any $x$ and $y$ we have $\E_\ka \ka_r(x,y)=2^r2^{-2r}=2^{-r}$, from which
it follows that $\E_\ka \tau(x,y) \le 2$, and hence, very crudely, that
\[
 \E_\ka \prod_{i<j} \ka(x_i,x_j) \le  \prod_{i<j} (2^{\ell+1}+2)  \le 2^{2\ell t^2}.
\]
It remains to take the expectation over $\bx$. Since $\PR(\ell=r)\le \binom{t}{2}2^{-2^r}$,
we find that
\[
 \E s(F,\ka) \le \sum_{r=0}^\infty \binom{t}{2}2^{-2^{r}} 2^{2r t^2} <\infty,
\]
noting that for any fixed $t$ the $2^{-2^r}$ term dominates. If follows that with probability
1 we have $s(F,\ka)<\infty$ for every $F$, giving a kernel with finite moments
but with $\norm{\ka}_2$ infinite. A simple modification, taking the probability
that $\ka_r$ takes the value $2^r$ on a given square to be
$2^{-(1+\eps)r}$ rather than $2^{-2r}$ gives, for each $\eps>0$, 
an example with $\norm{\ka}_{1+\eps}$ infinite.
\end{example}

\subsection{Non-uniform random graphs}\label{ss_nurg}
As in the dense case, there is a key connection between convergence
of the counts $s_p(F,G_n)$ and random graphs.
Given a kernel $\ka$, let $G_p(n,\ka)$ be the random graph on
$[n]$ obtained as follows: first choose $x_1,\ldots,x_n$ independently
and uniformly from $[0,1]$. Then, conditional on this choice, join
each pair $\{i,j\}$ of vertices independently, with probability
$\min\{p\ka(x_i,x_j),1\}$. If $p\ka$ is bounded
by $1$, then $G_p(n,\ka)$ is simply $G(n,p\ka$); we write the parameter $p$ as a subscript
to emphasize that it is part of the overall normalization: we think of
a sparse graph generated from the kernel $\ka$, rather than a `sparse
kernel' $p\ka$. If $p=1/n$, then $G_p(n,\ka)$ is a special
case of the general sparse inhomogeneous model of
Bollob\'as, Janson and Riordan~\cite{BJR}.

\begin{remark}\label{rconv}
In what follows, we shall consider many statements about the convergence
of various sequences of random graphs. As usual in the theory of random
graphs, the precise notion of convergence is not important: one thinks
of `a random graph' with certain asymptotic properties, although this
makes no formal sense. Formally, it is most natural to work throughout
with convergence in probability, but this would require us to consider
`in probability' versions of our various assumptions, for example
the (exponentially) bounded counts assumptions~\ref{AA} and~\ref{AB}.
In fact, it is easy to check that in all cases considered here,
the error probabilities decay fast enough to give almost sure convergence
for any coupling of the relevant probability spaces.
However, we shall not verify this explicitly, noting
that one can in any case ensure almost sure convergence by passing
to a suitable subsequence.
\end{remark}

\begin{lemma}\label{scconv-ad}
Let $p=p(n)=n^{-o(1)}$, and let $\ka$ be a kernel
with $s(F,\ka)<\infty$ for every $F$.
Then $s_p(F,G_p(n,\ka))\pto s(F,\ka)$ for each fixed graph $F$,
so $\de(G_p(n,\ka),\ka)\pto 0$.
In fact, the sequence $G_p(n,\ka)$ converges almost surely to $\ka$
in the metric $\de$.
\end{lemma}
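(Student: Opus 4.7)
The plan is a standard second moment method applied to each fixed graph $F$; the key subtleties are handling the unbounded kernel (via finite moments and monotone convergence) and controlling the overlap contributions (via $p = n^{-o(1)}$).

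First I would compute $\E[s_p(F, G_p(n,\ka))]$ by summing, over the $n_{(k)}$ injective maps $\phi : V(F) \to [n]$, the conditional probability that $\phi$ is an embedding. Symmetry of the iid uniform $x_i$ collapses this to
\[
 \E[s_p(F, G_p(n,\ka))] = \E\Bigl[\prod_{ij \in E(F)} \min\{\ka(x_i, x_j),\, 1/p\}\Bigr].
\]
The integrand is monotone non-decreasing in $1/p$ and converges pointwise as $p \to 0$ to $\prod_{ij \in E(F)} \ka(x_i, x_j)$, which is integrable by the finite-moments hypothesis; monotone convergence thus gives the desired limit $s(F,\ka)$.

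Next I would bound $\Var(s_p(F, G_p(n,\ka)))$ by decomposing $\E[\emb(F,G_p(n,\ka))^2]$ as a sum over pairs $(\phi_1, \phi_2)$ of injective maps, grouped by the isomorphism type of the union graph $H = \phi_1(F) \cup \phi_2(F)$. If such a pair shares $s$ vertices and $e_s$ edges, the trivial bound $\min\{p\ka, 1\} \le p\ka$ yields the contribution $O(n^{2k-s} p^{2e(F)-e_s} s(H,\ka))$, with $s(H,\ka)$ finite by hypothesis. Disjoint pairs ($s=0$) give $n_{(2k)} E_1^2$, cancelling against $\E[\emb]^2 = n_{(k)}^2 E_1^2$ up to an error of order $n^{2k-1} p^{2e(F)}$, where $E_1 = \E\prod_{ij} \min\{p\ka,1\}$. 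For each $s \ge 1$, the normalized contribution to $\Var(s_p)$ is $O(n^{-s} p^{-e_s})$; since $e_s \le \binom{s}{2}$ is bounded and $p^{-1} = n^{o(1)}$, each such term is $o(1)$, and the dominant $s=1$ term yields $\Var(s_p(F, G_p(n,\ka))) = O(1/n)$. Chebyshev then gives $s_p(F, G_p(n,\ka)) \pto s(F,\ka)$ for each fixed $F$; convergence in the product-topology metric $\de$ follows immediately since $\F$ is countable. For almost sure convergence, the $O(1/n)$ variance bound is summable along the subsequence $n_k = k^2$, so Borel--Cantelli handles each fixed $F$, and countability of $\F$ together with Remark~\ref{rconv} completes the argument.

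The delicate step, and the reason for restricting to the almost dense range, is the variance analysis: a pair of embeddings sharing a single edge loses a factor of $p$ relative to a disjoint pair, producing a $p^{-1}$ in the normalized variance. Only when $p = n^{-o(1)}$ can these $p^{-e_s}$ factors be absorbed by the $n^{-s}$ factor coming from forced vertex coincidences; outside this range, overlap contributions can swamp the main term and the second moment method breaks down.
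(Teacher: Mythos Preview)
Your proposal is correct and follows essentially the same approach as the paper: a second moment computation for $\emb(F,G_p(n,\ka))$, decomposing pairs of embeddings by the isomorphism type of their union $H$, using the finite-moments hypothesis to bound each $s(H,\ka)$, and using $p=n^{-o(1)}$ to ensure that the overlap terms $n^{-s}p^{-e_s}$ are all $o(1)$. You are somewhat more explicit than the paper about the mean calculation (monotone convergence through the truncation $\min\{\ka,1/p\}$) and about extracting almost sure convergence via Borel--Cantelli along a subsequence; the paper simply records that the dominant term in the second moment is the disjoint-union term and defers the almost sure statement to Remark~\ref{rconv}.
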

\begin{proof}
It is very easy to check that, for every $F$,
$s_p(F,G_p(n,\ka))$ is concentrated around its mean $s(F,\ka)$: indeed,
the second moment of the number of copies of $F$ can be written
as a sum of terms $(1+o(1))n_{(|H|)}p^{e(H)}s_p(H,\ka)$, and the dominant
term is the unique one with the largest power of $n$, where $H$ is the
disjoint union of two copies of $F$. (The $1+o(1)$ correction
is only needed if $\ka$ is unbounded, and appears due to the
$\max\{1,\cdot\}$ in the edge probabilities.)
This proves the first part of the result.
Convergence in probability in $\de$ follows since
convergence in probability in a product topology is equivalent
to convergence in probability of each coordinate. For the final
statement, see Remark~\ref{rconv}.
\end{proof}

Lemma~\ref{scconv-ad} implies that if $\ka$ has finite moments,
then the sequence $G_n=G_p(n,\ka)$
has bounded subgraph counts (i.e., satisfies Assumption~\ref{AA})
with probability $1$. If $\ka$ is bounded, then $G_n$
has exponentially bounded subgraph counts with probability~$1$.

Using Lemma~\ref{scconv-ad}, it is easy to see
that we must allow unbounded kernels in Conjecture~\ref{q1}. Indeed,
set $\ka(x,y)=\log(1/x)\log(1/y)$ for $0<x,y\le 1$, say,
and let $p(n)=1/\log n$. Then the
random graphs $G_p(n,\ka)$ satisfy Assumption~\ref{AA} with probability 1, and
\[
 s_p(F,G_p(n,\ka)) \to s(F,\ka)< \infty
\]
holds with probability 1 for every $F$. Since $\ka$ is unbounded,
by Lemma~\ref{lbd} there is no $C$
with $s(F,\ka)\le C^{e(F)}$ for every $F$, so there is no bounded $\ka'$
with $s_p(F,G_p(n,\ka))\to s(F,\ka')$ for every $F$.

Note that if $p$ decreases too fast with $n$, then
$s_p(F,G_p(n,\ka))$ is no longer concentrated around its mean: for
example, this is the case if $\E\,\emb(F,G_p(n,k))$ does not tend to infinity.
This is the reason for the assumption $p=n^{-o(1)}$ in the various
conjectures and results above: otherwise, there will be some $F$ for
which the expected number of embeddings does not tend to infinity. Note also that, for
smaller $p$, when $s_p(F,\cdot)$ and $t_p(F,\cdot)$ are no longer
asymptotically equal, the former is the more natural parameter: for
a given $F$, the lower limit on $p$ below which the corresponding
parameter for $G_p(n,\ka)$ is no longer close to $s(F,\ka)$ is in
general much smaller for $s_p(F,\cdot)$ than for $t_p(F,\cdot)$. It
may well be, however, that the conjectures in this section (or
perhaps just their proofs) fail when the relevant parameters
$s_p(F,\cdot)$ and
$t_p(F,\cdot)$ are no longer asymptotically equal.

\subsection{Subgraph counts in the uniform case}

Using convexity, it is very easy to check that the only possible
kernel $\ka$ with $s(K_2,\ka)=s(C_4,\ka)=1$ is the uniform kernel,
with $\ka=1$ a.e. The following conjecture is thus a very special case
of Conjecture~\ref{q1}.

\begin{conjecture}\label{q2}
Let $p=p(n)=n^{-o(1)}$, and let $(G_n)$ be a sequence of graphs with
$|G_n|=n$, $e(G_n)=p\binom{n}{2}$, $s_p(C_4,G_n)\to 1$,
and $\sup_n s_p(F,G_n)<\infty$ for each $F$.
Then $s_p(F,G_n)\to 1$ for every $F$.
\end{conjecture}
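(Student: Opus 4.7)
The plan is to split the proof into a \emph{discrepancy} step and a \emph{counting} step: first convert the $C_4$-count hypothesis into a cut-norm bound saying that the graphon of $G_n$ is close to the constant kernel $p\cdot\mathbf{1}$, then invoke a sparse counting lemma, using the bounded subgraph-count hypothesis, to propagate cut-norm closeness to all subgraph counts.

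For the discrepancy step I would use the classical defect-Cauchy--Schwarz argument of Chung and Graham~\cite{CG}. Writing $N(u,v)=|N(u)\cap N(v)|$ for the codegree of a pair, the identities $\hom(C_4,G_n)=\sum_{u,v}N(u,v)^2$ and $\sum_{u,v}N(u,v)=\sum_w\deg(w)^2$ allow one to chain two Cauchy--Schwarz inequalities
\[
 \hom(C_4,G_n) \ge n^{-2}\bb{\sum_w\deg(w)^2}^2 \ge n^{-4}\bb{\sum_w\deg(w)}^4 \sim p^4n^4.
\]
Since $s_p(C_4,G_n)\to 1$ and $e(G_n)=p\binom{n}{2}$ force asymptotic equality throughout, the degrees concentrate near $pn$ and the codegrees near $p^2n$ in an $L^2$ sense, and a short manipulation (testing against indicators of sets $S,T$ and applying Cauchy--Schwarz once more) converts this into the cut-norm bound $\cn{W_{G_n}-p\cdot\mathbf{1}}=o(p)$, equivalently $\sup_{S,T}\bm{e_{G_n}(S,T)-p|S||T|}=o(pn^2)$. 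The assumption $p=n^{-o(1)}$ is used here to ensure that the mean number of $C_4$'s dominates the various polynomial error terms, so the Cauchy--Schwarz tightness is genuinely informative.

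For the counting step I would apply the sparse counting lemma of Section~\ref{sec_compar} — Theorem~\ref{cthm} or one of its variants Theorems~\ref{cthm2} and~\ref{dthm} — to the rescaled pair of kernels $W_{G_n}/p$ and $\mathbf{1}$. The hypothesis $\sup_n s_p(F,G_n)<\infty$ for every $F$ is precisely the side condition these sparse counting lemmas require to turn cut-norm closeness into closeness of subgraph counts, and applying them yields $s_p(F,G_n)\to s(F,\mathbf{1})=1$ for each fixed $F$.

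The main obstacle is the counting step. In the dense case Lemma~\ref{cs} supplies a counting lemma for free from the definition of the cut norm; but in the sparse regime cut-norm closeness alone does not control subgraph counts: a graph $G_n$ that agrees with the uniform $p$-model in cut norm may still contain, say, a planted clique on $o(n)$ vertices — invisible to the cut norm but contributing abnormally many copies of some fixed $K_t$. \AA, boundedness of subgraph counts, is precisely the hypothesis ruling out such pathologies, and the sparse counting lemmas of Section~\ref{sec_compar} are the quantitative instruments that exploit it. Thus Conjecture~\ref{q2} reduces cleanly to (a suitable form of) the sparse counting lemma, which is the central technical issue of the paper.
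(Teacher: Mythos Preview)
The statement is Conjecture~\ref{q2}, which the paper leaves \emph{open}; there is no proof to compare against. Your two-step strategy --- convert the $C_4$ hypothesis into cut-norm closeness to the constant kernel, then invoke a sparse counting lemma --- is exactly the framework the paper sets up, and the discrepancy step is essentially correct: the codegree/eigenvalue argument you sketch does give $\dc(G_n,1)\to 0$.

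The gap is in the counting step. The sparse counting lemmas you cite, Theorems~\ref{cthm}, \ref{cthm2} and~\ref{dthm}, do \emph{not} yield $s_p(F,G_n)\to s(F,\ka)$ for every $F$. Each requires $\ell\ge 3$ (the hypothesis on $C_{2\ell-2}$ is vacuous for $\ell=2$) and each concludes only for $F\in\T\cup\F_{\ge\ell}$, i.e.\ trees and graphs built from paths of length at least $3$; such graphs have girth at least $6$. The paper's strongest result towards Conjecture~\ref{q2} is Theorem~\ref{g4}, proved by a direct moment argument rather than through the cut norm, and it reaches all $F$ of girth at least $4$ --- still not all $F$. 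The paper is explicit: ``The simplest graph for which we cannot prove the conclusion of Conjecture~\ref{q2} is the triangle'', this is restated as the separate open Conjecture~\ref{ctri}, and the authors add that they cannot even show $G_n$ contains a single triangle for large $n$. So your assertion that the cited theorems yield $s_p(F,G_n)\to 1$ for each fixed $F$ is simply false. Your final sentence is right in spirit --- the conjecture does reduce to a suitable sparse counting lemma --- but that suitable form is precisely what is missing, not what Section~\ref{sec_compar} supplies; Examples~\ref{vst}--\ref{tdense} show that one must genuinely exploit the boundedness of $s_p(F,G_n)$ for infinitely many $F$, and nobody currently knows how to extract the triangle count from this information.
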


Of course, there is a variant of Conjecture~\ref{q2} where we replace Assumption~\ref{AA} by Assumption~\ref{AB},
i.e., we demand that $\limsup_n s_p(F,G_n)\le C^{e(F)}$ for some $C<\infty$. In this uniform
context there is perhaps less reason to expect this to make a difference.

In the dense case, it is one of the basic results about quasi-random graphs
that $s_p(K_2,G_n)\to 1$ and $s_p(C_4,G_n)\to 1$ imply $s_p(F,G_n)\to 1$ for every $F$,
with no further assumptions; see Chung, Graham and Wilson~\cite{CGW89}. 
In the sparse case, 
this result extends easily to certain graphs $F$; here it turns out
to be simpler to work with $t_p(F,G_n)$ rather than $s_p(F,G_n)$.

\begin{lemma}\label{lcycles}
Let $p=p(n)$ with $pn^{1/2}\to\infty$, and let $(G_n)$ be a sequence
of graphs with $|G_n|=n$ such that
$t_p(K_2,G_n)\to 1$ and $t_p(C_4,G_n)\to 1$.
Then $t_p(C_k,G_n)\to 1$ for each $k\ge 5$.
\end{lemma}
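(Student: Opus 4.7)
The natural approach is spectral: since $\hom(C_k,G_n)=\tr(A_n^k)=\sum_i\lambda_i^k$, where $A_n$ is the adjacency matrix of $G_n$ with eigenvalues $\lambda_1\ge|\lambda_2|\ge\cdots\ge|\lambda_n|$, the hypotheses give us control on $\sum_i\lambda_i^2$ and $\sum_i\lambda_i^4$, and we want to control $\sum_i\lambda_i^k$ for $k\ge 5$. The plan is to first show that the $K_2$ and $C_4$ hypotheses together pin down the top eigenvalue $\lambda_1$ and force all smaller ones to be tiny, and then to bound the remaining $\lambda_i^k$ using a uniform bound times the known fourth-power sum.

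First I would translate the two assumptions. Since $\hom(K_2,G_n)=2e(G_n)=\sum_i\lambda_i^2$ and $\hom(C_4,G_n)=\sum_i\lambda_i^4$, the hypotheses become
\[
 \sum_i\lambda_i^2=(1+o(1))p n^2,\qquad \sum_i\lambda_i^4=(1+o(1))p^4n^4.
\]
The Rayleigh quotient applied to the all-ones vector gives
\[
 \lambda_1\ge\frac{\mathbf1^TA_n\mathbf1}{n}=\frac{2e(G_n)}{n}=(1+o(1))pn,
\]
while $\lambda_1^4\le\sum_i\lambda_i^4=(1+o(1))p^4n^4$, so $\lambda_1\le(1+o(1))pn$. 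Combining, $\lambda_1=(1+o(1))pn$ and hence $\lambda_1^4=(1+o(1))p^4n^4$, which forces
\[
 \sum_{i\ge 2}\lambda_i^4=\sum_i\lambda_i^4-\lambda_1^4=o(p^4n^4).
\]
In particular, setting $\mu:=\max_{i\ge 2}|\lambda_i|$, we have $\mu^4\le\sum_{i\ge 2}\lambda_i^4=o(p^4n^4)$, so $\mu=o(pn)$.

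For any fixed $k\ge 5$ (treating odd and even $k$ uniformly by taking absolute values),
\[
 \Bigl|\sum_{i\ge 2}\lambda_i^k\Bigr|\le\sum_{i\ge 2}|\lambda_i|^k\le\mu^{k-4}\sum_{i\ge 2}\lambda_i^4 = o\bb{(pn)^{k-4}\cdot p^4n^4}=o(p^kn^k).
\]
Since also $\lambda_1^k=(1+o(1))(pn)^k$, we get $\hom(C_k,G_n)=\sum_i\lambda_i^k=(1+o(1))p^kn^k$, which is exactly $t_p(C_k,G_n)\to 1$.

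The argument is essentially classical, so the only real subtlety is the role of the hypothesis $pn^{1/2}\to\infty$. It is used implicitly for internal consistency: Cauchy--Schwarz gives $\sum\lambda_i^4\ge(\sum\lambda_i^2)^2/n=(1+o(1))p^2n^3$, so the condition $\sum\lambda_i^4=(1+o(1))p^4n^4$ cannot hold unless $p^2n\gtrsim 1$, i.e.\ $pn^{1/2}$ is bounded below, and it is the tight regime $pn^{1/2}\to\infty$ which makes the two constraints compatible with a unique dominant eigenvalue. I do not expect any serious obstacle beyond being careful about signs when $k$ is odd, which is handled by the absolute-value bound above.
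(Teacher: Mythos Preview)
Your proof is correct and essentially identical to the paper's: both use the spectral identity $\hom(C_k,G_n)=\tr(A^k)=\sum_i\lambda_i^k$, pin down $\lambda_1\sim pn$ from below via the average degree and from above via the $C_4$ count, deduce $\sum_{i\ge 2}\lambda_i^4=o((pn)^4)$, and then bound the tail $\sum_{i\ge 2}\lambda_i^k$ by pulling out a factor $|\lambda_i|^{k-4}$. Your observation that the hypothesis $pn^{1/2}\to\infty$ is not used in the argument itself but is needed for the assumption $t_p(C_4,G_n)\to 1$ to be non-vacuous is also exactly what the paper notes after the proof.
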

\begin{proof}
Suppressing the dependence on $n$, let $A$ denote the adjacency matrix of
$G_n$, and let $\la_1\ge \la_2\ge\cdots \ge\la_n$ be the eigenvalues of $A$.
For $k\ge 3$ we have
\[
 \hom(C_k,G_n) = \sum_{v_1,v_2,\ldots,v_k\in V(G_n)}A_{v_1v_2}A_{v_2v_3}\cdots
 A_{v_kv_1} = \tr(A^k) = \sum_{i=1}^n \la_i^k,
\]
so
\begin{equation}\label{ck}
 t_p(C_k,G_n) = n^{-k}p^{-k}\sum_{i=1}^n \la_i^k =  \sum_{i=1}^n \mu_i^k,
\end{equation}
where $\mu_i=\la_i/(np)$ is the $i$th normalized eigenvalue of $G_n$.
In particular,
\begin{equation}\label{s4}
  \sum_i \mu_i^4\to 1.
\end{equation}
The maximum eigenvalue of the adjacency matrix of any graph
is at least the average degree, so
\[
 \mu_1 = (np)^{-1}\la_1 \ge (np)^{-1}(1+o(1)) (n^2p)/n = 1+o(1).
\]
From \eqref{s4} it follows that
$\mu_1\sim 1$ and that $\sum_{i\ge 2} \mu_i^4\to 0$.
Hence $\mu_2\le 1$ and $\mu_n\ge -1$ if $n$ is large enough, and then
\[
 \sum \mu_i^k =\mu_1^k + \sum_{i\ge 2}\mu_i^k
 \le \mu_1^k + \max\{\mu_2^{k-4},\mu_n^{k-4}\} \sum_{i\ge 2}\mu_i^4
 \le \mu_1^k + \sum_{i\ge 2}\mu_i^4
 =1+o(1).
\]
Using \eqref{ck} again, the result follows.
\end{proof}

Informally, when $pn^{1/2}\to\infty$, the parameters $s_p(C_k,G_n)$
and $t_p(C_k,G_n)$ are equivalent. More precisely, 
Lemma~\ref{lcycles} implies the analogous statement 
with all occurrences of $t_p$ replaced by $s_p$, but
this requires a little work to show.

The restriction on $p$ in Lemma~\ref{lcycles} was
not used in the proof. However,
if $G_n$ has average degree $\db$, then it contains
at least $n\binom{\db}{2}$ pairs of adjacent edges. 
Thus, writing $N_{i,j}$ for the number of common neighbours
of $i$ and $j$, the sum of $N_{i,j}$ over ordered pairs $i\ne j$
is at least $2n\binom{\db}{2}=n\db(\db-1)$.
Hence, the number of homomorphisms from $C_4$ to $G_n$
with a given pair of opposite vertices mapped to distinct vertices is
\[
 \sum_{i\ne j} N_{i,j}^2
  \ge \frac{1}{n(n-1)}\left(\sum N_{i,j}\right)^2 \ge \frac{n\db^2(\db-1)^2}{n-1}.
\]
The number of homomorphisms with a given pair of opposite vertices
mapped to the same vertex is simply the sum of the squares of the degrees
in $G_n$, which is at least $n\db^2$. Thus,
\begin{equation}\label{c4min}
 \hom(C_4,G_n) \ge \frac{n\db^2(\db-1)^2}{n-1} + n\db^2
\end{equation}
for any graph $G_n$ with $n$ vertices and average degree $\db$.
With $\db\sim pn\to \infty$, this gives $\hom(C_4,G_n)\ge (1+o(1))(n^4p^4+n^3p^2)$,
i.e., $t_p(C_4,G_n)\ge (1+o(1))(1+n^{-1}p^{-2})$.
Consequently, $t_p(C_4,G_n)\sim 1$ implies $pn^{1/2}\to\infty$.
When $pn^{1/2}\to\infty$, \eqref{c4min}
reduces to the well-known fact that, in this case, $e(G_n)\sim p\binom{n}{2}$ implies
that
\[
 t_p(C_4,G_n), \ s_p(C_4,G_n) \ge 1-o(1).
\]

In the dense case, Lemma~\ref{lcycles} extends to triangles. Indeed,
$\tr(A^2)$ counts the number of walks of length 2 in $G$, which is just
$2e(G)$. Thus
\[
 \sum\mu_i^2 =\frac{2e(G)}{n^2p^2}\sim p^{-1}.
\]
If $p$ is bounded away from zero then it follows that $\sum_{i\ge 2}\mu_i^2$
is bounded as $n\to\infty$. Since $\sum_{i\ge 2}\mu_i^4\to 0$, it follows
by the Cauchy--Schwarz inequality
that $\sum_{i\ge 2}\mu_i^3\to 0$, and hence that $s_p(C_3,G_n)\to 1$.

To obtain a result for triangles in the sparse case by this method,
one needs stronger assumptions. Defining $p$ by $e(G)=n^2p/2$,
if we assume that $t_p(C_4,G_n)=1+o(p)$, then arguing
as above we find that $\sum_{i\ge 2}\mu_i^4=o(p)$
and $\sum_{i\ge 2}\mu_i^4 \le p^{-1}$, so Cauchy--Schwarz does
give $\sum_{i\ge 2}\mu_i^3\to 0$. In general, many results
for quasi-random graphs extend to the sparse case
with similar modifications, where $o(1)$ error terms
are replaced by suitable functions of $p$; see, for example, the results
of Thomason~\cite{Tho,Tho2} on $(p,\alpha)$-jumbled graphs.
Our aim here is different; we wish to assume only convergence
in the relevant metric, making no assumption about the rate of convergence.

When $p\to 0$, the conditions of Lemma~\ref{lcycles}
do not guarantee the `right' number of triangles,
as our next two examples will show.

\begin{example}\label{vst}
{\bf Very sparse graphs with too few triangles.} 
Throughout this example we assume that $p_1(n)$ and
$p_2(n)$ are functions of $n$ satisfying
\begin{equation}\label{p1p2}
 p_2=(1-p_1^2)^{n-2}
\end{equation}
and $p_1,p_2=\Theta(\sqrt{\log n}/\sqrt{n})$. To be concrete,
we may take $p_2=\sqrt{\log n}/\sqrt{n}$, in which case
the corresponding $p_1$ satisfies $p_1\sim p_2/\sqrt{2}$.
Suppressing the dependence on $n$, let $G$ be the usual Erd\H
os--R\'enyi random graph $G=G(n,p_1)$, and let $H$ be the graph on
the same vertex set $[n]$ in which vertices $i$ and $j$ are joined
if and only if they do not have a common neighbour in $G$. From \eqref{p1p2},
each edge of $H$ is present with probability $p_2$; note
that the edges of $H$ are {\em not} present independently of one another.
For any set $E$ of $r=O(1)$ possible edges of
$H$, the edges of $E$ are all present if and only if no vertex of
$G$ is joined to both ends of some edge in $E$. Considering each
vertex of $G$ separately, we see that the probability of this event
is
\[
 (1-rp_1^2 +O(p_1^3))^{n+O(1)} =e^{-rp_1^2n+O(np_1^3)} \sim \left((1-p_1^2)^{n-2}\right)^r
 =p_2^r,
\]
where the $O(1)$ correction in the first exponent is to account for
vertices that are endpoints of one or more edges in $E$. In other
words, the probability that a bounded number of edges is present in
$H$ is asymptotically the corresponding probability for $G(n,p_2)$.

For $E_1, E_2\subset E(K_n)$, the event $E_2\subset E(H)$ is a down-set
in terms of $G$ (it says that certain pairs of edges of $G$ are not present),
so $E_2\subset E(H)$ and $E_1\subset E(G)$ are negatively correlated.
Hence, if $|E_2|=O(1)$, we have
\begin{equation}\label{e1e2}
 \PR\bb{\{E_1\subset E(G)\}\cap \{E_2\subset E(H)\}} \le (1+o(1))p_1^{|E_1|}p_2^{|E_2|}.
\end{equation}
Considering all ways of splitting a set $E$, it follows that
$\PR(E\subset G\cup H)\le (1+o(1))(p_1+p_2)^{|E|}$, and hence that
\begin{equation}\label{fgh}
 \E(s_p(F,G\cup H))\le 1+o(1)
\end{equation}
for any fixed graph $F$, where $p=p_1+p_2$.

Since $G$ and $H$ overlap in very few edges, and the numbers of edges
of $G$ and of $H$ are concentrated, we have $s_p(K_2,G\cup H)\to 1$ almost
surely. It
follows that $s_p(C_4,G\cup H)\ge 1-o(1)$ almost surely. Hence,
from \eqref{fgh},
$
 s_p(C_4,G\cup H) \pto 1,
$
and it is not hard to deduce that $t_p(C_4,G\cup H)\pto 1$.

On the other hand, there are by
definition no triangles with two edges in $G$ and one in $H$. Hence,
from \eqref{e1e2}, the expectation of $\emb(K_3,G\cup H)$ is at most
\[
 (1+o(1))n^3 (p_1^3+ 0 + 3p_1p_2^2+p_2^3),
\]
so $\E(s_p(K_3,G\cup H)) \le (p^3-3p_1^2p_2)/p^3+o(1)$.
Since $p_1,p_2$ and $p$ are all of the same order, this final fraction
is strictly less than 1, and our construction
gives almost surely
a sequence $G_n=G\cup H$ with $s_p(K_2,G_n)\to 1$, $s_p(C_4,G_n)\to 1$
but $s_p(C_3,G_n)\not\to 1$.
Since $\emb(C_3,G)=\hom(C_3,G)$ for any $G$,
we have $t_p(C_3,G_n)\sim s_p(C_3,G_n)\not\to 1$.
Choosing $p_1$ and $p_2$
satisfying \eqref{p1p2} so that $p_2\sim p_1/2$, we may
achieve $s_p(C_3,G_n)\to 5/9$. Alternatively,
choosing $p_1$ and $p_2$ suitably, we may
find a sequence with $s_p(C_3,G_n)\not\to 1$
for any $p=p(n)$ satisfying $pn^{1/2}\to\infty$
and $p=O(\sqrt{\log n}/\sqrt{n})$.
\end{example}

\begin{example}
{\bf Very sparse graphs with no triangles.}\label{noga}
In the context of finding explicit constructions
giving lower bounds on Ramsey numbers,
Alon~\cite{Alon} constructed
a sequence of graphs $G_n$ defined only for certain $n$, with the following
properties, where $d=d(n)\sim n^{2/3}/4$:
the graph $G_n$ is a $d$-regular Cayley graph, it is triangle
free and (which is irrelevant here) the largest
independent set has size $O(n^{2/3})$.
In proving the last property, Alon shows that all eigenvalues
other than $\la_1=d$ are uniformly bounded by $O(n^{1/3})$.
Setting $p=d/n$, so $t_p(K_2,G_n)=1$,
and writing $\mu_i$ for $\la_i/(np)$, as
in the proof of Lemma~\ref{lcycles}, one thus
has $\mu_1=1$ and $\mu_i=O(n^{-1/3})$ for $i\ne 2$,
so from \eqref{ck} it follows that $t_p(C_4,G_n)=1+O(n^{-1/3})=1+o(1)$.
This gives another example of a graph with almost the minimal
number of $C_4$s but too few (in this case no) triangles.
\end{example}

\begin{example}\label{tdense}
{\bf Denser graphs with too few triangles.}
Let $n=mk$ where
$m\to\infty$, and let $p=\sqrt{\log m}/{\sqrt m}$. Example~\ref{vst}
gives us a graph $G'$ of order $m$ with $t_p(K_2,G'),
t_p(C_4,G')\sim 1$ and $t_p(K_3,G')\le 0.9$, say, for all large
enough $m$. Let $G$ be the blow-up of $G'$ obtained by replacing
each vertex by $k$ vertices. Since $t_p(F,\cdot)$ is unchanged by
blow-ups, we have $t_p(K_2,G), t_p(C_4,G)\sim 1$ but $t_p(K_3,G)\le
0.9$, from which $s_p(K_2,G), s_p(C_4,G)\sim 1$ and (for $n$ large)
$s_p(K_3,G)\le 0.91$ follow immediately.

Although $p$ has not changed, the number of vertices has. Seen as a function
of $n$, we may choose $p=\sqrt{\log m}/{\sqrt m}$ for any $m$ dividing
$n$ with $m\to\infty$. Exact divisibility is not essential. Either by
using this fact, or by restricting to a subsequence, we see that
any given function $p(n)$ can be realized up to a factor of $(1+o(1))$,
provided $p(n)/(\sqrt{\log n}/\sqrt{n})\to \infty$ and $p(n)=o(1)$.
Hence, we may construct graphs with the right number of $C_4$s but
too few triangles for any such function $p(n)$.
\end{example}

At first sight Example~\ref{tdense} seems to contradict Conjecture~\ref{q2}, but
this is not the case. Indeed, for the graph $G'$ that we blow up,
\eqref{fgh} tells us that we do not have too many embeddings of any
fixed $F$. However, while $s_p\sim t_p$ for $p=n^{-o(1)}$, the final
$p$ we consider, and while blowing up preserves $t_p$, $G'$ is a
very sparse graph: although it has the same absolute density as the
final graph $G$, this density is much smaller than $|G'|^{-o(1)}$,
since $G'$ has many fewer vertices than $G$. It follows that the
homomorphism counts in $G'$ are {\em not} well behaved. In
particular, $G'$ contains around $m^4p^3$ non-injective
homomorphisms from $K_{2,3}$, which turns out to be much larger than
the number $m^5p^6$ of embeddings. It follows that $G$ contains too
many homomorphisms from, and thus embeddings of, $K_{2,3}$, i.e.,
that $s_p(K_{2,3},G)\to\infty$.

\begin{remark}\label{r_large}
Let us note in passing that the blowing-up argument above
shows that replacing the assumption $p=n^{-o(1)}$
in Conjecture~\ref{q2} (or Conjecture~\ref{q1a}) with a
stronger assumption such as $p(n)\ge 1/\log\log\log n$, say,
makes no difference. Indeed, if the conjecture fails,
and $(G_n)$ is a counterexample, then blowing up $G_n$
as above by replacing each vertex by $f(n)$ vertices
for some rapidly growing $f(n)$ gives a counterexample
for a different density function, where now the density
goes to zero extremely slowly as a function of the number
of vertices.
\end{remark}

One possible approach to producing a counterexample to Conjecture~\ref{q2} would
be to consider {\em circulant graphs}, i.e., graphs on the vertex
set $[n]$ in which whether or not $ij$ is an edge depends only on
$i-j$ modulo n. There is one circulant graph for each subset $A$ of
the integers modulo $n$ satisfying $0\notin A$ and $a\in A$ if and
only if $-a\in A$. All our conjectures thus imply corresponding
conjectures for subsets of $\Z_n$, the integers modulo $n$, in which
the symmetry condition is not likely to be relevant. Most subgraph
counts in the graph have a rather unnatural interpretation in terms
of the corresponding sets; the exception is cycles, where the number
of $k$-cycles in $G$ corresponds to ($n$ times the) number of
$k$-tuples in $A^k$ summing to 0. There is a result corresponding to
Lemma~\ref{lcycles} for subsets of $\Z_n$, proved in the same way
but using Fourier coefficients instead of eigenvalues.
Unfortunately, Examples~\ref{vst} and \ref{tdense} also carry over
to the set context, in a fairly straightforward way: instead of
blowing up the graph, we replace each element of $A$ by a block of
consecutive integers. This shows that any result of the kind we want
about subsets of $\Z_n$ must involve conditions other than
constraints on the number of tuples summing to 0.

In the sparse case, even when $p=n^{-o(1)}$,
it is not true that $s_p(K_2,G_n)\to 1$ and $s_p(C_4,G_n)\to 1$
together imply $s_p(F,G_n)\to 1$ for every $F$. We have just seen
one example, with $F=C_3$. There are also much simpler examples.

\begin{example}\label{clique}
{\bf Adding a dense part.}
Let $p=1/\log n$, say, and let $m=m(n)=n/(\log n)^c$ where $c>0$ is constant.
(We ignore rounding to integers.) Let $G'$ be any graph on $n-m$ vertices,
and let $G$ be the disjoint union of $G'$ and a complete graph on $m$ vertices.
Since $K_m$ contains roughly $m^{|F|}$ embeddings of any fixed $F$, we have
\[
 s_p(F,G) \sim s_p(F,G') + \frac{m^{|F|}}{p^{e(F)}n^{|F|}} = s_p(F,G') + (\log n)^{e(F)-c|F|}.
\]
Taking $G'=G(n-m,p)$ and $c=3/2$, say, we have
$s_p(K_2,G)\sim s_p(K_2,G')\sim 1$, 
$s_p(C_4,G)\sim s_p(C_4,G')\sim 1$, but $s_p(K_4,G)\sim 1+1=2$.
Note that $s_p(K_5,G)\to\infty$, so the assumptions of Conjecture~\ref{q2} are
not satisfied.
\end{example}

The above example is rather artificial: there are too many copies of $K_4$
(and of $K_5$), but these sit on a small number of vertices. However, the same effect
can be achieved by taking the union on the same vertex set of $G(n,p)$
and a disjoint union of $n/m$ copies of $K_m$. Also, we can use complete
bipartite graphs instead of complete graphs.

\begin{example}\label{blowup}
{\bf A blown-up random graph.}
Let $n=mk$, where $k=k(n)$ and $m=m(n)$ both tend to infinity.
(As usual, we ignore divisibility issues, or consider a sequence $n_i\to\infty$.)
Let $G_1$ be the random graph $G(m,p)$, where $p=p(n)$, and
let $G=G_1^{(k)}$ be formed by replacing each vertex of $G$ by an independent
set of size $k$, and each edge by a $k$-by-$k$ complete bipartite graph.
The number of edges of $G$ is $k^2e(G_1)$, which is asymptotically
$k^2m^2p/2=n^2p/2$, so $s_p(K_2,G)\to 1$ in probability and almost surely.
Similarly, for any fixed graph $F$, each embedding of $F$ into $G_1$
gives rise to $k^{|F|}$ embeddings into $G$; the expected
number of embeddings arising in this way is essentially the
expected number in $G(n,p)$, so whenever this expectation tends
to infinity, such embeddings will contribute $1+o(1)$ to $s_p(F,G)$.

There are other embeddings of $F$ into $G$, however, where some distinct
vertices of $F$ are mapped to the same vertex in $G_1$. For $C_4$, we
have roughly $m^2pk^4$ such embeddings within our complete bipartite graphs,
and roughly $2m^3p^2k^4$ from embeddings involving three vertices
of $G_1$. Provided $mp^2\to\infty$, we still have $s_p(C_4,G)\to 1$.

Fix an integer $t\ge 3$, and suppose now that $m=m(n)$ and $p=p(n)$ are chosen
so that $m$ and $k=n/m\to \infty$, and $mp^t\to c$ for some constant $0<c<\infty$;
for example, set $p=1/\log n$, $m=c(\log n)^t$ and $k=c^{-1}n/(\log n)^t$.
Note that $mp^2\to\infty$.
Then we have roughly $m^{2+t}k^{2+t}p^{2t}$ embeddings of $K_{2,t}$ into
$G$ coming from embeddings into $G_1$. But we also have
roughly $m^{1+t}k^{2+t}p^t$ embeddings into $G$ coming from maps from
$K_{2,t}$ into $G_1$ sending the two vertices on one side to the same
vertex. It is easy to check that these two are the dominant terms (mapping
the two vertices on one side to the same place we gain $t$ factors
of $1/p$ and lose one factor of $m$; any other identifications gain
fewer factors of $1/p$ per factor of $m$ lost), and it follows that
$s_p(K_{2,t},G) \to 1+1/c$.

Taking a `typical' sequence of random graphs constructed as above
gives an example with $s_p(K_2,G_n)\to 1$, $s_p(C_4,G_n)\to 1$ (and
indeed $s_p(K_{2,t'},G_n)\to 1$ for $2\le t'<t$), but
$s_p(K_{2,t},G_n)\to 1+1/c\ne 1$.
Once again, the assumptions of Conjecture~\ref{q2} are not satisfied,
this time because $s_p(K_{2,t+1},G_n)\to\infty$.
\end{example}

We have seen from the examples above that if $p(n)\to 0$, then
$s_p(K_2,G_n)$ and $s_p(C_4,G_n)\to 1$ do not themselves imply
that $s_p(F,G_n)\to 1$ for every $F$.
However, attempted counterexamples to Conjecture~\ref{q2} seem to be
doomed to failure by the the additional
assumption that $s_p(F,G_n)$ is bounded for every $F$.
In the next section we shall see that
we can make some progress towards proving Conjecture~\ref{q2}.

\subsection{Partial results in the almost dense, uniform case}
In the examples in the previous subsection,
each vertex is in about the same number of copies
of any fixed graph $F$, but there are relatively few ($o(n^2)$) pairs
that are in too many copies of $K_{2,t}$, for example.
It is easy to see that, under the assumptions of Conjecture~\ref{q2}, this cannot happen.
In fact, we can make a much more general statement. For this it is convenient
to work with homomorphism counts and $t_p(F,G_n)$ rather than embeddings and $s_p(F,G_n)$.
As noted earlier, in the almost dense case that we consider
in this subsection, i.e., when $p=n^{-o(1)}$, the quantities
$t_p(F,G_n)$ and $s_p(F,G_n)$ differ by $o(1)$.

Let $F$ be a fixed graph, and $F'$ a subgraph of $F$. Without loss
of generality, suppose that $V(F')=[\ell]\subset [k]=V(F)$.
Then any homomorphism $\phi_F:F\to G_n$ restricts
to a homomorphism $\phi_{F'}:F'\to G_n$. With $e(G_n)\sim p\binom{n}{2}$, we expect a typical
$\phi_{F'}$ to have around $n^{k-\ell}p^{e(F)-e(F')}$ extensions.
For each $n$, let us define a random variable $Z_n(F',F)$ as follows:
let $\phi_{F'}$ be chosen uniformly at random from among all homomorphisms from $F'$ into $G$
(if there are any), and let $Z_n(F',F)$ be the number of extensions
of $\phi_{F'}$ divided by $n^{k-\ell}p^{e(F)-e(F')}$.
(The reader may well prefer to picture copies of $F'$ and $F$ in $G_n$ rather
than homomorphisms. In fact, it is better to picture embeddings, i.e., 
labelled copies. There are essentially the same number of these as of homomorphisms.)
Since $\hom(F,G_n)$ is the sum over $\phi_{F'}$ of the number of extensions,
we have 
\[
 \hom(F,G_n)=\hom(F',G_n) \E(Z_n(F',F)) n^{k-\ell}p^{e(F)-e(F')}, 
\]
and hence
\[
 t_p(F,G_n) = t_p(F',G_n) \E(Z_n(F',F)).
\]
For $r\ge 2$, let $rF/F'$ denote the graph formed by the union of $r$ copies
of $F$ which all meet in the same subgraph $F'$, so $rF/F'$
has $|F'|+r(|F|-|F'|)$ vertices and $e(F')+r(e(F)-e(F'))$ edges.
A homomorphism from $rF/F'$ to $G_n$ consists of a homomorphism $\phi$ from
$F'$ to $G_n$ together with $r$ extensions of $\phi$ to homomorphisms
from $F$ to $G$, which may or may not be distinct. (They almost always will be.)
Since we have normalized by the right powers of $n$ and $p$, it follows that
\begin{equation}\label{tmom}
 t_p(rF/F',G_n) = t_p(F',G_n) \E(Z_n(F',F)^r).
\end{equation}

Let $\mu_F=\mu_F(n)=n^{|F|}p^{e(F)}$, which is asymptotically equal to
the expected number of homomorphisms
from $F$ into $G(n,p)$. Then, under the assumptions of any of
Conjectures~\ref{q1a}, \ref{q1} and \ref{q2}, it is easy to see
that for $F'\subset F$, any $o(\mu_{F'})$ copies of $F'$ meet $o(\mu_F)$ copies
of $F$. (Here `copies' may be subgraphs of $G_n$, embeddings,
or homomorphisms; it makes no difference.) Otherwise $t_p(2F/F',G_n)$
would not remain bounded. This rules out any construction of a potential
counterexample similar to those above; it also shows that if $t_p(K_2,G_n)\to 0$
and Assumption~\ref{AA} holds (i.e., $(G_n)$ has bounded subgraph counts),
then $t_p(F,G_n)\to 0$ for every $F$.

Conjecture~\ref{q2} states that infinitely many conclusions (one for each $F$) hold
under the same assumptions. We have already proved some of these conclusions,
with $F=C_k$, $k\ge 5$. Our next aim is to prove a corresponding result
for a much wider class of graphs.  In doing so, the following observation
will be useful.

\begin{lemma}\label{lmom}
Let $X_n\ge 0$ be a sequence of random variables with $\sup_n \E(X_n^k)<\infty$
for every $k\ge 1$. Then $\E(X_n^k)\to 1$ for every $k$ if and only if $X_n\pto 1$.
\end{lemma}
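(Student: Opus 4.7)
The plan is to prove each direction separately, each being a short classical moment-method argument.

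For the reverse implication, that $\E(X_n^k)\to 1$ for every $k$ implies $X_n\pto 1$, I would simply compute the second moment of the deviation: writing
\[
 \E\bb{(X_n-1)^2} = \E(X_n^2) - 2\E(X_n) + 1,
\]
the assumption with $k=1,2$ gives $\E((X_n-1)^2)\to 0$, so $X_n\to 1$ in $L^2$, which implies convergence in probability. Note that this direction does not actually need the uniform moment bound hypothesis.

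For the forward implication, assume $X_n\pto 1$ and fix $k\ge 1$. Then $X_n^k\pto 1$ by the continuous mapping theorem. The hypothesis $\sup_n\E(X_n^{k+1})<\infty$ gives $\sup_n\E((X_n^k)^{1+1/k})<\infty$, which implies that the family $(X_n^k)$ is uniformly integrable (by the standard criterion: $\E(Y\IN{Y>M})\le M^{-1/k}\E(Y^{1+1/k})$). Convergence in probability together with uniform integrability gives convergence in $L^1$, so $\E(X_n^k)\to\E(1)=1$, as required.

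Neither direction presents a serious obstacle: the reverse direction is immediate from the identity for $\E((X_n-1)^2)$, and the forward direction is the standard fact that convergence in probability plus uniform integrability (ensured here by the hypothesis of one additional bounded moment) upgrades to $L^1$ convergence. If anything, the only mildly delicate point is to notice that one only needs boundedness of the $(k+1)$-th moment to control $L^1$ convergence of the $k$-th powers, which is exactly what the uniformity in $k$ of Assumption~\ref{AA}-type hypotheses provides.
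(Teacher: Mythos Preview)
Your proof is correct and essentially the same as the paper's. For the direction ``moments $\to$ $1$ implies convergence in probability'' both arguments use $k=1,2$ and Chebyshev/$L^2$; for the direction ``convergence in probability implies moments $\to 1$'' the paper argues by contradiction, showing that if $\E(X_n^k)\not\to 1$ then $\E(X_n^{k+1})$ is unbounded, which is exactly the uniform-integrability criterion you cite, only unpacked by hand. (One cosmetic point: you have the labels ``forward'' and ``reverse'' swapped relative to the paper, but this has no mathematical content.)
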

\begin{proof}
For the forward implication we have $\E(X_n)\to 1$ and $\E(X_n^2)\to 1$; applying Chebyshev's
inequality it follows that $X_n\pto 1$. The reverse implication is not much harder.
Suppose that $X_n\pto 1$, but that $\E(X_n^k)\not\to 1$ for some $k$.
For any $M$, the variables $X_n^k\IN{X_n\le M}$ are uniformly
bounded and converge in probability to~1,
so
$\E(X_n^k\IN{X_n\le M})\to 1$.
It follows that there is some $M(n)\to\infty$ such that
$\E(X_n^k\IN{X_n\le M(n)})\to 1$.
But then
$\E(X_n^k\IN{X_n> M(n)})\not\to 0$, so
\[
 \E(X_n^{k+1}) \ge \E(X_n^{k+1}\IN{X_n> M(n)})
 \ge M(n)\E(X_n^k\IN{X_n> M(n)})
\]
is unbounded, contradicting our assumptions.
\end{proof}

\begin{corollary}\label{cmom}
Under the assumptions of Conjecture~\ref{q2}, if $F'$ and $F$ are fixed
graphs with $F'\subset F$ and $t_p(F',G_n)\to 1$,
then $Z_n(F',F)\pto 1$ if and only if $t_p(rF/F',G_n)\to 1$ for every $r\ge 1$.
\end{corollary}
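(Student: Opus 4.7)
The plan is to reduce the statement directly to Lemma~\ref{lmom}, applied to the sequence of non-negative random variables $X_n=Z_n(F',F)$. The bridge between moments of $X_n$ and homomorphism counts is already provided by \eqref{tmom}, which reads
\[
 t_p(rF/F',G_n) = t_p(F',G_n)\,\E(Z_n(F',F)^r).
\]
Since by hypothesis $t_p(F',G_n)\to 1$, this factor is eventually bounded away from $0$, so $t_p(rF/F',G_n)\to 1$ holds for a given $r$ if and only if $\E(Z_n(F',F)^r)\to 1$.

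Thus the equivalence to be proved translates exactly into: $Z_n(F',F)\pto 1$ iff $\E(Z_n(F',F)^r)\to 1$ for every $r\ge 1$. This is precisely the content of Lemma~\ref{lmom}, provided we can verify its hypothesis, namely that $\sup_n \E(Z_n(F',F)^k)<\infty$ for every $k\ge 1$. But \eqref{tmom} with $r=k$ gives
\[
 \E(Z_n(F',F)^k) = \frac{t_p(kF/F',G_n)}{t_p(F',G_n)},
\]
and the Conjecture~\ref{q2} assumptions include $\sup_n s_p(H,G_n)<\infty$ for every fixed graph $H$, in particular for $H=kF/F'$. In the almost dense regime $p=n^{-o(1)}$ we have $t_p(H,G_n)=s_p(H,G_n)+o(1)$ for every fixed $H$, so $t_p(kF/F',G_n)$ is bounded; combined with $t_p(F',G_n)\to 1$, this yields the required uniform bound on the $k$th moment.

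Having checked the hypothesis, Lemma~\ref{lmom} delivers both directions simultaneously, and the corollary follows. There is no real obstacle here: once \eqref{tmom} is in hand the argument is essentially bookkeeping, the only mildly delicate point being the observation that boundedness of all $s_p$-counts transfers to boundedness of all $t_p$-counts in the almost dense regime, which is what makes Lemma~\ref{lmom} applicable.
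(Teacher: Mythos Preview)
Your proof is correct and follows exactly the same approach as the paper: apply Lemma~\ref{lmom} to $Z_n(F',F)$, using \eqref{tmom} to translate between moments and the counts $t_p(rF/F',G_n)$. You have simply spelled out in more detail than the paper's one-line proof the verification of the boundedness hypothesis of Lemma~\ref{lmom}, which is indeed where the assumptions of Conjecture~\ref{q2} (bounded $s_p$-counts, hence bounded $t_p$-counts in the almost dense regime) enter.
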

\begin{proof}
Apply Lemma~\ref{lmom} to the random variable
$Z_n(F',F)$, using \eqref{tmom} to evaluate its moments.
\end{proof}

We shall say that the distribution of $F$ is {\em flat over} that of $F'$ in $G_n$,
or simply that $F$ is {\em flat over} $F'$, if $Z_n(F',F)\pto 1$.

\begin{lemma}\label{lKst}
Under the assumptions of Conjecture~\ref{q2} we have $s_p(K_{s,t},G_n)\to 1$ for all $s,t\ge 1$.
Moreover, $K_{1,s}$ is flat over $E_s$, where $E_s$ is the empty subgraph
of $K_{1,s}$ induced by the vertices in the second part.
\end{lemma}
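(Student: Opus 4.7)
The plan is to reduce both halves of the lemma to a single statement---$t_p(K_{r,s},G_n)\to 1$ for all $r,s\ge 1$---and then establish it by bootstrapping from the case $F=C_4$. By Corollary~\ref{cmom} applied with $F'=E_s$ and $F=K_{1,s}$, noting that $rK_{1,s}/E_s=K_{r,s}$ and $t_p(E_s,G_n)=1$, flatness of $K_{1,s}$ over $E_s$ is precisely the assertion that $t_p(K_{r,s},G_n)\to 1$ for every $r\ge 1$. Since $p=n^{-o(1)}$, $s_p(F,\cdot)$ and $t_p(F,\cdot)$ differ by $o(1)$ for each fixed $F$, so it is enough to work with $t_p$ throughout.

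The engine of the argument is to prove $t_p(K_{1,2},G_n)\to 1$ by sandwiching. Cauchy--Schwarz on degrees gives $\sum_v d(v)^2\ge (2e(G_n))^2/n\sim n^3p^2$, which yields $t_p(K_{1,2},G_n)\ge 1-o(1)$. For the matching upper bound, set $Z=Z_n(E_2,K_{1,2})$ and apply Cauchy--Schwarz: since $2K_{1,2}/E_2=K_{2,2}=C_4$, the identity \eqref{tmom} gives
\[
 t_p(K_{1,2},G_n)^2=(\E Z)^2\le \E Z^2=t_p(C_4,G_n)\to 1.
\]

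From here the proof cascades in three mechanical steps. First, flatness of $K_{1,2}$ over $E_2$: the first and second moments of $Z_n(E_2,K_{1,2})$ both tend to $1$, so Chebyshev gives $Z_n\pto 1$, and since Assumption~\ref{AA} makes all moments of $Z_n$ uniformly bounded, Lemma~\ref{lmom} upgrades this to convergence of every moment, i.e., $t_p(K_{r,2},G_n)\to 1$ for all $r$. Second, flatness of $K_{1,1}$ over $E_1$: the first and second moments of $d(v)/(np)$ are $t_p(K_2,G_n)$ and $t_p(K_{1,2},G_n)$, both tending to $1$; the same Chebyshev--Lemma~\ref{lmom} step gives $t_p(K_{1,r},G_n)\to 1$ for every $r$. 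Third, for general $s\ge 1$, the first moment of $Z_n(E_s,K_{1,s})$ is $t_p(K_{1,s},G_n)$, which tends to $1$ by the second step, and its second moment is $t_p(K_{2,s},G_n)=t_p(K_{s,2},G_n)$, which tends to $1$ by the first step together with the symmetry $K_{r,s}\cong K_{s,r}$; one more application of the same recipe delivers $t_p(K_{r,s},G_n)\to 1$ for all $r,s\ge 1$, proving both claims of the lemma.

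The only non-trivial step is the engine: establishing $t_p(K_{1,2},G_n)\to 1$ by pairing a lower bound from $\sum d(v)^2$ with a Cauchy--Schwarz upper bound against $t_p(C_4,G_n)$. After that, the proof is an automatic cascade of Cauchy--Schwarz followed by the moment upgrade of Lemma~\ref{lmom}, and I foresee no further obstacle beyond careful bookkeeping of which $K_{r,s}$-count has already been established.
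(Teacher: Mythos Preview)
Your proof is correct and follows essentially the same approach as the paper: both use the convexity lower bound on $t_p(K_{1,2},G_n)$ together with the second-moment identity $\E Z_n^2=t_p(C_4,G_n)\to 1$ to get $Z_n(E_2,K_{1,2})\pto 1$, then cascade via Corollary~\ref{cmom} to all $K_{r,s}$. The only structural difference is that the paper uses the convexity bound $t_p(K_{1,s},G_n)\ge t_p(K_2,G_n)^s$ directly for every $s$, so it never needs your intermediate ``flatness of $K_{1,1}$ over $E_1$'' step; your route is slightly longer but entirely sound.
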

\begin{proof}
Let $d_1,\dots,d_n$ denote the degrees of the vertices of $G_n$, and $\db$ the average
degree. Fix $s\ge 1$. By convexity, we have
\[
 \hom(K_{1,s},G_n) =\sum_{i=1}^n d_i^s \ge n{\db}^s,
\]
which we can rewrite as $t_p(K_{1,s},G_n)\ge t_p(K_2,G_n)^s$.
Since $t_p(K_2,G_n)\to 1$ by assumption, this gives
\begin{equation}\label{lotss}
 t_p(K_{1,s},G_n) \ge 1+o(1).
\end{equation}

Specializing to $s=2$ for the moment, let $Z_n=Z_n(E_2,K_{1,2})$ be the random
variable describing the distribution of the number of common neighbours
of a random pair of vertices of $G_n$. For any empty graph $E_k$
we have $t_p(E_k,G_n)=1$. Hence,
from \eqref{tmom} and \eqref{lotss},
\[
 \E(Z_n) = t_p(K_{1,2},G_n) \ge 1+o(1).
\]
On the other hand, since $tK_{1,2}/E_2=K_{2,t}$,
\[
 \E(Z_n^2) = t_p(K_{2,2},G_n) = t_p(C_4,G_n) \to 1.
\]
Since $\E(Z_n^2)\ge \E(Z_n)^2$, it follows that $\E(Z_n)\to 1$ and
(by Lemma~\ref{lmom}) that $Z_n\pto 1$. In other words,
$K_{1,2}$ is flat over pairs of vertices.
By Corollary~\ref{cmom} it then follows that $t_p(K_{2,t},G_n)\to 1$ for every $t$.

Returning to general $s$, let $W_n=Z_n(E_s,K_{1,s})$.
From \eqref{lotss} we have $\E(W_n)=t_p(K_{1,s},G_n)\ge 1+o(1)$.
But we have just shown that $\E(W_n^2)=t_p(K_{2,s},G_n)\to 1$,
so $W_n\pto 1$, i.e., $K_{1,s}$ is flat over $E_s$.
Applying Corollary~\ref{cmom} again we thus have $t_p(K_{s,t},G_n)\to 1$ for every $t$,
as required.
\end{proof}

\begin{theorem}\label{g4}
Let $F$ be any fixed graph with girth at least $4$,
and let $F'\ne F$ be any induced subgraph of $F$. Under the assumptions of Conjecture~\ref{q2},
$F$ is flat over $F'$. Furthermore, $s_p(F,G_n), t_p(F,G_n)\to 1$
as $n\to\infty$.
\end{theorem}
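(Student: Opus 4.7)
I plan to prove the theorem by strong induction on $|V(F)|$, carrying along the joint inductive hypothesis that for every triangle-free $H$ with $|V(H)|<|V(F)|$ and every induced $H'\subseteq H$, $t_p(H,G_n)\to 1$ and, if $H'\ne H$, $H$ is flat over $H'$. Base cases $|V(F)|\le 2$ follow from Lemma~\ref{lKst} and triviality. In the inductive step I first settle the single-vertex case $F'=F-v$; the general $F'$ is then reduced to it by a transitivity-type argument.

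For the single-vertex case, fix $v\in V(F)$, set $F^*=F-v$, $S=N_F(v)$, $d=|S|$; triangle-freeness of $F$ makes $S$ independent in $F^*$. The identity
\[
 t_p(rF/F^*,G_n) = t_p(F^*,G_n)\,\E_\phi\bigl[(N_G(\phi(S))/(np^d))^r\bigr],
\]
for $\phi$ a uniform homomorphism $F^*\to G_n$, drives the argument: changing variables to $U=\phi(S)$ rewrites the expectation as $\E_U[\rho(U)W_n^r]$, with $\rho$ the Radon-Nikodym density of the law of $\phi(S)$ against the uniform distribution on $V(G_n)^d$ and $W_n=N_G(U)/(np^d)$. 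The inductive hypothesis applied to $F^*$ with its induced subgraph $E_S$ gives $\rho(U)\pto 1$, and $t_p(F^*,G_n)\to 1$; Lemma~\ref{lKst} gives $W_n\pto 1$; the formulas $\E(\rho^q)=t_p(qF^*/E_S,G_n)/t_p(F^*,G_n)^q$ and $\E(W_n^q)=t_p(K_{q,d},G_n)$ together with Assumption~\ref{AA} bound all $L^q$-norms of $\rho$ and $W_n$. H\"older then yields $\E_U[\rho W_n^r]\to 1$ for every $r\ge 1$. Taking $r=1$ gives $t_p(F,G_n)\to 1$; Corollary~\ref{cmom} promotes the full family of limits to $F$ being flat over $F^*$.

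For a general proper induced $F'\subsetneq F$ with $|V(F)\setminus V(F')|\ge 2$, pick $v\in V(F)\setminus V(F')$ and set $F^*=F-v$ so $F'\subsetneq F^*\subsetneq F$. The inductive hypothesis gives $F^*$ flat over $F'$ (hence $t_p(rF^*/F',G_n)\to 1$ for every $r$ via Corollary~\ref{cmom}), and the previous step gave $F$ flat over $F^*$. I combine them via the identity
\[
 t_p(rF/F',G_n) = t_p(rF^*/F',G_n)\,\E_\psi\Bigl[\prod_{i=1}^r Z_n(F^*,F)|_{\psi|_{F^*_i}}\Bigr],
\]
with $\psi$ a uniform homomorphism $rF^*/F'\to G_n$. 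Conditioning $\psi$ on its restriction $\phi_{F'}=\psi|_{F'}$ makes the restrictions $\psi|_{F^*_i}$ iid uniform extensions of $\phi_{F'}$ to $F^*$, and the expectation becomes a weighted mean of $M(\phi_{F'})^r$ with $M(\phi_{F'})=\E[Z_n(F^*,F)\mid \phi_{F'}]$ and weights proportional to $B(\phi_{F'})^r$, where $B(\phi_{F'})$ is the number of homomorphism extensions of $\phi_{F'}$ to $F^*$. Jensen's inequality together with the second-moment bound $\E(Z_n(F^*,F)^2)\to 1$ (the $r=2$ instance of the previous paragraph) forces $\Var(M)\to 0$ under the uniform distribution on $\phi_{F'}$, so $M\pto 1$ there; flatness of $F^*$ over $F'$ renders the $B^r$-weighting asymptotically uniform, and bounded $L^q$-norms of $M$ then give that the weighted mean of $M^r$ converges to $1$. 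Hence $t_p(rF/F',G_n)\to 1$, and $F$ is flat over $F'$ by Corollary~\ref{cmom}.

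I expect the passage from the natural $\psi$-uniform distribution to the uniform distribution on $\phi_{F'}$ to be the main obstacle: the natural marginal is $B^r$-weighted rather than uniform, and one must carefully verify that the second-moment control from the single-vertex case combined with the flatness of $F^*$ over $F'$ together suffice to push the weighted mean through. Once $t_p(F,G_n)\to 1$ is in hand, the convergence $s_p(F,G_n)\to 1$ is immediate from $s_p(F,G_n)=t_p(F,G_n)+o(1)$, valid in the almost dense regime $p=n^{-o(1)}$.
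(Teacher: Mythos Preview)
Your proof is correct, and the single-vertex case is essentially identical to the paper's: your density $\rho$ is the paper's $X_n=Z_n(E_s,F')$ up to the factor $t_p(F^*,G_n)\to 1$, your $W_n$ is the paper's $Y_n=Z_n(E_s,K_{1,s})$, and your H\"older step replaces the paper's Cauchy--Schwarz on cross moments of $X_n-1$ and $Y_n-1$ to reach $\E(Z_n(F^*,F)^r)=\E(X_nY_n^r)/\E(X_n)\to 1$.

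The general case genuinely diverges from the paper. The paper argues combinatorially: it shows directly that the negative part of $Z_n(F',F)-1$ tends to $0$ in probability by counting ``bad'' copies of $F^*$ (those with too few extensions to $F$) and then copies of $F'$ with too many bad extensions or too few total extensions, concluding via $\E Z_n(F',F)\to 1$. Your route is analytic: factor $Z_n(F',F)=B\cdot M$ with $B=Z_n(F',F^*)$ and $M=\E[Z_n(F^*,F)\mid\phi_{F'}]$, and control the moments of $BM$ by separately controlling $B$ (flatness of $F^*$ over $F'$) and $M$ (Jensen plus flatness of $F$ over $F^*$). One correction: the Jensen step gives $\Var(M)\to 0$ under the $B$-\emph{weighted} law on $\phi_{F'}$ (the pushforward of uniform $\phi_{F^*}$), not the uniform one; but since $B\pto 1$ with all moments tending to~$1$, you can transfer $M\pto 1$ to the uniform law, and more to the point you can compute $\E_{B^r}[M^r]=\E_B[B^{r-1}M^r]/\E_B[B^{r-1}]$ and show both numerator and denominator tend to~$1$ using that, under the $B$-weighted law, $M$ has all moments bounded (Jensen plus Assumption~\ref{AA} applied to $qF/F^*$) and $B\pto 1$ with bounded moments. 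This is precisely the obstacle you flag, and it does go through. The paper's counting argument is more concrete and avoids juggling three measures; your moment argument is slicker once the measure-change bookkeeping is done carefully, and generalises more mechanically.
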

\begin{proof}
Note first that the definition of $Z_n(F',F)$ makes perfect sense when $F'$
is the empty `graph' with no vertices; there is one homomorphism from $F'$ to
$G_n$, and $Z_n(F',F)$ is constant and takes the value $t_p(F,G_n)$.
Hence, $F$ is flat over the empty subgraph means exactly that $t_p(F,G_n)\to 1$.
Since $p=n^{-o(1)}$, we have $s_p(F,G_n)\sim t_p(F,G_n)$, so it suffices
to prove the first statement.

We prove the first statement
of the theorem by induction on $|F|$. If $|F|=1$, there is nothing to prove.
Suppose then that $F$ and $F'$ are given, with $|F|\ge 2$, and that the result
holds for all smaller $F$.

Suppose first that $F'=F-v$ for some vertex $v$ of $F$.
Let $E_s$ denote the subgraph of $F'$ induced by the neighbours of $v$,
noting that $E_s$ has no edges, as $F$ is triangle free.
Set $X_n = Z_n(E_s,F')$ and $Y_n= Z_n(E_s,K_{1,s})$.
Note that these random variables are defined on the same probability space: the 
elements of this space are simply $s$-tuples of vertices of $G_n$.
If $F'=E_s$, then $F'$ is trivially flat over $E_s$. If not,
then $F'$ is flat over $E_s$ by the induction hypothesis.
Hence, in either case, $\E(X_n^k)\to 1$ for every $k$.
By the last part of Lemma~\ref{lKst}, $K_{1,s}$ is flat over $E_s$,
so $\E(Y_n^k)\to 1$ for every $k$. It follows that
$\E((X_n-1)^k)\to 0$ and $\E((Y_n-1)^k)\to 0$ for all $k\ge 1$.
Hence, by the Cauchy--Schwarz inequality,
\[
 \E((X_n-1)^k(Y_n-1)^\ell) \le\sqrt{\E((X_n-1)^{2k})\E((Y_n-1)^{2\ell})} \to 0
\]
for all $k,\ell\ge 0$ with $k+\ell>0$. Writing
$\E(X_n^kY_n^\ell)=\E( (X_n-1 +1)^k(Y_n-1+1)^\ell)$ as $1$ plus a sum
of terms $\E((X_n-1)^{k'}(Y_n-1)^{\ell'})$, $k',\ell'\ge 0$, $k'+\ell'>0$,
it follows that $\E(X_n^kY_n^\ell)\to 1$ for any $k,\ell\ge 0$.

Any homomorphism $\phi_{F'}$ from $F'$ into $G_n$ is the extension
of a unique homomorphism $\phi_{E_s}$ from $E_s$ into $G_n$.
Furthermore, to extend $\phi_{F'}$ to $F$ we must choose for the image of $v$
a common neighbour of the vertices in the image of $\phi_{E_s}$.
Hence, the value of $Z_n=Z_n(F',F)$ on $\phi_{F'}$ is simply the value
of $Y_n$ on $\phi_{E_s}$.
Choosing $\phi_{F'}$ uniformly at random, to obtain the correct distribution
for $Z_n$, the probability of obtaining a particular restriction $\phi_{E_s}$
is proportional to the number of extensions of $\phi_{E_s}$ to $F'$, i.e., to $X_n$.
Thus the distribution of $Z_n$ is that of $Y_n$ `size biased' by $X_n$.
In particular,
\[
 \E(Z_n^k) = \frac{\E(X_nY_n^k)}{\E(X_n)} \sim 1/1=1.
\]
Taking $k=1,2$, it follows that $Z_n\pto 1$, i.e., that $F$ is flat over
$F'$, as required.

It remains to handle the case $|F|-|F'|\ge 2$. In this case, we can find an induced
subgraph $F''=F-v$ of $F$ with $F'\subset F''\subset F$.
Note that $t_p(F',G_n), t_p(F'',G_n)\sim 1$ by induction,
that $F''$ is flat over $F'$ by induction, and that $F$ is flat over $F''$
by the case treated above. In particular, we certainly have
\[
 t_p(F,G_n) = t_p(F'',G_n)\E(Z_n(F'',F)) \sim 1.
\]
Fix $\eps>0$. Let us call a copy of $F''$ (more precisely, a homomorphism from
$F''$ into $G_n$) {\em bad} if it has fewer than $(1-\eps)\mu_{F}/\mu_{F''}$
extensions to copies of $F$.
Since $F$ is flat over $F''$ and $t_p(F'',G_n)\sim 1$, there
are fewer than $\eps^2\mu_{F''}$ bad copies of $F''$ if $n$ is large enough.
Since each copy of $F''$ extends a unique copy of $F'$,
it follows that at most $\eps\mu_{F'}$ copies of $F'$ have
more than $\eps\mu_{F''}/\mu_{F'}$ extensions to bad copies
of $F''$.

Let $\BB_1$ denote the set of copies of $F'$ that have more than 
$\eps\mu_{F''}/\mu_{F'}$ extensions to bad copies
of $F''$, so $|\BB_1|\le \eps\mu_{F'}$ if $n$ is large.
Let $\BB_2$ denote the set of copies of $F'$ that have fewer than
$(1-\eps)\mu_{F''}/\mu_{F'}$ extensions to copies of $F''$.
Since $F''$ is flat over $F'$, we have $|\BB_2|\le \eps\mu_{F'}$
if $n$ is large enough, which we assume from now on.
If $\phi$ is a copy of $F'$ not in $\BB_1\cup \BB_2$, then
$\phi$ has at least $(1-2\eps)\mu_{F''}/\mu_{F'}$ extensions
to good copies of $F''$, which in turn have at least $(1-\eps)\mu_F/\mu_{F''}$
extensions to copies of $F$,
so the value of $Z_n(F',F)$ on $\phi$ is at least $(1-2\eps)(1-\eps)$.
Since there are $(1+o(1))\mu_{F'}$ copies of $F'$ in total,
the proportion of these copies in $\BB_1\cup\BB_2$ is at most $\eps+o(1)$.
Since $\eps>0$ was arbitrary, it follows
that the negative part of $Z_n(F',F)-1$ tends to zero in probability.
Since $\E(Z_n(F',F))=t_p(F,G_n)/t_p(F',G_n)\to 1$,
it follows that $Z_n(F',F)\pto 1$, i.e., that $F$ is flat over $F'$. 
\end{proof}

The reader may find many of the arguments above familiar from the dense case; for example,
the proof for $K_{2,t}$ is an absolutely standard convexity argument.
The key point is that many arguments for the dense case do not carry over.
In particular, we have shown that almost all, i.e., all but $o(n^2)$, pairs
of vertices have about the right number of common neighbours. 
In the dense case, it follows immediately that almost all (all but $o(pn^2)=o(n^2)$)
edges are in the right number of triangles, and hence that $t_p(K_3,G_n)\to 1$.
Similarly,
the proof above shows that any $F$ is flat over all its subgraphs in the dense case,
without restriction to girth at least $4$.
In the sparse case, there are only $o(n^2)$ edges, and there seems to be no simple
way to rule out the possibility that a large fraction, or even {\em all}, of
the pairs of vertices corresponding to edges fall in the $o(n^2)$ set with too few
common neighbours. Nevertheless, we conjecture that this cannot happen.
The simplest graph for which we cannot prove the conclusion of Conjecture~\ref{q2} is the triangle.

\begin{conjecture}\label{ctri}
Under the conditions of Conjecture~\ref{q2} we have $s_p(K_3,G_n)\to 1$.
\end{conjecture}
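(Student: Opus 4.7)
My plan is to reduce Conjecture~\ref{ctri} to a statement about the codegree distribution on edges of $G_n$, and then attempt the reduction via the moment method used in Theorem~\ref{g4}. Writing $N_{ij}$ for the number of common neighbours of the pair $\{i,j\}$ in $G_n$ and setting $Z=N_{ij}/(np^2)$, the identity $3X_{K_3}(G_n)=\sum_{ij\in E(G_n)}N_{ij}$ combined with $t_p(K_2,G_n)\to 1$ shows that the claim $s_p(K_3,G_n)\to 1$ is equivalent to $\E_e[Z]\to 1$, where $\E_e$ denotes expectation over a uniformly random edge of $G_n$. The analogous statement over a uniformly random \emph{pair} is already in hand: Lemma~\ref{lKst} says that $K_{1,2}$ is flat over $E_2$, so $\E_{\mathrm{pair}}[Z^r]\to 1$ for every $r$, and hence $Z\pto 1$ under $\E_{\mathrm{pair}}$. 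The content of the conjecture is thus to transport this concentration from the $\binom{n}{2}$ pairs to the asymptotically much smaller set of $p\binom{n}{2}$ edges.

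The natural framework for this transport uses the book graphs $B_k=kK_3/K_2$, consisting of $k$ triangles glued along a common edge. Since $\hom(B_k,G_n)=2\sum_{ij\in E(G_n)}N_{ij}^k$, one has $\E_e[Z^k]=t_p(B_k,G_n)/t_p(K_2,G_n)$, so by Lemma~\ref{lmom}, showing $t_p(B_k,G_n)\to 1$ for every $k\ge 1$ would give $Z\pto 1$ under $\E_e$ and hence the conjecture. Assumption~\ref{AA} already yields $\E_e[Z^k]=O(1)$, so Cauchy--Schwarz gives the qualitative bound $s_p(K_3,G_n)=O(1)$ and, conditional on $t_p(B_2,G_n)\to 1$, even the one-sided estimate $\E_e[Z]\le 1+o(1)$. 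I would then try to induct on $k$ in the pattern of Theorem~\ref{g4}: prove that $B_k$ is flat over $B_{k-1}$ via a Cauchy--Schwarz argument comparing moments of extension variables rooted at an edge, and then compose the flatness chain down to $B_1=K_3$.

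The main obstacle is that this induction has no available base case. The minimal book $B_1$ \emph{is} $K_3$ itself, so the chain terminates at precisely the object we wish to control, and Theorem~\ref{g4} cannot be invoked because every book contains a triangle. More fundamentally, every subgraph count $s_p(F,G_n)$ that conditions on a pair being an edge necessarily contains a triangle, so Assumption~\ref{AA} delivers only boundedness, never convergence, of these edge-conditioned counts. A naive Cauchy--Schwarz passage from $\E_{\mathrm{pair}}$ to $\E_e$ loses a factor of $1/p\to\infty$, since edges form only a $p$-fraction of pairs, and this loss cannot be recouped from triangle-free structural data. Overcoming this obstacle would likely require either a genuine sparse counting lemma that extracts triangle densities from lower-order regularity-type information --- exactly the major open problem highlighted in the introduction --- or a new analytic input decoupling the codegree distribution on edges from that on non-edges.
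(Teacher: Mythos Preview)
The paper does not prove this statement: it is stated as Conjecture~\ref{ctri}, and immediately afterwards the authors write ``In fact, we do not even have a proof that $G_n$ must contain at least one triangle for $n$ large enough!'' So there is no proof in the paper to compare against.

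Your proposal is not a proof either, and you are candid about this. Your analysis of \emph{why} the natural moment/flatness approach fails is accurate and matches the paper's own discussion. The paper makes essentially the same point just before stating the conjecture: Theorem~\ref{g4} shows that almost all pairs have about the right codegree, but ``there are only $o(n^2)$ edges, and there seems to be no simple way to rule out the possibility that a large fraction, or even \emph{all}, of the pairs of vertices corresponding to edges fall in the $o(n^2)$ set with too few common neighbours.'' Your reformulation via the book graphs $B_k$ and the edge-measure $\E_e$ is a clean way to isolate exactly this obstruction: the hypotheses control moments of $Z$ under $\E_{\mathrm{pair}}$, the conclusion requires control under $\E_e$, and the Radon--Nikodym factor between these measures is of order $1/p$. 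Your observation that every graph $F$ for which $s_p(F,G_n)$ carries edge-conditioned information necessarily contains a triangle (so Assumption~\ref{AA} gives only boundedness, never convergence, for such $F$) is precisely the structural reason the induction of Theorem~\ref{g4} cannot be seeded. In short, your diagnosis agrees with the paper's, and the conjecture remains open.
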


In fact, we do not even have a proof that $G_n$ must contain at least {\em one}
triangle for $n$ large enough!

\subsection{Extensions to lower densities.}\label{ss_e}
Let us return to the study of general subgraphs $F$, rather than simply triangles.
If true, the various conjectures above may extend to smaller values of $p$, but one must be
careful. Firstly, $s_p$ and $t_p$ no longer coincide, as noted above.
One should work with $s_p$, because these quantities behave in the right
way for $G_p(n,\ka)$, while $t_p$ does not.
A simple modification of the proof of Lemma~\ref{lKst}, considering
the distribution of the number of common neighbours
of a set of $s$ {\em distinct} vertices, shows that if $np^s\to\infty$,
then $s_p(K_2,G_n)\to 1$, $s_p(C_4,G_n)\to 1$ and $s_p(K_{s,t+1},G_n)$ bounded
together imply $s_p(K_{s,t},G_n)\to 1$.
Taking $p=n^{-\alpha}$, with $0<\alpha<1/2$ constant, there is no
corresponding result for $t_p$, even with $s=2$.
Indeed, if $t_p(K_2,G_n)=1$, then there are at least
$n^{t+1}p^t$ homomorphisms from $K_{2,t}$ into $G_n$ mapping the two vertices
in the smaller class to the same vertex. It follows that
$t_p(K_{2,t},G_n)$ will be unbounded for any $t>1/\alpha$.

Secondly, even working with $s_p$ rather than $t_p$,
we cannot in general hope to conclude in the analogue
of Conjecture~\ref{q1} that $s_p(F,G_n)\to s(F,\ka)$
for {\em all} fixed graphs $F$. For example, set $p=n^{-1/2}$
and consider the {\em polarity graphs} $G_n$ of Erd\H os and R\'enyi~\cite{ERpolarity},
defined (for suitable $n$)
by taking as vertices the points of the projective
plane over $GF(q)$, $q$ a prime power, and joining $x=(x_0,x_1,x_2)$
and $y=(y_0,y_1,y_2)$ 
if and only if $x_0y_0+x_1y_1+x_2y_2=0$ in $GF(q)$.
These graphs satisfy $e(G_n)\sim n^{3/2}/2=pn^2/2$ but contain no $C_4$s, 
and thus satisfy $s_p(K_2,G_n)\to 1$ and $s_p(C_4,G_n)= 0$.
Since $s(C_4,\ka)\ge s(K_2,\ka)^4$ for any $\ka$,
we cannot have $s_p(F,G_n)\to s(F,\ka)$ for $F=K_2$ and 
for $F=C_4$ in this case. More generally, whenever
$pn^{1/2}\not\to\infty$, then there are graphs $G_n$ with
$pn^2$ edges but too few $C_4$s, so we should only consider
the counts $s_p(C_4,G_n)$ if $pn^{1/2}\to\infty$.
This problem is not unique to $C_4$, so it seems that to extend
our conjectures for $p=n^{-o(1)}$ to sparser graphs,
we should modify them to refer only to a certain set of `admissible' subgraphs $F$,
depending on the function $p=p(n)$.

In fact, we should only consider subgraphs $F$ for which
the expected number $\mu_F\sim n^{|F|}p^{e(F)}$ of embeddings of $F$ into $G(n,p)$
is much larger than the number $(1+o(1))n^2p/2$ of edges,
at least if $pn^{1/2}\to\infty$. To see this, first
suppose that $n^{|F|}p^{e(F)}\sim An^2p$, for some constant $0<A<\infty$.
Form a graph $G'$ from $G=G(n,p)$ by adding $\eps n^2p/(2e(F))$ copies
$F_1,F_2,\ldots$
of $F$, chosen uniformly at random from all subgraphs of $K_n$ isomorphic to $F$.
After deleting the small number of duplicate edges, we have added around
$\eps n^2p/2$ edges, so $s_p(K_2,G')\sim 1+\eps$.
It is easy to check that the number of $C_4$s in $G'$ containing two or more edges
from one single $F_i$ is negligible and thus, considering
$C_4$s formed from all combinations of edges from $G(n,p)$ and 
from different $F_i$, that $s_p(C_4,G')\sim (1+\eps)^4$ whp.
Hence, the appropriate limiting kernel is the constant kernel
$\ka=1+\eps$. Copies of $F$ itself containing at most one
edge from each $F_i$ contribute $(1+\eps)^{e(F)}$ to $s_p(F,G')$,
but there are $\Theta(n^{|F|}p^{e(F)})$ extra copies
of $F$, namely the $F_i$ themselves. It follows that $s_p(F,G_n)\not\to 1$.
If $n^{|F|}p^{e(F)}=o(n^2p)$, then the argument is much simpler: adding
a few copies of $F$ to $G(n,p)$ does not change the number of edges or $C_4$s
significantly, but does change the number of copies of $F$.

We can go somewhat further: the construction in Example~\ref{vst}
shows that for $C_3$ to be admissible, the expected number of $C_3$s per edge
should be larger than $\log n$. A similar construction can be carried
out for any fixed $F$, and shows that, at least for suitable balanced $F$,
we should require $n^{|F|}p^{e(F)}/(np^2\log n)\to \infty$
for $F$ to be admissible. In general, for $F$ to be admissible,
we need all induced subgraphs $F'$ of $F$ to be admissible; otherwise,
the distribution of copies of $F$ over $F'$ cannot be flat as we
expect in the uniform case.

Returning to triangles,
in the light of the comments above, perhaps the strongest conceivable extension
of Conjecture~\ref{ctri} to smaller $p$ would be that if
$p=p(n)=\omega(\sqrt{\log n}/\sqrt{n})$, and
$s_p(K_2,G_n)\to 1$, $s_p(C_4,G_n)\to 1$, and $\sup_n s_p(K_{2,t},G_n)<\infty$
for each $t$,  then $s_p(C_3,G_n)\to 1$.
However, it may well be that
the graphs constructed by Alon~\cite{Alon} mentioned in Example~\ref{noga}
have $s_p(K_{2,t},G_n)\to 1$ for each $t$.
(This may also be true of Kim's random construction~\cite{Kim}
giving his famous lower bound on the Ramsey numbers $R(3,t)$.)
If so, blowing
these graphs up as in Example~\ref{tdense} would show that even
in the almost dense case, controlling the $K_{2,t}$ counts is not enough,
so one should control (at least) the $K_{s,t}$ counts for some larger $s$.
Returning to much sparser graphs, we then have to limit ourselves
to $p=p(n)$ for which $K_{s,t}$ is admissible, suggesting the following conjecture.

\begin{conjecture}
There are constants $s\ge 2$ and $a>0$ such that,
if $p=p(n)=\omega((\log n)^a n^{-1/s})$ and
$G_n$ is a sequence of graphs with $|G_n|=n$,
$s_p(K_2,G_n)\to 1$, $s_p(C_4,G_n)\to 1$, and $\sup_n s_p(K_{s,t},G_n)<\infty$
for each $t$, then $s_p(C_3,G_n)\to 1$.
\end{conjecture}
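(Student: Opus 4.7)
The plan is to proceed by contradiction via a spectral argument. Passing to a subsequence we may assume $s_p(F,G_n)$ converges for every $F$, and in particular $s_p(K_3,G_n)\to c$ for some $c\ge 0$; we must show $c=1$. Under the density hypothesis with $s\ge 2$ we have $pn^{1/2}\to\infty$, so for the graphs $F$ appearing below $s_p$ and $t_p$ coincide asymptotically and we may use either.

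First, one extends Lemma~\ref{lKst} to the present sparser regime. The density hypothesis $p=\omega((\log n)^an^{-1/s})$ guarantees $np^{s'}\to\infty$ for each $s'\le s$, so common neighbourhoods of $s'$-tuples of vertices concentrate around $np^{s'}$; combining this with the bounded $K_{s,t}$ hypothesis and the Cauchy--Schwarz / moment machinery of Lemmas~\ref{lKst} and~\ref{lmom} should yield $s_p(K_{s',t'},G_n)\to 1$ for all $s'\le s$ and all $t'$. Taken together with Lemma~\ref{lcycles} (whose hypothesis on $p$ is automatic), this also gives $t_p(C_{2k},G_n)\to 1$ for every fixed $k\ge 2$.

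Second, let $\la_1\ge\cdots\ge\la_n$ be the eigenvalues of the adjacency matrix of $G_n$ and set $\mu_i=\la_i/(np)$. As in the proof of Lemma~\ref{lcycles}, $\mu_1\to 1$ and $\sum_{i\ge 2}\mu_i^4\to 0$, hence $\max_{i\ge 2}|\mu_i|\to 0$. Using $\sum_i\mu_i^2=2e(G_n)/(np)^2\sim 1/p$ together with the identity $t_p(K_3,G_n)=\sum_i\mu_i^3$, we obtain
\[
 |s_p(K_3,G_n)-1|\le|\mu_1^3-1|+\bigl(\max_{i\ge 2}|\mu_i|\bigr)\sum_{i\ge 2}\mu_i^2+o(1)=O\bigl(\max_{i\ge 2}|\mu_i|/p\bigr)+o(1).
\]
It therefore suffices to establish the quantitative spectral bound $\max_{i\ge 2}|\mu_i|=o(p)$.

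The hard part is this quantitative bound. The elementary inequality $(\max_{i\ge 2}|\mu_i|)^{2k}\le\sum_{i\ge 2}\mu_i^{2k}=t_p(C_{2k},G_n)-\mu_1^{2k}$ reduces matters to an explicit rate $t_p(C_{2k},G_n)-1=o(p^{2k})$ for some fixed $k$, whereas Step~1 gives only the qualitative convergence $t_p(C_{2k},G_n)\to 1$. The rate must be extracted from the bounded $K_{s,t}$ hypothesis by partitioning the homomorphisms $C_{2k}\to G_n$ according to the combinatorial type of their image (in particular the identification pattern of their vertices) and bounding every non-injective contribution via the controlled higher moments of common-neighbourhood sizes of $s'$-tuples with $s'\le s$. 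The constants $s\ge 2$ and $a>0$ in the conjecture must then be chosen large enough that, under the density lower bound on $p$, each of these error terms (which for a given identification type scales as an explicit power of $n$ and $p$) is dominated by $p^{2k}$. This quantitative step is where the triangle problem is genuinely harder than the triangle-free case handled in Theorem~\ref{g4}: for triangle-free $F$, the flatness cascade there bypasses any need for quantitative rates, while for $K_3$ one must quantify how sharply the common-neighbourhood distribution concentrates on the set of edge-pairs, which has measure only $p$ among all pairs.
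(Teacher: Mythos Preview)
This statement is a \emph{conjecture} in the paper, not a theorem; the authors do not claim to prove it, and indeed remark just before it that they cannot even show $G_n$ must contain a single triangle under the related Conjecture~\ref{ctri}. So there is no ``paper's own proof'' to compare to, and your proposal is an attempt to resolve an open problem.

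The central gap is in your Step~3, and it is fatal to the approach. You correctly reduce the spectral argument to the quantitative bound $\max_{i\ge 2}|\mu_i|=o(p)$, equivalently $t_p(C_{2k},G_n)-1=o(p^{2k})$ for some fixed $k$. But every hypothesis in the conjecture is purely qualitative: $s_p(K_2,G_n)\to 1$, $s_p(C_4,G_n)\to 1$, and boundedness of the $K_{s,t}$ counts. No rate of convergence is assumed. Your proposed extraction of a rate ``by partitioning the homomorphisms $C_{2k}\to G_n$ according to combinatorial type'' controls only the \emph{difference} $t_p(C_{2k},G_n)-s_p(C_{2k},G_n)$; it says nothing about $s_p(C_{2k},G_n)-1$ itself, which is the injective contribution and for which you have only qualitative convergence from Step~1. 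The paper explicitly flags this obstruction in the paragraph following Lemma~\ref{lcycles}: the spectral route to triangles works if one assumes $t_p(C_4,G_n)=1+o(p)$, but ``our aim here is different; we wish to assume only convergence in the relevant metric, making no assumption about the rate of convergence.'' Moreover, Examples~\ref{noga} and~\ref{tdense} show that controlling $K_{2,t}$ counts alone may not suffice, which is precisely why the conjecture allows $s$ to be large; your argument makes no essential use of $s>2$.

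A smaller issue: your claim in Step~1 that $s_p\sim t_p$ for the relevant $F$ fails for $K_{s,t}$ with $t>s$ when $p$ is near $n^{-1/s}$, since then $np^t\not\to\infty$; the paper's workaround in Subsection~\ref{ss_e} is to count common neighbours of \emph{distinct} $s$-tuples directly rather than go via $t_p$.
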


It may be that if the conjecture holds for a given $s$, it holds with $c=1/s$.
It may also be that one needs to control the counts for $K_{s,t}$
and at the same time to consider $p$ larger than $n^{-b}$ for some
$b<1/s$.

There is a potential pitfall in handling subgraph counts
when $p$ is smaller than $n^{-o(1)}$: in proving that $s_p(F,G_n)\to 1$
for various graphs $F$ above,
we made use of the assumption that $s_p(F',G_n)$ is bounded
for other graphs $F'$. In particular, with $F=K_{2,t}$, we used
this assumption for $F'=K_{2,t+1}$. It may be that $F'$ is admissible
whenever $F$ is (as is likely in this case: $K_{2,t}$
should be admissible as soon as $C_4$ is),
but perhaps not. In the latter case we may be forced to work with a larger
admissible set for which we impose the hypothesis of Conjecture~\ref{q1a} (or Conjecture~\ref{q1}),
and a smaller set for which we obtain the conclusion.
In any case, the (smaller) admissible set should have the following property:
if $\F_\alpha$ denotes the set of admissible graphs when $p=n^{-\alpha}$,
$\alpha>0$,
then the sets $\F_\alpha$ should increase as $\alpha$ decreases, and their union
should contain all finite graphs.
We shall return to this question in Section~\ref{sec_compar}, in particular
in Subsections~\ref{sec_em} and~\ref{sec_hm}, where we prove
results that are steps towards (non-uniform) versions of the various
conjectures in this section.

\section{Szemer\'edi's Lemma and the cut metric}\label{sec_Sz}

In the next section we shall discuss the relationship between the
cut and count metrics. As in
the dense case, a key tool in the study of the cut metric is some
variant of Szemer\'edi's Lemma~\cite{Szem}: this will be discussed in this
section. Unlike in the dense case, we need an assumption on the
graphs we consider to make this useful; roughly speaking, our
assumption is that no subgraph of $G_n$ containing a constant
fraction of the vertices has density more than a constant factor
larger than it should have. Several of the usual proofs of
Szemer\'edi's Lemma extend easily to the sparse case under this
assumption; this was noted independently by Kohayakawa and R\"odl;
see~\cite{KR-2003}. (The much earlier Theorem 2 of
Kohayakawa~\cite{K97} is slightly different.)

Throughout this section, $p=p(n)$ with $p=o(1)$ and $np\to\infty$.
(Often, $n^2p\to\infty$ is enough in the proofs, but see
Remark~\ref{r_sp}.) As before, $(G_n)$ always denotes a sequence of
graphs with $|G_n|=n$, which need not be defined for all $n$, but
only for some infinite set.

For disjoint sets $A$, $B$ of vertices of a graph $G=G_n$ with
$n$ vertices, we write $e_G(A,B)$ for the number of edges
of $G$ joining $A$ to $B$, and
\begin{equation}\label{dpdef}
  d_p(A,B) = \frac{e_G(A,B)}{p|A||B|}
\end{equation}
for the normalized density of $G$ between $A$ and $B$. It is
convenient to extend this definition to sets $A$ and $B$ that need
not be disjoint: in this case, we write $e_G(A,B)$ for the number of
ordered pairs $(i,j)$ with $i\in A$, $j\in B$ and $ij\in E(G)$; we
then define $d_p(A,B)$ as above. Note that $e_G(A,A)=2e(G[A])$.
We shall make the following assumption:

\begin{assumption}[bounded density]\label{AC}
There
is a constant $C$ and a function $n_0(\eps)$ such that, for every $\eps>0$
and $n\ge n_0(\eps)$, and any $A$, $B\subset V(G_n)$ with $|A|$, $|B|\ge \eps n$,
we have $d_p(A,B)\le C+\eps$.
\end{assumption}

It suffices to impose this assumption only when $A=B$, replacing $C$ by $C/2$
and $\eps$ by $\eps/2$.
Indeed, if $|A|, |B|\ge \eps n$, $n\ge n_0(\eps)$, and $d_p(A,B)> C+\eps$ then,
by averaging, we may find $A'\subset A$
and $B'\subset B$ with $|A'|=|B'|=\ceil{\eps n}$ such that $d_p(A',B')>C+\eps$.
Then $e_G(A'\cup B',A'\cup B')\ge 2e_G(A',B')> 2(C+\eps)|A'|^2 \ge (C/2+\eps/2)|A'\cup B'|^2$.

The condition above may be written more compactly as follows:
\begin{equation}\label{aC}
 \forall \eps>0:\ \limsup_{n\to\infty} \max\{d_p(A,B): A, B\subset V(G_n), |A|,|B|\ge \eps n\} \le C.
\end{equation}

Note that we shall often assume that \eqref{aC} holds for a particular value of $C$: in this
case, we say that $(G_n)$ has {\em density bounded by $C$}. This is the reason
for including the final $+\eps$ in Assumption~\ref{AC}.

It will be convenient to phrase the proof of Szemer\'edi's Lemma in
terms of kernels. In this sparse setting, the way in which we
associate a kernel to a graph is different from in the dense case.
Indeed, our aim is that the random graph $G(n,p)$ should approximate
the constant kernel taking value $1$. For this reason, to a graph
$G$ with $n$ vertices $1,2,\ldots,n$ we associate the kernel
$\kappa_G$ taking the value $1/p$ on each square $((i-1)/n,i/n]
\times ((j-1)/n,j/n]$ whenever $ij\in E(G)$, and zero elsewhere.
This association will often be implicit: for example, given a graph
$G$ and a kernel $\ka$, we write $\dc(G,\ka)$ for $\dc(\ka_G,\ka)$.

The following observation shows the importance of bounded density.
In the proof, and throughout this section, given a subset $A$ of the
vertices of a graph $G$, we shall often abuse notation by also
writing $A$ for the corresponding subset of $[0,1]$.

\begin{lemma}\label{kbd}
Let $p=p(n)$ be any function of $n$, let $\ka:[0,1]^2\to [0,C]$
be a kernel, and let $(G_n)$ be a sequence of graphs with $|G_n|=n$ and $\dc(G_n,\ka)\to 0$.
Then $(G_n)$ has density bounded by $C$.
\end{lemma}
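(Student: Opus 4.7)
The plan is to translate the statement about vertex subsets and the quantity $d_p$ into a statement about integrals of kernels over measurable subsets of $[0,1]^2$, and then apply the cut norm closeness to $\ka$ combined with the pointwise bound $\ka \le C$.

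First I would unwind the definition of $\ka_{G_n}$. If $A \subset V(G_n)$ is identified with the corresponding union of intervals $I_i$ in $[0,1]$, then $\mu(A) = |A|/n$, and a direct computation shows
\[
 \int_{A \times B} \ka_{G_n}(x,y)\,dx\,dy = \frac{e_{G_n}(A,B)}{pn^2} = \mu(A)\mu(B)\, d_p(A,B).
\]
So the bounded density condition is equivalent to an upper bound on the average of $\ka_{G_n}$ over rectangles of the form $A \times B$.

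Next, fix $\eps > 0$ and any $\delta > 0$. Since $\dc(G_n,\ka) \to 0$, for all sufficiently large $n$ there is a rearrangement $\ka_n'$ of $\ka$ (i.e., $\ka_n' \approx \ka$) with $\cn{\ka_{G_n} - \ka_n'} < \delta\eps^2$, by the definition \eqref{dcdef1}. Because a rearrangement preserves the essential supremum, we still have $\ka_n' \le C$ a.e., and therefore
\[
 \int_{A \times B} \ka_n' \le C\,\mu(A)\mu(B).
\]
Combining this with the cut-norm inequality applied to the measurable sets $A, B \subset [0,1]$,
\[
 \mu(A)\mu(B)\, d_p(A,B) = \int_{A \times B}\ka_{G_n} \le \int_{A \times B}\ka_n' + \cn{\ka_{G_n}-\ka_n'} \le C\,\mu(A)\mu(B) + \delta\eps^2.
\]

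Finally, whenever $|A|,|B| \ge \eps n$, i.e., $\mu(A),\mu(B) \ge \eps$, dividing through yields
\[
 d_p(A,B) \le C + \frac{\delta\eps^2}{\mu(A)\mu(B)} \le C + \delta.
\]
Hence $\limsup_n \max\{d_p(A,B) : |A|,|B| \ge \eps n\} \le C + \delta$ for every $\delta > 0$, which is exactly \eqref{aC}. There is essentially no obstacle here beyond bookkeeping; the only point requiring a moment's care is that the supremum in the cut norm is taken over all measurable subsets of $[0,1]$, so applying it to the interval-unions corresponding to $A$ and $B$ is legitimate.
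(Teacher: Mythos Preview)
Your proof is correct and takes essentially the same approach as the paper's: both use that any rearrangement of $\ka$ is still bounded by $C$, compute $\int_{A\times B}\ka_{G_n}=\mu(A)\mu(B)d_p(A,B)$, and compare via the cut norm. The only cosmetic difference is that the paper argues by contradiction (assuming $d_p(A_n,B_n)\ge C+\eps$ on a subsequence and deducing $\dc(G_n,\ka)\ge\eps^3$), whereas you prove the bound directly.
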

\begin{proof}
Suppose that $(G_n)$ does not have density bounded by $C$. Then there is an $\eps>0$
such that, for infinitely many $n$, there are sets $A_n,B_n\subset V(G_n)$
with $|A_n|,|B_n|\ge \eps n$ and $d_p(A_n,B_n)\ge C+\eps$. Identifying
$A_n$ and $B_n$ with subsets of $[0,1]$, and writing $\mu$
for Lebesgue measure, we have
\[
 \int_{A_n\times B_n} \ka_{G_n} = d_p(A_n,B_n)\mu(A_n)\mu(B_n)\ge (C+\eps)\mu(A_n)\mu(B_n).
\]
Since $\ka$ is bounded by $C$, it follows that
\[
 \left|\int_{A_n\times B_n} \ka_{G_n}-\ka^{(\tau)}
 \right|\ge \eps\mu(A_n)\mu(B_n)\ge \eps^3
\]
for any rearrangement $\ka^{(\tau)}$ of $\ka$,
which contradicts $\dc(G_n,\ka)\to 0$.
\end{proof}

\subsection{Weakly regular partitions}\label{sec4.1}

If $G$ is a graph with vertex set $\{1,2,\ldots,n\}$,
and $\Pi=(P_1,\ldots,P_k)$ is a partition of $V(G)$, then we write $G/\Pi$
for the kernel on $[0,1]^2$ taking the value $d_p(P_a,P_b)$
on the union of the squares $((i-1)/n,i/n]\times (j-1)/n,j/n]$,
$i\in P_a$, $j\in P_b$.
We say that a partition $\Pi$ of a graph $G$ is {\em weakly $(\eps,p)$-regular}
if $\cn{\ka_G-G/\Pi}\le \eps$. Note that the normalizing function $p$ comes
in via the definition of the kernels $\ka_G$ and $G/\Pi$.

For a kernel $\ka$, the definitions are similar:
for $A$, $B\subset [0,1]$ we write $\ka(A,B)$ for the integral of $\ka$
over $A\times B$, and
\[
 d(A,B)=d_\ka(A,B)=\frac{\ka(A,B)}{\mu(A)\mu(B)}
\]
for the average value of $\ka$ on $A\times B$.
Then $d_p(A,B)$, defined
using $G$, is exactly $d(A,B)$, defined using $\ka_G$, so the kernel
$G/\Pi$ is obtained from $\ka_G$ by replacing the value at each
point by the average over the relevant rectangle $P_a\times P_b$.
For $\ka$ a kernel and $\Pi$ a partition of $[0,1]^2$, we define
$\ka/\Pi$ similarly. The partition $\Pi$ is {\em weakly $\eps$-regular}
with respect to $\ka$ if $\cn{\ka-\ka/\Pi}\le \eps$.

The next lemma is a a sparse equivalent of (a version of) the Frieze--Kannan `weak' form
of Szemer\'edi's Lemma from~\cite{FKquick}. As with many proofs of the various
forms of Szemer\'edi's Lemma, the proof of the dense result is not hard
to adapt to the sparse setting:
the only
additional complication is that one must make sure that the parts
of the partition remain large enough so that we can make use of the bounded density assumption.
In the following lemma, $p=p(n)$ is any normalizing
function with $pn^2\to\infty$. In principle, the various constants depend
on the choice of $p$, but this is not the case if we impose
an explicit lower bound on $p(n)$, such as the harmless bound $p\ge n^{3/2}$.

\begin{lemma}\label{lSzw}
Let $p=p(n)$ be any function with $0<p\le 1$ and $pn^2\to\infty$.
Let $\eps>0$, $C>0$ and $k\ge 1$ be given.
There exist constants $n_0$, $K$ and $\eta>0$, all depending on $\eps$, $C$ and $k$,
such that, if $G_n$
is any graph with $n\ge n_0$ vertices such that
\begin{equation}\label{wC}
 d_p(A,B)\le C \hbox{ whenever }|A|,|B|\ge \eta n,
\end{equation}
and $\Pi$ is any partition of $V(G)$ into $k$ parts $P_1,\ldots,P_k$ with
sizes as equal as possible, then there is a weakly $(\eps,p)$-regular partition $\Pi'$
of $V(G_n)$ into $K$ parts that refines~$\Pi$.
\end{lemma}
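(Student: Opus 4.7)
The proof is a standard energy-increment argument in the spirit of Frieze--Kannan~\cite{FKquick}, adapted to the $p$-normalised sparse setting by using the bounded density hypothesis together with an exceptional sub-part inside each class. I work with the potential
\[
 q(\Pi) = \norm{\ka_G/\Pi}_2^2 = \sum_{a,b} \frac{|P_a||P_b|}{n^2}\, d_p(P_a,P_b)^2,
\]
whose two key properties are standard: $q$ is non-decreasing under refinement of $\Pi$, with $q(\Pi')-q(\Pi)=\norm{\ka_G/\Pi'-\ka_G/\Pi}_2^2$ by orthogonality of conditional expectations; and if some measurable $S,T\subset[0,1]$ satisfy $\bigl|\int_{S\times T}(\ka_G-\ka_G/\Pi)\bigr|>\alpha$, then since $S\times T$ is a union of parts of $\Pi':=\Pi\vee\{S,S^c\}\vee\{T,T^c\}$, Cauchy--Schwarz gives $q(\Pi')\ge q(\Pi)+\alpha^2$. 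The plan is to iterate: at step $i$, if $\Pi_i$ is not weakly $(\eps,p)$-regular, pick a cut-norm witness $(S_i,T_i)$ (wlog a union of vertex intervals) and refine.

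The essential new difficulty, absent in the dense case, is that $q(\Pi)$ is not automatically bounded by a $p$-independent constant: on pairs of very small parts $d_p$ can be as large as $1/p$. The role of hypothesis~\eqref{wC} is precisely to give $q(\Pi)\le(C+\eps)^2$ \emph{provided every part of $\Pi$ has size at least $\eta n$}, by applying bounded density pair-by-pair. The strategy is therefore to run the iteration while maintaining this minimum-size invariant, by using, in the spirit of Kohayakawa--R\"odl~\cite{KR-2003}, a designated \emph{exceptional sub-part} $V_0^{(i,j)}\subset P_j$ inside each class of $\Pi$ to absorb pieces that would otherwise be too small.

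Choose $T_0=\lceil 16(C+1)^2/\eps^2\rceil$, $K=k\cdot 4^{T_0}$, $\delta=\eps/(32(C+1))$, and $\eta=\eta(\eps,C,k)>0$ small enough that $\eta\le 1/(8k)$ and $(k+4KT_0)\eta\le\delta$. Initialise $\Pi_0$ from $\Pi$ by carving off a subset of size $\lceil\eta n\rceil$ from each $P_j$ as $V_0^{(0,j)}$, leaving the rest of $P_j$ as a single big piece. At each subsequent step: using that each exceptional sub-part has size $\ge\eta n$ and that $V_0^{(i,j)}\times V$ is a union of parts of $\Pi_i$ (so $\int_{V_0^{(i,j)}\times V}(\ka_G-\ka_G/\Pi_i)=0$ and both $\int_{V_0^{(i,j)}\times V}\ka_G$ and $\int_{V_0^{(i,j)}\times V}\ka_G/\Pi_i$ are $\le(C+\eps)|V_0^{(i,j)}|/n$ by bounded density), the witness $(S_i,T_i)$ may be replaced by its intersection with the union of big pieces, losing at most $4(C+\eps)\delta\le\eps/2$ in the cut integral. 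Refine $\Pi_i$ by $\{S_i,S_i^c\}\vee\{T_i,T_i^c\}$ --- this leaves every exceptional sub-part untouched and splits each big piece into at most four sub-pieces --- and then move any new piece of size $<\eta n$ into the exceptional sub-part of its ambient $P_j$ to form $\Pi_{i+1}$.

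By construction every part of every $\Pi_i$ has size at least $\eta n$, so bounded density gives $q(\Pi_i)\le(C+\eps)^2$ throughout, and Cauchy--Schwarz applied to the reduced witness gives a raw gain of at least $(\eps/2)^2=\eps^2/4$ per step. The principal technical obstacle --- the real core of the argument --- is to show that the absorption of small pieces into the exceptional sub-parts does not wipe out this gain: one controls the density of each absorbed sub-piece $Q'\subset Q$ in terms of that of its parent big piece $Q$ via the inequality $d_p(Q',P_b)\le d_p(Q,P_b)|Q|/|Q'|$, and uses this, together with the tiny total measure $O(K\eta)$ of absorbed material and a convexity estimate applied within each $P_j$, to bound the $L^2$ coarsening-loss below $\eps^2/8$. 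Once the net gain $\ge\eps^2/8$ is secured, the iteration must terminate within $T_0$ steps, producing a weakly $(\eps,p)$-regular refinement of $\Pi$ into at most $k\cdot 4^{T_0}=K$ parts; the delicate choice of $\eta$, exponentially small in $(C+1)^2/\eps^2$ and $k$ but independent of $p$ and $n$, is exactly what makes both the cut-norm reduction and the merge-loss estimate quantitative.
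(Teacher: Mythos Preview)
Your overall strategy—an $L^2$ energy increment in the Frieze--Kannan style, with bounded density keeping the potential $q(\Pi)\le (C+\eps)^2$ as long as every part has size at least $\eta n$—is sound, and your reduction of the witness $(S_i,T_i)$ to the union of big pieces is correct. The gap is in the merge-loss step. Your inequality $d_p(Q',P_b)\le d_p(Q,P_b)\,|Q|/|Q'|$ yields only
\[
 |Q'|\,d_p(Q',P_b)^2\ \le\ (C+\eps)^2\,|Q|^2/|Q'|,
\]
which is unbounded as $|Q'|\to 0$. Concretely, a single vertex $v$ of degree $\Theta(n)$ is perfectly compatible with~\eqref{wC} once $np\to\infty$; if the refinement isolates it as $Q'=\{v\}$, the row of $\ka_G/\Pi'$ indexed by $Q'$ has entries of order $1/p$, and its contribution to $q(\Pi')$, hence to the merge loss $q(\Pi')-q(\Pi_{i+1})$, is of order $1/(np^2)$, which is at least $1$ whenever $p\le n^{-1/2}$. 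No convexity estimate rescues this: Jensen runs the wrong way, bounding $\sum_l |Q'_l|\,d_p(R,Q'_l)^2$ from below rather than above. So the claim that the $L^2$ coarsening loss stays below $\eps^2/8$ cannot be established with the tools you name, and since $\Pi_{i+1}$ is not a refinement of $\Pi_i$ (the exceptional part has grown), there is no direct orthogonality relation between the two either.

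The paper's proof sidesteps this entirely by modifying the witnessing cut \emph{before} refining, so that small parts are never created and no merging is needed. Given a witness $A$ with $|\ka_G(A,A^\cc)-\ka_t(A,A^\cc)|\ge\eps/2$, for each current part $P_i$ one moves a set $S$ of at most $\gamma|P_i|$ vertices across the cut so that both $B\cap P_i$ and $B^\cc\cap P_i$ have at least $\gamma|P_i|$ vertices. Bounded density, applied to $S$ (enlarged to $\lceil\eta n\rceil$ vertices if $|S|<\eta n$), shows that $S$ meets at most $Cpn\gamma|P_i|$ edges of $G$; hence each of $\ka_G(A,A^\cc)$ and $\ka_t(A,A^\cc)$ changes by at most $2C\gamma|P_i|/n$ at this stage, and by at most $2C\gamma\le\eps/8$ in total. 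Refining by $(B,B^\cc)$ then gives every new part size at least $\gamma|P_i|\ge\eta n$, so $\ka_{t+1}$ is pointwise bounded by $C$ throughout and orthogonality yields $\tn{\ka_{t+1}}^2\ge\tn{\ka_t}^2+\eps^2/16$ cleanly, with no loss term to estimate.
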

\begin{proof}
Reducing $\eps$ if necessary, we may assume that $\eps \le C$, say.
We assume without comment that $n$ is `large enough' whenever this is needed.

Let $\Pi_0=\Pi$. We shall inductively define a sequence $\Pi_t$ of partitions
of $V(G)$ into $k_t=2^tk$ parts, stopping either when we reach
some $\Pi_t$ that is weakly $(\eps/2,p)$-regular, or when $t\ge T=\ceil{16C^2/\eps^2}+1$.
Every part of $\Pi_t$ will have size at least $\gamma^t n/(2k)$, where $\gamma=\eps/(100C)\le 1/100$.
Note that $\Pi_0$ satisfies this condition.

Set $\eta=\gamma^T/(2k)$, and let $n_0$ be a large constant to be chosen later.
We shall write $\ka_t$ for the kernel $G/\Pi_t$, noting that, since
all parts of $\Pi_t$ have size at least $\eta n$, the kernel $\ka_t$
is bounded by $C$.

Given $\Pi_t$ as above, suppose that $\Pi_t$ is not weakly $(\eps/2,p)$-regular.
Then there is a cut $[0,1]=A \cup A^\cc$ exhibiting this, i.e., a set $A\subset [0,1]$ 
for which
$|\ka_G(A,A^\cc)-\ka_t(A,A^\cc)|\ge \eps/2$. Since both $\ka_G$
and $\ka_t$ correspond to weighted graphs on $V(G)=\{1,2,\ldots,n\}$, we may choose
the cut $A$ to correspond to a subset of $V(G)$: among all `worst' cuts, there
is a cut of this form.

Our aim is to modify $A$ slightly to obtain a set $B$ (which we may think of
as a subset of $V(G)$ or as a subset of $[0,1]$) and
then take two parts $P_i\cap B$ and $P_i\cap B^\cc$ of $\Pi_{t+1}$ for
each part of $\Pi_t$; in doing so, we must ensure that neither of these
parts is too small.
We modify the set $A$ to obtain $B$ in $k_t$ stages, one for each part $P_i$.
At each stage, we move a set $S$ of at most $\gamma|P_i|\ge \eta n$ vertices
from $A$ to $A^\cc$ or vice versa, to ensure that both $B$ and $B^\cc$
meet $P_i$ in at least $\gamma|P_i|$ vertices.
Since $\ka_t$ is bounded by $C$, this changes the value
of the cut $\ka_t(A,A^\cc)$ by at most $2C\gamma|P_i|/n$.

From \eqref{wC}, the set $S$ meets at most $Cpn\gamma |P_i|$ edges
of $G$: to see this, apply \eqref{wC} to $S$ and $V(G)$ if $|S|\ge \eta n$,
and to $S'$ and $V(G)$ otherwise, for any $S'\supset S$ with $\ceil{\eta n}$ vertices.
Hence, the value of the cut
$\ka_G(A,A^\cc)$ changes by at most $2C\gamma|P_i|/n$ when we move our set $S$
from one side of the cut to the other.
After all these changes, we have
\[
 |\ka_t(A,A^\cc)-\ka_t(B,B^\cc)|, \  |\ka_G(A,A^\cc)-\ka_G(B,B^\cc)|
 \le 2C\gamma\le \eps/8.
\]
It follows that
\begin{equation}\label{badcut}
 |\ka_G(B,B^\cc)-\ka_t(B,B^\cc)|\ge \eps/4.
\end{equation}

Let $\Pi_{t+1}$ be the partition obtained by intersecting each part of $\Pi_t$
with $B$ and $B^\cc$, noting that $\Pi_{t+1}$ has all
the required properties. Set $\ka_{t+1}=G/\Pi_{t+1}$, noting
that $\ka_{t+1}(B,B^\cc)=\ka_G(B,B^\cc)$, since $\Pi_{t+1}$ refines
the partition $(B,B^\cc)$. From
\eqref{badcut} it thus follows that
\[
 \on{\ka_{t+1}-\ka_t} \ge \cn{\ka_{t+1}-\ka_t} \ge \eps/4,
\]
with the final inequality witnessed by the cut $(B,B^\cc)$.
Hence, $\tn{\ka_{t+1}-\ka_t}^2 \ge \on{\ka_{t+1}-\ka_t}^2 \ge \eps^2/16$.
Since $\ka_t$ may be obtained from $\ka_{t+1}$ by averaging over rectangles,
$\ka_t$ and $\ka_{t+1}-\ka_t$ are orthogonal:
for any two parts $P_i$, $P_j$ of $\Pi_t$, the kernel $\ka_t$ is constant
on $P_i\times P_j$. Also, $\int_{P_i\times P_j}\ka_{t+1} = \int_{P_i\times P_j}\ka_G = \int_{P_i\times P_j}\ka_t$.
Thus $\int_{P_i\times P_j}\ka_t(\ka_{t+1}-\ka_t)=0$. Summing over $i$ and $j$
it follows that $\int \ka_t(\ka_{t+1}-\ka_t)=0$.
Thus,
\[
 \tn{\ka_{t+1}}^2 = \tn{\ka_t}^2 + \tn{\ka_{t+1}-\ka_t}^2 \ge \tn{\ka_t}^2+\eps^2/16.
\]
It follows by induction that $\tn{\ka_t}^2\ge t\eps^2/16$ as long as our
construction continues.
But, as noted above, $\ka_t$ is bounded by $C$, so our construction
must stop after at most $16C^2/\eps^2$ steps. Since this number is smaller than $T$,
we must stop at a weakly $(\eps/2,p)$-regular partition.

To complete the proof we modify the final partition $\Pi_t$ slightly.
Set $K=k\ceil{\gamma^{-T}}$, and note that, since $t\le T-1$, each part of $\Pi_t$
has size at least $\gamma^{-1}n/K$. First, adjust the parts slightly so that the
size of each is of the form $a\floor{n/K}+b\ceil{n/K}$, $a,b\in \Z^+$,
replacing the kernel $\ka_t$ by a new kernel $\ka'$ corresponding to the altered
partition $\Pi'$.
Arguing as above, $\cn{\ka_t - \ka'}\le 2C\gamma\le \eps/4$,
so, by the triangle inequality and weak $(\eps/2,p)$-regularity of $\Pi_t$, we have
\[
 \cn{\ka'-\ka_G}\le \cn{\ka_t-\ka_G} + \cn{\ka_t-\ka'}
 \le \eps/2+\eps/4=3\eps/4.
\]
Finally, we split each part randomly into parts of sizes exactly $\floor{n/K}$
and $\ceil{n/K}$, obtaining a partition $\Pi''$ into $K$
parts whose sizes are as equal as possible. We write $\ka''$
for the corresponding kernel. Since $\Pi'$ has $O(1)$ parts, and we
have $\Theta(pn^2)$ edges between any two parts with density at least $\eps/100$, say,
it follows from Chernoff's inequality that if $n$ is large enough,
which we enforce by choosing $n_0$ suitably, then with probability
at least $0.99$ the density $d_p(A,B)$ between every pair $(A,B)$ of
new parts $A$ and $B$ coming from parts $P_i$ and $P_j$ of $\Pi'$
with $d_p(P_i,P_j)\ge \eps/100$ is $d_p(P_i,P_j)(1+o(1))$.
Since the densities $d_p(P_i,P_j)$ are uniformly bounded by $C$,
it follows that with probability at least $0.99$
we have $\on{\ka''-\ka'}\le \eps/100$.
But then
\[
 \cn{\ka''-\ka_G}\le \cn{\ka''-\ka'}+\cn{\ka'-\ka_G}
 \le \on{\ka''-\ka'} +\cn{\ka'-\ka_G} \le \eps/100+\eps/2,
\]
so our final partition $\Pi''$ is indeed weakly $(\eps,p)$-regular.
\end{proof}

If for any reason we want a weakly $(\eps,p)$-regular partition into a
particular number $K$ of 
parts (which must be a multiple of the number in the original
partition if we are refining a given partition), the proof
above gives such a partition for any large enough $K$, indeed,
for any $K\ge k\ceil{\gamma^{-T}}$. Of course, $n_0$ then depends
on $K$.

\begin{remark}\label{r_sp}
The proof of Lemma~\ref{lSzw} works even
if $p$ is very small, say of order $1/n$. However, this is of no help -- it is impossible
for Assumption~\ref{AC} (the sequence version of \eqref{wC})
to be satisfied in this range, except in the trivial case where $e(G_n)=o(pn^2)$
(so $p$ is not the appropriate normalizing function).
Indeed, passing to a subsequence where $e(G_n)/(pn^2)$ is bounded away from zero,
picking any $\eps pn^2$ edges of $G_n$, and putting one endpoint of each edge
into $A$ and the other into $B$, we find sets $A$, $B$ with $|A|,|B|\le \eps pn^2$
but $e(A,B)\ge \eps pn^2$, which gives $d_p(A,B)\ge 1/(\eps p^2n^2)$, which tends
to infinity as $\eps\to0$.
\end{remark}

\subsection{Strongly regular partitions}\label{sec4.2}

Usually, when working with the cut metric, weak $\eps$-regularity
turns out to be just as good as the usual stronger
$\eps$-regularity. In the dense case, this is true also when
considering subgraph counts. However, for the subgraph counts we
consider in the next section, it turns out that we do in fact need
the usual form of $\eps$-regularity.

As usual, a pair $(A,B)$ of (not necessarily disjoint) subsets of $V(G)$
is an {\em $(\eps,p)$-regular pair} if
$|d_p(A',B')-d_p(A,B)|\le \eps$ whenever $A'\subset A$ and $B'\subset B$
satisfy $|A'|\ge \eps |A|$ and $|B'|\ge \eps|B|$.
A partition $\Pi=(P_1,\ldots,P_k)$ of $V(G)$ is {\em $(\eps,p)$-regular}
if the parts $P_i$ each have size $\ceil{n/k}$ or $\floor{n/k}$,
and all but at most $\eps\binom{k}{2}$ of the unordered
pairs $\{P_i,P_j\}$, $i\ne j$, are $(\eps,p)$-regular.
The definition (now simply of $\eps$-regularity) for a kernel is similar,
although here one partitions the interval $[0,1]$ into parts
with measure exactly $1/k$.

The following is (essentially) the sparse version of Szemer\'edi's Lemma
observed by Kohayakawa and R\"odl; see~\cite{KR-2003}, where a closely related
result is proved. For a proof, see also Gerke and Steger~\cite{GSsurvey}.
We shall include a proof here as we state the result in a slightly different
way (which makes no real difference), and the use of kernels allows one to 
phrase the proof a little more simply than in~\cite{KR-2003} or~\cite{GSsurvey}.

\begin{lemma}\label{lSzs}
Let $p=p(n)$ be any function with $0<p\le 1$ and $pn^2\to\infty$.
Let $\eps>0$, $C>0$ and $k\ge 1$ be given.
There exist constants $n_0$, $K$ and $\eta>0$, all depending on $\eps$, $C$ and $k$,
such that, if $G_n$
is any graph with $n\ge n_0$ vertices such that
\begin{equation}\label{sC}
 d_p(A,B)\le C \hbox{ whenever }|A|,|B|\ge \eta n,
\end{equation}
and $\Pi$ is any partition of $V(G)$ into $k$ parts $P_1,\ldots,P_k$ with
sizes as equal as possible, then there is an $(\eps,p)$-regular partition $\Pi'$
of $V(G_n)$ into at most $K$ parts that refines~$\Pi$.
\end{lemma}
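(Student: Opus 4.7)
The plan is to adapt the classical index-increment proof of Szemerédi's Regularity Lemma, with the bounded density hypothesis \eqref{sC} playing the role of the trivial bound $d\le 1$ available in the dense case. Define the mean-squared density index
\[
 \ind(\Pi) = \tn{G/\Pi}^2 = \sum_{i,j} \frac{|P_i||P_j|}{n^2}\,d_p(P_i,P_j)^2,
\]
which is monotone non-decreasing under refinement, by the orthogonality argument used already in the proof of Lemma~\ref{lSzw}. Crucially, so long as every part of $\Pi$ has size at least $\eta n$, the hypothesis \eqref{sC} (applied with $A=P_i$, $B=P_j$, including $i=j$) gives $d_p(P_i,P_j)\le C$ for every $i,j$, and hence $\ind(\Pi)\le C^2$.

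Starting from $\Pi_0=\Pi$, iteratively build refinements $\Pi_1,\Pi_2,\ldots$, stopping at the first $t$ for which $\Pi_t$ is $(\eps/2,p)$-regular as a (possibly unequal-size) partition. If $\Pi_t$ is not $(\eps/2,p)$-regular, then more than $(\eps/2)\binom{k_t}{2}$ unordered pairs $\{P_i,P_j\}$ are non-regular, so we may select witness sets $A^{ij}\subset P_i$, $B^{ij}\subset P_j$ of relative sizes at least $\eps/2$ with $|d_p(A^{ij},B^{ij})-d_p(P_i,P_j)|>\eps/2$. Let $\Pi_{t+1}$ be the common refinement of $\Pi_t$ with all the witness sets, so $k_{t+1}\le k_t\,2^{k_t}$, and note that every sub-part has relative size at least $(\eps/2)^{k_t}$ within its parent. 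The standard defect Cauchy--Schwarz calculation (identical to the dense case, since only density differences enter) shows that each irregular pair contributes an increase of at least $(\eps/2)^4|P_i||P_j|/n^2$ to $\ind$; maintaining the invariant that the parts are of roughly equal size after each step (by performing a Chernoff-based random re-equalization as in Lemma~\ref{lSzw}, valid because $pn^2\to\infty$) turns this into a uniform per-step increment of at least $(\eps/2)^5/4$. Since $\ind\le C^2$ is preserved while parts remain of size at least $\eta n$, the iteration halts within $T=O(C^2/\eps^5)$ steps.

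The final cleanup mimics that of Lemma~\ref{lSzw}: split each part of the terminal partition $\Pi_T$ uniformly at random into blocks of size $\floor{n/K}$ or $\ceil{n/K}$, where $K$ is a fixed common multiple of $k$ and of the number of terminal parts. Chernoff's inequality (applicable because $pn^2\to\infty$ and $K$ is a constant, giving $\Theta(pn^2/K^2)$ edges between any pair of relevant blocks of non-negligible density) shows that with positive probability every new density $d_p(A,B)$ differs from the corresponding parent density by $o(1)$, upgrading $(\eps/2,p)$-regularity to $(\eps,p)$-regularity while keeping the refinement of $\Pi$.

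The main obstacle is maintaining the a priori cap $\ind(\Pi_t)\le C^2$ throughout: unlike in the dense setting, where $\ind\le 1$ is automatic, here it relies on every part keeping size at least $\eta n$, so $\eta$ must be chosen tower-function small in $\eps^{-5}$ and $C$ in advance, in order that the minimum part size over all $\le T$ iterations --- a quantity of the form $(\eps/2)^{k_t}$ repeated $T$ times --- still exceeds $\eta n$. This is precisely the step at which the hypothesis \eqref{sC} is indispensable; without it, densities between small parts could blow up, $\ind$ would be unbounded, and the termination argument would collapse.
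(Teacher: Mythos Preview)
Your overall plan --- index increment bounded by $C^2$ via \eqref{sC} --- is the same as the paper's, but the step ``every sub-part has relative size at least $(\eps/2)^{k_t}$ within its parent'' is false. The atoms of $P_i$ under the witness sets $A^{ij}$ are arbitrary intersections $\bigcap_j (A^{ij})^{\pm}$, and nothing stops these from being empty (two disjoint witness sets of relative size $\eps/2$ already do it). This is not a technicality but exactly the place where the sparse proof diverges from the dense one: in the dense case you can tolerate tiny atoms because $\ind\le 1$ is automatic, whereas here you need every part to stay above $\eta n$ at every stage just to invoke \eqref{sC} and keep $\ind\le C^2$.

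The paper fixes this not by Chernoff but deterministically. After forming the atom partition $\Pi'$, it divides each $P_i$ into a fixed number $k_{t+1}/k_t$ of equal-size blocks, chosen greedily so that every atom of $\Pi'$ differs from a union of new blocks by at most $n/k_{t+1}$ vertices. Each witness set $A_{ij}$ is then replaced by the nearby union $A_{ij}'$, and \eqref{sC} is used a \emph{second} time --- not to cap $\ind$, but to bound the number of edges meeting the small symmetric difference $S_{ij}$ --- yielding $|d_p(A_{ij}',A_{ji}')-d_p(A_{ij},A_{ji})|\le \eps/2$, so the adjusted witnesses still certify an index jump of order $\eps^5$. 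Your Chernoff re-equalization cannot substitute: applied to the atoms it fails because atoms may have $O(1)$ vertices; applied directly to the $P_i$ it discards the witness structure, so densities between new parts match those between old parts and there is no increment. Your final cleanup is also unjustified: random splitting preserves densities between the new blocks, but that is not $(\eps,p)$-regularity of the pairs, which concerns \emph{all} subsets of size $\ge\eps|P_i|$; the paper remarks just after this proof that the $C_4$-counting route to this implication fails in the sparse setting. (The paper avoids the issue by building $\Pi_{t+1}$ already equipartite at each step, so no final cleanup is needed.)
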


\begin{proof}
Reducing $\eps$ and/or increasing $C$ if necessary, we may suppose
for convenience that $\eps\le 1$ and $C\ge 1$.

Set $\gamma=\eps^3/(100C)$.
This time we inductively define a sequence $\Pi_t$ of partitions
of $V(G)$ into $k_t$ parts, where $\Pi_0=\Pi$, $k_0=k$, and $k_{t+1}=k_t\ceil{k_t2^{k_t}/\gamma}$,
stopping either when we reach
some $\Pi_t$ that is $(\eps,p)$-regular, or when $t\ge T=\ceil{20C^2/\eps^5}+1$.
The parts of each $\Pi_t$ will have sizes as equal as possible.
Note that $\Pi_0$ satisfies this condition.

Set $\eta=1/(2k_T)$, and let $n_0$ be a large constant to be chosen later.
We assume throughout that $n\ge n_0$.
As before, we write $\ka_t$ for the kernel $G/\Pi_t$, noting that, since
all parts of $\Pi_t$ have size at least $\eta n$, the kernel $\ka_t$
is bounded by $C$.

The key (standard) observation
is the following. Let $A$ and $B$ be parts of $\Pi_t$, so $\ka_t$ is by definition
constant on $A\times B$, and let $A'\subset A$ and $B'\subset B$.
Let $\Pi'$ be any partition refining $\Pi$ such that each of $A'$
and $B'$ is a union of parts of $\Pi'$, and let $\ka'=G/\Pi'$ be the corresponding
kernel. Restricted to $A\times B$, the function $\ka'$ integrates
to $d_p(A,B)\mu(A)\mu(B)=\int_{A\times B}\ka_t$, since
$A$ and $B$ are unions of parts of $\ka'$. Hence, $\ka_t$ and $\ka'-\ka_t$
are orthogonal on this set. Using the fact that $A'$ and $B'$ are unions
of parts of $\ka'$, we see that $\int_{A'\times B'}\ka'=d_p(A',B')\mu(A')\mu(B')$,
which differs from the integral of $\ka_t$ over the same set
by $|d_p(A',B')-d_p(A,B)|\mu(A')\mu(B')$. It follows that $\tn{\ka'-\ka_t}^2$
is at least $\bb{d_p(A',B')-d_p(A,B)}^2\mu(A')\mu(B')$, and hence, using orthogonality, that
\begin{equation}\label{incr}
 \int_{A\times B} (\ka')^2 \ge \int_{A\times B}\ka_t^2 + \bb{d_p(A',B')-d_p(A,B)}^2\mu(A')\mu(B').
\end{equation}

Suppose then that $\Pi_t$ is not $(\eps,p)$-regular, and let
$A_1,\ldots,A_{k_t}$ denote the parts of $\Pi_t$.
Then there are at least
$\eps \binom{k_t}{2}$ pairs $\{A_i,A_j\}$ of parts of $\Pi_t$ that are not 
$(\eps,p)$-regular. For each, pick sets $A_{ij}\subset A_i$ and $A_{ji}\subset A_j$ witnessing this,
i.e., with $|d_p(A_{ij},A_{ji})-d_p(A_i,A_j)|\ge \eps$ and $|A_{ij}|\ge \eps |A_i|$, $|A_{ji}|\ge \eps |A_j|$.
Let $\Pi'$ be the partition whose parts are all atoms formed by the sets $A_i$ and the
sets $A_{ij}$ taken together, so $\Pi'$ refines $\Pi_t$, and each $A_{ij}$ is a union
of parts of $\Pi'$.
We could estimate the $L^2$-norm of $G/\Pi'$ using \eqref{incr},
but this will not be useful if some parts of $\Pi'$
are too small, so we first adjust the part sizes.

Define $\Pi_{t+1}$
by dividing
each $A_i$ into $k_{t+1}/k_t$ parts whose sizes are as equal as possible,
so that each part of $\Pi'$ differs from a union of parts of $\Pi_{t+1}$
in at most $n/k_{t+1}$ vertices: to do this, keep taking for a part of $\Pi_{t+1}$ a subset
of some part of $\Pi'$, until what is left of every part of $\Pi'$ is too small.
For each $i$, there are at most $k_t$ sets $A_{ij}$ inside $A_i$,
so $A_i$ is a union of at most $2^{k_t}$ parts of $\Pi'$. 
It follows that there is some union $A_{ij}'$ of parts of $\Pi_{t+1}$ with
\[
 |A_{ij}-A'_{ij}|\le 2^{k_t}n/k_{t+1} \le \gamma n/k_t^2.
\]
Arguing as in the proof of Lemma~\ref{lSzw}, it follows from \eqref{sC} 
that the symmetric difference $S_{ij}$ of $A_{ij}$ and $A'_{ij}$ meets
at most 
\[
 Cpn|S_{ij}| \le Cp\gamma n^2/k_t^2 \le \eps^3 p |A_i||A_j|/99
\]
edges of $G$, if $n$ is sufficiently large.
Since $|S_{ij}|\le \eps^3 |A_i|/100$, say, while $|A_{ij}|\ge \eps |A_i|$
and $|A_{ji}|\ge \eps |A_j|$, it follows crudely that
\[
 |d_p(A_{ij}',A_{ji}')-d_p(A_{ij},A_{ji})| \le \eps/2,
\]
which implies that
\[
 |d_p(A_{ij}',A_{ji}')-d_p(A_i,A_j)| \ge \eps/2.
\]
Now $A_{ij}'$ and $A_{ji}'$ are unions of parts of $\Pi_{t+1}$,
and these sets have size at least $\eps n/(2k_t)$.
Hence, from \eqref{incr},
\[
 \int_{A_i\times A_j} \ka_{t+1}^2 \ge \int_{A_i\times A_j} \ka_t^2 + \eps^4/(16k_t^2)
\]
for each of the at least $\eps \binom{k_t}{2}$ irregular pairs $\{A_i,A_j\}$.
Since $\int_{A_i\times A_j} \ka_{t+1}^2 \ge \int_{A_i\times A_j} \ka_t^2$ always holds,
it follows
that $\tn{\ka_{t+1}}^2\ge \tn{\ka_t}^2+\eps^5/20$.

If the construction above does not stop before step $T$, then by induction we have
$\tn{\ka_t}\ge t\eps^5/20$ for $0\le t\le T$. But each $\ka_t$ is bounded
by $C$, so $\tn{\ka_T}^2\le C^2$, giving a contradiction. Hence
the construction does stop before step $T$, giving an $(\eps,p)$-regular partition
with $k_t\le k_T$ parts.
\end{proof}

Note that Lemma~\ref{lSzs} implies (essentially) Lemma~\ref{lSzw}: it is easy to check that
an $(\eps,p)$-regular partition is, say, weakly $(10(C+1)\eps,p)$-regular, provided
the parts are large enough for \eqref{sC} to hold. However, one of course
obtains much worse bounds on the number of parts using the stronger notion of regularity.

\begin{remark}
Let us illustrate once again the difference between the dense
and sparse cases with a simple observation. Given a pair $(A,B)$ of sets
of vertices of a graph $G$,
let $C_4(A,B)$ denote the number
of homomorphisms from $C_4$ into the subgraph spanned by 
$A\cup B$ mapping a given pair of opposite vertices into $A$
and the other pair into $B$. Standard convexity arguments
show that $C_4(A,B)\ge d(A,B)^4|A|^2|B|^2$. The pair $(A,B)$
is {\em $(\eps,p)$-$C_4$-minimal} if $C_4(A,B)\le (d(A,B)^4+\eps p^4)|A|^2|B|^2$.
In the dense case (with $p=1$) it is well known and very easy to check
that $\eps$-regularity and $\eps$-$C_4$-minimality are essentially
equivalent: $\eps$-regularity implies $f(\eps)$-$C_4$-minimality,
and $\eps$-$C_4$-minimality implies $g(\eps)$-regularity, for some
$f(\eps),g(\eps)$ with $f(\eps),g(\eps)\to 0$ as $\eps\to 0$.

Let $\eps>0$ and $M$ be given. By counting $C_4$s it is easy to see that
there is a function $f(\eps)$ with $f(\eps)\to 0$ as $\eps\to 0$
such that,
if $n$ is large enough and $(A,B)$ is $\eps$-regular with $|A|=|B|=n$,
then we may partition $A$ and $B$ into sets $A_1,\ldots,A_M$ and $B_1,\ldots,B_M$
of almost equal sizes so that every pair $(A_i,B_j)$ is $f(\eps)$-regular.
Indeed, a random partition has this property with probability
tending to 1, since
by standard concentration results (for example, the Hoeffding--Azuma
inequality), the edge densities and `$C_4$-densities' of the pairs
$(A_i,B_j)$ are highly concentrated about the corresponding densities
for $(A,B)$.
It follows immediately that in the usual dense Szemer\'edi's Lemma~\cite{Szem}, we may
specify in advance the number of parts $K$ we would like our partition
to have, provided (as in the weak case) that $K$ is large enough given
$\eps$, and $n$ large enough given $\eps$ and $K$.

In the sparse case, the fact about random partitioning above is presumably
true, but the simple proof using $C_4$-counts fails totally. It is still
true that $(\eps,p)$-$C_4$-minimality implies $(f(\eps),p)$-regularity,
but the reverse implication fails. Indeed, whenever $p=p(n)\to 0$,
given any pair $(A,B)$,
we may add a small dense (say complete bipartite) subgraph with too
few edges to disturb regularity, but containing many more than $p^4|A||B|$
$C_4$s.
\end{remark}

\subsection{Szemer\'edi's lemma and convergence in the cut
norm}\label{sec4.3}

We start with a consequence of Lemma~\ref{lSzw} concerning the cut norm.

\begin{corollary}\label{cSz}
Let $(G_n)$ be a sequence of graphs satisfying Assumption~\ref{AC}. Then there
is a kernel $\ka:[0,1]^2\to [0,C]$ and a subsequence $(G_{n_i})$ of $(G_n)$
such that $\dc(G_{n_i},\ka)\to 0$. Moreover, we may label the vertices
of $G_{n_i}$ with $1,2,\dots,n_i$ so that $\cn{\ka_{G_{n_i}}-\ka}\to 0$.
\end{corollary}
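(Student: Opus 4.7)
The plan is to apply Lemma~\ref{lSzw} iteratively to produce, for each $G_n$, a nested family of increasingly fine weakly regular partitions, and then extract a limiting kernel by a diagonal compactness argument. Concretely, starting from the trivial one-part partition $\Pi_n^{(0)}$, I inductively define $\Pi_n^{(k+1)}$ to be a weakly $(2^{-k},p)$-regular partition refining $\Pi_n^{(k)}$, produced by Lemma~\ref{lSzw}; here the hypothesis \eqref{wC} is supplied by Assumption~\ref{AC} applied with a sufficiently small $\eps$ (depending on $k$) and $n$ sufficiently large, and the number of parts $K_k$ of $\Pi_n^{(k)}$ depends only on $k$ and $C$. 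Then I relabel the vertices of each $G_n$ by $1,\dots,n$ so that, for every $k$, each part of $\Pi_n^{(k)}$ is a block of consecutive integers; this can be done simultaneously for all $k$ because the partitions are nested.

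With this labeling, the averaged kernel $\ka_n^{(k)}:=G_n/\Pi_n^{(k)}$ is a step function on $[0,1]^2$, constant on at most $K_k^2$ product-of-intervals rectangles, and taking values in $[0,C+o(1)]$ by Assumption~\ref{AC} (all parts have size $\Theta(n)$). Each $\ka_n^{(k)}$ is thus encoded by the pair of tuples (part-endpoints in $[0,1]^{K_k-1}$ and densities in $[0,C+1]^{K_k^2}$), living in a compact space. A standard diagonal argument produces a subsequence $(G_{n_i})$ along which, for every fixed $k$, both tuples converge, so $\on{\ka_{n_i}^{(k)}-\ka^{(k)}}\to 0$ for some step kernel $\ka^{(k)}:[0,1]^2\to[0,C]$; in particular $\cn{\ka_{n_i}^{(k)}-\ka^{(k)}}\to 0$. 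Since weak $(2^{-k},p)$-regularity of $\Pi_n^{(k)}$ gives $\cn{\ka_{G_n}-\ka_n^{(k)}}\le 2^{-k}$, the triangle inequality shows that $(\ka^{(k)})_{k\ge 1}$ is Cauchy in the cut norm.

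To produce a kernel limit, I apply Banach--Alaoglu to the closed ball of radius $C$ in $L^\infty([0,1]^2)$: a subsequence of $(\ka^{(k)})$ converges weak-$*$ to some $\ka\in L^\infty$ with $\infn{\ka}\le C$. Combined with the Cauchy property, this upgrades to cut-norm convergence $\cn{\ka^{(k)}-\ka}\to 0$, as follows: given $\eps>0$, pick $N$ with $\cn{\ka^{(k)}-\ka^{(\ell)}}\le\eps$ for $k,\ell\ge N$; then for any $k\ge N$ and any measurable $S,T\subset[0,1]$, choose $\ell\ge N$ along the weak-$*$ subsequence with $|\int_{S\times T}(\ka^{(\ell)}-\ka)|\le \eps$ (allowed since $\mathbf{1}_{S\times T}\in L^1$), obtaining $|\int_{S\times T}(\ka^{(k)}-\ka)|\le 2\eps$ uniformly in $S,T$. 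Symmetry of $\ka$ follows from symmetry of the $\ka^{(k)}$ via the same weak-$*$ argument applied to $\mathbf{1}_{A\times B}$ and $\mathbf{1}_{B\times A}$, so $\ka$ is a kernel. Finally, the triangle inequality
$$\cn{\ka_{G_{n_i}}-\ka}\le\cn{\ka_{G_{n_i}}-\ka_{n_i}^{(k)}}+\cn{\ka_{n_i}^{(k)}-\ka^{(k)}}+\cn{\ka^{(k)}-\ka}$$
is made arbitrarily small by first choosing $k$ large and then $i$ large, proving the ``moreover'' statement; since $\dc\le\cn{\cdot}$, this also gives $\dc(G_{n_i},\ka)\to 0$.

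The main obstacle is the last analytic step: verifying that a cut-norm Cauchy sequence of uniformly bounded kernels has a kernel limit bounded by $C$. The cut norm is a supremum over an uncountable family of test pairs $(S,T)$, so one cannot simply diagonalize pointwise in $(S,T)$; the weak-$*$/Banach--Alaoglu argument sketched above, using the Cauchy property crucially, is what promotes $(S,T)$-pointwise control to $(S,T)$-uniform control. Everything else is routine bookkeeping built on Lemma~\ref{lSzw}.
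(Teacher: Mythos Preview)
Your proof is correct and follows the paper's outline for the iterative regularity/diagonalization part, but takes a genuinely different route for the key analytic step of producing the limiting kernel $\ka$ from the sequence $(\ka^{(k)})$. The paper observes that, because $\Pi_n^{(k+1)}$ refines $\Pi_n^{(k)}$, the limiting step kernels satisfy the conditional-expectation relation: $\ka^{(k)}$ is the average of $\ka^{(k+1)}$ over each $K_k^{-1}$-by-$K_k^{-1}$ square. Thus $(\ka^{(k)})$ is a bounded martingale on $[0,1]^2$, and the Martingale Convergence Theorem gives a limit $\ka$ pointwise a.e., hence in $L^1$ and in the cut norm. You instead use Banach--Alaoglu (sequential, since $L^1([0,1]^2)$ is separable) to produce a weak-$*$ limit, and then promote this to cut-norm convergence via the cut-norm Cauchy property; your ``choose $\ell$ depending on $S,T$ but bound the Cauchy term uniformly'' trick is exactly what is needed here. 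One small omission: you should also note $\ka\ge 0$ a.e., which follows just as symmetry does (each $\ka^{(k)}\ge 0$ and $\int_A \ka=\lim\int_A \ka^{(k)}\ge 0$ for every measurable $A$).

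Comparing the two: the martingale argument is shorter and yields the stronger conclusion of pointwise a.e.\ and $L^1$ convergence, but it relies on the conditional-expectation structure coming from the nested partitions. Your argument uses the nesting only to secure a single consistent vertex labeling (needed for the ``moreover'' clause) and otherwise proves the general fact that any uniformly bounded, cut-norm Cauchy sequence of kernels converges in cut norm to a bounded kernel; this is a useful standalone lemma and makes the logical dependence on the refinement structure more transparent.
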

\begin{proof}
We shall only sketch the proof as the argument is exactly the same as
that of Lov\'asz and Szegedy~\cite{LSz1} for the dense case. Note that given any $\eta>0$
and $\eps>0$,
our graphs $G_n$ satisfy the assumption \eqref{wC} of Lemma~\ref{lSzw}
with $C+\eps$ in place of $C$ whenever $n$ is large enough.

First, let us apply Lemma~\ref{lSzw} with $k=1$ and $\eps=\eps_1=1/2$, say, to obtain
a weakly $(\eps_1,p)$-regular partition $\Pi_{n,1}$ of $G_n$ into
$k_1=K$ parts, for all large enough $n$. We may relabel the vertices of each $G_n$
so that the parts of $\Pi_{n,1}$ are all intervals.
Each kernel $G_n/\Pi_{n,1}$ is characterized by a $k_1$-by-$k_1$ density matrix,
whose entries all lie in $[0,C+\eps_1]$. (Indeed, if $k_1$ happens to divide $n$, then
the kernel is exactly the kernel obtained from the matrix in the obvious way.)
Since these matrices live in a compact set, $[0,C+\eps_1]^{k_1^2}$, they have a convergent
subsequence. Passing to the corresponding subsequence of $G_n$, we then
have $G_n/\Pi_{n,1}\to \ka_1$ pointwise almost everywhere, and hence in $L^1$ and
in the cut norm. Since the partitions $\Pi_{n,1}$ are weakly $(\eps_1,p)$-regular,
we have $\cn{\ka_{G_n} - G_n/\Pi_{n,1}}\le \eps_1$. Passing far enough along
our subsequence, it follows that $\cn{\ka_{G_n} - \ka_1}\le 2\eps_1$.

Working within the subsequence defined above, apply Lemma~\ref{lSzw} again
with $\eps=\eps_2=1/4$, say, and $k=k_1$. For each $n$ we find a partition $\Pi_{n,2}$
refining $\Pi_{n,1}$, with $k_2=K(\eps_1,C,k_1)$ parts.
Relabelling vertices, we may assume
that each part of each $\Pi_{n,2}$ is an interval. (Note that we only reorder
the vertices within parts of $\Pi_{n,1}$.) As before, on a subsequence we
have $G_n/\Pi_{n,2}\to \ka_2$, for some kernel $\ka_2$ constant on squares
of side-length $1/k_2$.
Since $\Pi_{n,2}$ refines $\Pi_{n,1}$ for each $n$, it follows that the value
of $\ka_1$ on each $1/k_1$-by-$1/k_1$ square is exactly the average of $\ka_2$
over this set; to see this, let $n\to\infty$.

Iterating, we find kernels $\ka_1,\ka_2,\ldots$ each of which can be obtained
by averaging the next one, and graphs $G_{n_i}$ with $\cn{\ka_{G_{n_i}} -\ka_i}\le
2\eps_i=2^{1-i}$, say.
To complete the proof we simply observe that the sequence $(\ka_t)$ is a martingale
on the state space $[0,1]^2$. Since each $\ka_t$ is bounded by $C+\eps_t\le C+1$, by the Martingale
Convergence Theorem there 
is a kernel $\ka:[0,1]^2\to [0,C]$ with $\ka_t\to \ka$ pointwise almost everywhere,
and hence in $L^1$ and in the cut-norm. Then $\cn{\ka_{G_{n_i}} -\ka}\to 0$
as required.
\end{proof}

The corollary above says that any (suitable) sequence of graphs
has a subsequence converging to a kernel, and is a simple
consequence of Szemer\'edi's Lemma and the Martingale
Convergence Theorem. Together with Lemma~\ref{kbd}, it shows that Assumption~\ref{AC}
is the correct assumption to impose on sequences of graphs
when we seek limits that are bounded kernels $\ka:[0,1]^2\to[0,C]$.
Before turning to an application of Corollary~\ref{cSz},
let us note an even simpler consequence of the Martingale Convergence
Theorem.

\begin{lemma}\label{approx}
Let $\ka$ be a bounded kernel, and for $k\ge 1$, let $\ka_k$ be the piecewise
constant kernel obtained by dividing $[0,1]^2$ into $2^{2k}$ squares
of side $2^{-k}$, and replacing $\ka$ by its average over each square.
Then $\ka_k\to \ka$ pointwise almost everywhere and also in $L^p$ for any $p$.
\end{lemma}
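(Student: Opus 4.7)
The plan is to recognize $(\ka_k)$ as a bounded martingale on $[0,1]^2$ with respect to the filtration by dyadic squares, and then invoke the Martingale Convergence Theorem together with dominated convergence.

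More concretely, first I would let $\F_k$ denote the (finite) $\sigma$-algebra on $[0,1]^2$ generated by the $2^{2k}$ closed-open dyadic squares of side $2^{-k}$. The sequence $(\F_k)$ is increasing, and $\bigcup_k \F_k$ generates the Borel $\sigma$-algebra, since any open subset of $[0,1]^2$ is a countable union of dyadic squares. By the definition of $\ka_k$ (the average of $\ka$ over each dyadic square), we have $\ka_k = \E[\ka \mid \F_k]$ in the probabilistic sense (with Lebesgue measure as the underlying probability), and consequently $(\ka_k)_{k\ge 1}$ is a martingale. Moreover, if $|\ka|\le M$ almost everywhere, then $|\ka_k|\le M$ pointwise for every $k$, so the martingale is uniformly bounded.

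Second, I would apply L\'evy's upward theorem (the $L^1$ version of the martingale convergence theorem): since $\ka\in L^1([0,1]^2)$ and $\F_\infty=\sigma\bb{\bigcup_k \F_k}$ is the full Borel $\sigma$-algebra, we get $\ka_k \to \E[\ka\mid \F_\infty]=\ka$ almost everywhere and in $L^1$. (Alternatively, this is just the Lebesgue Differentiation Theorem applied along the dyadic basis, which regularly shrinks to each point.)

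Finally, for $L^p$ convergence with $1\le p<\infty$, the pointwise bound $|\ka_k-\ka|\le 2M$ combined with $\ka_k\to\ka$ almost everywhere gives $|\ka_k-\ka|^p\to 0$ a.e., dominated by the integrable constant $(2M)^p$; the Dominated Convergence Theorem then yields $\int|\ka_k-\ka|^p\to 0$. The case $p=\infty$ is not claimed and is false in general, so no separate argument is needed. There is essentially no obstacle here: the result is a textbook application of martingale convergence, and the only thing worth double-checking is that the dyadic $\sigma$-algebras do indeed generate the Borel $\sigma$-algebra, which is immediate.
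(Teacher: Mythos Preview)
Your proof is correct and follows exactly the same approach as the paper: recognize $(\ka_k)$ as a bounded martingale with respect to the dyadic filtration, apply the Martingale Convergence Theorem for almost-everywhere convergence, and then use dominated convergence for $L^p$ convergence. The paper's argument is just a terser version of yours.
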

\begin{proof}
The sequence $\ka_k$ is a bounded martingale on $[0,1]^2$, so pointwise
convergence is given by the Martingale Convergence Theorem. Since
the sequence $\ka_k$ is bounded by $\sup \ka$, convergence in $L^p$ follows
by dominated convergence.
\end{proof}

A consequence of Corollary~\ref{cSz} is that it allows us to compare
the two different versions of the cut metric. Recall that for graphs $G_1$, $G_2$,
we defined $\dc(G_1,G_2)$ by first passing to kernels taking
the values $0$ and $1/p$. If $G_1$ and $G_2$
have the same number of vertices, then
there is a more natural definition of their cut-distance, $\dcG(G_1,G_2)$,
defined in the same way but only allowing rearrangements
that `map whole vertices to whole vertices'. As in the dense
case, $\dc(G_1,G_2)$ and $\dcG(G_1,G_2)$ are defined by \eqref{dcdef} and
\eqref{dcg}, respectively; the difference between the sparse
and dense cases is in the normalization of $\ka_{G_i}$.
Writing $\dc^1$ and $\dcG^1$ for the metrics defined using $p=1$,
Borgs, Chayes, Lov\'asz, S\'os and Vesztergombi~\cite[Theorem 2.3]{BCLSV:1}
showed that these metrics are equivalent, proving that
\begin{equation}\label{cequiv}
 \dc^1(G_1,G_2)\le \dcG^1(G_1,G_2)\le 32\dc^1(G_1,G_2)^{1/67}.
\end{equation}
In fact, they proved \eqref{cequiv} for edge-weighted graphs,
as long as all edge weights lie in $[-1,1]$.
Unlike simple Lipschitz equivalence, which may also hold,
this does not directly carry over to the sparse setting: we have $\dc=p^{-1}\dc^1$
and $\dcG=p^{-1}\dcG^1$, so \eqref{cequiv} can be written as
\[
 \dc(G_1,G_2)\le \dcG(G_1,G_2)\le 32\dc(G_1,G_2)^{1/67}p^{-66/67},
\]
which is of little if any use here.
However, the equivalence of the two metrics in the sparse case
is not too hard to deduce from \eqref{cequiv}, using Corollary~\ref{cSz}.

\begin{lemma}\label{dcdcG}
For $i=1,2$, let $(G_n^{(i)})$ be a sequence of graphs satisfying
the bounded density assumption~\ref{AC}.
Then $\dc(G_n^{(1)},G_n^{(2)})\to 0$ if and only if
$\dcG(G_n^{(1)},G_n^{(2)})\to\nobreak 0$.
\end{lemma}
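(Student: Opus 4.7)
The direction $\dcG(G_n^{(1)},G_n^{(2)})\to 0 \Rightarrow \dc(G_n^{(1)},G_n^{(2)})\to 0$ is immediate from the trivial pointwise inequality $\dc\le \dcG$, so the content is in the converse. My plan for the converse is to use the sparse weak Szemer\'edi lemma (Lemma~\ref{lSzw}) to reduce to the dense setting, apply the known dense equivalence \eqref{cequiv} of Borgs, Chayes, Lov\'asz, S\'os and Vesztergombi, and then lift the resulting permutation of partition classes back to a genuine vertex bijection.

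Fix $\eps>0$. First I would apply Lemma~\ref{lSzw} to each $G_n^{(i)}$ with regularity parameter $\eps$ and the same number $K=K(\eps,C)$ of parts (by the remark following that lemma we are free to increase $K$ to make it common to both sequences), obtaining weakly $(\eps,p)$-regular partitions $\Pi_n^{(i)}$ of $V(G_n^{(i)})$ into $K$ blocks of almost equal size. The $K\times K$ density matrices $D_n^{(i)}$ have entries in $[0,C+o(1)]$ and so live in a compact subset of $\RR^{K^2}$; passing to a subsequence I can assume both converge to matrices $D^{(i)}$ with entries in $[0,C]$. Let $\ka^{(i)}$ be the step-function kernel on the standard $K\times K$ grid obtained from $D^{(i)}$. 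Weak regularity gives $\cn{\ka_{G_n^{(i)}}-\ka^{(i)}}\le \eps+o(1)$, so the triangle inequality together with $\dc(G_n^{(1)},G_n^{(2)})\to 0$ yields $\dc(\ka^{(1)},\ka^{(2)})\le 2\eps+o(1)$.

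Next I would view $\ka^{(1)}$ and $\ka^{(2)}$ as edge-weighted graphs on a common $K$-element vertex set with weights in $[0,C]$. After scaling by $1/C$, the dense equivalence \eqref{cequiv} applies and, for $\eps$ small enough, produces a permutation $\sigma$ of the $K$ blocks such that $\cn{\ka^{(1)}-(\ka^{(2)})^{(\tau_\sigma)}}\le 32\,C^{66/67}(2\eps)^{1/67}$, a quantity I will denote by $g(\eps)$; here $\tau_\sigma$ is the piecewise affine rearrangement of $[0,1]$ that permutes the $K$ standard intervals according to $\sigma$. I would then lift $\sigma$ to a vertex bijection $\pi\colon V(G_n^{(1)})\to V(G_n^{(2)})$ by pairing the part $P_a^{(1)}$ with $P_{\sigma(a)}^{(2)}$ (after moving a handful of vertices between adjacent parts so that corresponding parts have exactly equal size) and then matching vertices arbitrarily within each pair. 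Applying the triangle inequality across $\ka_{G_n^{(1)}}$, $\ka^{(1)}$, $(\ka^{(2)})^{(\tau_\sigma)}$ and the $\pi$-relabelled kernel of $G_n^{(2)}$ bounds $\dcG(G_n^{(1)},G_n^{(2)})$ by $2\eps+g(\eps)+o(1)$, which tends to $0$ as $\eps\to 0$.

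The hard part will be the final bookkeeping step: showing that the vertex-level adjustments needed to make corresponding parts exactly the same size, together with the error between $G_n^{(i)}/\Pi_n^{(i)}$ and its subsequential limit $\ka^{(i)}$, contribute only $o(1)$ to the cut norm. Both will follow from the bounded-density assumption~\ref{AC} exactly as in the closing paragraph of the proof of Lemma~\ref{lSzw}: any set of $o(n)$ vertices is incident to $O(pn^2/K)$ edges of each $G_n^{(i)}$, and so perturbs any normalized cut by $o(1)$.
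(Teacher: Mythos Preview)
Your proof is correct and follows essentially the same route as the paper: approximate each $G_n^{(i)}$ by a bounded step kernel on a common $K\times K$ grid, apply the dense inequality \eqref{cequiv} to obtain a block permutation, and lift this to a vertex bijection. The only cosmetic differences are that the paper obtains its step kernels by first invoking Corollary~\ref{cSz} and then Lemma~\ref{approx} (whereas you use Lemma~\ref{lSzw} directly, which is if anything more economical), and the paper frames the converse as a contradiction argument (assume $\dcG\ge\delta$ on a subsequence), which handles the dependence of your compactness subsequence on $\eps$ a little more cleanly than your presentation does.
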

\begin{proof}
If $\dcG(G_n^{(1)},G_n^{(2)})\to 0$ then, since $\dc\le\dcG$, it follows trivially that
$\dc(G_n^{(1)},G_n^{(2)})\to 0$.

Suppose now that $\dc(G_n^{(1)},G_n^{(2)})\to 0$; our aim is to show
that $\dcG(G_n^{(1)},G_n^{(2)})\to 0$, so we may suppose that this is not
the case. Hence, passing to a subsequence, we may assume
that $\dcG(G_n^{(1)},G_n^{(2)})\ge \delta$
for some positive $\delta$ and all $n$ in our subsequence.

Applying Corollary~\ref{cSz} twice, the second time to a suitable subsequence,
we find kernels $\ka_1$, $\ka_2:[0,1]^2\to [0,C]$, and
subsequences of the sequences
$(G_n^{(i)})$, defined for the same values of $n$,
on which $\cn{\ka_{G_n^{(i)}}-\ka_i}\to 0$.
Since $\dc(G_n^{(1)},G_n^{(2)})\to 0$, it follows that $\dc(\ka_1,\ka_2)=0$.

For any $\eps>0$, by Lemma~\ref{approx}
we may find a $K$ and kernels $\ka_1',\ka_2':[0,1]^2\to[0,C]$
that are constant on squares of side $1/K$, with $\cn{\ka_i'-\ka_i}\le \eps$.
Since the kernels $\ka_i'$ may be thought of as weighted graphs, it would appear that we have
gone round in circles, but the point is that they are {\em dense} weighted graphs.
Regarding the kernels $\ka_1'/C$ and $\ka_2'/C$ as weighted graphs with
edge weights in $[0,1]$, we have 
\[
 \dc(\ka_1'/C,\ka_2'/C)= \dc(\ka_1',\ka_2')/C \le 2\eps/C,
\]
so \eqref{cequiv} gives
\[
 \dcG(\ka_1',\ka_2')=C\dcG(\ka_1'/C,\ka_2'/C) \le 32C(2\eps/C)^{1/67} = O(\eps^{1/67}).
\]
Hence, there is a rearrangement of $\ka_1'$ preserving intervals
that is close to $\ka_2'$ in the cut norm. Ignoring divisibility, adapting this rearrangement
to the graph $G_n^{(1)}$, $n$ much larger than $K$, 
and using $\cn{\ka_{G_n^{(i)}}-\ka_i'}\le \eps+o(1)$, it follows that 
that $\dcG(G_n^{(1)},G_n^{(2)})\le O(\eps)+O(\eps^{1/67})$.
Choosing $\eps$ small enough, the final bound is less than $\delta$,
contradicting our assumptions.
\end{proof}

Corollary~\ref{cSz} shows that one property of the cut metric
carries over to the sparse setting: for every suitable sequence
$(G_n)$, i.e., any sequence satisfying the
bounded density assumption~\ref{AC}, there is a kernel $\ka$
and a subsequence converging to $\ka$ in $\dc$. In the other
direction, as in the dense case, such a sequence is given by the
natural random construction.

\begin{lemma}\label{lrgcut}
Let $p=p(n)$ satisfy $np\to\infty$,
let $C>0$ be constant, let $\ka:[0,1]^2\to [0,C]$ be a bounded kernel,
and let $G_n=G_p(n,\ka)$.
Then $\dc(G_n,\ka)\to 0$ almost surely.
Also, the sequence $(G_n)$ satisfies the bounded density assumption~\ref{AC} with probability $1$. 
\end{lemma}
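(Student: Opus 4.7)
The plan is to prove both statements via Chernoff concentration combined with a union bound, with the hypothesis $np \to \infty$ supplying just enough margin for the exponential tail to beat a $4^n$ union bound.

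\textbf{Bounded density.} Fix $\eps > 0$. For any $A, B \subset V(G_n)$ with $|A|, |B| \ge \eps n$, conditional on the types $x_i$, the variable $e_G(A, B)$ is a sum of independent Bernoullis of mean at most $pC|A||B|$; Chernoff gives
\[
\PR\bb{d_p(A, B) > C + \eps \mid x_1, \dots, x_n} \le \exp(-c \eps^4 pn^2).
\]
A union bound over the at most $4^n$ pairs $(A, B)$ is dominated by this tail, since $pn\to\infty$ forces $pn^2 \gg n$. The per-$n$ failure probabilities are summable, so Borel--Cantelli, applied along a countable sequence $\eps_k \to 0$, yields Assumption~\ref{AC} almost surely.

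\textbf{Cut-distance convergence.} Fix $\eps > 0$. By Lemma~\ref{approx}, choose $k$ large so that the averaged kernel $\ka_k$ (constant on each $J_a \times J_b$, where $J_1, \dots, J_K$ with $K = 2^k$ is the equipartition of $[0,1]$) satisfies $\cn{\ka - \ka_k} < \eps/3$. Relabel so $x_1 \le \dots \le x_n$, let $A_a = \{i : x_i \in J_a\}$, and write $I_a^\flat = \bigcup_{i \in A_a}((i-1)/n, i/n]$; by the LLN $|A_a|/n \to 1/K$ a.s., so there is a measure-preserving bijection $\tau_n$ of $[0,1]$ sending $I_a^\flat$ onto $J_a$ modulo a set of measure $o(1)$. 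Set $W_n = \ka_{G_n}^{(\tau_n^{-1})}$, so that $\dc(G_n, \ka) \le \cn{W_n - \ka} \le \cn{W_n - \ka_k} + \eps/3$. Decompose $W_n - \ka_k = \bb{W_n - \E[W_n \mid x]} + \bb{\E[W_n \mid x] - \ka_k}$. Using the $\pm 1$-valued characterisation of the cut norm, the first summand is controlled by Chernoff applied to the edge indicators, uniformly over the $\le 4^n$ test pairs $(f, g):[n] \to \{\pm 1\}$; the per-pair Chernoff tail $\exp(-c\eps'^2 pn^2/K^2)$ dominates the $4^n$ union bound since $K$ is fixed and $pn \to \infty$. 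The second summand is controlled by a uniform LLN for the i.i.d.\ $x_i$ (the supremum over subsets $S \subset J_a$, $T \subset J_b$ of $n^{-2}\sum_{i \in S, j \in T}(\ka(x_i, x_j) - \ka_k|_{J_a \times J_b})$ tends to $0$ a.s., e.g.\ by further refining $\ka_k$ to a step function and applying Glivenko--Cantelli on the finitely many resulting rectangle classes). Borel--Cantelli upgrades the resulting convergence in probability to a.s.~convergence, and $\dc(G_n, \ka) \to 0$ a.s.\ follows by sending $\eps$ to $0$ along a countable sequence.

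\textbf{Main obstacle.} The delicate point is bounding $\cn{W_n - \E[W_n\mid x]}$ uniformly over the exponentially many $\{\pm 1\}$-witnesses to the cut norm at once, rather than for a single fixed $(f, g)$. The hypothesis $pn \to \infty$ is precisely what is needed to make the Chernoff tail $\exp(-\Omega(pn^2/K^2))$ beat $4^n$ for each fixed $K$; there is no analogous control if $pn$ stays bounded. A minor secondary subtlety is that $\tau_n$ depends on the realisation of $(x_i)$, but this is handled by running the argument on the probability-$1-o(1)$ event that $|A_a|/n = 1/K + o(1)$ for all $a$, and then taking the countable intersection over $\eps_k \to 0$ and $K_k \to \infty$.
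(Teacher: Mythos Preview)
Your proof is correct, and the core concentration step is the same as the paper's: Chernoff applied edge-by-edge, with a union bound over the at most $4^n$ witnesses to the cut norm, where $np\to\infty$ supplies exactly the margin needed for the $\exp(-\Omega(pn^2))$ tail to beat $4^n$. The bounded-density argument also matches the paper's one-line treatment.

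Where you differ from the paper is in the reduction from a general kernel to a finite-type one. The paper first proves the result for finite-type $\ka$ directly (your Chernoff argument), and then for general $\ka$ it \emph{couples} $G_p(n,\ka)$ with $G_p(n,\ka_\eps)$ using the same types $x_1,\dots,x_n$, so that the symmetric difference has the law of $G_p(n,|\ka-\ka_\eps|)$ and hence $o(n^2p)$ edges; this immediately gives $\dc(G_p(n,\ka),G_p(n,\ka_\eps))\le 2\eps$ without any further work. You instead keep the single graph $G_p(n,\ka)$, approximate $\ka$ by $\ka_k$ on the kernel side only, and split $W_n-\ka_k$ into a fluctuation term (handled by Chernoff) and a deterministic sampling term $\E[W_n\mid x]-\ka_k$. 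That second term is essentially the sampling lemma (the empirical kernel $\ka(x_i,x_j)$ is close to $\ka$ in cut distance), and your sketch via a finer step approximation plus the law of large numbers on cell counts is correct but does require its own small argument. The paper's coupling trick sidesteps this entirely, which is arguably cleaner; your decomposition is a legitimate alternative that makes the two sources of error (edge randomness and type randomness) more explicit. The stray $K^2$ in your Chernoff exponent is harmless since $K$ is fixed.
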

\begin{proof}
The second statement is essentially immediate from Chernoff's inequality,
constructing $G_n$ as a subgraph of
the Erd\H os--R\'enyi random graph $G(n,Cp)$; it
also follows from the first statement and Lemma~\ref{kbd}.

We now turn to the proof that $\dc(G_n,\ka)\to 0$.
Recall that $\ka$ is of finite type if $[0,1]$ may be partitioned
into sets $A_1,\ldots,A_k$ so that $\ka$ is constant on each rectangle
$A_i\times A_j$. We first suppose that $\ka$ is of finite
type. Rearranging $\ka$, and ignoring parts with measure zero, we
may assume that each $A_i$ is an interval with positive measure.
Recall that $G_n=G_p(n,\ka)$ is constructed by first choosing
the `types' $x_1,\ldots,x_n$ of the vertices independently and uniformly
at random from $[0,1]$. Let $n_i$ denote the number of vertices of type
$i$, noting that we have $n_i\sim \mu(A_i)n$ a.s.
Let us adjust the intervals $A_i$ slightly, replacing $A_i$ by
a set $A_i'$ ($=A_i'(n)$) with measure $n_i/n$. Let $\ka'=\ka'(n)$
be the adjusted kernel, taking on $A_i'\times A_j'$ the value
that $\ka$ takes on $A_i\times A_j$. Since, a.s., we adjust
the length of each $A_i$ by $o(1)$, the kernels $\ka'$ and $\ka$
differ on a set of measure $o(1)$. Since each is bounded, it follows that
\begin{equation}\label{kaka'}
 \cn{\ka-\ka'}\le \on{\ka-\ka'}\to 0
\end{equation}
a.s., as $n\to\infty$.

Given $x_1,\ldots,x_n$, let $G'$ be the weighted graph in which
each edge is present and has weight $w_{ij}=p\ka(x_i,x_j)$. Then, relabelling
the vertices so that those with $x_i=k$ correspond to the set $A_k'$,
we see that $\ka_{G'}=\ka'$. The graph $G_n$
may be constructed from $G'$ by simply selecting each edge $ij$
independently, with probability equal to its weight in $G'$. As noted earlier, for a kernel
corresponding to a (weighted) graph, the cut norm (defined
by~\eqref{cndefST}) is realized by a cut corresponding to a partition
of the vertex set, so
\[
 \cn{\ka_{G_n}-\ka'} = \cn{\ka_{G_n}-\ka_{G'}} = \max_{S\subset V(G_n)}
 \left|\frac{e_{G_n}(S,S^\cc)-\sum_{i\in S,\,j\in S^\cc}w_{ij}}{n^2p}\right|.
\]
Having conditioned on $x_1,\ldots,x_n$,
for each $S$ the random variable $X=e_{G_n}(S,S^\cc)$ has mean exactly
$\sum_{i\in S,\,j\in S^\cc}w_{ij}$. Furthermore, $\E(X)=O(n^2p)$. Since $X$ is a sum
of independent indicator variables, it follows from (for example) the Chernoff bounds,
that for any $\eps>0$ we have $\PR(|X-\E(X)|\ge \eps n^2p) \le \exp(-c_\eps n^2p)$
for some $c_\eps>0$. Since $n^2p=\omega(n)$, this probability decays superexponentially.
Since there are only $2^n$ sets $S$ to consider, we see that
$\PR(\cn{\ka_{G_n}-\ka'}\ge \eps)$ decays superexponentially as $n\to\infty$.
Since $\eps>0$ was arbitrary, using \eqref{kaka'} it follows that $\dc(G_n,\ka)\to 0$ a.s.

So far we assumed that $\ka$ was of finite type. Given an arbitrary $\ka$,
for each $\eps>0$ we can find a finite type approximation $\ka_\eps$ to $\ka$
with
\[
 \cn{\ka_\eps-\ka} \le \on{\ka_\eps-\ka} \le \eps;
\]
see, for example, Lemma~\ref{approx}.
One can couple the random graphs $G_n=G_p(n,\ka)$ and $G_n'=G_p(n,\ka_\eps)$
using the same vertex types $x_1,\ldots,x_n$ for each, in such a way
that the symmetric difference $G_n\diff G_n'$ has the distribution
of $G_p(n,\Delta\ka)$, where $\Delta\ka(x,y)=|\ka(x,y)-\ka_\eps(x,y)|$.
The expected number of edges of $G_p(n,\Delta\ka)$ is at most $n(n-1)p\on{\Delta\ka}/2$
(with equality if $p\Delta\ka\le 1$),
which is at most $n^2p\eps/2$. It is easy to check that the actual number is tightly
concentrated about the mean, so 
\[
 \dc(G_n,G_n') \le \on{\ka_{G_n}-\ka_{G_n'}} =\frac{2e(G_n\Delta G_n')}{n^2p} \le 2\eps
\]
holds with probability tending (rapidly) to $1$ as $n\to\infty$.
Using the finite-type case to show that $\dc(G_n',\ka_\eps)\to 0$
and the bound $\dc(\ka,\ka_\eps)\le \eps$,
and recalling that $\eps>0$ was arbitrary, the result follows.
\end{proof}

\section{Comparison between cut and count convergence}\label{sec_compar}

Throughout this section, we fix a function $p=p(n)$, and consider
sequences $(G_n)$ of graphs with $|G_n|=n$. In the dense case, with
$p(n)=1$ for all $n$, Borgs, Chayes, Lov\'asz, S\'os and
Vesztergombi~\cite{BCLSV:1} showed that such a sequence converges to
a kernel $\ka$ in $\dc$ if and only if it converges to $\ka$ in
$\de$; here we wish to investigate whether this result can be
extended to the sparse case. To do this, we first have to make sense
of the definitions. For $\dc$, as in the previous section, we simply
associate a kernel $\kappa_n$ to $G_n$ as before, with $\kappa_n$
taking the values $0$ and $1/p$. Then we use the usual definition of
$\dc$ for (dense) kernels to define $\dc(G_n,G_m)$ and
$\dc(G_n,\ka)$. In the light of Lemma~\ref{dcdcG}, for questions of
convergence the metrics $\dc$ and $\dcG$ are equivalent; we shall
use $\dc$ rather than $\dcG$ in this section.

\subsection{Admissible subgraphs and their counts}

If $p=n^{-o(1)}$, then we use \eqref{ds1} and \eqref{ds2} to define $\de$,
so convergence in $\de$ is equivalent to convergence of
$s_p(F,G_n)$ for every graph $F$.
For smaller $p$, as noted in Subsection~\ref{ss_e}, it makes sense only
to consider graphs $F$ in a certain set $\A$ of {\em admissible} graphs.
It is not quite clear exactly which graphs should be admissible (see
Subsection~\ref{ss_e}), so there are several variants of the definitions.
To keep things simple, we shall work here with one particular choice
for the set $\A$, depending on the function $p$. It may be that
the various conjectures we shall make, if true, extend to larger sets $\A$.

Recall that we write $\F$ for the set of isomorphism classes of finite (simple) graphs.
Given a loopless multi-graph $F$ and an integer $t\ge 1$, let $F_t$ denote the graph obtained
by subdividing each edge of $F$ exactly $t-1$ times, so $e(F_t)=te(F)$
and $|F_t|=|F|+(t-1)e(F)$.
Writing $\Fm$ for the set of isomorphism classes of finite loopless multi-graphs,
for $t\ge 2$ let
\[
 \F_t =\{F_t: F\in \Fm\},
\]
and set $\F_1=\F$ (not $\Fm$). Thus, for $t\ge 2$,
the family $\F_t$ is the set of simple graphs that may be obtained
as follows: starting with a set of paths of length $t$, identify subsets of the
endpoints of these paths in an arbitrary way, except that the two endpoints of the
same path may
not be identified. Note that any $F_t\in \F_t$ has girth at least $2t$.

Similarly, let $\F_{\ge t}$ be the set of simple graphs that may be obtained
as above but starting with paths of length at least $t$. Thus
$\F_{\ge 1}=\F$ and, for $t\ge 2$, $\F_{\ge t}$
is the set of graphs that may be obtained from some $F\in \Fm$ by subdividing
each edge at least $t-1$ times.
Note that $\F=\F_{\ge 1}\supset \F_{\ge 2}\supset \cdots $.
Let $\T$ denote the set of (isomorphism classes of) finite trees.

Throughout this subsection and the next
we suppose that there is some $\alpha>0$ such that
$np\ge n^{\alpha}$ for all large enough $n$. Equivalently,
there is some integer $t\ge 1$ such that
\begin{equation}\label{npt}
 n^{t-1}p^t\ge n^{-o(1)}.
\end{equation}
We shall set
\[
 \A=\T\cup \F_{\ge t}
\]
for the smallest such $t$, noting that if $p=n^{-o(1)}$ then
$t=1$, so all graphs are admissible.
(An alternative that would work just as well is to let $\A$ be the set
of all subgraphs of graphs in $\F_{\ge t}$, which includes $\T$.)
A key observation is that if $F\in \F_{\ge t}$ then (considering
the internal vertices on the paths making up $F$) we have
$|F| > e(F)(t-1)/t$. This also holds if $F\in \T$, or indeed
if $F$ is a subgraph of some $F'\in \A$.
It follows that if $F\subset F'\in \A$ then
\begin{equation}\label{Finf}
 \E\, \emb(F,G(n,p)) \sim n^{|F|}p^{e(F)} = n^{|F|-e(F)(t-1)/t}(n^{t-1}p^t)^{e(F)/t}
 = n^{\Theta(1)-o(1)} \to\infty.
\end{equation}

On the one hand, $\A=\T\cup \F_{\ge t}$ is small enough to satisfy the requirements
for admissibility discussed in Subsection~\ref{ss_e},
including \eqref{Finf}.
(There may be requirements we have missed, in which case
$\A=\T\cup \F_{\ge t}$ for some larger $t$ is likely to work.) On the other hand,
as we shall now see, 
this set $\A$ is large enough to ensure that the counts for $F\in \A$ determine a kernel,
up to the equivalence relation $\sim$ defined in Subsection~\ref{ss_equiv}.

\begin{theorem}\label{kdet}
Let $\ka_1$ and $\ka_2$ be two bounded kernels, and $t\ge 1$ an odd integer.
Suppose that
$s(F,\ka_1)=s(F,\ka_2)$ for every $F\in \F_t$. Then $\ka_1\sim \ka_2$.
\end{theorem}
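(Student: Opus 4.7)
The plan is to reduce Theorem~\ref{kdet} to Theorem~\ref{th_kequiv} via an operator-theoretic step based on ``path products'' of kernels. To any bounded kernel $\ka$ I would associate its $t$-fold path product
\[
 \ka^{(t)}(x,y) = \int_{[0,1]^{t-1}} \ka(x,z_1)\ka(z_1,z_2)\cdots\ka(z_{t-1},y)\,dz_1\cdots dz_{t-1},
\]
which is a bounded symmetric kernel (bounded by $M^t$ when $\ka\le M$). A straightforward change of variables shows pullback commutes with the path product, $(\ka^{(\sigma)})^{(t)}=(\ka^{(t)})^{(\sigma)}$ for any measure-preserving $\sigma$. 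Extending \eqref{sfk} to multigraphs in the natural way (a product over edges with multiplicity), integrating out the $t-1$ internal vertices of each subdivided edge yields the key identity $s(F_t,\ka)=s(F,\ka^{(t)})$ for every $F\in\Fm$.

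Combined with the hypothesis, this gives $s(F,\ka_1^{(t)})=s(F,\ka_2^{(t)})$ for every multigraph $F$, hence certainly for every simple graph, so Theorem~\ref{th_kequiv} yields $\ka_1^{(t)}\sim\ka_2^{(t)}$. Unpacking the definition of $\sim$, there exist measure-preserving maps $\sigma_i\colon\Omega\to[0,1]$ on some probability space $\Omega$ with $(\ka_1^{(t)})^{(\sigma_1)}=(\ka_2^{(t)})^{(\sigma_2)}$ a.e. Writing $\ka_i'=\ka_i^{(\sigma_i)}$, the commutativity from the previous paragraph gives $(\ka_1')^{(t)}=(\ka_2')^{(t)}$ a.e.

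The remaining, and main, task is to deduce $\ka_1'=\ka_2'$ a.e. Let $T_i$ be the integral operator on $L^2(\Omega)$ with kernel $\ka_i'$: each $T_i$ is self-adjoint and Hilbert--Schmidt, and kernel convolution corresponds to operator composition, so $T_1^t=T_2^t=:A$. By the spectral theorem, $A$ admits an orthogonal eigenspace decomposition. Any self-adjoint $T$ with $T^t=A$ commutes with $A$ and thus preserves each eigenspace $E_\mu$ of $A$; on $E_\mu$ with $\mu\ne0$ we have $(T|_{E_\mu})^t=\mu I$, and since $t$ is odd the equation $s^t=\mu$ has the unique real root $\mu^{1/t}$, so all eigenvalues of the self-adjoint operator $T|_{E_\mu}$ equal $\mu^{1/t}$, forcing $T|_{E_\mu}=\mu^{1/t}I$. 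On $\ker A$, iterating the identity $\|T^{2^{k-1}}v\|^2=\langle T^{2^{k}}v,v\rangle$ downward from any power $2^k\ge t$ forces $Tv=0$. Hence $T_1=T_2$, giving $\ka_1'=\ka_2'$ a.e., which unwinds to $\ka_1\sim\ka_2$.

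The main obstacle is this spectral step, and it is exactly where the hypothesis that $t$ is odd enters: for even $t$ the equation $s^t=\mu$ with $\mu>0$ has two real roots, blocking uniqueness of the $t$-th root of a self-adjoint operator.
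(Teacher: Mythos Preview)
Your proposal is correct and follows essentially the same route as the paper: define the $t$-fold path product $\ka^t$, observe $s(F_t,\ka)=s(F,\ka^t)$, apply Theorem~\ref{th_kequiv} to conclude $\ka_1^t\sim\ka_2^t$, pull back to reduce to $(\ka_1')^t=(\ka_2')^t$ a.e., and then use the spectral theorem for compact self-adjoint operators together with the injectivity of $\la\mapsto\la^t$ on $\RR$ (for $t$ odd) to deduce $\ka_1'=\ka_2'$ a.e. The only cosmetic differences are that the paper proves compactness of $T_\ka$ via finite-rank approximation rather than invoking Hilbert--Schmidt, and handles the zero eigenspace implicitly via the eigenbasis rather than via your iterated norm identity.
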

\begin{proof}
Given a kernel $\ka$, let $\ka^t$ be the kernel defined by
\begin{equation}\label{kat}
 \ka^t(x,y) = \int_{[0,1]^{t-1}} \ka(x,x_1)\ka(x_1,x_2)\cdots\ka(x_{t-1},y)\dd x_1\cdots\dd x_{t-1}.
\end{equation}
In other words, roughly speaking, $\ka^t(x,y)$ counts the number of paths from $x$ to $y$
in $\ka$ with length $t$.
The key observation is that if $F$ is a graph, $\ka$ a kernel, and $t\ge 1$, then
\begin{equation}\label{kkat}
 s(F_t,\ka) = s(F,\ka^t).
\end{equation}
Indeed, $s(F_t,\ka)$ is defined as an integral over one variable for each
vertex of $F_t$. We may evaluate this integral by first fixing the variables
corresponding to vertices of $F$, then using \eqref{kat} once for each edge of $F$
to integrate over the remaining variables. What remains is exactly the integral
defining $s(F,\ka^t)$.

By assumption, $s(F,\ka_1)=s(F,\ka_2)$ for every $F\in \F_t$. Hence,
from \eqref{kkat}, we have $s(F,\ka_1^t)=s(F,\ka_2^t)$ for
{\em every} graph $F$, so, by Theorem~\ref{th_kequiv}
or Theorem~\ref{unique2}, $\ka_1^t\sim \ka_2^t$.
Hence, from \eqref{kup}, there is a kernel $\ka$ and measure-preserving
maps $\sigma_1,\sigma_2:[0,1]\to[0,1]$ such that
$(\ka_i^t)^{(\sigma_i)}=\ka$ a.e., for $i=1,2$.
Since $(\ka_i^{(\sigma_i)})^t=(\ka_i^t)^{(\sigma_i)}$, we thus
have $(\ka_1')^t=(\ka_2')^t$ a.e. for $\ka_i'=\ka_i^{(\sigma_i)}$.
Since $\ka_i'\sim \ka_i$, and our aim is to prove that $\ka_1\sim\ka_2$,
it suffices to prove that $\ka_1'\sim\ka_2'$. Hence, without loss of generality,
we may replace $\ka_i$ by $\ka_i'$, so
we have $\ka_1^t=\ka_2^t$ almost everywhere.
It is now a matter of simple analysis to deduce that $\ka_1=\ka_2$ a.e.

Given a bounded signed kernel, i.e., a bounded function $\ka:[0,1]^2\to \RR$
satisfying $\ka(x,y)=\ka(y,x)$,
let $T_\ka$ be the corresponding operator on $L^2([0,1])$,
defined by
\begin{equation}\label{tkdef}
 (T_\ka f)(x) = \int_0^1 \ka(x,y)f(y) \dd y.
\end{equation}
From the Cauchy--Schwarz inequality we have
\begin{eqnarray*}
 \tn{T_\ka f}^2 &=& \int_0^1 \left(\int_0^1 \ka(x,y)f(y)\dd y\right)^2 \dd x \\
 &\le& \int_0^1 \left(\int_0^1 \ka(x,y)^2\dd y \int_0^1 f(y)^2\dd y\right) \dd x \\
&=& \tn{f}^2 \int\int \ka(x,y)^2\dd x\dd y = \tn{f}^2 \tn{\ka}^2,
\end{eqnarray*}
so the operator norm of $T_\ka$ on $L^2$ satisfies
\begin{equation}\label{nka}
 \norm{T_\ka} \le \tn{\ka}<\infty.
\end{equation}

Now let $\ka$ be any bounded kernel, and $\eps>0$ a real number.
By Lemma~\ref{approx} there is some $k$ such that the kernel $\ka_k$
obtained by averaging $\ka$ over $2^{-k}$-by-$2^{-k}$ squares
satisfies $\tn{\ka-\ka_k}\le \eps$.
Writing $T_\ka=T_{\ka_k}+T_{\ka-\ka_k}$, the first term has finite rank, since
$T_{\ka_k}f$ is constant on intervals of length $2^{-k}$.
From \eqref{nka}, the second term has operator norm at most $\tn{\ka-\ka_k}\le \eps$.
It follows that the image of the unit ball under $T$ can be covered
by a finite number of balls of radius $2\eps$. Since $\eps$ was arbitrary, this shows
that $T_\ka$ is a compact operator.

Since $\ka$ is symmetric, we also have that $T_\ka$ self-adjoint.
Consequently, $T_{\ka_1}$ is a compact self-adjoint operator
on the Hilbert space $L^2(0,1)$, so by standard results (see, for example,
Bollob\'as~\cite{BBLinAnal})
there is an orthonormal basis of eigenvectors
of $T_{\ka_1}$, and all its eigenvalues are real.
It is easy to see that $T_{\ka_1^t}=(T_{\ka_1})^t$, so $T_{\ka_1^t}$
acts on the $\la$-eigenspace of $T_{\ka_1}$ by multiplication by $\la^t$.
Since $t$ is odd (so the map $\la\mapsto \la^t$ is injective), it follows
that $T_{\ka_1^t}$ has the same eigenspaces as $T_{\ka_1}$.
Turning this around, the action of $T_{\ka_1}$ on each eigenspace $E_\la$
of $T_{\ka_1^t}$ with eigenvalue $\la$ is to multiply by $\la^{1/t}$.
Thus, $T_{\ka_1}$ is uniquely determined by $T_{\ka_1^t}$.
In particular, since $\ka_1^t=\ka_2^t$ a.e., the operators $T_{\ka_1}$ and $T_{\ka_2}$
are equal, i.e., $\ka_1=\ka_2$ a.e., as required.
\end{proof}

Note that in Theorem~\ref{kdet} the restriction to odd $t$ is
essential, as shown by the following example.
\begin{example}
Let $\ka_1$ and $\ka_2$ be the two $2$-by-$2$ `chessboard' kernels defined by
\[
 \ka_1(x,y) = \left\{
\begin{array}{ll}
  1 & \hbox{if } x<1/2,\  y<1/2 \hbox{ or } x\ge 1/2,\  y\ge 1/2\\
  0 & \hbox{otherwise}
\end{array}\right .
\]
and
\[
 \ka_2(x,y) = \left\{
\begin{array}{ll}
  1 & \hbox{if } x<1/2,\  y\ge 1/2 \hbox{ or } x\ge 1/2,\  y< 1/2\\
  0 & \hbox{otherwise}
\end{array}\right .
\]
Thus, in the dense case, $\ka_1$ corresponds to the union of two disjoint
complete graphs on $n/2$ vertices, and $\ka_2$ to the complete $n/2$-by-$n/2$ bipartite
graph.
It is easy to check that for any graph $F$ we have $s(F,\ka_1)=2^{1-|F|}$,
while $s(F,\ka_2)=2^{1-|F|}$ if $F$ is bipartite, and $s(F,\ka_2)=0$ otherwise.
In particular, $s(F,\ka_1)=s(F,\ka_2)$ for all bipartite $F$, and
hence for all $F\in \F_t$, $t$ even.
\end{example}

As we saw from Lemma~\ref{kbd} and Corollary~\ref{cSz},
bounded density is a natural
condition to impose on our sequence $(G_n)$ when
dealing with $\dc$ for sparse graphs. In the previous sections, when
dealing with subgraph counts and $\de$, we imposed different conditions,
the closest being Assumption~\ref{AB}. Let us restate this here in the
appropriate form when $p$ need not be as large as $n^{-o(1)}$.

\begin{assumption}[exponentially bounded admissible subgraph counts]\label{ABp}
\ \\
There is a constant $C$ such that, for each fixed $F\in \A$,
we have $\limsup s_p(F,G_n)\le C^{e(F)}$ as $n\to\infty$.
\end{assumption}

Note that we impose a condition only for $F\in \A$.
When comparing $\dc$ and $\de$, we need to impose both Assumption~\ref{AC} (bounded
density) and Assumption~\ref{ABp}.
In the `almost dense' case, when we take $\A=\F$, then Assumption~\ref{ABp} implies
Assumption~\ref{AC}, with the same constant $C$. The argument is based on showing
that a not-too-small dense part of $G_n$ would contain too many $K_{t,t}$s
for some large $t$. Since the details are very similar to the proof of Lemma~\ref{lbd},
we omit them.

Unfortunately, in general neither of Assumptions~\ref{AC} and~\ref{ABp}
implies the other.
In one direction, this is easy to see: simply add a complete graph on $m$
vertices, where $m(n)$ is chosen so that $e(K_m)\sim m^2/2=o(n^2p)$.
This does not affect Assumption~\ref{AC},
but,
if $m$ is chosen large enough, will create too many copies of any fixed connected
graph $F$ with $|F|\ge 3$.
For the reverse direction, consider the following example.

\begin{example}\label{B'C}
Fix a real number $D>1$, and let
$\ka=\ka_D$ be the unbounded kernel defined as follows. First partition $[0,1]$
into intervals $I_1,I_2,\ldots$, so that $I_i$ has length $2^{-i}$.
Then set $\ka(x,y)=i^{2/D}$ if $x,y\in I_i$, and $\ka(x,y)=0$ otherwise.
Let $F$ be a connected graph with average degree at most $D$. Then,
since only terms where all vertices are in the same $I_i$ contribute, we have
\[
 s(F,\ka) = \sum_{i=1}^\infty 2^{-i|F|} i^{2e(F)/D}
 \le \sum_{i=1}^\infty \left(2^{-i}i\right)^{|F|} 
 \le \sum_{i=1}^\infty 2^{-i}i = 2.
\] 
Let $G_n=G_p(n,\ka)$ be the random graph defined from $\ka$ as before.
If every component of any admissible graph has average degree at most $D$,
then it is easy to check that with probability 1 the sequence $(G_n)$
satisfies Assumption~\ref{ABp} (with $C=2$). On the other hand, this sequence
does not satisfy Assumption~\ref{AC}, since, for every $i$,
there will be a subgraph 
of $G_n$ containing a positive fraction of the vertices with density
around $i^{2/D}$.
\end{example}
With the choice of $\A$ made here, whenever $p=p(n)$ does not satisfy
$p=n^{-o(1)}$ then only trees and graphs in some $\F_{\ge t}$, $t\ge 2$,
are admissible. All such graphs, and all their components, have average degree
less than $4$, so the example above shows that in this case,
Assumption~\ref{ABp} does not imply Assumption~\ref{AC}.

Example~\ref{B'C} also shows that, in contrast to the almost dense case (where all graphs
are admissible), in general we cannot tell from the admissible subgraph counts
whether a kernel is bounded.
For this reason, together with those discussed above, when comparing $\dc$ and $\de$
we impose both Assumptions~\ref{AC} and~\ref{ABp}.

\subsection{Conjectured equivalence between cut and count convergence}

Our main conjecture from Section~\ref{sec_hsp} was that, in the sparse case, if the subgraph
counts converge, they converge to those of a kernel. 
In the present setting, we consider counts for admissible subgraphs.
Fix $p(n)$ satisfying \eqref{npt}, and a set $\A$ of admissible graphs.
By default we take $\A=\T\cup \F_{\ge t}$ as in the previous subsection,
although the definitions make sense for other sets $\A$.
Throughout we impose Assumptions~\ref{AC} and~\ref{ABp}
for some fixed constant $C$.
Let $X=[0,\infty)^\A$, let $s_p:\F\to X$ be the map defined by
\[
 s_p(G_n) = (s_p(F,G_n))_{F\in \A} \in X
\]
for any graph $G_n$ with $n$ vertices, let $d$ be any metric
on $X$ inducing product topology, and define $\de$ by mapping to $X$
and then applying $d$; as usual, we suppress the dependence
on the normalizing function $p$. Note that $\de$ is in general a pseudo-metric
rather than a metric: there may be non-isomorphic
graphs $G$, $G'$ with $s_p(F,G)=s_p(F,G')$
for all $F\in\A$. As we only consider questions of convergence
for sequences $G_n$ with $|G_n|\to\infty$, this will not be relevant.

Let $\LL\subset X$ denote the set of possible limit points of
sequences $s_p(G_n)$, where $(G_n)$ satisfies our assumptions.

Recall that we write $\K$ for the space of kernels, that is, symmetric
measurable functions $\ka:[0,1]^2\to [0,C]$ quotiented by
equivalence. There is a natural map from $\K$ into $X$ given
by subgraph counts; we write $s$ for this map, which does not
depend on $p$ (except through the choice of $\A$).
Since $\A$ always contains some set $\F_{\ge t}$, and hence some $\F_{t'}$
with $t'$ odd, Theorem~\ref{kdet} tells us that this map is injective.

Our main conjecture is the following.
\begin{conjecture}\label{mqs}
With the assumptions and definitions above, we have
\begin{equation}\label{ceq}
 \LL \subset s(\K).
\end{equation}
\end{conjecture}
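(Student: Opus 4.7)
The plan is to combine Corollary~\ref{cSz} with a sparse counting lemma for admissible subgraphs. First, Assumption~\ref{AC} lets us apply Corollary~\ref{cSz} to extract a subsequence $(G_{n_i})$ and a bounded kernel $\ka:[0,1]^2\to[0,C]$ with $\dc(G_{n_i},\ka)\to 0$. The core task is then to show that, for every $F\in\A$, cut convergence forces $s_p(F,G_{n_i})\to s(F,\ka)$; once this is established, the hypothesis $s_p(F,G_n)\to c_F$ forces $c_F=s(F,\ka)$ for all $F\in\A$, so $(c_F)_{F\in\A}=s(\ka)\in s(\K)$, which is the desired inclusion~\eqref{ceq}.

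So the statement reduces to a \emph{sparse counting lemma} for $\A$: if $(G_n)$ has density bounded by $C$ and $\dc(G_n,\ka)\to 0$ for a bounded kernel $\ka$, then $s_p(F,G_n)\to s(F,\ka)$ for every $F\in\A$. For $F\in\T$, I would argue directly: writing $T_n$ and $T_\ka$ for the integral operators associated with $\ka_{G_n}$ and $\ka$, convergence in the cut norm gives $T_n\to T_\ka$ weakly as operators $L^\infty\to L^1$, and $t_p(F,G_n)$ is a product of pairings of such operators applied to the constant function $1$; the bounded density assumption ensures the relevant $L^\infty$ bounds and lets one pass from $t_p$ to $s_p$ using~\eqref{Finf}. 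For $F\in\F_{\ge t}$, the strategy is to regularize both $\ka_{G_n}$ and $\ka$ via Lemma~\ref{lSzs} into $(\eps,p)$-regular partitions, count embeddings of $F$ across the parts using that each edge of $F$ lies on a subdivided path of length $\ge t$ (so the counting can proceed one path at a time, and the large girth prevents two edges from having to be controlled simultaneously), and then transfer the estimate back to $\ka_{G_n}$ and $\ka$ using the cut norm via an iterated telescoping analogous to~\eqref{dsmall} in Lemma~\ref{cs}.

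The main obstacle is the sparse counting lemma itself. In the dense case it follows immediately from Lemma~\ref{cs}, but sparsity makes even the case $F=K_3$ essentially open (this is the content of Conjecture~\ref{ctri}): small dense patches of $G_n$ can contribute $\Theta(1)$ to $s_p(F,G_n)$ without perturbing $\dc(G_n,\ka)$, as illustrated by Examples~\ref{vst}--\ref{blowup}. Restricting to $\A=\T\cup\F_{\ge t}$ is precisely what rules this out under Assumption~\ref{AC}: a graph of girth at least $2t$ cannot be concentrated on a vertex set of size $o(n)$ once densities are uniformly bounded, because the $\eps$-regular pairs that host its edges must each occupy a positive fraction of the vertices. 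Quantifying this is exactly where the work lies, and it is the substance of Theorems~\ref{cthm}, \ref{cthm2} and~\ref{dthm}, which the paper identifies as its most substantial results.

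One further point needs care: the sparse counting lemma above is only needed along subsequences, but the argument must be robust enough that from \emph{any} subsequence of $(G_n)$ one can extract a further subsequence converging in $\dc$ to some $\ka'$, whose counts then necessarily agree with $(c_F)$. Theorem~\ref{kdet} ensures that any two such limiting kernels are equivalent (since $\A$ contains $\F_{\ge t}$ and hence some $\F_{t'}$ with $t'$ odd), so the limit kernel $\ka$ is uniquely determined modulo $\sim$, and the argument closes without needing the full sequence to converge in $\dc$.
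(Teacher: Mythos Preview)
The statement you are trying to prove is labelled \emph{Conjecture}~\ref{mqs} in the paper; it is explicitly presented as open, and the paper offers no proof. Your reduction --- pass to a subsequence converging in $\dc$ to some bounded $\ka$ via Corollary~\ref{cSz}, then invoke a sparse counting lemma to force $s_p(F,G_{n_i})\to s(F,\ka)$ for all $F\in\A$ --- is exactly the strategy the paper itself outlines (see the discussion surrounding Question~\ref{qq} and Conjecture~\ref{cce}). But the sparse counting lemma you need is not established under the hypotheses of Conjecture~\ref{mqs}, and your sketch does not fill the gap.

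Concretely: for trees the counting lemma does go through (Corollary~\ref{cT}/Theorem~\ref{th_Temb}, using that Assumption~\ref{ABp} bounds $s_p(T,G_n)$ and hence $t_p(T,G_n)$ by Lemma~\ref{stT}). For $F\in\F_{\ge t}$, however, Theorems~\ref{cthm}, \ref{cthm2} and~\ref{dthm} all require an \emph{extra} hypothesis --- boundedness of $t_p(C_{2\ell-2},G_n)$, or the path-moment conditions~\eqref{dc2}--\eqref{dc3} --- that is \emph{not} implied by Assumptions~\ref{AC} and~\ref{ABp}. The cycle $C_{2t-2}$ has girth $2t-2<2t$, so it does not lie in $\A=\T\cup\F_{\ge t}$, and the paper explicitly flags this ``annoying'' extra assumption after Theorem~\ref{cthm2}. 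Your informal claim that large girth plus bounded density prevents concentration on $o(n)$ vertices is not a substitute: the obstruction is not that copies of $F$ sit on few vertices, but that the distribution of short paths between pairs of vertices may be wildly uneven without violating bounded density, and controlling that is precisely what the $C_{2\ell-2}$ bound buys in the proof of Theorem~\ref{cthm}. In short, you have correctly identified the reduction the paper makes, but the core step you defer to Theorems~\ref{cthm}--\ref{dthm} is not available under the stated hypotheses, and the conjecture remains open.
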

Note that if $t=1$ then $p=n^{-o(1)}$ and we recover Conjecture~\ref{q1a};
Conjecture~\ref{mqs} seems to be the natural extension of Theorem~\ref{ctok}
to functions $p=p(n)$ with $p\to 0$ but $np\ge n^{\alpha}$ for some $\alpha>0$.

Turning to the equivalent of Theorem~\ref{dedc},
we believe that in this setting
the notions of convergence given by $\de$ and $\dc$ are equivalent.
The most concrete way of saying this is as follows; again we
take $\A=\T\cup \F_{\ge t}$ by default, although it might be 
that the conjecture fails for this $\A$ but holds for some other $\A$.
\begin{conjecture}\label{cce}
Let $(G_n)$ be a sequence satisfying Assumptions~\ref{AC} and~\ref{ABp},
and let $\ka\in \K$.
Then $\dc(G_n,\ka)\to 0$ if and only if $s_p(G_n)\to s(\ka)$.
\end{conjecture}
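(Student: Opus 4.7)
The plan is to imitate the two-step pattern by which Theorem~\ref{th_kequiv} was shown to imply Theorem~\ref{dedc} in the dense case, substituting sparse analogues of each ingredient. The easier direction is ``$s_p(G_n)\to s(\ka)$ implies $\dc(G_n,\ka)\to 0$''; the harder direction is the sparse counting lemma ``$\dc(G_n,\ka)\to 0$ implies $s_p(F,G_n)\to s(F,\ka)$ for all admissible $F$''.

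For the easier direction, I would argue by contradiction: if $\dc(G_n,\ka)\not\to 0$, pass to a subsequence with $\dc(G_{n_i},\ka)\ge \delta>0$. Assumption~\ref{AC} lets me apply Corollary~\ref{cSz} to extract a further subsequence (still called $(G_{n_i})$) and a bounded kernel $\ka':[0,1]^2\to[0,C]$ with $\dc(G_{n_i},\ka')\to 0$. Applying the counting direction on this subsequence gives $s(F,G_{n_i})\to s(F,\ka')$ for every $F\in\A$, while the hypothesis gives $s(F,G_{n_i})\to s(F,\ka)$. Hence $s(F,\ka)=s(F,\ka')$ for every $F\in \A$. Since $\A=\T\cup\F_{\ge t}$ contains $\F_{t'}$ for some odd $t'$, Theorem~\ref{kdet} forces $\ka\sim\ka'$, so $\dc(\ka,\ka')=0$. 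Then $\dc(G_{n_i},\ka)\to 0$, contradicting $\dc(G_{n_i},\ka)\ge \delta$.

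The real difficulty is the counting direction. One cannot simply apply Lemma~\ref{cs} after the sparse rescaling, since $\ka_{G_n}$ now takes values in $\{0,1/p\}$ with $1/p\to\infty$, and the telescoping trick in Lemma~\ref{cs} picks up a factor of $\infn{\ka_{G_n}}^{e(F)-1}$ per edge that blows up. My plan is to use Assumption~\ref{AC} together with the sparse Szemer\'edi lemma (Lemma~\ref{lSzs}) to produce an $(\eps,p)$-regular partition $\Pi_n$ of $G_n$; the quotient $G_n/\Pi_n$ is a kernel bounded by $C+\eps$, and $\cn{\ka_{G_n}-G_n/\Pi_n}$ is small. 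On this bounded quotient the dense counting lemma (Lemma~\ref{cs}) controls $s(F,G_n/\Pi_n)$. What remains, and what is the heart of the proof, is to show that for each admissible $F$ one has $s(F,G_n/\Pi_n)\sim s_p(F,G_n)$, which amounts to a genuine sparse counting/embedding lemma — precisely what Theorem~\ref{cthm} and its variants are set up to deliver.

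The main obstacle is thus the sparse counting lemma step: transferring subgraph counts from the regular quotient back to $G_n$ itself. This is delicate because unlike in the dense setting, there is no trivial bound on the number of copies of $F$ within a regular pair in terms of its density; one must use both the admissibility condition on $F$ (forcing enough subdivision that expected embedding counts $n^{|F|}p^{e(F)}$ tend to infinity by \eqref{Finf}) and Assumption~\ref{AC} to suppress the contribution of pairs with anomalously many edges, and one very likely needs strong $(\eps,p)$-regularity rather than merely weak regularity — hence the careful development of Lemma~\ref{lSzs} above. Once such a counting lemma is in hand for $F\in\A$, the two-step scheme closes and Conjecture~\ref{cce} follows.
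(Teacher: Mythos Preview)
Your overall architecture matches the paper's own ``meta-argument'' (the paragraphs surrounding Question~\ref{qq}): reduce everything to the counting direction, then use Corollary~\ref{cSz} plus Theorem~\ref{kdet} to obtain the reverse implication by the subsequence trick. That part is fine.

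The genuine gap is that the statement you are trying to prove is a \emph{conjecture}, not a theorem: the paper does not prove it, and the sparse counting lemma you need is precisely what is missing. You write that Theorems~\ref{cthm} and its variants ``are set up to deliver'' the step $s(F,G_n/\Pi_n)\sim s_p(F,G_n)$ for all $F\in\A$; they are not. Theorem~\ref{cthm2} requires, in addition to Assumptions~\ref{AC} and~\ref{ABp}, the extra hypothesis that $t_p(C_{2\ell-2},G_n)$ is bounded, and only yields the conclusion for $F\in\T\cup\F_{\ge\ell}$ with $\ell\ge 3$. Theorem~\ref{dthm} replaces $t_p$ by $s_p$ but imposes the path-moment conditions \eqref{dc2} and \eqref{dc3}, which are not consequences of Assumption~\ref{ABp}. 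The paper is explicit that the $C_{2\ell-2}$ assumption is ``somehow annoying'' and cannot be dropped by the methods given; and that the proof breaks down for $\ell=2$.

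To see concretely that your plan cannot close, take $p=n^{-o(1)}$, so $t=1$ and $\A=\F$. Then your counting direction must establish $s_p(K_3,G_n)\to s(K_3,\ka)$ under Assumptions~\ref{AC} and~\ref{ABp} alone. Even in the uniform case $\ka\equiv 1$ this is Conjecture~\ref{ctri}, which the paper leaves open (indeed, it remarks that it cannot even show $G_n$ contains a single triangle). The regularity-plus-quotient scheme you sketch fails here for exactly the reason the paper discusses after Theorem~\ref{g4}: knowing that almost all pairs of vertices have the right number of common neighbours says nothing about the $o(n^2)$ pairs that happen to be edges, and there is no known way to rule out that all edges lie in the exceptional set.
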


In this form, the conjecture implies \eqref{ceq} (see below). Without assuming
\eqref{ceq}, it still makes sense to compare the notions of Cauchy sequences instead.
\begin{conjecture}\label{ccc}
Let $(G_n)$ be a sequence satisfying Assumptions~\ref{AC} and~\ref{ABp}. Then $(G_n)$
is Cauchy with respect to $\dc$ if and only if $(G_n)$ is Cauchy with respect
to $\de$.
\end{conjecture}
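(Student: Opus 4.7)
The plan is to reduce both directions of Conjecture~\ref{ccc} to a single auxiliary statement, which I shall call (SC) and which is a sparse counting lemma:
\begin{quote}
(SC) If $(G_n)$ satisfies Assumptions~\ref{AC} and~\ref{ABp} and $\dc(G_n,\ka)\to 0$ for some bounded kernel $\ka:[0,1]^2\to[0,C]$, then $s_p(F,G_n)\to s(F,\ka)$ for every $F\in\A$.
\end{quote}
This is the natural sparse analogue of Corollary~\ref{cor_dcde}, i.e., the forward half of Theorem~\ref{dedc}, restricted to admissible test graphs; it is essentially what Theorem~\ref{cthm} and its variants later in the paper are designed to provide.

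Granted (SC), the forward direction of Conjecture~\ref{ccc} is immediate. If $(G_n)$ is $\dc$-Cauchy, then Corollary~\ref{cSz} produces a subsequence $(G_{n_i})$ converging in $\dc$ to some bounded $\ka\colon[0,1]^2\to[0,C]$, and $\dc$-Cauchyness plus the triangle inequality upgrades this to $\dc(G_n,\ka)\to 0$ along the full sequence. Applying (SC) gives $s_p(F,G_n)\to s(F,\ka)$ for every $F\in\A$, so $(G_n)$ is not merely $\de$-Cauchy but $\de$-convergent.

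For the reverse direction I would argue by contradiction. Suppose $(G_n)$ is $\de$-Cauchy but not $\dc$-Cauchy. Then there exist $\eps>0$ and indices $n_k,m_k\to\infty$ with $\dc(G_{n_k},G_{m_k})\ge\eps$. Two successive applications of Corollary~\ref{cSz}, combined with passing to a common sub-subsequence, produce bounded kernels $\ka_1,\ka_2$ with $\dc(G_{n_k},\ka_1)\to 0$ and $\dc(G_{m_k},\ka_2)\to 0$; by the triangle inequality, $\dc(\ka_1,\ka_2)\ge\eps/2$ for large $k$. Applying (SC) to each of the two sub-subsequences gives $s_p(F,G_{n_k})\to s(F,\ka_1)$ and $s_p(F,G_{m_k})\to s(F,\ka_2)$ for every $F\in\A$. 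But $(G_n)$ is $\de$-Cauchy, so each sequence $s_p(F,G_n)$ converges and these two limits coincide: $s(F,\ka_1)=s(F,\ka_2)$ for all $F\in\A$. Picking the smallest odd integer $t'\ge t$, we have $\F_{t'}\subset\F_{\ge t}\subset\A$, so Theorem~\ref{kdet} forces $\ka_1\sim\ka_2$, whence $\dc(\ka_1,\ka_2)=0$, contradicting $\dc(\ka_1,\ka_2)\ge\eps/2$.

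The main obstacle is (SC) itself. In the dense case it is a one-line consequence of Lemma~\ref{cs}, because $\ka_{G_n}$ then takes values in $[0,1]$ and the Lipschitz constant $e(F)$ in Lemma~\ref{cs} is independent of $p$. In our sparse setting $\ka_{G_n}$ takes the value $1/p\to\infty$ on its support, so the naive application of Lemma~\ref{cs} loses a factor of $p^{1-e(F)}$, which is worthless. A proof of (SC) has to exploit the bounded density Assumption~\ref{AC} to control the `spikes' of $\ka_{G_n}-\ka$, presumably through truncation combined with the sparse regularity machinery of Lemmas~\ref{lSzw} and~\ref{lSzs}; moreover, it must genuinely work with $s_p$ rather than going through $t_p$, because for $F\in\F_{\ge t}$ of moderately high edge density the expected embedding count tends to infinity by \eqref{Finf} while the non-injective contribution to $\hom(F,G_n)$ can still dominate $\emb(F,G_n)$, so the cheap substitution $s_p\sim t_p$ available in the almost-dense case fails. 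Establishing such a sparse embedding/counting lemma for all admissible $F$ under only Assumptions~\ref{AC} and~\ref{ABp} is precisely the hard content that Theorems~\ref{cthm}, \ref{cthm2} and~\ref{dthm} address only under additional hypotheses; removing those hypotheses is essentially equivalent to Conjecture~\ref{ccc} itself.
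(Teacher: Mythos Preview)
Your reduction is correct and matches the paper's own approach: the discussion surrounding Facts~1--3 and the set $\J$ in Section~\ref{sec_compar} is exactly your argument that both directions of Conjecture~\ref{ccc} follow from (SC), with your reverse-direction contradiction being the spelled-out version of the paper's observation that $\J=\D$ forces both conjectures to hold. You have also correctly identified that (SC) itself is the genuine open content, addressed only partially by Theorems~\ref{cthm}, \ref{cthm2} and~\ref{dthm} under extra hypotheses.
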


As we shall shortly see, Conjectures~\ref{cce} and \ref{ccc}
are equivalent.

Although we cannot prove the conjectures above, we can say something. Conjecture~\ref{cce},
for example, asserts two implications. Surprisingly, it is easy to show that,
if \eqref{ceq} holds, then
either of these implications (for all sequences, not just a particular sequence) implies
the other! To prove this we shall first show
that the random graph $G(n,\ka)$ behaves `correctly' with respect
to our definition of $\de$; the corresponding result for $\dc$ is
Lemma~\ref{lrgcut}.

\begin{lemma}\label{lrgsub} 
Fix $C>0$, let $\ka:[0,1]^2\to [0,C]$ be a bounded kernel, and let $G_n=G_p(n,\ka)$.
Then, with probability 1, the sequence $(G_n)$ satisfies
Assumption~\ref{ABp} and we have $s_p(G_n)\to s(\ka)$.
\end{lemma}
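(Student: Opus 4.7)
The plan is a second-moment argument applied, for each fixed $F\in\A$, to the count $\emb(F,G_n)$. Since convergence in $X$ means coordinatewise convergence $s_p(F,G_n)\to s(F,\ka)$ for every $F\in\A$ and $\ka\le C$ forces $s(F,\ka)\le C^{e(F)}$, intersecting the resulting probability-one events over the countable set $\A$ yields both the desired limit and Assumption~\ref{ABp} with constant $C$.

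First I would compute the mean. Taking $n$ large enough that $p(n)C\le 1$, so that the min in the edge probabilities is inactive, we condition on the types $x_1,\ldots,x_n$ and sum over injections $\phi:V(F)\to[n]$ to get
\[
 \E\bb{\emb(F,G_n)}\ =\ n_{(|F|)}\,p^{e(F)}\,s(F,\ka)\ =\ (1+o(1))\,n^{|F|}p^{e(F)}s(F,\ka).
\]
For the variance, expand $\E\bb{\emb(F,G_n)^2}=\sum_{\phi_1,\phi_2}\E[I_{\phi_1}I_{\phi_2}]$ and group ordered pairs of injections by the overlap graph $H$, which has $k=|\phi_1(V(F))\cap\phi_2(V(F))|$ vertices and $\ell$ shared edges. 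Pairs with $k=0$ contribute $(1-o(1))\E[\emb(F,G_n)]^2$, while each pattern with $k\ge 1$ contributes $O(n^{2|F|-k}p^{2e(F)-\ell})$, which relative to $\E[\emb(F,G_n)]^2=\Theta(n^{2|F|}p^{2e(F)})$ is $O\bigl(1/(n^kp^\ell)\bigr)$.

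This is where the main work is concentrated: since $H$ is a subgraph of the admissible $F$, by the observation preceding~\eqref{Finf} it satisfies $k>\ell(t-1)/t$, so
\[
 n^kp^\ell\ =\ n^{k-\ell(t-1)/t}(n^{t-1}p^t)^{\ell/t}\ \to\ \infty
\]
whenever $k\ge 1$, using $n^{t-1}p^t\ge n^{-o(1)}$. Summing the finitely many patterns yields $\Var\bb{\emb(F,G_n)}=o(\E[\emb(F,G_n)]^2)$, so Chebyshev's inequality gives $s_p(F,G_n)\to s(F,\ka)$ in probability.

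Almost sure convergence follows by the standard refinement alluded to in Remark~\ref{rconv}: first use Chernoff to concentrate the conditional count (given the types) around its conditional mean, then apply a bounded-differences inequality to concentrate the conditional mean (a bounded U-statistic in the i.i.d.\ types $x_i$) around $s(F,\ka)n^{|F|}p^{e(F)}$. The resulting two-sided error probabilities are summable in $n$, so Borel--Cantelli applies, and intersecting the resulting full-measure events over $F\in\A$ completes the proof. The only nontrivial obstacle is the overlap analysis above: one must verify that every overlap subgraph of an admissible $F$ itself inherits the admissibility growth bound, which is precisely the content of the observation preceding~\eqref{Finf}.
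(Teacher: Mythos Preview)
Your proposal is correct and follows the same underlying second-moment strategy as the paper, hinging on the same key observation (just before \eqref{Finf}) that every subgraph of an admissible $F$ satisfies $|H|>e(H)(t-1)/t$, whence $n^{|H|}p^{e(H)}\to\infty$. The difference is organizational: the paper's outline first invokes the classical concentration result for subgraph counts in $G(n,p')$ (with $p'=Cp$), then extends to finite-type kernels, and finally passes to general bounded $\ka$ via the coupling/approximation argument from Lemma~\ref{lrgcut} (using that $o(n^2p)$ edges meet $o(n^{|F|}p^{e(F)})$ copies of $F$). You instead carry out the second-moment computation directly for general bounded $\ka$, which is cleaner since boundedness of $\ka$ lets you bound every overlap term by $C^{O(1)}n^{2|F|-k}p^{2e(F)-\ell}$ in one stroke; you thereby avoid the finite-type approximation step entirely. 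One small caveat: in your almost-sure step, ``Chernoff'' does not apply verbatim to the conditional count, since given the types the indicators $I_\phi$ share edges and are dependent; one needs a polynomial concentration inequality (e.g., Kim--Vu) or a higher-moment computation instead. This is the kind of routine detail the paper subsumes under Remark~\ref{rconv}, so it does not undermine your argument.
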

\begin{proof}[Outline proof]
The first statement follows from the second, since $s(F,\ka)\le C^{e(F)}$ holds
for every $F$, and in particular for $F\in \A$.

It is well known that if $F$ is a fixed graph, and $p'=p'(n)$ is a function of $n$,
then the number $X_F$ of subgraphs of $G(n,p')$ isomorphic to $F$
is concentrated about its mean if and only if $\E(X_{F'})\to \infty$ for
every subgraph $F'$ of $F$. (For early results of this type see
Bollob\'as~\cite{BBthr} and Ruci{\'n}ski~\cite{Ruc}; for more
recent, much stronger, results see Janson~\cite{J_papr}
and Janson, Oleszkiewicz and Ruci{\'n}ski~\cite{JOR}.)

Our choice of the set $\A$ ensures that this holds
for every $F\in \F$ with $p'=Cp$ (see \eqref{Finf}), proving the result if $\ka$
is constant. It is straightforward to adapt this result to finite
type $\ka$. It is easy to check that for the $F$ we consider,
any $o(n^2p)$ edges of $G_n\subset G(n,Cp)$
meet $o(n^{|F|}p^{e(F)})$ copies of $F$. Using this observation, one
can approximate the general case by the finite type case
as in the proof of Lemma~\ref{lrgcut}. We omit the details.
\end{proof}

Lemma~\ref{lrgsub} gives us a sequence tending in $\de$ to any $\ka\in \K$.
In other words, it shows that $\LL\supset s(\K)$. Hence, if \eqref{ceq} holds,
\begin{equation}\label{LK}
 \LL = s(\K).
\end{equation}

Let $\J\subset \K\times \LL$ denote the set of pairs $(\ka,\la)\in \K\times \LL$
such that there is a sequence $(G_n)$ satisfying our assumptions
with $\dc(G_n,\ka)\to 0$ and $s_p(G_n)\to \la$. Together, Lemmas~\ref{lrgcut}
and Lemma~\ref{lrgsub} tell us much
more than simply that $\LL\subset s(\K)$: they show that
the `diagonal' $\D=\{(\ka,s(\ka)): \ka\in\K\}$ is contained in $\J$.

At this point, we have established three basic facts:

\medskip
FACT 1: Every subsequence of $(G_n)$  has a subsequence converging
in $\de$ to some point
of $\LL$. This is trivial, since Assumption~\ref{ABp} ensures that
$s_p(G_n)$ lives in a compact subset of $X=[0,\infty)^\A$.

FACT 2: Every subsequence of $(G_n)$ has a subsequence converging in
$\dc$ to some kernel $\ka\in \K$. This is the first part of
Corollary~\ref{cSz}.

FACT 3: The map $s$ is an injection from $\K$ to $\LL$. As noted
above, this follows from Theorem~\ref{kdet}.

\medskip
Facts 1 and 2 tell us that the relationship between the notions of convergence
in $\dc$ and $\de$ is described by the set $\J$. Indeed, any subsequence
of $(G_n)$ itself has a subsequence in which we have convergence in both these
metrics, to some point of~$\J$.

Suppose for the moment that \eqref{LK} holds. There are two possibilities.

If $\J$ is precisely the diagonal $\D$, then the three facts above
easily imply that Conjectures~\ref{cce} and \ref{ccc} both hold.

If $\J\ne \D$, 
then there is some off diagonal point $(\ka_1,\la)$ in $\J$.
Since we are assuming \eqref{LK}, we have $\la=s(\ka_2)$
for some $\ka_2\in \K$.
From the definition of $\J$ there is a sequence $(G_n)$
satisfying our assumptions, with $\dc(G_n,\ka_1)\to 0$ and
$s_p(G_n)\to s(\ka_2)$.
Interleaving the sequence $G_n$ with the sequence $G_p(n,\ka)$, which converges
to $\ka$ in both $\dc$ and $\de$, taking $\ka=\ka_1$ or $\ka_2$,
we find a sequence which converges in one of $\dc$ or $\de$ but not in 
the other. Hence, neither implication in Conjecture~\ref{cce} or \ref{ccc}
holds, i.e., these conjectures fail as badly as possible.

In the light of the comments above, Conjecture~\ref{cce} has the
following rather vague reformulation as a question.

\begin{question}\label{qq}
Given a definition of `suitable' sequences $(G_n)$,
let $\C$ be the set of all graphs $F$ with the property that, whenever $\ka$
is a bounded kernel and $(G_n)$ is a suitable sequence
with $\dc(G_n,\ka)\to 0$, then $s_p(F,G_n)\to s(F,\ka)$.
Under what reasonable definition of `suitable' is the
set $\C$ large enough that the counts $s(F,\ka)$, $F\in \C$,
determine a kernel $\ka$ up to equivalence?
\end{question}

The point is that, if $\C$ is large enough, then the three facts
above hold with $\A=\C$, and we simply use $\C$ as the set
of graphs whose counts we use to define $\de$. Then, for
our `suitable' sequences, $\dc$ convergence
implies $\de$ convergence to the same kernel by definition,
so $(\ka,\la)\in \J$ implies $\la=s(\ka)$. Thus \eqref{ceq} (and hence
\eqref{LK}) holds, and
$\J=\D$, so $\de$ convergence also implies $\dc$ convergence.
Unfortunately, there is no obvious single choice for the
set of suitable sequences. One could hope that
sequences with bounded density would do, but this is not the case:
by adding a complete graph with many (but still $o(pn^2)$)
edges to $G(n,p)$, say,
it is easy to check that in this case $\C$ consists only of matchings.
Conjecture~\ref{cce} is more specific than Question~\ref{qq}, since
we define `suitable' by assuming $s_p(F,G_n)$ bounded
for $F$ in some set $\A$, and then require $\C\supset \A$.

\bigskip
If Conjecture~\ref{mqs} does not hold, then Conjectures~\ref{cce}
and \ref{ccc} cannot hold. Indeed, there is some $\la\in \LL$ not corresponding
to a kernel. Taking $G_n$ converging to $\la$ in $\de$, and then a subsequence
that converges in $\dc$, there is some $\ka$ with $(\ka,\la)\in \J$.
Interleaving a corresponding sequence $(G_n)$ with $G_p(n,\ka)$,
we find a sequence that converges in $\dc$ but not in $\de$.

Even if Conjecture~\ref{mqs} does not hold,
it is still possible that there is some relationship between cut and subgraph convergence:
it may be
that every sequence that is Cauchy with respect to $\de$, and hence converges
to some $\la\in \LL$, is Cauchy with respect to $\dc$, i.e., converges
to some $\ka\in \K$. This happens if and only if, for every $\la\in \LL$,
there is a unique $\ka\in \K$ such that $(\ka,\la)\in \J$.
This is not as implausible as it may sound.
Indeed, suppose Conjecture~\ref{cce} holds for some admissible set $\A_-$,
but that the definitions involved
make sense for a larger set $\A_+$. It may be that \eqref{ceq}
fails working with $\A_+$, because we are now allowing as admissible
some counts which need not converge to what we expect.
However, there is a restriction map from $\LL_+$ to $\LL_-$ forgetting
about the counts outside $\A_-$. Since \eqref{LK} holds for the smaller set
of admissible graphs, this would show that for the larger set
there is only one $\ka$ for each $\la$, but not vice versa.

\bigskip
In the next section we shall prove a form of Conjecture~\ref{cce}.
Before doing so, let
us briefly compare this conjecture with
the corresponding result of Borgs, Chayes, Lov\'asz, S\'os and Vesztergombi~\cite{BCLSV:1}
for the dense case.
In the dense setting, as here, Facts 1 and 2 above
are easy to prove. That all limiting counts come from kernels was shown
by Lov\'asz and Szegedy~\cite{LSz1}; this gives \eqref{LK}.
Surprisingly, the hard part is proving Fact 3,
that the counts (now meaning all counts)
determine the kernel, up to equivalence as defined in Subsection~\ref{ss_equiv}.
(For us this was easy, since we deduced the sparse
equivalent of this statement from the dense result, Theorem~\ref{th_kequiv}.) Once one knows
that the counts determine the kernel,
the `meta-argument'
above shows that $\dc$ convergence implies $\de$ convergence if and only
if the reverse implication holds. Since the forward implication is very easy
(see Corollary~\ref{cor_dcde}),
the result of~\cite{BCLSV:1} that the two metrics are equivalent follows.
This gives a proof of this result in which the only non-straightforward
step is showing that the counts $s(F,\ka)$ determine the kernel $\ka$
up to the appropriate notion of equivalence. One might expect this uniqueness
result to be easy, but this seems to be far from the case.
Recently, Borgs, Chayes and Lov\'asz~\cite{BCL:unique} gave a direct proof
of this result (which, as noted in Section~\ref{dense}, actually follows
from the results of~\cite{BCLSV:1}); their proof is far from simple.

\subsection{Partial results: embedding lemmas}\label{sec_em}

Our aim in this section is to prove a positive result, that under certain
circumstances, if $\dc(G_n,\ka)\to 0$, then $s_p(F,G_n)\to s_p(F,\ka)$
for certain graphs $F$. In the case where $\ka$ is of finite type,
this is simply a counting lemma: in this case, $G_n\to \ka$
says that $G_n$ can be partitioned into $(\eps,p)$-regular pairs
with densities given by $\ka$. In the uniform case,
Chung and Graham~\cite{CG} proved such counting lemmas for certain graphs
under certain assumptions.
The general case turns out to be rather different, but we shall
still use several of their ideas.

We start with the simplest case, where $F$ is a path. First we need
some definitions. As usual, in the proof it will be easier
to consider homomorphisms from $F$ to $G_n$ (i.e., walks in $G_n$) 
rather than embeddings. As we shall see later, this makes no difference.

For $G_n$ a graph and $X_0,\ldots,X_\ell$ subsets of $V(G_n)$, let
$G_n(X_0,X_1,\ldots,X_\ell)$ denote the number of $(\ell+1)$-tuples
$(v_i)$ with $v_i\in X_i$ and $v_iv_{i+1}\in E(G)$ for  $0\le i\le \ell-1$.
Identifying a subset of $V(G_n)$ with a subset of $[0,1]$ as before, 
for a kernel $\ka$ let
\[
 \ka(X_0,X_1,\ldots,X_\ell) = \int_{X_0\times\cdots\times X_\ell}
 \ka(x_0,x_1)\cdots\ka(x_{\ell-1},x_\ell) \dd x_0\cdots \dd x_{\ell}.
\]

\begin{lemma}\label{pem}
Let $C>0$ be constant, let $p(n)$ be any function of $n$ with $np\to\infty$,
and let $(G_n)$ be a sequence of graphs with $t_p(T,G_n)$ bounded
for each tree $T$. For every $\eps>0$ and $\ell\ge 1$
there is a $\delta=\delta_\ell(\eps)>0$ such that, whenever
$\ka:[0,1]^2\to [0,C]$ is a kernel with $\cn{\ka_{G_n}-\ka}\le\delta$,
then
\[
 \bm{G_n(X_0,X_1,\ldots,X_\ell) - n^{\ell+1}p^\ell\ka(X_0,X_1,\ldots,X_\ell)}
  \le \eps n^{\ell+1}p^\ell
\]
for any sets $X_0,X_1\ldots,X_\ell\subset V(G_n)$.
\end{lemma}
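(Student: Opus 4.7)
The plan is to replace $\ka_{G_n}$ by $\ka$ one factor at a time in a telescoping argument, using the bounded tree count assumption to tame the $L^\infty$-unboundedness of $\ka_{G_n}$. Since $\ka_{G_n}$ takes the values $0$ and $1/p$ on $1/n$-by-$1/n$ squares, a direct computation gives
\[
 G_n(X_0,\ldots,X_\ell)=n^{\ell+1}p^\ell\int_{X_0\times\cdots\times X_\ell}\prod_{i=0}^{\ell-1}\ka_{G_n}(x_i,x_{i+1})\dd x_0\cdots\dd x_\ell,
\]
so it suffices to bound the difference between this integral and the analogous one with $\ka$ in place of $\ka_{G_n}$ by $\eps$. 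Writing this difference as a telescoping sum, and integrating out every variable except $x_k$ and $x_{k+1}$ in the $k$th summand, the task reduces to showing that each of
\[
 D_k=\int (\ka_{G_n}-\ka)(x_k,x_{k+1})\,f_k(x_k)\,g_k(x_{k+1})\dd x_k\dd x_{k+1},\qquad 0\le k\le\ell-1,
\]
has absolute value at most $\eps/\ell$, where
\[
 f_k(x_k)=\mathbf 1_{X_k}(x_k)\int_{X_0\times\cdots\times X_{k-1}}\prod_{i=0}^{k-1}\ka(x_i,x_{i+1})\dd x_0\cdots\dd x_{k-1}
\]
collects the already swapped $\ka$-factors and
\[
 g_k(x_{k+1})=\mathbf 1_{X_{k+1}}(x_{k+1})\int_{X_{k+2}\times\cdots\times X_\ell}\prod_{i=k+1}^{\ell-1}\ka_{G_n}(x_i,x_{i+1})\dd x_{k+2}\cdots\dd x_\ell
\]
collects the not yet swapped $\ka_{G_n}$-factors.

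Since $\ka\le C$, we have $\infn{f_k}\le C^\ell$; but $g_k$ is not $L^\infty$-bounded. The crucial observation is that, for any integer $q\ge 1$, dropping the indicator constraints on $x_{k+2},\ldots,x_\ell$ gives
\[
 \int g_k(y)^q\dd y\;\le\;t_p(S_{q,\ell-k-1},G_n),
\]
where $S_{q,h}$ is the spider tree obtained by joining $q$ paths of length $h$ at a common endpoint. Since every spider is a tree, the right-hand side is bounded uniformly in $n$ by hypothesis, giving uniform $L^q$-control of $g_k$ for every $q$. To exploit this I truncate: with $g_k^{\le N}=g_k\,\mathbf 1_{g_k\le N}$ and $g_k^{>N}=g_k-g_k^{\le N}$, the bounded part is handled directly by the cut-norm definition,
\[
 \Bigl|\int(\ka_{G_n}-\ka)f_k g_k^{\le N}\Bigr|\le\infn{f_k}\infn{g_k^{\le N}}\cn{\ka_{G_n}-\ka}\le C^\ell N\delta,
\]
while the tail is handled by the crude pointwise bound $|\ka_{G_n}-\ka|\le\ka_{G_n}+\ka$: the $\ka$-piece is at most $C^{\ell+1}\on{g_k^{>N}}$, and the $\ka_{G_n}$-piece is at most $C^\ell\,\tn{d_{G_n}}\,\tn{g_k^{>N}}$ by Cauchy--Schwarz, where $d_{G_n}(y):=\int\ka_{G_n}(x,y)\dd x$ is the normalized degree function and $\tn{d_{G_n}}^2=t_p(P_3,G_n)$ is bounded since $P_3$ is a tree. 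Markov-type estimates $\tn{g_k^{>N}}^2\le N^{-2}\int g_k^4\le N^{-2}\,t_p(S_{4,\ell-k-1},G_n)$ and $\on{g_k^{>N}}\le N^{-3}\int g_k^4$ then show that both tail contributions tend to $0$ as $N\to\infty$, uniformly in $n$.

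Given $\eps>0$, I first choose $N$ large enough that every tail contribution is at most $\eps/(2\ell)$, and then choose $\delta=\delta_\ell(\eps)>0$ small enough that $C^\ell N\delta\le\eps/(2\ell)$; summing the $\ell$ terms $D_k$ gives the claimed bound. The main obstacle --- and the essential new feature of the sparse setting relative to the dense counting Lemma~\ref{cs} --- is precisely the $L^\infty$-unboundedness of $g_k$: the dense argument simply multiplies by $\cn{\ka_{G_n}-\ka}$ using $\infn{g_k}\le 1$, which is unavailable here. The bounded tree count hypothesis is tailored for exactly this purpose, furnishing the $L^q$-control of $g_k$ for arbitrarily large $q$ that the truncation argument requires.
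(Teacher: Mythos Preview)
Your proof is correct and takes a genuinely different route from the paper's.

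The paper argues by induction on the path length~$t$. Given the inductive bound for paths of length $t-1$, it shows that only a small set $B\subset X_1$ of ``bad'' vertices $v$ can have $|\Delta(v,X_2,\ldots,X_t)|$ or $|\Delta(v,X_0)|$ large; good vertices contribute the right amount, and the contribution from $B$ is controlled by Cauchy--Schwarz together with the bound on $t_p(T,G_n)$ for the tree $T$ obtained by gluing two copies of $P_t$ at their second vertices. The paper also begins with a preliminary reduction (via weak regularity) to piecewise-constant~$\ka$.

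Your telescoping-plus-truncation argument is more direct: no induction, no reduction to finite-type~$\ka$, and no tracking of a recursive error sequence $\eps_t$. The key idea is the same in spirit---use tree counts to control moments of walk-count functions---but you package it differently, bounding $\|g_k\|_q$ via the spider $S_{q,\ell-k-1}$ and then splitting at a threshold~$N$. The paper's inductive structure, on the other hand, yields the stronger intermediate statement that the count is accurate simultaneously for all shorter paths with the same~$\delta$, and its ``bad set'' viewpoint feeds naturally into the later arguments of Section~\ref{sec_em}. Both approaches extend to general trees (yours by ordering the edges so that each prefix is a subtree, so that the $\ka$-side remains an $L^\infty$-bounded function of a single variable).

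One small point: your bound $\bigl|\int(\ka_{G_n}-\ka)f_kg_k^{\le N}\bigr|\le\infn{f_k}\infn{g_k^{\le N}}\cn{\ka_{G_n}-\ka}$ is correct as stated because $f_k,g_k^{\le N}\ge 0$ (use the layer-cake representation), but in general the functional version of the cut norm and the set version differ by a constant factor; it is worth saying a word about this.
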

Roughly speaking, the lemma says that if $G_n\to \ka$ and $t_p(T,G_n)$
is bounded for each $T$, then $t_p(P_\ell,\ka)\to s(P_\ell,\ka)$.
The stronger assertion makes it simpler to prove the result by induction.
\begin{proof}
Renormalizing, we may assume without loss of generality that $C=1$. Let
us do so from now on.

The fact that $\delta$ is not allowed to depend on $\ka$ allows
us to assume without loss of generality that $\ka$ is piecewise
constant on squares of side $1/n$, i.e., that $\ka$
may be interpreted as a (dense) weighted graph
with vertex set $V(G_n)$. Indeed, the Frieze--Kannan form
of Szemer\'edi's Lemma shows that there is an integer $k$ such 
that, given any $\ka$, there is a $\ka'$ that is constant on squares
of side $1/k$ with $\dc(\ka,\ka')\le \delta$. Tweaking
$\ka'$ slightly if $k$ does not divide $n$, we obtain
a kernel $\ka''$ of the required form. Replacing $\delta$
by $2\delta$ as appropriate, the result for $\ka$ follows from the result
for $\ka''$. [Note that we implicitly assumed that $n$
is large here, meaning larger than some $n_0$ depending on $\eps$ and $\ell$.
We could simply assume this in the statement of the lemma,
but it can be achieved by subdividing vertices.
In fact, we could work with a kernel instead of a graph throughout the proof.]

Let
\[
 \Delta(X_0,\ldots,X_\ell) = 
 \frac{G_n(X_0,X_1,\ldots,X_\ell)}{n^{\ell+1}p^\ell} -
 \ka(X_0,X_1,\ldots,X_\ell),
\]
so our aim is to show that $|\Delta(X_0,\ldots,X_\ell)|\le \eps$
for all choices of the sets $X_i$. We shall show much more:
let $M=\max_T \sup_n t_p(T,G_n)$, where the maximum is over trees
with at most $2\ell+1$ vertices, noting that $M<\infty$.
We shall show
that if $\dc(G_n,\ka)\le \delta$, then, for any $1\le t\le \ell$
and any $X_0,\ldots,X_t\subset V(G_n)$ we have
\begin{equation}\label{ih}
 |\Delta(X_0,X_1,\ldots,X_t)| \le \eps_t,
\end{equation}
where $\eps_1=\delta$, and
\[
 \eps_t= 7\sqrt{\eps_{t-1}}+\sqrt{M}\eps_{t-1}^{1/4}
\]
for $t\ge 2$.
Since $\eps_\ell$ tends to zero as $\delta\to 0$,
taking $\delta$ small enough we have $\delta=\eps_1\le \eps_2\le \cdots \eps_\ell \le \eps$,
so to complete the proof of the lemma it suffices to prove \eqref{ih}
for this choice of $\delta$.

We shall prove \eqref{ih} by induction on $t$. For $t=1$, the result is immediate
from the definition
of the cut norm: indeed, $\Delta(X_0,X_1)$ is one of the
quantities appearing in the supremum defining this norm.
Suppose now that $2\le t\le\ell$, and that \eqref{ih} holds
with $t$ replaced by $t-1$.

For $v\in V(G)$ and $X_1,\ldots,X_r\subset V(G)$, set
\[
 \ka(v,X_1,\ldots,X_r) = \int_{X_1\times\cdots\times X_r}
 \ka(x,x_1)\ka(x_1,x_2)\cdots\ka(x_{r-1},x_r) \dd x_1\cdots \dd x_r,
\]
where $x$ is any point of the interval of length $1/n$
corresponding to the vertex $v$, and let
\begin{equation}\label{Dv}
 \Delta(v,X_1,\ldots,X_r) = 
 \frac{G_n(\{v\},X_1,\ldots,X_r)}{n^r p^r} -
 \ka(v,X_1,\ldots,X_r).
\end{equation}
Note that
\begin{equation}\label{dsum}
 \Delta(X,X_1,\ldots,X_r) = \frac{1}{n}\sum_{v\in X}\Delta(v,X_1,\ldots,X_r).
\end{equation}
Fix $X_0,\ldots,X_t\subset V(G_n)$, and set $\eta=\sqrt{\eps_{t-1}}$.
Let $B_1$ be the set of $v\in X_1$ with $\Delta(v,X_2,\ldots,X_t)>\eta$.
Then, from \eqref{dsum}, $\Delta(B_1,X_2,\ldots,X_t)\ge \eta |B_1|/n$.
But by the induction hypothesis, $\Delta(B_1,X_2,\ldots,X_t)\le \eps_{t-1}=\eta^2$.
Hence, $|B_1|\le \eta n$. Arguing similarly, and using $\eps_{t-1}\ge \eps_1$,
we see that the set $B$ of vertices $v\in X_1$ for which
either $|\Delta(v,X_2,\ldots,X_t)|\ge \eta$ or $|\Delta(v,X_0)|\ge \eta$
holds has size at most $4\eta n$.

If $v\in X_1\setminus B$, then we have roughly the right number of
walks through $v$, i.e.,
\[
 G_n(X_0,\{v\},X_2,\ldots,X_t) = G_n(\{v\},X_0) G_n(\{v\},X_2,\ldots,X_t)
\]
is close to $np \ka(v,X_0) n^{t-1}p^{t-1}\ka(v,X_2,\ldots,X_t)$.
More precisely, using the fact that
$\ka$ is pointwise bounded by $C=1$ to bound the $\ka$ terms in the
last expression by $1$, for $v\in X_1\setminus B$ we have
\begin{equation}\label{good}
 |\Delta(X_0,v,X_2,\ldots,X_t)| \le 3\eta,
\end{equation}
where the left hand side is defined by analogy with \eqref{Dv}.

It remains to consider $v\in B$. For $i=1,2$, let
\[
 \sigma_i = \sum_{v\in B} G_n(X_0,\{v\},X_2,\ldots,X_t)^i,
\]
noting that $\sigma_1\le \sqrt{|B|\sigma_2}$ by the Cauchy--Schwarz inequality.
Let $T$ be the tree with $2t$ edges formed by identifying the second vertices
of two paths of length $t$. Then $\sigma_2$ counts a subset of the homomorphisms from $T$ into $G_n$, so
\[
 \sigma_2 \le \hom(T,G_n) = n^{2t+1}p^{2t} t_p(T,G_n) \le Mn^{2t+1}p^{2t}.
\]
Since $|B|\le 4\eta n$ it follows that
\[
 \sigma_1\le \sqrt{|B|\sigma_2} \le 2\sqrt{M\eta} n^{t+1}p^t.
\]
Since $\ka$ is bounded by $1$, we have 
$\ka(X_0,B,X_2,\ldots,X_t)\le \mu(B)\le 4\eta$, so
\[
 |\Delta(X_0,B,X_2,\ldots,X_t)|\le 2\sqrt{M\eta}+4\eta.
\]
Together with the bound \eqref{good} for $v\in X_1\setminus B$
and (the equivalent of) \eqref{dsum},
this implies that
\[
 |\Delta(X_0,X_1,\ldots,X_t)|\le 7\eta +2\sqrt{M\eta} = \eps_t,
\]
as required. This completes the proof of \eqref{ih} by induction,
and thus the proof of the lemma.
\end{proof}

Note that the argument above works just as well for an arbitrary
fixed tree rather than a path: we pick some leaf $v$ to play the role
of $x_0$; the unique neighbour of $v$ then plays the role
of $x_1$. This gives us a counting
lemma for trees.

\begin{corollary}\label{cT}
Let $(G_n)$ be a sequence of graphs with $t_p(T,G_n)$ bounded for
every tree $T$, and suppose that $\dc(G_n,\ka)\to 0$,
where $\ka$ is a bounded kernel. Then for each
tree $T$ we have $t_p(T,G_n)\to s(T,\ka)$ as $n\to\infty$.
\noproof
\end{corollary}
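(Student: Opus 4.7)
The plan is to deduce the corollary directly from the tree generalization of Lemma~\ref{pem} flagged in the remark preceding the statement, specialized to $X_0=X_1=\cdots=V(G_n)$. With this choice, the quantity $G_n(X_0,\ldots)$ in the notation of Lemma~\ref{pem} is exactly $\hom(T,G_n)=n^{|T|}p^{e(T)}t_p(T,G_n)$, while $\ka(X_0,\ldots)=s(T,\ka)$, so the tree counting lemma reads $|t_p(T,G_n)-s(T,\ka)|\le\eps$ whenever $\cn{\ka_{G_n}-\ka}\le\delta_T(\eps)$.

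To prove the tree version, I would rerun the induction of Lemma~\ref{pem} on $|T|$ rather than on path length. Pick a leaf $v$ of $T$ with unique neighbor $w$, and set $T'=T-v$. For a fixed image $u$ of $w$, the inductive hypothesis controls the number of homomorphic extensions of $T'$ rooted at $u$, while the cut-norm bound controls the number of neighbors of $u$ in $X_v$; for most $u$ these two counts are close to what $\ka$ predicts, and their product gives the correct count for $T$. The bad set $B$ of vertices $u$ where either estimate fails is small by induction and the cut-norm bound, and its contribution is handled by Cauchy--Schwarz applied to the tree $T\cup_w T$ obtained by gluing two copies of $T$ at $w$. The hypothesis that $t_p(T'',G_n)$ is bounded for \emph{every} tree $T''$---in particular for $T\cup_w T$, which has $2|T|-1$ vertices and is again a tree---is exactly what keeps the Cauchy--Schwarz step going, replacing the tree with $2\ell+1$ vertices that appeared in the path case.

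Finally, to pass from the cut-norm hypothesis of the lemma to the cut-distance hypothesis of the corollary: given $\eps>0$ and $\delta=\delta_T(\eps)$, the assumption $\dc(G_n,\ka)\to 0$ combined with the definition~\eqref{dcdef1} of $\dc$ yields, for each sufficiently large $n$, a rearrangement $\ka_n^*\approx\ka$ with $\cn{\ka_{G_n}-\ka_n^*}\le\delta$. Since $s(T,\ka_n^*)=s(T,\ka)$, applying the tree version of Lemma~\ref{pem} to $\ka_n^*$ gives $|t_p(T,G_n)-s(T,\ka)|\le\eps$; letting $\eps\to 0$ finishes the proof. The only point requiring care is the inductive step for trees, but because the class of trees is closed under vertex-gluing, no hypotheses beyond those already in the statement are needed, so the main obstacle is really just bookkeeping rather than a genuine new difficulty.
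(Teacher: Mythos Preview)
Your proposal is correct and follows exactly the approach the paper intends: the corollary is stated with \verb|\noproof| because the paper's preceding remark already says ``the argument above works just as well for an arbitrary fixed tree rather than a path: we pick some leaf $v$ to play the role of $x_0$; the unique neighbour of $v$ then plays the role of $x_1$.'' You have simply spelled out this remark in detail---the induction on $|T|$ with $T'=T-v$, the bad set at the neighbour $w$, and the Cauchy--Schwarz step using the tree $T\cup_w T$ (which corresponds precisely to the paper's ``tree with $2t$ edges formed by identifying the second vertices of two paths of length $t$'')---and added the routine passage from $\dc$ to $\cn{\cdot}$ via a rearrangement $\ka_n^*\approx\ka$, which the paper leaves implicit.
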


Chung and Graham~\cite{CG} proved a version of this result (for
paths rather than trees) with $\ka$ constant, under the assumption
that the maximum degree of $G_n$ is at most $Cpn$. This maximum
degree assumption of course gives $t_p(T,G_n)\le C^{e(T)}$, so it is
stronger than the bounded tree counts assumption of Lemma~\ref{pem}.
In some sense, the maximum degree condition is much stronger, but it
turns out that our global assumption is just as good for questions
involving subgraph counts. The reason that Lemma~\ref{pem} is more
complicated than the corresponding simple result in~\cite{CG} is
that $\kappa$ is not uniform, not our weaker assumption.

We stated earlier that, in the sparse case, the parameter
$s_p(F,\ka)$ should be preferred to $t_p(F,\ka)$, even though $t_p$
tends to be easier to work with. Nevertheless, in the case of trees,
these parameters are equivalent, as shown by the following
observation.

\begin{lemma}\label{stT}
Let $p(n)$ be any function of $n$ with $np\to \infty$, and let $(G_n)$
be a sequence with $s_p(T,G_n)$ bounded for every tree $T$.
Then, for each tree $T$, we have $t_p(T,G_n)\sim s_p(T,G_n)$.
In particular, $t_p(T,G_n)$ is bounded.
\end{lemma}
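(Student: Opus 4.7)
The plan is to write $\hom(T,G_n)$ as a sum, over partitions $\pi$ of $V(T)$, of the number of homomorphisms that identify vertices precisely according to $\pi$. The trivial partition contributes exactly $\emb(T,G_n)$, and every non-trivial partition contributes a lower-order term.

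More precisely, let me fix a tree $T$ with $k=|T|$ vertices and $e(T)=k-1$ edges. For a partition $\pi$ of $V(T)$, let $T/\pi$ denote the multigraph obtained by identifying vertices in each block, and let $F_\pi$ be its underlying simple graph. A homomorphism $T\to G_n$ identifying vertices exactly as $\pi$ can exist only if no two adjacent vertices of $T$ lie in the same block (else $G_n$ would need a loop); when this holds, such homomorphisms correspond bijectively to embeddings of $F_\pi$ into $G_n$ (distinct edges of $T/\pi$ that underlie the same edge of $F_\pi$ give the same condition in $G_n$). Hence
\[
 \hom(T,G_n) = \emb(T,G_n) + \sum_{\pi\ne \mathrm{triv}} \emb(F_\pi, G_n).
\]
The sum runs over a finite set of partitions, whose number depends only on $|T|$.

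The key observation is that each $F_\pi$ is a connected simple graph (since $T$ is connected) on $|F_\pi|=k-j$ vertices for some $j\ge 1$. A spanning tree $T_\pi$ of $F_\pi$ therefore has $k-j-1$ edges and $k-j$ vertices, and, by monotonicity, $\emb(F_\pi,G_n)\le \emb(T_\pi,G_n)$. Since $T_\pi$ is a tree, the hypothesis gives $s_p(T_\pi,G_n)=O(1)$, so
\[
 \emb(F_\pi,G_n) \le \emb(T_\pi,G_n) = O\bb{p^{k-j-1}n^{k-j}}.
\]
Dividing by $p^{e(T)}n^{|T|}=p^{k-1}n^k$ gives a contribution of $O((np)^{-j})$ to $t_p(T,G_n)$. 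Since $j\ge 1$ and $np\to\infty$, each of the finitely many non-trivial partitions contributes $o(1)$.

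Combining, $t_p(T,G_n) = \emb(T,G_n)/(p^{k-1}n^k) + o(1) = s_p(T,G_n)\cdot n_{(k)}/n^k + o(1) = s_p(T,G_n) + o(1)$, since $n_{(k)}/n^k = 1 + O(1/n)$ and $s_p(T,G_n)$ is bounded. Boundedness of $t_p(T,G_n)$ follows immediately. The only mild subtlety, which is really just bookkeeping, is verifying the identification between non-injective homomorphisms and embeddings of $F_\pi$; there is no genuine obstacle because $G_n$ is a simple graph, so the multiplicity of edges in $T/\pi$ is irrelevant to the count in $G_n$.
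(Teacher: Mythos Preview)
Your proof is correct and takes essentially the same approach as the paper's: both bound the non-injective homomorphisms by observing that the image of any such map is a connected graph on fewer than $|T|$ vertices, and then control the count via embeddings of a spanning tree of that image, using the hypothesis that $s_p(T',G_n)=O(1)$ for every tree $T'$. The only difference is organisational: you stratify by the partition $\pi$ of $V(T)$ induced by the fibres and identify each stratum exactly with $\emb(F_\pi,G_n)$, while the paper groups more coarsely by the size $\ell$ of the image and bounds the number of homomorphisms with a given $\ell$-element image by $k^\ell$ --- but both routes arrive at the same estimate $O(n^{k-1}p^{k-2})=o(n^kp^{k-1})$.
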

\begin{proof}
Fix a tree $T$ with $k$ vertices. It suffices to show that the number $N_T$ of
non-injective homomorphisms from $T$ to $G_n$ satisfies $N_T=o(n^k p^{k-1})$
as $n\to\infty$.
Now the image of any non-injective homomorphism $\phi$ from $T$ to $G_n$ is a 
connected subgraph $H$ of $G_n$ with $\ell$ vertices, where $1\le \ell\le k-1$.
Any such subgraph contains a tree $T'$ with $\ell$ vertices, so for each $\ell$ there
are (crudely) at most $\sum_{|T'|=\ell} \emb(T',G_n)$ possibilities
for vertex set of $H$, where the sum is over all trees $T'$ with $\ell$ vertices.
Since there are at most $k^\ell$ homomorphisms $\phi$ with image
a given set of $\ell$ vertices, we thus have
\[
 N_T \le \sum_{\ell=1}^{k-1} k^\ell \sum_{|T'|=\ell} \emb(T',G_n).
\]
Since $\emb(T',G_n)=n_{(|T|')}p^{e(T')}s_p(T',G_n)$, the final
term is $O(n^\ell p^{\ell-1})$ by assumption. It follows that
$N_T=O(n^{k-1}p^{k-2}) = o(n^k p^{k-1})$, as claimed.
\end{proof}
Lemma~\ref{stT} allows us to restate Corollary~\ref{cT} in terms
of the parameter $s$.

\begin{theorem}\label{th_Temb}
Let $(G_n)$ be a sequence of graphs with $s_p(T,G_n)$ bounded for
every tree $T$, and suppose that $\dc(G_n,\ka)\to 0$,
where $\ka$ is a bounded kernel. Then for each
tree $T$ we have $s_p(T,G_n)\to s(T,\ka)$ as $n\to\infty$.
\noproof
\end{theorem}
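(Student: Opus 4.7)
The proof plan is straightforward and essentially just combines the two immediately preceding results, so I would present it as a short deduction rather than grinding through fresh calculations. The key observation is that Lemma~\ref{stT} lets us transfer between $s_p$ and $t_p$ for trees, while Corollary~\ref{cT} gives the convergence statement for $t_p$.

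First, I would invoke Lemma~\ref{stT}: the hypothesis that $s_p(T,G_n)$ is bounded for every tree $T$ immediately yields that $t_p(T,G_n)$ is bounded for every tree $T$. So the hypotheses of Corollary~\ref{cT} are satisfied, and applying it gives $t_p(T,G_n)\to s(T,\ka)$ for each fixed tree $T$.

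Second, I need to pass back from $t_p$ to $s_p$. Here I would not just use the ``$\sim$'' statement of Lemma~\ref{stT} (which is inconvenient when $s(T,\ka)=0$), but rather the stronger additive version visible from its proof. Concretely, if $T$ has $k$ vertices and $k-1$ edges, then $\hom(T,G_n)=\emb(T,G_n)+N_T$, where by the bound in the proof of Lemma~\ref{stT} we have $N_T=O(n^{k-1}p^{k-2})$, and $n_{(k)}=n^k(1+o(1))$. Dividing by $p^{k-1}n^k$ gives
\[
 t_p(T,G_n) = s_p(T,G_n)(1+o(1)) + O\bigl(1/(np)\bigr),
\]
and since $np\to\infty$ and $s_p(T,G_n)$ is bounded, this reduces to $t_p(T,G_n)-s_p(T,G_n)\to 0$.

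Combining the two steps, $s_p(T,G_n)\to s(T,\ka)$ as required. There is really no obstacle here: the substantive work has already been done in Lemma~\ref{pem} (and its tree version giving Corollary~\ref{cT}) and in Lemma~\ref{stT}. The only thing to be slightly careful about is preferring the additive form $t_p-s_p\to 0$ to the multiplicative form $t_p\sim s_p$, so as to cover the case $s(T,\ka)=0$ uniformly.
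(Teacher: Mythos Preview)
Your proposal is correct and matches the paper's own approach: the paper simply states the theorem with no proof, remarking beforehand that ``Lemma~\ref{stT} allows us to restate Corollary~\ref{cT} in terms of the parameter $s$.'' Your added care in using the additive form $t_p-s_p\to 0$ rather than $t_p\sim s_p$ to handle the case $s(T,\ka)=0$ is a sensible refinement.
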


Theorem~\ref{th_Temb} may be regarded as an embedding lemma for trees.
Our next aim is to prove a much more general result.
Chung and Graham showed that, in the uniform case,
if the number of paths
of length $\ell-1$ between any two vertices is at most a constant
times what it should be, then almost all pairs of vertices
are joined by almost the right number of paths of length $\ell$,
and hence $G_n$ contains asymptotically the expected number of copies
of any $F\in \F_\ell$. This result is much harder than the paths
result, even in the uniform case. 
Although we shall use the key idea of Chung and Graham,
the proof does not carry over in a simple way.
In the following result, we work with $t_p$ rather than $s_p$
for simplicity; we return to this later.

\begin{theorem}\label{cthm}
Let $C>0$ and $\ell\ge 3$ be fixed, and let $p=p(n)$ be any
function of $n$. Let $(G_n)$ be a sequence of graphs with
$\sup_n t_p(F,G_n)<\infty$ for each $F\in \T\cup\F_\ell\cup\{C_{2\ell-2}\}$,
and suppose that $\dc(G_n,\ka)\to 0$ for some kernel
$\ka:[0,1]^2\to [0,C]$. Then
$t_p(F,G_n)\to s(F,\ka)$ for each $F\in \F_\ell$.
\end{theorem}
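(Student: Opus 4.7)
The plan is to reduce counting $F\in\F_\ell$ in $G_n$ to an integral involving a walk-count kernel, and then to show that this kernel is close in cut norm to the power kernel $\ka^\ell$ defined by~\eqref{kat}. Fix $F\in\F_\ell$ and let $F^*\in\Fm$ be such that $F=F^*_\ell$. Every homomorphism $F\to G_n$ is specified by a map $\phi:V(F^*)\to V(G_n)$ together with a walk of length $\ell$ in $G_n$ from $\phi(u)$ to $\phi(v)$ for every edge $uv\in E(F^*)$. Letting $N_\ell(u,v)$ denote the number of such walks, and $\rho_n:[0,1]^2\to[0,\infty)$ denote the kernel taking the value $N_\ell(u,v)/(n^{\ell-1}p^\ell)$ on the rectangle indexed by vertices $u,v\in V(G_n)$, we have
\[
t_p(F,G_n)=\int_{[0,1]^{|F^*|}}\prod_{uv\in E(F^*)}\rho_n(x_u,x_v)\prod_{u\in V(F^*)}\dd x_u,
\]
while by~\eqref{kkat} the target value $s(F,\ka)$ is the analogous integral with $\rho_n$ replaced by $\ka^\ell$.

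First I would show that $\rho_n$ and a rearrangement of $\ka^\ell$ are close in cut norm. The definition of $\dc$ supplies, for each $n$, a measure-preserving $\tau_n$ with $\cn{\ka_{G_n}-\ka^{(\tau_n)}}\to 0$, and the identity $(\ka^{(\tau_n)})^\ell=(\ka^\ell)^{(\tau_n)}$ lets one replace $\ka$ by $\ka^{(\tau_n)}$ throughout, without changing any integral appearing in $s(F,\ka)$. Applying Lemma~\ref{pem} (whose tree-count hypothesis is given by the theorem) with $X_0=S$, $X_\ell=T$ and $X_i=V(G_n)$ for $0<i<\ell$, one finds that $\int_{S\times T}\rho_n$, which equals $G_n(S,V,\dots,V,T)/(n^{\ell+1}p^\ell)$, is within $o(1)$ of $\int_{S\times T}(\ka^\ell)^{(\tau_n)}$ uniformly in measurable $S,T\subset[0,1]$. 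Hence $\cn{\rho_n-(\ka^\ell)^{(\tau_n)}}\to 0$, and $\ka^\ell$ is bounded by $C^\ell$ pointwise.

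The main remaining step is a counting lemma for the multigraph $F^*$ asserting that
\[
\int\prod_{uv\in E(F^*)}\rho_n(x_u,x_v)\prod\dd x_u\;\longrightarrow\;\int\prod_{uv\in E(F^*)}(\ka^\ell)^{(\tau_n)}(x_u,x_v)\prod\dd x_u,
\]
the right-hand side being $s(F,\ka)$ by change of variables. Were $\rho_n$ uniformly bounded, this would follow by the standard edge-by-edge swap of Lemma~\ref{cs}, each swap contributing at most $\cn{\rho_n-(\ka^\ell)^{(\tau_n)}}$ times a bound on the remaining factors. The essential difficulty is that $\rho_n$ need not be bounded pointwise: a few vertex pairs may carry vastly more than the expected number of $\ell$-walks, and the cut norm is insensitive to such isolated spikes. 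I would handle this by truncation: write $\rho_n=\rho_n^{\le M}+\rho_n^{>M}$ with $\rho_n^{\le M}=\min(\rho_n,M)$, apply the bounded cut-swap argument to $\rho_n^{\le M}$ against $\min((\ka^\ell)^{(\tau_n)},M)$, and control the tail $\rho_n^{>M}$ via uniform higher-moment bounds on $\rho_n$.

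The role of the hypothesis $\sup_n t_p(C_{2\ell-2},G_n)<\infty$, beyond the $\F_\ell$-count hypothesis (which only delivers $\sum_{u,v}N_\ell(u,v)^2=\hom(C_{2\ell},G_n)$), is precisely to supply these higher-moment bounds. Since $\sum_{u,v}N_{\ell-1}(u,v)^2=\tr(A^{2\ell-2})=\hom(C_{2\ell-2},G_n)$, combining the identity $N_\ell=AN_{\ell-1}$ with Cauchy--Schwarz/H\"older yields a uniform $L^q$ estimate on $\rho_n$ for some $q>1$, which can be chosen large relative to the maximum degree of $F^*$. Then H\"older's inequality bounds the contribution of $\rho_n^{>M}$ to the product integral by a quantity tending to $0$ as $M\to\infty$, uniformly in $n$; letting $M\to\infty$ after $n\to\infty$ yields the theorem. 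The main obstacle is precisely this tail control: without an a priori moment bound, the unbounded pointwise behaviour of $\rho_n$ could ruin the multigraph integral as soon as $F^*$ has a vertex of degree $\ge 2$, and much of the technical work consists of threading the truncation level $M$, the cut-norm error, and the moment bound through the edge-swap argument simultaneously.
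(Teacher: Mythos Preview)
Your reduction to comparing $s(F^*,\rho_n)$ with $s(F^*,\ka^\ell)$ is sound, and Lemma~\ref{pem} does deliver $\cn{\rho_n-(\ka^\ell)^{(\tau_n)}}\to 0$. But the edge-swap counting step has a genuine gap when $F^*$ has parallel edges, and $\F_\ell$ explicitly consists of subdivisions of loopless \emph{multi}graphs (so $C_{2\ell}$ and the theta graphs $H_{k,\ell}$ are all in $\F_\ell$). The proof of Lemma~\ref{cs} relies on factoring the remaining kernel product as $f_0(\bx)f_1(x_1,\bx)f_2(x_2,\bx)$; a factor coming from an edge parallel to the one being swapped depends on both $x_1$ and $x_2$ and cannot be so placed. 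The implication you need is in fact false at this level of generality: take any $\{0,1\}$-valued kernels $\sigma_n$ with $\cn{\sigma_n-\tfrac12}\to 0$ (for instance $\sigma_n=\ka_{G(n,1/2)}$ in the dense normalization); then $\int\sigma_n^2=\int\sigma_n\to\tfrac12\ne\tfrac14=\int(\tfrac12)^2$. Here $\sigma_n$ is already bounded by $1$ and all moments are trivially bounded, so neither truncation nor moment control can rescue the argument. For $F=C_{2\ell}$ your method reduces precisely to this failed implication.

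Two further remarks. First, the $\F_\ell$ hypothesis already bounds \emph{every} moment of $\rho_n$, since $\int\rho_n^k=t_p(H_{k,\ell},G_n)$ with $H_{k,\ell}\in\F_\ell$; so the $C_{2\ell-2}$ hypothesis is not what supplies moment control on $\rho_n$. Second, in the paper's proof its role is completely different: combined with \emph{strong} $(\eta,p)$-regularity (not merely the cut-norm closeness encoded by Lemma~\ref{pem}) and a Chung--Graham argument that partitions the vertices of a part according to their $(\ell-1)$-walk count from a fixed vertex $u$, it yields \emph{pointwise} control, namely $|\rho_n(u,v)-\ka^\ell(u,v)|<\sqrt\eps$ for all but $O(\sqrt\eps)\,n^2$ pairs (equations~\eqref{aim} and~\eqref{Waim}). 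That near-pointwise closeness, not cut-norm closeness, is what makes the multigraph product integral converge, via Cauchy--Schwarz against the $\F_\ell$ moment bounds.
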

\begin{proof}
Note that by Lemma~\ref{kbd}, the sequence $(G_n)$ has density
bounded by $C$, i.e., it satisfies Assumption~\ref{AC}.
Renormalizing, we shall assume without loss of generality that $C=1$.

Fix $\eps>0$, and a graph $F_\ell\in \F_\ell$.
Let $\eta>0$ be a small constant to be chosen below (depending
on $\eps$, $\ell$ and $F_\ell$). 
By Lemma~\ref{lSzs} there is some $K$ such that
for $n$ large enough, which we assume from
now on, $G_n$ has an $(\eta,p)$-regular partition
$\Pi=(P_1,\ldots,P_k)$ for some $k=k(n)\le K$. Passing to a subsequence
of $(G_n)$, we may assume that $k$ is constant.
As usual, we shall ignore rounding to integers,
assuming that each $P_i$ contains exactly $n/k$ vertices.

Passing to a subsequence (again), we may assume that
for all $i$ and $j$ the sequence $d_p(P_i,P_j)$
converges to some $\ka'(P_i,P_j)\in [0,1]$.
Relabelling if necessary so that $P_i$ consists of vertices $v$
with $in/k<v\le (i+1)n/k$, and identifying vertices with corresponding
subsets of $[0,1]$ as usual, we may view $\ka'$ as a kernel on $[0,1]^2$.

If $n$ is large enough, which we assume, then each $d_p(P_i,P_j)$ 
is within $\eps$ of $\ka(P_i,P_j)$. It follows that $\dc(G_n/\Pi,\ka')
\le \on{G_n/\Pi-\ka'} \le\eps$. Under our bounded density assumption~\ref{AC},
strong regularity
implies weak regularity (for suitably transformed parameters),
so choosing $\eta$ small enough we have
$\dc(G_n,G_n/\Pi)\le \eps$. Hence, choosing $n$ large enough,
$\dc(\ka,\ka')\le \dc(\ka,G_n)+\dc(G_n,G_n/\Pi)+\dc(G_n/\Pi,\ka')\le 3\eps$.
Hence, by Lemma~\ref{cs},
for any fixed $F$ we have
\[
 |s(F,\ka)-s(F,\ka')| = O(\eps),
\]
so it suffices to show that $t_p(F_\ell,G_n)$ is close to $s(F_\ell,\ka')$
rather than to $s(F_\ell,\ka)$. To avoid clutter in the notation, from now on 
we write $\ka$ for the finite type kernel $\ka'$ defined above;
the original $\ka$ plays no further role in the proof. Recall that
$\ka$ (formerly known as $\ka'$) is bounded by $1$.
For $u\in P_i$ and $v\in P_j$
we shall abuse notation by writing $\ka(u,v)=\ka(P_i,P_j)$
for the value of $\ka$ at any point of $[0,1]^2$ corresponding to $(u,v)$.
Recall that $|d_p(P_i,P_j)-\ka(P_i,P_j)|\le \eps$ for all $i,j$.

For $v,w\in V(G_n)$ and $t\ge 1$, let $w_t(v,w)$ denote the number of walks
of length
$t$ in $G_n$ starting at $v$ and ending at $w$; we suppress
the dependence on $G_n$ in the notation. Let $\ka^t(v,w)$
denote the normalized `expected' number of such walks, if $G_n$ behaved
like the random graph $G_p(n,\ka)$.
Let $U\subset V^2$ be the set of pairs $(v,w)$ such that
$w_\ell(v,w) \le (\ka^\ell(v,w)-\eps) n^{\ell-1}p^\ell$.
We call the pairs $(v,w)\in U$ {\em underconnected}, since they
are joined by `too few' walks of length $\ell$.
We shall show that
\begin{equation}\label{aim}
 |U| = \bm{\{(v,w): w_\ell(v,w) \le (\ka^\ell(v,w)-\eps)n^{\ell-1}p^\ell\}}
  \le \eps n^2
\end{equation}
if $\eta$ is chosen suitably,
and then $n$ is taken large enough. Before doing so, let us
note that this implies the result.

By Lemma~\ref{pem}, if we choose $\eta$ small enough, then
the total number of walks of length $\ell$
in $G_n$ is within $\eps n^{\ell+1}p^\ell$
of the expected number in $G_p(n,\ka)$, namely
$\on{\ka^\ell}n^{\ell+1}p^\ell$. If \eqref{aim} holds,
then if we count only a maximum of $\ka^\ell(v,w)n^{\ell-1}p^{\ell}$
walks for each pair $(v,w)$ of endpoints, we still count at least
$(1-\eps)(\on{\ka^\ell}-\eps)n^{\ell+1}p^\ell$ walks, so there are at
most $3\eps n^{\ell+1}p^\ell$ walks uncounted,
using $\on{\ka^\ell}\le 1$.
Writing
$W$ for the set of {\em overconnected} pairs $(v,w)\in V^2$ with
$w_\ell(v,w)\ge (\ka^\ell(v,w)+\sqrt{\eps})n^{\ell-1}p^\ell$, it follows
that
\begin{equation}\label{Waim}
 |W|\le 3\sqrt{\eps} n^2.
\end{equation}
In other words, almost all pairs of
vertices are joined by almost the right number of walks.

Recall that we fixed a graph $F_\ell\in \F_\ell$. Let $F_\ell$ be obtained
by subdividing the edges of a loopless multi-graph $F$ with vertex set
$u_1,\ldots,u_r$, so
\begin{equation}\label{hft}
 \hom(F_\ell,G_n) = \sum_{v_1,\ldots,v_r\in V(G_n)} \prod_{u_iu_j\in E(F)} w_\ell(v_i,v_j),
\end{equation}
where the factors in the product corresponding to multiple edges of $F$
are of course repeated.
Given $u_iu_j\in E(F)$, let $2F_\ell/E_2$ be the graph formed from two
copies of $F_\ell$ by identifying the vertices corresponding to $u_i$
and identifying the vertices corresponding to $u_j$.
Since $2F_\ell/E_2\in \F_\ell$,
we have $t_p(2F_\ell/E_2,G_n)$ bounded. It follows by the Cauchy--Schwarz
inequality
that the number of homomorphisms from $F_\ell$ into $G_n$ mapping
$u_i$ and $u_j$ to a pair in $U\cup W$ is small, in fact of order
$\eps^{1/4}n^{|F_\ell|}p^{e(F_\ell)}$; the argument is as in the
proof of Lemma~\ref{pem}.

Since the comment above applies to any edge $u_iu_j$ of $F$,
the contribution to the sum in \eqref{hft} from terms in which one
or more pairs $(v_i,v_j)$ fall in $U\cup W$ is small. But in the remaining
terms,
$w_\ell(v_i,v_j)$ is well approximated by $\ka^\ell(v_i,v_j)n^{\ell-1}p^\ell$,
and it follows
that $t_p(F_\ell,G_n)$ is close to $s(F_\ell,\ka)$: the difference
is bounded by some function of $|F_\ell|$ and $\eps$.
In short, we have shown that to prove the theorem, it suffices to prove
\eqref{aim}, i.e., that there are few
underconnected pairs.

{F}rom now on, we forget the original graph $F_\ell$, and
aim to prove \eqref{aim}, recalling that $\ka$ is a fixed
finite-type kernel and that $G_n/\Pi$ is (pointwise)
within $\eps$ of $\ka$, where $\Pi=(P_1,\ldots,P_k)$
is our $(\eta,p)$-regular partition of $G_n$.
It will be convenient to assume that
$\eps$ is fairly small. In particular, we shall assume that
$\eps\le 1/40$. 

Recall that all but at most $\eta k^2$ pairs in our partition $(P_i)_1^k$
are $(\eta,p)$-regular.
Since {\em all} pairs have density at most $1+\eps\le 2$, the irregular pairs
contain at most $2\eta n^2p$ edges. By assumption $t_p(T,G_n)$ is bounded for
each tree $T$, and in particular for the trees formed from two paths
by identifying an edge from each, so using Cauchy--Schwarz again
a small set of edges meets only a small fraction of the
walks of length $\ell$ in $G_n$. In particular, the number of walks
of length $\ell$ containing
one or more edges from irregular pairs is $O(\sqrt{\eta}n^{\ell+1}p^\ell)$.
Taking $\eta$ small enough, we may assume that this quantity is less than $\eps^2 n^{\ell+1}p^\ell/10$, say.
It follows that in proving \eqref{aim},
we may delete all edges in irregular pairs, i.e., we may
assume that every pair is regular: if \eqref{aim} holds for the resulting graph $G_n'$
and kernel $\ka'$ with $\eps/2$ in place of $\eps$, then \eqref{aim} holds for our original
graph $G_n$ and kernel $\ka$.

The lower bound in the proof of Lemma~\ref{pem} used only closeness
of the graph and kernel in the cut norm, not the bounds on various
tree counts. This argument can thus be applied locally to sequences
of parts of our partition.
Abusing notation, let us write $P_0,P_1,\ldots,P_{\ell-1}$
for an arbitrary sequence of $\ell$ parts of our partition, with repetition
allowed. For any subsets $X_i\subset P_i$, we find that
there are at least
\[
 p^{\ell-1} \prod_{i=0}^{\ell-1} |X_i| \, \prod_{i=0}^{\ell-2}\ka(P_i,P_{i+1})
 - \gamma \frac{n^\ell}{k^\ell} p^{\ell-1}
\]
walks $v_0v_1\cdots v_{\ell-1}$ with $v_i\in X_i$, where $\gamma=\gamma(\eta,\ell)$
tends to 0 as $\eta\to 0$.
We choose $\eta$ small enough that $\gamma\le \eps^{12}$.
Taking $X_i=P_i$ for $i>0$,
and summing over all choices for the intermediate parts,
a consequence of this is that if $P_0$ and $P_{\ell-1}$
are any two parts, and $X_0$ is any subset of $P_0$, then
there are at least
\begin{equation}\label{defb}
 (\ka^{\ell-1}(P_0,P_{\ell-1}) |X_0|/|P_0|-\gamma) n^\ell p^{\ell-1}/k^2
\end{equation}
walks of length $\ell-1$ from $X_0$ to $P_{\ell-1}$.

Let us call a walk of length $\ell-1$ in $G_n$ {\em bad} if there are 
at least $M n^{\ell-2} p^{\ell-1}$ walks in $G_n$ with the same endpoints,
where $M$ is a constant to be chosen in a moment, depending
on $\eps$ but not on $\eta$; otherwise, the walk is {\em good}. Each bad walk may 
be extended to at least $M n^{\ell-2} p^{\ell-1}$ homomorphic
images of $C_{2\ell-2}$. By assumption, $t_p(C_{2\ell-2},G_n)$ is bounded,
so it follows that there are $O(n^{\ell}p^{\ell-1}/M)$ bad walks.
In particular, choosing the constant $M$ large enough, we may assume
that there are at most $\eps^9 n^\ell p^{\ell-1}/3$ bad walks.

Suppose for a contradiction that \eqref{aim} does not hold,
i.e., the set $U$ of underconnected pairs of vertices has size at least
$\eps n^2$. Our first aim is to select a pair $(P,P')$ of parts
of our partition such that there are many underconnected
pairs $(u,v)$ in $P\times P'$,
but not too many bad walks start in $P$.
Since $|U|\ge \eps n^2$ by assumption, there are at least $\eps k/2$ parts
$P$ with
\begin{equation}\label{mb}
 |U\cap (P\times V)|\ge \eps n^2/(2k).
\end{equation}
On the other hand, there are at most $\eps k/3$ parts $P$
with the property that more than $\eps^8 n^\ell p^{\ell-1}/k$
bad walks start in $P$ (otherwise there would be too many bad walks).
Hence there exists a part $P$ for which \eqref{mb} holds,
with at most $\eps^8 n^\ell p^{\ell-1}/k$ bad walks starting in $P$.
Fix such a $P$. From \eqref{mb} and averaging, there is 
a part $P'$ such that
\begin{equation}\label{pp'}
 |U\cap (P\times P')| \ge \eps n^2/(2k^2) = \eps |P||P'|/2.
\end{equation}
From now on, fix such a $P'$.

Let us say that a pair $(u,P'')$ with $u\in P$ and $P''$ a part of
our partition is {\em deficient} if there are fewer than
$(\ka^{\ell-1}(P,P'')-\sqrt{\gamma}) n^{\ell-1}p^{\ell-1}/k$
walks of length $\ell-1$ from $u$ to $P''$,
where $\gamma$ is as in \eqref{defb}.
For a given $P''$, at most $\sqrt{\gamma} n/k$ vertices $u\in P$
form a deficient pair with $P''$: otherwise, the set $X_0$ of such
vertices would have more than $\gamma n^\ell p^{\ell-1}/k^2$ fewer
walks to $P''$ than it should have, contradicting \eqref{defb}.
Hence, there are at most $\sqrt{\gamma}n$ deficient pairs.
Let $D\subset P$ be the set of vertices $u$ in more than $\gamma^{1/4}k$
deficient pairs.
Then $|D|\le \sqrt{\gamma}n/(\gamma^{1/4}k)=\gamma^{1/4}|P|$.

Let us say that a pair $(u,P'')$ with $u\in P$ and $P''$ a part
of our partition is {\em compromised} if there are more than
$\eps^3 n^{\ell-1}p^{\ell-1}/k$ bad walks from $u$ to $P''$.
Since at most $\eps^8 n^\ell p^{\ell-1}/k$ bad walks start in $P$,
there are at most $\eps^5 n$ compromised pairs.
Let $C$ be the set of $u\in P$ in more than $\eps^3 k$ compromised
pairs; then $|C|\le \eps ^2 n/k=\eps^2 |P|$.

Let $S\subset P$ be the set of vertices $u$ for which there are at 
least $\eps|P'|/4$ vertices $v\in P'$ with $(u,v)\in U$.
By \eqref{pp'} we have
\[ 
 \eps |P||P'|/2 \le |U\cap (P\times P')| \le |S||P'|+\eps|P||P'|/4,
\]
so $|S|\ge \eps|P|/4>(\gamma^{1/4}+\eps^2)|P|$.
Thus $|S| > |D| + |C|$, and there is some $u$
in $S\setminus (D\cup C)$. Fix such a $u$ for the rest of the proof,
and let $U_u$ denote the set of $v\in P'$ for which $(u,v)$ is underconnected.

At this point we have chosen a vertex $u\in P$, a part $P'$, and
a set $U_u\subset P'$ with the following
properties:

(i) for each $v\in U_u$, there are at most $
(\ka^\ell(u,v)-\eps)n^{\ell-1}p^\ell = (\ka^\ell(P,P')-\eps)n^{\ell-1}p^\ell$
walks of length $\ell$ from $u$ to $v$.

(ii) $|U_u|\ge \eps |P'|/4$,

(iii) there are at most $\gamma^{1/4} k \le \eps^3 k$
deficient pairs $(u,P'')$,

(iv) there are at most $\eps^3 k$ compromised pairs $(u,P'')$.

From (i) and (ii) above, there are at least $m=\eps^2 n^\ell p^\ell /(4k)$
`missing walks' from $u$ to $U_u$: the number of walks of length $\ell$
from $u$ to $U_u$ falls short of the expected number in $G_p(n,\ka)$
by at least $m$.
Let $P''$ be any part of our partition. By a {\em $u$-$U_u$ walk via $P''$}
we mean a walk of length $\ell$ from $u$ to $U_u$ whose second last vertex
lies in $P''$;
the expected number of such walks is 
$N_{P''} = \ka^{\ell-1}(P,P'')\ka(P'',P') |U_u|n^{\ell-1} p^\ell/k$.
Note that $\sum_{P''}N_{P''}$ is simply the expected number of walks
from $u$ to $U_u$.
Let $m_{P''}$ be the number of `missing walks via $P''$', i.e.,
the difference between $N_{P''}$ and the number of $u$-$U_u$ walks
via $P''$, or zero if there are at least $N_{P''}$ such walks.
The total number of missing walks is at most the sum of the
numbers $m_{P''}$, so
\[
 \sum_{P''} m_{P''} \ge m \ge \eps^2 n^\ell p^\ell /(4k).
\]
Let us say that $P''$ is {\em useful}
if $m_{P''}\ge \eps^2 n^\ell p^\ell/(8k^2)$, so the contribution
to the sum above from non-useful parts $P''$ is at most half
the right hand side.
Recalling that we have normalized so that $\ka$ is bounded by $1$,
and that $\eps<1/40$,
for each $P''$ we have $m_{P''}\le N_{P''}\le n^{\ell}p^\ell/k^2$;
it follows that there are at least $\eps^2 k/8\ge 5\eps^3 k$ useful
parts $P''$.

Using (iii) and (iv) above, it follows that
there is a part $P''$
which is useful, but neither deficient nor compromised. 
Fix such a part $P''$.

Recall that a walk of length $\ell-1$ from $u$ to $w\in P''$
is {\em good} if it is not bad, i.e., if 
\begin{equation}\label{goodw}
 w_{\ell-1}(u,w)\le N = M n^{\ell-2} p^{\ell-1}.
\end{equation}
Since $\gamma^{1/4}\le \eps^3$, and
$P''$ is neither deficient nor compromised, there are at least
\[
 (\ka^{\ell-1}(P,P'')-2\eps^3) n^{\ell-1} p^{\ell-1}/k
\]
good walks from $u$ to $P''$. On the other hand, there are many
missing walks via $P''$. With this setup, we are finally ready to apply
the key idea of Chung and Graham~\cite{CG}, which is to partition the set $P''$
into subsets according to the approximate number of walks from $u$
to the relevant vertex, and then use regularity to show
that there are about the right number of walks from $U_u$
to each such subset. In fact, there is a slick way of doing this.

\begin{figure}[htb]
\centering
\input{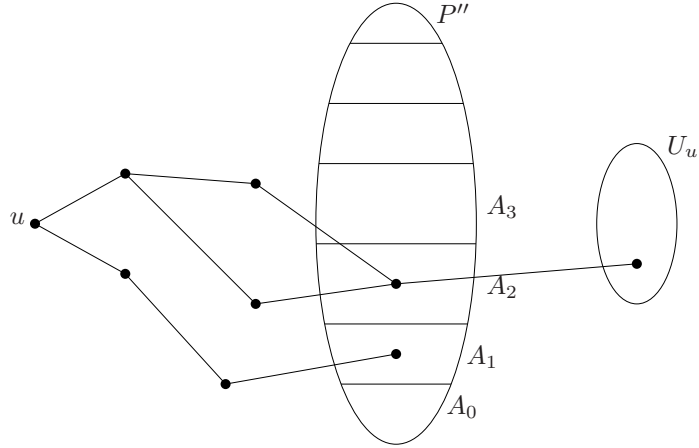}
\caption{The set $P''$ is subdivided into sets $A_i$, with $w_{\ell-1}(u,v)=i$
for each $v\in A_i$. Each edge from $A_i$ to $U_u$ contributes
$i$ walks from $u$ to $U_u$ via $P''$.} \label{fig_pc}
\end{figure}

For $i\ge 0$, let $A_i$
be the set of vertices $v\in P''$ with $w_{\ell-1}(u,v)=i$;
see Figure~\ref{fig_pc}. Also, let $A_i^+=\bigcup_{j\ge i}A_i$.
Then,
\[
 w_{\ell-1}(u,P'')= \sum_{i\ge 0} i|A_i| = \sum_{i\ge 1} |A_i^+|.
\]
More importantly,
$\sum_{i=1}^N |A_i^+|$ is at least the number of {\em good} walks from $u$
to $P''$, so
\begin{equation}\label{abig}
 \sum_{i=1}^N |A_i^+| \ge (\ka^{\ell-1}(P,P'')-2\eps^3) n^{\ell-1} p^{\ell-1}/k.
\end{equation}
Since $(P'',P')$ is $(\eta,p)$-regular with (normalized) density
$\ka(P'',P')\le 1$, if $A\subset P''$ and $B\subset P'$
then $e(A,B)\ge p\ka(P'',P')|A||B|-\eta p (n/k)^2$ (this is trivially true if
one of $A$ or $B$ has size less than $\eta n/k$).
Since each edge from $U_u$ to $A_i$ forms the final edge of exactly $i$
walks from $u$ to $U_u$, the number of walks from $u$ to $U_u$ via $P''$
is given by
\begin{eqnarray*}
 \sum_{i \ge 1} i e(A_i,U_u) 
 &\ge& \sum_{i=1}^N e(A_i^+,U_u) \\
 &\ge& \sum_{i=1}^N p\ka(P'',P')|A_i^+||U_u| -  \eta p (n/k)^2 \\
 &\ge& (\ka^{\ell-1}(P,P'')-2\eps^3)\ka(P'',P')|U_u| n^{\ell-1} p^\ell/k - \eta Np(n/k)^2,
\end{eqnarray*}
where we used \eqref{abig} in the last step.
The main term is simply the expected number of walks from $u$ to $U_u$ 
via $P''$,
so the conclusion is that there are at most
\begin{equation}\label{miss2}
 2\eps^3\ka(P'',P')|U_u|n^{\ell-1}p^\ell/k + \eta Np(n/k)^2
\end{equation}
missing walks from $u$ to $U_u$ via $P''$. The two terms
above may be bounded above by $2\eps^3 n^\ell p^\ell /k^2$
and, recalling \eqref{goodw}, $\eta M n^\ell p^\ell/k^2$, respectively.
Choosing $\eta\le \eps^3/M$ we thus have at most
$3\eps^3 n^\ell p^\ell/k^2$ missing walks via $P''$, i.e.,
$m_{P''}\le 3\eps^3 n^\ell p^\ell/k^2$, which
contradicts the fact that $P''$ is useful. This contradiction completes
the proof.
\end{proof}

Note that the argument above does not extend to $\ell=2$, and
not only because $C_2$ makes no sense. The problem is that
we cannot define $N$ as in \eqref{goodw} (this quantity
is now $o(1)$), but must take $N=1$ instead, and then
the second term in \eqref{miss2} is too large.

The proof of Theorem~\ref{cthm} actually gives rather more with
almost no extra work.

\begin{theorem}\label{cthm2}
Let $C>0$ and $\ell\ge 3$ be fixed, and let $p=p(n)$ be any
function of $n$. Let $(G_n)$ be a sequence of graphs with
$\sup_n t_p(F,G_n)<\infty$ for each $F\in \T\cup\F_{\ge\ell}
\cup\{C_{2\ell-2}\}$,
and suppose that $\dc(G_n,\ka)\to 0$ for some bounded kernel
$\ka$. Then
$t_p(F,G_n)\to s(F,\ka)$ for each $F\in \T\cup\F_{\ge\ell}$.
\end{theorem}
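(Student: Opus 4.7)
The tree case follows immediately from Corollary~\ref{cT} (equivalently Theorem~\ref{th_Temb}): the hypothesis $\sup_n t_p(T,G_n)<\infty$ for every $T\in\T$ together with $\dc(G_n,\ka)\to 0$ yields $t_p(T,G_n)\to s(T,\ka)$ for every tree $T$. It thus remains to treat $F\in\F_{\ge\ell}$. Write $F$ as the subdivision of a loopless multigraph $F'$ on vertices $u_1,\ldots,u_r$ with edges $e_1,\ldots,e_m$, where $e_i=u_{a_i}u_{b_i}$ is subdivided into a path of length $t_i\ge\ell$, so that
\[
 \hom(F,G_n) = \sum_{v_1,\ldots,v_r\in V(G_n)} \prod_{i=1}^m w_{t_i}(v_{a_i},v_{b_i}),
\]
where $w_t(v,w)$ denotes the number of walks of length $t$ in $G_n$ from $v$ to $w$.

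The plan is to replay the proof of Theorem~\ref{cthm} with two modifications: (i) establish the walk-counting conclusion at each length $t\in\{t_1,\ldots,t_m\}$ rather than only at length $\ell$, and (ii) assemble via the Cauchy--Schwarz step edge-by-edge in $F'$. For (i), the walk-counting portion of Theorem~\ref{cthm}'s proof uses only three ingredients at length $t$: density bounded by some constant $C$ (supplied by $\dc(G_n,\ka)\to 0$ via Lemma~\ref{kbd}), $t_p(T,G_n)$ bounded for trees $T$ with $|T|\le 2t+1$ (supplied by the $\T$-hypothesis), and $t_p(C_{2t-2},G_n)$ bounded. The last is immediate when $t=\ell$ by the explicit $C_{2\ell-2}$-assumption; when $t\ge\ell+1$, one has $C_{2t-2}\in\F_{\ge\ell}$ by realising it as the subdivision of the multigraph on two vertices with two parallel edges, one edge subdivided $\ell-1$ times and the other $2t-\ell-3$ times (both $\ge\ell-1$), so boundedness follows from the $\F_{\ge\ell}$-hypothesis. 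Running the argument separately for each such $t$ yields sets $U_t,W_t\subset V(G_n)^2$ of size $o(n^2)$ such that $w_t(v,w)=(\ka^t(v,w)+o(1))\,n^{t-1}p^t$ for $(v,w)\notin U_t\cup W_t$.

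To complete (ii), apply Cauchy--Schwarz exactly as in Theorem~\ref{cthm}'s proof to bound the contribution to the sum above from tuples for which some $(v_{a_i},v_{b_i})$ lies in $U_{t_i}\cup W_{t_i}$: the relevant second moment is $t_p(2F/e_i,G_n)$, where $2F/e_i$ is two copies of $F$ identified at the pair of vertices corresponding to $e_i$, and $2F/e_i\in\F_{\ge\ell}$ because the underlying multigraph is obtained by gluing two copies of $F'$ along the endpoints of $e_i$ while leaving the multiplicities $t_j\ge\ell$ intact. This bounds the bad contribution by $O(\eps^{1/4}n^{|F|}p^{e(F)})$; on the complementary tuples the product in the formula above is within a factor $(1+o(1))$ of $\prod_i\ka^{t_i}(v_{a_i},v_{b_i})n^{t_i-1}p^{t_i}$, whose sum is $(1+o(1))\,n^{|F|}p^{e(F)}\,s(F,\ka)$, giving $t_p(F,G_n)\to s(F,\ka)$. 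The main obstacle is not essentially new---Theorem~\ref{cthm}'s walk-counting step carries the analytic weight---rather, the substance is the bookkeeping just sketched: verifying that the cycle and second-moment counts needed for edges of mixed lengths in $F$ are all provided by the $\F_{\ge\ell}$-hypothesis together with the single auxiliary $C_{2\ell-2}$-assumption.
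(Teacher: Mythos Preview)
Your proposal is correct and follows essentially the same route as the paper's own proof: handle trees via Corollary~\ref{cT}, observe that $C_{2t-2}\in\F_{\ge\ell}$ for $t>\ell$ so that the walk-counting argument from Theorem~\ref{cthm} applies at every relevant length, and then use Cauchy--Schwarz with $2F/e_i\in\F_{\ge\ell}$ to control the contribution from bad pairs. The paper phrases this slightly more tersely (it ranges over all $\ell\le t\le L$ with $L$ the longest induced path rather than just the lengths $t_i$ that occur), but the structure and all the key verifications are identical to yours.
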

\begin{proof}
The conclusion for $F\in \T$ follows from Corollary~\ref{cT}.

Fix $F\in \F_{\ge\ell}$ and $\eps>0$, and let $L$ be the length
of the longest induced path in $F$.
Noting that for $t>\ell$ we have $C_{2t-2}\in \F_{\ge \ell}$,
the hypotheses of Theorem~\ref{cthm} are satisfied
with $\ell$ replaced by any $t$ in the range $\ell\le t\le L$.
The proof of that result thus shows that if $\eta$ is chosen small enough,
then when we take an $(\eta,p)$-regular partition of $G_n$ with associated
kernel $\ka'$, almost all pairs $(v,w)$ of vertices are joined by
almost the `right' number of walks of each length $t$, $\ell\le t\le L$.
More precisely, writing $\ka$ for $\ka'$
as in the proof of Theorem~\ref{cthm},
and writing $U_t$ for the set of pairs $(v,w)$
with $w_t(v,w)\le (\ka^t(v,w)-\eps)n^{t-1}p^t$ and
$W_t$ for the set of pairs with
$w_t(v,w)\ge (\ka^t(v,w)+\sqrt{\eps})n^{t-1}p^t$,
the proof of Theorem~\ref{cthm} shows that $|U_t|\le \eps n^2$
for $\ell\le t\le L$, and (hence) that $|W_t|\le 3\sqrt{\eps}n^2$
for each $t$ in this range.
Using the analogue of \eqref{hft} in which each term $w_\ell(\cdot,\cdot)$
is replaced by an appropriate term $w_t(\cdot,\cdot)$, as before
we can use the Cauchy--Schwarz inequality to show that the contribution
to $t_p(F,G_n)$ from terms with some pair $(v_i,v_j)$ in the
small set $\bigcup_t U_t\cup W_t$ is small (of order $\eps^{1/4}$),
and it follows as before that if $\eta$ is small enough,
then $|t_p(F,G_n)-s(F,\ka)|$ is bounded
by some function of $F$ and $\eps$, giving the result.
\end{proof}

Let us note for later reference that, in one way, the assumptions
of Theorems~\ref{cthm} and~\ref{cthm2} are weaker than they may
first appear. Let $F$ be a loopless multigraph with vertex
set $u_1,u_2,\ldots,u_k$, and let $F_\ell\in \F_\ell$ be obtained
by subdividing each edge of $F$ exactly $\ell-1$ times.
Then \eqref{hft} may be rewritten as
\[
 \hom(F_\ell,G_n) = n^k\E\left( \prod_{u_iu_j\in E(F)} w_\ell(v_i,v_j)\right),
\]
where the expectation is over the uniform choice of $(v_1,v_2,\ldots,v_k)\in V(G_n)^k$.
Applying H\"older's inequality,
$\E(\prod_{i=1}^r X_i)\le \left(\prod \E(|X_i|^r)\right)^{1/r}$, with
$r=e(F)$, it follows that
\begin{multline}\label{Hol}
 \hom(F_\ell,G_n)^r \le n^{kr} \prod_{u_iu_j\in E(F)} \E(w_\ell(v_i,v_j)^r)
 = n^{kr} \E\bb{w_\ell(v_1,v_2)^r}^r \\
 = n^{kr-2r} \hom(H_{r,\ell},G_n)^r,
\end{multline}
where $H_{r,\ell}\in \F_\ell$
is the `theta graph' consisting of $r$ internally
vertex disjoint paths
of length $\ell$ joining the same pair of vertices.
The normalizing factors work out correctly, so we have
\begin{equation}\label{thol}
 t_p(F_\ell,G_n) \le t_p(H_{r,\ell},G_n).
\end{equation}
Hence, the condition that $t_p(F,G_n)$ remain bounded for
every $F\in F_\ell$ is equivalent to the condition that $t_p(F,G_n)$
is bounded for $F=H_{r,\ell}$, $r=1,2,\ldots$.

Arguing similarly, for any $F\in F_{\ge \ell}$ we may bound
$t_p(F,G_n)$ in terms of the quantities $t_p(H_{r,\ell'},G_n)$,
where $\ell'$ ranges over the lengths of the paths making up $F$.
Hence, to show that $t_p(F,G_n)$ is bounded for all $F\in F_{\ge\ell}$,
it suffices to prove the same condition for the graphs
$H_{r,\ell'}$, $r\ge 1$, $\ell'\ge \ell$.
Note that these latter conditions are simply moment conditions
on the numbers of walks of various lengths joining a random
pair of vertices of $G_n$.

\medskip
In the case where the limiting kernel $\ka$ is of finite type,
Theorem~\ref{cthm2} may be seen as a form of counting lemma.
In this case, it is easy to strengthen the result to count homomorphisms
from $F$ into $G_n$ with each vertex mapped to a specified part
of the partition of $G_n$ corresponding to the finite type kernel $\ka$,
obtaining a result similar in form to Lemma~\ref{pem}. 
Such a (strengthened) finite type case of Theorem~\ref{cthm}
or Theorem~\ref{cthm2} is very much easier to prove than the general case:
there is no need to apply Szemer\'edi's Lemma, and the proof of the
result of Chung and Graham~\cite{CG} for the uniform case goes through
without much modification. One might hope that, using Szemer\'edi's Lemma,
the full generality of Theorem~\ref{cthm2} would follow easily
from the finite type case, but this is not true. The problem
is that our assumptions are inescapably global: we assume, for instance,
that the number of copies of $C_{2\ell-2}$ in $G_n$
is bounded by a multiple of the expected number of copies. When we
take an $(\eps,p)$-regular partition, this gives no useful information
about the number of copies of $C_{2\ell-2}$ in each regular pair:
we have a bound that is of the form $Mk^{2\ell-2}$ times the expected
number of copies, where $k$ is the number of parts. To apply the finite
type case, we would need a bound independent of $k$.
For this reason there seems to be no easy way around the work
in the proof of Theorem~\ref{cthm}.

\medskip
Theorem~\ref{cthm2} may be seen as some progress towards a proof
of some form of Conjecture~\ref{cce}. More precisely, it is almost
an answer to Question~\ref{qq}: the only problem is that
for Theorem~\ref{cthm2} we work with $t_p$ rather than $s_p$.
We shall return to this in detail in a moment. However, even
ignoring this, Theorem~\ref{cthm2} is a little disappointing
in some ways. Let $\A=\T\cup\F_{\ge\ell}$.
Assuming boundedness of $t_p(F,G_n)$
for $F\in \A\cup\{C_{2\ell-2}\}$, we obtain convergence of the
counts $t_p(F,G_n)$ for $F\in \A$. The extra assumption for $F=C_{2\ell-2}$
is somehow annoying. This is perhaps clearest if we consider the range
where $p$ is fairly large, say $n^{-o(1)}$. In this case $s_p\sim t_p$,
and it makes sense to assume boundedness of all counts $s_p(F,G_n)$.
However, since $C_2$ does not make sense, the smallest value
of $\ell$ for which we can apply Theorem~\ref{cthm2} is $\ell=3$,
and we obtain convergence of the counts $s_p(F,G_n)$ for $F\in \F_{\ge 3}$.
In comparison, Theorem~\ref{g4} shows that with the counts $s_p$ bounded,
and $s_p(C_4,G_n)\to 1$, which should roughly correspond
to convergence to the uniform kernel $\ka=1$, we obtain
$s_p(F,G_n)\to s(F,\ka)=1$
for all $F\in \F_{\ge 2}$, rather than just for $F\in \F_{\ge 3}$.

In fact, Theorem~\ref{g4} gives much more: it gives convergence for all
$F$ with girth at least $4$. Chung and Graham~\cite{CG} asked
whether an analogous result holds for sparse graphs under
the appropriate assumptions (what they call `$\ell$-quasi randomness', which
corresponds roughly to the assumptions of Theorem~\ref{cthm}
with $\ka$ constant), with girth at least $4$ replaced by girth at least $2\ell$.
In our language, they asked whether (when $\ka=1$) the conclusion of Theorem~\ref{cthm}
can be extended to all $F$ with girth at least $2\ell$. Unfortunately,
the answer is no for a trivial reason,
namely that there are graphs $F$ with arbitrarily large
girth and arbitrarily large average degree. Taking $p=n^{-\alpha}$
for some $0<\alpha<1$, and $d$ large enough, for any graph $F$
with average degree $d$ the expected number of copies of $F$
in $G_n=G(n,p)$ is $o(1)$, so the normalizing constant in the definition
of $t_p(F,G_n)$ is $o(1)$. Since $\hom(F,G_n)$ is an integer,
we cannot have $t_p(F,G_n)\to 1$ in this case.

\subsection{Embeddings or homomorphisms?}\label{sec_hm}

In this subsection we
return to the use of $t_p$ rather than $s_p$ in Theorems~\ref{cthm}
and~\ref{cthm2}. Although this simplifies the proof, it is unsatisfactory
for a reason we shall now explain. We start by discussing the analogous
problem with the corresponding result of Chung and Graham~\cite{CG},
their Theorem 8. We shall use the following fact,
proved by Blakley and Roy~\cite{BlakleyRoy} in a slightly more general form
in the context of symmetric matrices.
\begin{theorem}\label{th_paths}
Let $G$ be a graph with $n$ vertices and average degree $d$. Then
$G$ contains at least $nd^\ell$ walks of length $\ell$.\noproof
\end{theorem}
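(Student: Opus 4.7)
The plan is to express the walk count in matrix form and reduce the statement to the Blakley--Roy inequality. Let $A$ be the adjacency matrix of $G$. Then the number of walks of length $\ell$ is $W_\ell = \mathbf{1}^T A^\ell \mathbf{1}$, while $\mathbf{1}^T A \mathbf{1} = 2e(G) = nd$ and $\mathbf{1}^T \mathbf{1} = n$. The bound $W_\ell \geq nd^\ell$ thus rearranges exactly to
$$(\mathbf{1}^T A^\ell \mathbf{1})(\mathbf{1}^T \mathbf{1})^{\ell-1} \geq (\mathbf{1}^T A \mathbf{1})^\ell,$$
which is the special case $x=\mathbf{1}$ of the Blakley--Roy inequality $(x^T A^\ell x)(x^T x)^{\ell-1} \geq (x^T Ax)^\ell$, valid for any symmetric matrix $A$ with nonnegative entries and any nonnegative vector $x$.

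The base cases are easy. For $\ell=1$ there is nothing to prove, and for $\ell=2$ the inequality is precisely Cauchy--Schwarz:
$(x^T Ax)^2 = \langle x, Ax\rangle^2 \leq \|x\|^2 \|Ax\|^2 = (x^T x)(x^T A^2 x).$
Applying this with $x$ replaced by $A^k x$ gives $(x^T A^{2k} x)(x^T x) \geq (x^T A^k x)^2$, which, iterated along powers of two, already settles the result at $\ell = 2^m$. Handling general $\ell$, in particular odd $\ell$, requires more work.

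The main obstacle is that $A$ generically has eigenvalues of both signs, so one cannot conclude by Jensen applied to the spectral measure: the function $\lambda \mapsto \lambda^\ell$ is not convex on $\mathbb{R}$ for odd $\ell$. In fact, the tempting intermediate inequality $W_\ell^2 \leq W_{\ell-1} W_{\ell+1}$ (log-convexity in $\ell$) is genuinely false in general; for example $G=P_3$ gives $W_1=4$, $W_2=6$, $W_3=8$, and $W_2^2=36>32=W_1W_3$. Blakley and Roy's proof therefore bypasses spectral arguments altogether, applying H\"older's inequality to the explicit expansion
$$x^T A^\ell x \;=\; \sum_{v_0,v_1,\ldots,v_\ell} x_{v_0}\,x_{v_\ell}\,\prod_{i=0}^{\ell-1} A_{v_i v_{i+1}}$$
and regrouping the product so that each of the $\ell$ resulting factors, after normalising by a suitable power of $x^T x$, resembles $x^T A x$. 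This combinatorial H\"older argument, which relies essentially on the entrywise positivity of both $A$ and $x$ rather than on the eigenvalue structure, is the technical heart of the proof and the step I would spend most effort writing out carefully.
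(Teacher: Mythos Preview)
Your reduction is exactly what the paper does: it states the theorem without proof, attributing it to Blakley and Roy~\cite{BlakleyRoy} as a special case of their matrix inequality, and your translation via $W_\ell=\mathbf{1}^T A^\ell\mathbf{1}$ makes this identification explicit. The paper offers nothing beyond the citation, so your sketch of the Blakley--Roy argument (Cauchy--Schwarz for even $\ell$, the failure of log-convexity illustrated by $P_3$, and the H\"older-type regrouping of the walk sum for the general case) already goes further than the source; there is no discrepancy to report.
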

Recall that we write $w_t(u,v)$ for the number of walks of length
$t$ from $u$ to $v$. Chung and Graham~\cite{CG} impose the condition
that $w_{\ell-1}(u,v)<c_0 p^{\ell-1}n^{\ell-2}$ holds for {\em
every} pair of vertices $u$, $v$, where $c_0$ is a constant: they
call this {\em condition} $U(\ell)$. In other words, the number of
walks from $u$ to $v$ is at most a constant times what it should be.
Normalizing so that $G_n$ contains exactly $pn^2/2$ edges, Chung and
Graham note that $U(\ell)$ can only hold if
$p=\Omega(n^{-1+1/(\ell-1)})$: otherwise, the expected number of
walks of length $\ell-1$ from a random $u$ to a random $v$ is much
less than 1, so $w_{\ell-1}(u,v)$ must sometimes be much larger than
its expectation.

In fact, $U(\ell)$ cannot hold unless $p$ is quite a bit larger,
but for the `wrong' reason: taking $\ell$ odd for simplicity, let $\ell=2k+1$.
Considering walks of length $\ell-1$ formed by tracing a walk of length
$k$ forwards and then backwards, we see that if $G_n$ has $pn^2/2$ edges, then
\begin{equation}\label{vv}
 \sum_v w_{\ell-1}(v,v) \ge \hom(P_k,G_n) \ge n(np)^k,
\end{equation}
where the second inequality is Theorem~\ref{th_paths}.
Thus there is some $v$ with $w_{\ell-1}(v,v)\ge (np)^k$,
and it follows that $U(\ell)$ can only hold if $p=\Omega(n^{-1+2/(\ell-1)})$,
so Theorem 8 of~\cite{CG} can only be applied for $p$ in this
range. Note that this is an essential problem: this result
counts homomorphisms (Chung and Graham use the notation
$\#\{H\subset G\}$ for $\hom(H,G)$), and the bound on $w_{\ell-1}(u,v)$
is definitely used with $u=v$. Indeed, as we shall see, the conclusion
fails if $p=o(n^{-1+2/(\ell-1)})$.

Turning to Theorem~\ref{cthm}, the condition that $t_p(C_{2\ell-2},G_n)$
remain bounded corresponds roughly to the condition $U(\ell)$: indeed,
the former says exactly that
\begin{equation}\label{tpcl}
 \sum_{u,v} w_{\ell-1}(u,v)^2 = O(n^{2\ell-2}p^{2\ell-2}),
\end{equation}
which follows immediately from $U(\ell)$. It turns out that the
problem described above does not arise with \eqref{tpcl} -- in this
second moment (rather than uniform) condition, the few pairs
with $u=v$ matter less. Indeed, it is easy to check that in $G(n,p)$,
for example, \eqref{tpcl} holds as long as $p=\Omega(n^{-1+1/(\ell-1)})$.
[The expected number of homomorphisms from $C_{2\ell-2}$ whose
image is a tree with $k$ edges is $O(n(np)^k)=O(n(np)^{\ell-1})$,
and the expected number whose image is a graph with $k$
vertices containing a cycle is $O(n^kp^k)=O((np)^{2\ell-2})$.]
However, the same problem arises in a different place.

As before,
let $H_{k,\ell}\in \F_\ell$ be the `theta graph' formed by $k$ paths
of length $\ell$ joining the same pair $(s,t)$ of vertices,
with the paths internally vertex disjoint. 
Suppose that $\ell$ is even. Writing $w_t(v)=w_t(v,V(G_n))$
for the number of walks of length $t$ in $G_n$ starting at $v$,
normalizing still so that $e(G_n)=pn^2/2$, 
and considering homomorphisms from $H_{k,\ell}$ to $G_n$ mapping $s$ and $t$
to a common vertex $v$, we have
\[
 \hom(H_{k,\ell},G_n) \ge \sum_v w_{\ell/2}(v)^k
 \ge n \left(\frac{1}{n} \sum_v w_{\ell/2}(v)\right)^k
 \ge n(np)^{k\ell/2},
\]
where the second inequality is from convexity and the last
from Theorem~\ref{th_paths}.
Since $|H_{k,\ell}| = 2+k(\ell-1)$ and $e(H_{k,\ell})=k\ell$,
it follows that $t_p(H_{k,\ell},G_n) \ge n^{k-1}(np)^{-k\ell/2}$.
Suppose that $p\le n^{-1+2/\ell-\eps}$ for some $\eps>0$.
Then taking $k$ large enough we see that $t_p(H_{k,\ell},G_n)\to\infty$,
so neither the assumptions nor the conclusion of Theorem~\ref{cthm}
can hold. When $\eps$ is small, this value of $p$
is much larger than that
above which the number of subgraphs of $G(n,p)$ isomorphic to $H_{k,\ell}$
is well behaved.

The calculations above illustrate the problem with working with $t_p$:
we count certain trees as copies of $H_{k,\ell}$, for example,
and the number of these trees exceeds the number of embeddings
of $H_{k,\ell}$ in a wide range of densities in which Theorem~\ref{cthm}
might otherwise apply. For this reason, if we could replace $t_p$
by $s_p$ throughout the statement of the theorem, we would obtain
a much stronger and more satisfactory result: not only would
it count embeddings, which is what we are really interested
in,  but it would apply to a much
larger family of graphs, for example, to random graphs with much
lower densities. Unfortunately, the proof breaks down in various
places if we simply replace $t_p$ by $s_p$. However, the next result 
is a major step in this direction.

Given vertices $v$, $w$ of a graph $G_n$, suppressing
the dependence on $G_n$, let us write
$p_\ell(v,w)$ for the number of {\em paths} of length $\ell$ from $v$ to $w$,
so $p_\ell(v,w)\le w_\ell(v,w)$.

\begin{theorem}\label{dthm}
Let $C>0$ and $\ell\ge 3$ be fixed, and let $p=p(n)$ be any
function of $n$. Let $(G_n)$ be a sequence of graphs satisfying
the following three conditions:
\begin{equation}\label{dc1}
 \sup_n s_p(F,G_n)<\infty \hbox{ for each } F\in \T,
\end{equation}
\begin{equation}\label{dc2}
\sum_u\sum_{v\ne u} p_{\ell-1}(u,v)^2 = O\bb{n^{2\ell-2}p^{2\ell-2}},
\end{equation}
and
\begin{equation}\label{dc3}
\sum_u\sum_{v\ne u} p_\ell(u,v)^k = O\bb{n^{2+k(\ell-1)}p^{k\ell}},
\end{equation}
for each fixed $k\ge 1$.
Suppose also that $\dc(G_n,\ka)\to 0$ for some kernel
$\ka:[0,1]^2\to [0,C]$. Then
$s_p(F,G_n)\to s(F,\ka)$ for each $F\in \F_\ell$.
\end{theorem}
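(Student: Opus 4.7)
The plan is to follow the structure of the proof of Theorem~\ref{cthm}, systematically replacing walks by paths and homomorphism counts by embedding counts, using hypotheses \eqref{dc1}--\eqref{dc3} in place of boundedness of $t_p(F, G_n)$ for $F \in \T \cup \F_\ell \cup \{C_{2\ell-2}\}$. Conditions \eqref{dc2} and \eqref{dc3} are the natural path-analogues of $t_p(C_{2\ell-2}, G_n) = O(1)$ and of boundedness of $t_p(H_{k, \ell}, G_n)$ for the theta graphs (cf.~\eqref{thol}); the restriction to $u \neq v$ excludes precisely the pathological walks-returning-to-their-start contribution of~\eqref{vv} that forced the narrower assumption in Theorem~\ref{cthm}.

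First I would reduce, as in the proof of Theorem~\ref{cthm}, via Lemma~\ref{lSzs} (with Assumption~\ref{AC} supplied by Lemma~\ref{kbd}) to the case where $\ka$ is a finite-type kernel arising from an $(\eta, p)$-regular partition $\Pi = (P_1, \ldots, P_k)$ of $G_n$. I would then prove a path-version of Lemma~\ref{pem}: for sets $X_0, \ldots, X_{\ell-1}$ sitting inside specified parts of $\Pi$, the number of \emph{paths} of length $\ell-1$ from $X_0$ to $X_{\ell-1}$ via $X_1, \ldots, X_{\ell-2}$ is within $\eps n^\ell p^{\ell-1}$ of $n^\ell p^{\ell-1} \ka(X_0, \ldots, X_{\ell-1})$ when $\eta$ is small. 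This follows from Lemma~\ref{pem} itself, since any walk that fails to be a path factors through a proper identification of vertices and hence may be identified, at bounded multiplicative cost, with an embedding of a tree on strictly fewer than $\ell$ vertices; \eqref{dc1} plus Lemma~\ref{stT} then bounds the number of such non-path walks by $o(n^\ell p^{\ell-1})$.

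Next I would rerun the main argument of Theorem~\ref{cthm} with the set of underconnected pairs redefined via paths, namely $\{(u, v): u \neq v,\ p_\ell(u, v) \leq (\ka^\ell(u, v) - \eps) n^{\ell-1} p^\ell\}$, and with `bad paths' of length $\ell-1$ (those $(u, w)$ with $p_{\ell-1}(u, w) \geq M n^{\ell-2} p^{\ell-1}$) in place of bad walks; by \eqref{dc2} and Markov's inequality, the number of bad paths is $O(n^\ell p^{\ell-1}/M)$, hence negligible for $M$ large. The subdivision of $P''$ into sets $A_i$ by $p_{\ell-1}(u, v)$ then proceeds as before. The one genuinely new subtlety is that extending a path of length $\ell-1$ from $u$ by a single edge to $U_u$ may produce only a walk, not a path, if the new endpoint coincides with one of the at most $\ell$ vertices already used; but the number of such non-path extensions is at most $\ell \cdot p_{\ell-1}(u, P'') = O(n^{\ell-1} p^{\ell-1})$, smaller by a factor $np \to \infty$ than the quantity of interest, and so absorbable into the error.

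Finally, to conclude $s_p(F, G_n) \to s(F, \ka)$ for $F \in \F_\ell$, I would write $\emb(F, G_n)$ as a sum over distinct tuples $(v_1, \ldots, v_r)$ of images of the branching vertices of $\prod_{u_i u_j \in E(F)} p_\ell(v_i, v_j)$, corrected for the event that internal vertices of different paths coincide. Such coincidence configurations are embeddings of proper quotients of $F$ and contribute $o(n^{|F|} p^{e(F)})$ by dimension counting combined with \eqref{dc1}--\eqref{dc3}. The contribution to the main sum from tuples with some $(v_i, v_j)$ in the small set of under- or over-connected pairs is handled by H\"older's inequality in the spirit of \eqref{Hol}, with \eqref{dc3} at $k = e(F)$ bounding the relevant $e(F)$-th moment of $p_\ell$; for the remaining tuples, $p_\ell(v_i, v_j) \approx \ka^\ell(v_i, v_j) n^{\ell-1} p^\ell$, yielding $s(F, \ka) n^{|F|} p^{e(F)}$ in the limit. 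The main obstacle I anticipate is the path-version of the Chung--Graham extension step together with the overlap-bookkeeping in the final step: both require careful repeated use of \eqref{dc1} applied to various trees, rather than a single clean inequality as in the walk case.
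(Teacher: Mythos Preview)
Your outline tracks the paper's proof closely through the reduction to a finite-type kernel, the replacement of walks by paths via Lemma~\ref{stT}, and the use of \eqref{dc2} to bound bad paths. The use of H\"older with \eqref{dc3} to dispose of under/over-connected branch-vertex tuples is also exactly what the paper does. However, your final step has a genuine gap.

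The problem is the sentence ``Such coincidence configurations are embeddings of proper quotients of $F$ and contribute $o(n^{|F|} p^{e(F)})$ by dimension counting combined with \eqref{dc1}--\eqref{dc3}.'' This works when the two overlapping paths correspond to edges of $F$ that are vertex-disjoint, or share exactly one endpoint: in those cases the quotient (with a single coincidence) is a tree, and \eqref{dc1} applies. But $\F_\ell$ is built from loopless \emph{multi}graphs, so $F$ may have parallel edges; the simplest instance is $F_\ell = C_{2\ell}$, arising from a double edge. Two paths of length $\ell$ between the \emph{same} branch vertices $v_1,v_2$ sharing an internal vertex $w$ produce a graph with $2\ell-1$ vertices and $2\ell$ edges containing two cycles whose lengths depend on where $w$ sits in each path. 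This graph is neither a tree nor in $\F_\ell$, and none of \eqref{dc1}--\eqref{dc3} bounds its embedding count: the hypotheses control only $p_{\ell-1}$ and $p_\ell$ between \emph{distinct} vertices, not the shorter path counts $p_i(v_1,w)$, $p_{\ell-i}(w,v_2)$ that arise here. There is no dimension-counting argument available, because the hypotheses say nothing about how the $\sim n^{\ell-1}p^\ell$ paths from $v_1$ to $v_2$ are distributed over their internal vertices; a priori they could all funnel through one vertex.

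The paper resolves this with an additional device you are missing: before taking the $(\eta,p)$-regular partition, it first fixes a crude partition $V(G_n)=Q_1\cup\cdots\cup Q_N$ into $N$ roughly equal parts, with $N$ large but fixed (depending only on $\eps$ and $F_\ell$), and then refines this to the regular partition. Paths are classified into $N^{\ell-1}$ \emph{types} according to which $Q_a$ each internal vertex lies in. Re-running the under/over-connected argument for each type separately shows that for almost all pairs $(v_1,v_2)$, the number of $v_1$--$v_2$ paths of \emph{every} type is close to its expected value. One then chooses the $e(F)$ constituent paths sequentially: at each stage the previously chosen paths occupy $O(1)$ internal vertices, hence at most $O(1)$ of the $Q_a$; ruling out types that visit those $Q_a$ eliminates only an $O(1/N)$ fraction of types, and since each surviving type has roughly the right count, one still has $(1-O(\eps))$ of the expected number of paths available, all guaranteed internally disjoint from the earlier ones. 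This is what replaces your dimension-counting step for parallel edges.
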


Before turning to the proof of this result, let us make some remarks
on the conditions above. Firstly, in \eqref{dc1} it makes no
difference whether we write $s_p$ or $t_p$, by Lemma~\ref{stT}.

Condition \eqref{dc3} is almost the same as the condition
$s_p(H_{k,\ell},G_n)=O(1)$. Indeed, $\emb(H_{k,\ell},G_n)$ is simply
the sum over distinct $u$ and $v$ of the number of $k$-tuples
of {\em internally vertex disjoint} paths from $u$ to $v$,
so \eqref{dc3}, which bounds the same sum
without the restriction to disjoint paths,
is formally stronger than $s_p(H_{k,\ell},G_n)=O(1)$.
Since there are (typically) many paths from $u$ to $v$ 
in the range of $p$ for which \eqref{dc2} may hold,
it seems very likely that, assuming the other
conditions of Theorem~\ref{dthm}, $s_p(H_{k,\ell},G_n)=O(1)$ implies
\eqref{dc3}, so \eqref{dc3} could be replaced by this more pleasant
condition. However, we do not have a proof of this.

Similarly, condition \eqref{dc2} is closely related to
$s_p(C_{2\ell-2},G_n)=O(1)$, and could perhaps be replaced by this
weaker condition. This is less clear, however, as Theorem~\ref{dthm}
can be applied for $p$ small enough that the typical number of paths
of length $\ell-1$ between a given pair of vertices is $O(1)$.

Instead of \eqref{dc2} we can always impose the stronger
condition $t_p(C_{2\ell-2},G_n)=O(1)$; these conditions
are probably equivalent in the present setting.
The corresponding statement for \eqref{dc3} and 
the stronger assumption $t_p(H_{k,\ell},G_n)=O(1)$
is not true; see the discussion of the behaviour
of $t_p(H_{k,\ell},G_n)$ in the paragraphs preceding
Theorem~\ref{dthm}.

Finally, let us note that \eqref{dc3} gives us control over
$s_p(F_\ell,G_n)$ for all $F_\ell\in \F_\ell$, not just
for $F_\ell=H_{k,\ell}$. Let $F_\ell$ be obtained by subdividing a graph
$F$ with vertex set $u_1,u_2,\ldots,u_k$.
Then
\[
 \emb(F_\ell,G_n) \le \sum_{v_1,v_2,\ldots,v_k} \prod_{u_iu_j\in E(F)}
 p_\ell(v_i,v_j),
\]
where the sum is over all $n_{(k)}$ $k$-tuples of distinct vertices
of $G_n$.
Applying H\"older's inequality as in the proof \eqref{Hol}
of \eqref{thol}, but in a probability space with $n_{(k)}$
elements rather than $n^k$, we find that
\[
 \emb(F_\ell,G_n) \le n_{(k)} \E(p_\ell(v_1,v_2)^{e(F)}),
\]
where the expectation is over the choice of a random pair $(v_1,v_2)$
of {\em distinct} vertices of $G_n$. Condition
\eqref{dc3} bounds the final expectation; as usual the normalizing
factors work out, and we see that if \eqref{dc3} holds
for every $k$ then $s_p(F_\ell,G_n)=O(1)$ for every $F_\ell\in \F_\ell$.

\begin{proof}[Outline proof of Theorem~\ref{dthm}]
Since the proof is a relatively simple modification
of that of Theorem~\ref{cthm}, we shall give only an outline,
concentrating on the differences.

The first change we make is that we work with paths rather
than walks, replacing the quantities $w_t(u,v)$, $t=\ell-1,\ell$,
appearing in the proof of Theorem~\ref{cthm}
with the corresponding quantities $p_t(u,v)$. By Lemma~\ref{stT},
all but a vanishing fraction of the walks in $G_n$ of a given
length are paths, so \eqref{aim}, for example, implies
the same statement with $w_\ell(v,w)$ replaced by $p_\ell(v,w)$.
Of course, \eqref{aim} was proved using the assumption
$t_p(C_{2\ell-2},G_n)=O(1)$, whereas we now have the weaker
assumption \eqref{dc2}. However, following through the proof
it is easy to see that if we count paths instead of walks,
then \eqref{dc2} suffices. (The key point is that \eqref{dc2}
suffices to bound the number of {\em bad paths},
i.e., paths between endpoints $u$, $v$
with $p_{\ell-1}(u,v)> Mn^{\ell-2}p^{\ell-1}$.)

Let us fix (a small) $\eps>0$ and a graph $F_\ell\in \F_\ell$. We
also fix an integer $N$ to be chosen
later, depending only on $\eps$ and $F_\ell$. Finally, let $\eta$
be a small positive constant depending on $\eps$, $F_\ell$ and $N$.
For reasons that will become clear later, we first partition $V(G_n)$
into $N$ almost equal parts $Q_1,\ldots,Q_N$. Then we take
an $(\eta,p)$-regular partition $(P_i)$ with each $P_i$ contained
in some $Q_j$. For the moment we ignore the partition $(Q_i)$.

As before, passing to a subsequence we assume that the densities
$d_p(P_i,P_j)$ converge to a finite-type kernel $\ka$.
Let $S\subset V\times V$
be the set of pairs of vertices joined by the `wrong' number
of paths of length~$\ell$:
\[
 S =  \{(v,w): v\ne w,\ 
 | p_\ell(v,w) -\ka^\ell(v,w)n^{\ell-1}p^\ell| \ge \eps n^{\ell-1}p^\ell\}.
\]
If $\eta$ is chosen small enough then the proofs of \eqref{aim}
and \eqref{Waim}
carry though counting paths instead of walks,
and  (replacing $\eps$ by $\eps^2/10$), the equivalents
of \eqref{aim} and \eqref{Waim} imply that
\begin{equation}\label{ss}
 |S|\le \eps n(n-1).
\end{equation}
We proceed from here to our bound on $s_p(F_\ell,G_n)$ in two steps. First
we count something that is not quite an embedding of $F_\ell$.

Let $F_\ell$ be obtained from
the loopless multigraph $F$ 
by subdividing each edge $\ell-1$ times,
and let $u_1,\ldots,u_k$ be the vertices of $F$, which we also
regard as vertices of $F_\ell$.
By a {\em semiembedding} of $F_\ell$ into $G_n$ we mean a homomorphism
from $F_\ell$ into $G_n$ that maps the vertices $u_1,\ldots,u_k$
to distinct vertices of $G_n$, and each of the $e(F)$ $u_i$--$u_j$
paths of length $\ell$ that make up the graph $F_\ell$ into
a {\em path} in $G_n$. 
Clearly, every embedding is a semiembedding; the only additional condition
on an embedding is that the paths in $G_n$ are internally vertex
disjoint.

Let $\emb^+(F_\ell,G_n)\ge \emb(F_\ell,G_n)$ denote the number of semiembeddings
of $F_\ell$ into $G_n$. Then, from the definition of a semiembedding, we have
\begin{equation}\label{seform}
 \emb^+(F_\ell,G_n) = \sum_{v_1,\ldots,v_k} \prod_{u_iu_j\in E(F)} p_\ell(v_i,v_j),
\end{equation}
where the sum is over all $n_{(k)}$ sequences $(v_1,\ldots,v_k)$
of distinct vertices of $G_n$ and,
as usual, any multiple edges in $F$ give rise to multiple factors in the product.

As before we, we can rewrite the formula above as an expectation over
a random choice of $(v_1,\ldots,v_k)$. Normalizing correctly for a change,
let $X_{ij}$ be the random variable $p_\ell(v_i,v_j)/(n^{\ell-1}p^\ell)$, so
\[
 s_p^+(F_\ell,G_n) = \frac{\emb^+(F_\ell,G_n)}{n_{(|F_\ell|)}p^{e(F_\ell)}}
\sim \frac{\emb^+(F_\ell,G_n)}{n_{(|F|)}n^{|F_\ell|-|F|}p^{e(F_\ell)}}
  = \E \left(\prod_{u_iu_j\in E(F)} X_{ij}\right).
\]
Equation \eqref{ss} says, roughly speaking, that each $X_{ij}$
is with high probability close to `what it should be', which is a random variable
depending on $\ka$, the kernel corresponding
to the partition $(P_1,\ldots,P_k)$ of $G_n$.
We should like to deduce that the expectation
of the product is close to what it should be.

Let $Z$ be the set of $k$-tuples $(v_1,\ldots,v_k)$ with the
$v_i$ distinct such that $(v_i,v_j)\in S$ for some $1\le i<j\le k$.
Regarding $Z$ as an event in our probability space,
\[
 \PR(Z) \le \binom{k}{2} \PR((v_1,v_2)\in S) \le \eps\binom{k}{2},
\]
from \eqref{ss}. H\"older's inequality thus gives
\[
 \E \left(1_Z \prod_{u_iu_j\in E(F)} X_{ij}\right)
 \le \left(\E\left(1_Z^{e(F)+1}\right) \prod_{u_iu_j\in E(F)} \E\left(X_{ij}^{e(F)+1}\right)\right)^{1/(e(F)+1)},
\]
where $1_Z$ is the indicator function of the event $Z$.
Now, for each $i$ and $j$, we have
\[
 \E(X_{ij}^{e(F)+1}) 
 = \frac{1}{n(n-1)}\sum_u\sum_{v\ne u} \left(\frac{p_\ell(u,v)}{n^{\ell-1}p^\ell}\right)^{e(F)+1},
\]
which is $O(1)$ by our assumption \eqref{dc3}.
Also, $\E(1_Z^{e(F)+1})=\E(1_Z)=\PR(Z)\le\eps$.
Hence,
\begin{equation}\label{eZ}
 \E \left(1_Z \prod_{u_iu_j\in E(F)} X_{ij}\right)
 =O(\eps^{1/(e(F)+1)}).
\end{equation}
In other words, the contribution to \eqref{seform} from semiembeddings
mapping some edge of $F$ into a pair $(u,v)\in S$ is negligible.
By definition of $S$, the contribution from all other semiembeddings is 
`what it should be', and it follows that
\[
 |s_p^+(F_\ell,G_n)-s(F_\ell,\ka)| \le O(\eps^{1/(e(F)+1)}) +O(\eps).
\]
Since $\eps>0$ was arbitrary, we thus have $s_p^+(F_\ell,G_n)\sim s(F_\ell,\ka)$.

In the end, of course, it is $s_p(F_\ell,G_n)$ that we wish to bound,
not $s_p^+(F_\ell,G_n)$. Since $s_p(F_\ell,G_n)\le s_p^+(F_\ell,G_n)$
it remains to show that most semiembeddings are in fact embeddings,
i.e., that the paths in $G_n$ making up a typical semiembedding
are internally vertex disjoint.
For paths corresponding to vertex disjoint edges of $F$, this is 
quite easy, using the fact that $s_p(T,G_n)$ is bounded for each
tree, which tells us that almost all pairs of paths of length
$\ell$ are vertex disjoint. For paths corresponding to edges of $F$
sharing a vertex, there is a similar argument. We shall not spell
these arguments out as there is a third case that cannot be handled
in this way, namely paths corresponding to duplicate edges in $F$.
We must allow these, since we include, for example, $C_{2\ell}$
in $F_\ell$. It is in handling these paths that our `crude' partition
$(Q_i)$ comes in.

Let us classify paths $w_0w_1,\ldots,w_\ell$ in $G_n$ into $N^{\ell+1}$
{\em types}, according to which part $Q_i$ each $w_i$ lies in.
We say that a pair $(u,v)$ of distinct vertices of $G_n$ is {\em good} if,
for all $N^{\ell-1}$ possible types of $u$--$v$ path,
the number of $u$--$v$ paths of this type is `close' to what it should
be, i.e., within $\eps |Q_1|^{\ell-1}p^{\ell} \sim \eps n^{\ell-1}p^\ell/N^{\ell-1}$
of what it should be.
As usual, `what it should be' means the expected number in $G_p(n,\ka)$,
which depends not only on which parts $P_i$ the vertices $u$ and $v$ lie in,
but also on the type of path being considered.
Let $S'$ be the set of pairs $(u,v)$, $u\ne v$, that are {\em bad}, i.e.,
not good.

Since $N$ is fixed before $\eta$ is chosen, it is not hard to see that the argument
giving \eqref{ss} (applied with $\eps/N^{\ell-1}$ in place of $\eps$)
also shows that $|S'|\le \eps n(n-1)$; we omit the details. In other words,
almost all pairs of vertices are joined by about the right number of paths
of any given type. As before, we break down the set of embeddings
of $F_\ell$ into $G_n$ according to which vertices $v_1,\ldots,v_k$
of $G_n$ the `branch vertices' $u_1,\ldots,u_k$ are mapped to.
Defining $Z'$ analogously to $Z$, but using $S'$ instead of $S$,
the argument giving  \eqref{eZ} shows that we may assume
that $(v_1,\ldots,v_k)\notin Z'$, i.e., that no pair $(v_i,v_j)$
is in $S'$. Counting embeddings with $v_1,\ldots,v_k$ fixed,
it remains to choose $e(F)$ paths joining the appropriate
pairs $v_i$, $v_j$. Let us choose these paths one by one.
Since the total number of paths
joining $v_i$ to $v_j$ is about what it should be, all we must
show is that few (say at most $\eps n^{\ell-1}p^{\ell}$)
paths from $v_i$ to $v_j$ meet one of our at most $e(F)-1$ earlier paths.
But this is now easy: we must avoid a set $X$ of at most $(e(F)-1)(\ell-1)=O(1)$
vertices, the internal vertices of the previously chosen paths.
In fact, we shall do much more, avoiding any part $Q_a$ that meets $X$!
This rules out at most $(\ell-1)|X|N^{\ell-2}$ of the $N^{\ell-1}$ types
of $v_i$--$v_j$ paths. Choosing $N$ large enough (larger
than $1/\eps$), this is only a fraction $O(\eps)$ of all possible
types. Since $(v_i,v_j)\notin S'$, we have almost the right number
of paths of each remaining type, and hence almost the right number
of paths in total.
This completes our outline proof of Theorem~\ref{dthm}.
\end{proof}

Of course, there is a variant of Theorem~\ref{dthm} which is to
Theorem~\ref{dthm} as Theorem~\ref{cthm2} is to Theorem~\ref{cthm};
we shall not state this separately.

\medskip
Let us close this section by giving one simple example of a setting
in which the conditions of Theorem~\ref{dthm} are satisfied.
Fix $\ell\ge 3$, and
suppose that our sequence $(G_n)$ has the following two properties.
Firstly, the maximal degree $\Delta(G_n)$ is not too large:
\begin{equation}\label{deg}
 \Delta(G_n)\le Mpn,
\end{equation}
for some constant $M$.
Secondly,
\begin{equation}\label{UT}
 p_{\ell-1}(u,v)\le M n^{\ell-2}p^{\ell-1}
\end{equation}
for all $u\ne v\in V(G_n)$.
Condition \eqref{deg} is called DEG in Chung and Graham~\cite{CG};
condition \eqref{UT} is related to their condition $U(\ell)$,
but, as noted in the paragraph containing \eqref{vv},
is much weaker. In particular, it is easy to
check that if $p=n^{-\alpha}$ with $0<\alpha<1$ constant,
and $\ka$ is any bounded kernel, then the random graphs
$G_p(n,\ka)$ satisfy \eqref{deg} and \eqref{UT} with probability 1,
as long as $\alpha<1-1/(\ell-\nobreak1)$.
If \eqref{deg} and \eqref{UT} hold then $p_\ell(v,w)\le M^2 n^{\ell-1}p^\ell$
for all $v$ and $w$, while $s_p(T,G_n)\le M^{e(T)}$
for any tree $T$, so the conditions of Theorem~\ref{dthm} are satisfied.
Similarly, $p_t(v,w)\le M^{t-\ell+2} n^{t-1}p^t$ holds
for all $t\ge \ell$, so the variant of Theorem~\ref{dthm} corresponding
to Theorem~\ref{cthm2} applies.

It follows that conditions \eqref{deg} and \eqref{UT}
provide an answer to Question~\ref{qq}.
Indeed, Theorem~\ref{dthm} tells us that, under these
conditions, if $\ka$ is a bounded
kernel, then $\dc(G_n,\ka)\to 0$ implies $s_p(F,G_n)\to s(F,\ka)$
for all $F\in F_\ell$; its variant gives us $s_p(F,G_n)\to s(F,\ka)$
for all $F\in F_{\ge\ell}$. By Theorem~\ref{kdet}, the counts $s(F,\ka)$,
$F\in F_{\ge\ell}$, do determine the kernel (up to equivalence),
so conditions \eqref{deg} and \eqref{UT} are `suitable'
in the sense of Question~\ref{qq}.
As noted after Question~\ref{qq}, this implies the following result.

\begin{theorem}\label{th_sum}
Fix $\ell\ge 3$, let $p=p(n)$ be any function, and let $(G_n)$ be a
sequence of graphs satisfying \eqref{deg}, \eqref{UT} and the
bounded density assumption~\ref{AC}.
Then, for any bounded kernel $\ka$, we
have $\dc(G_n,\ka)\to 0$ if and only if $\de(G_n,\ka)\to 0$, where
$\de$ is defined using $\A=\T\cup \F_{\ge\ell}$ for the set of
admissible graphs.\noproof
\end{theorem}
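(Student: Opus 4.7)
The plan is to assemble the theorem from four pieces already in place: the sparse counting lemma for paths/trees (Theorem~\ref{th_Temb}), the embedding counting lemma for $\F_{\ge\ell}$ (the variant of Theorem~\ref{dthm}), the compactness statement for $\dc$ under bounded density (Corollary~\ref{cSz}), and the rigidity result that admissible counts determine a kernel (Theorem~\ref{kdet}). The discussion immediately preceding the statement already verifies that the combination of \eqref{deg}, \eqref{UT} and Assumption~\ref{AC} implies every hypothesis of Theorem~\ref{dthm} and its $\F_{\ge\ell}$-variant: bounded maximum degree gives $s_p(T,G_n)=O(1)$ for every tree $T$, while \eqref{UT} combined with \eqref{deg} gives pointwise bounds $p_t(u,v)=O(n^{t-1}p^t)$ for every $t\ge\ell-1$, which in turn yield both \eqref{dc2} and \eqref{dc3}.

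First I would handle the forward direction. Assume $\dc(G_n,\ka)\to 0$ for some bounded kernel $\ka$. For each tree $T\in\T$, Theorem~\ref{th_Temb} gives $s_p(T,G_n)\to s(T,\ka)$. For each $F\in\F_{\ge\ell}$, the $\F_{\ge\ell}$-variant of Theorem~\ref{dthm} (indicated at the end of Subsection~\ref{sec_hm}) gives $s_p(F,G_n)\to s(F,\ka)$. Since $\A=\T\cup\F_{\ge\ell}$, this is precisely convergence of $s_p(G_n)$ to $s(\ka)$ in the product topology, i.e., $\de(G_n,\ka)\to 0$.

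For the reverse direction I would use the standard compactness-plus-uniqueness argument sketched in Subsection~5.2. Suppose $\de(G_n,\ka)\to 0$ but $\dc(G_n,\ka)\not\to 0$; passing to a subsequence, assume $\dc(G_n,\ka)\ge\delta>0$ for all $n$. By Assumption~\ref{AC}, Corollary~\ref{cSz} yields a further subsequence along which $\dc(G_n,\ka')\to 0$ for some bounded kernel $\ka':[0,1]^2\to[0,C]$. Applying the forward direction (which has just been established) to this subsubsequence gives $\de(G_n,\ka')\to 0$, and combined with $\de(G_n,\ka)\to 0$ this forces $s(F,\ka)=s(F,\ka')$ for every $F\in\A$. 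Since $\A\supset\F_{\ge\ell}\supset\F_t$ for any odd $t\ge\ell$, Theorem~\ref{kdet} now gives $\ka\sim\ka'$, so $\dc(\ka,\ka')=0$. By the triangle inequality $\dc(G_n,\ka)\le \dc(G_n,\ka')+\dc(\ka',\ka)\to 0$ along the subsubsequence, contradicting $\dc(G_n,\ka)\ge\delta$.

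There is no real obstacle here beyond checking the matching of hypotheses; the work has all been done in Theorems~\ref{th_Temb}, \ref{dthm}, and~\ref{kdet} and in Corollary~\ref{cSz}. The only mildly delicate point is ensuring that $\A$ contains some $\F_t$ with $t$ odd, which is needed to invoke Theorem~\ref{kdet}; this is automatic because $\A\supset\F_{\ge\ell}$ contains $\F_t$ for every $t\ge\ell$, and such $t$ may be chosen odd. The argument is essentially the same ``meta-argument'' described after Question~\ref{qq}, now made rigorous by the fact that, thanks to \eqref{deg} and \eqref{UT}, the counting lemmas apply in full and the admissible set is large enough to be rigidity-separating.
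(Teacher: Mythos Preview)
Your proposal is correct and follows essentially the same approach as the paper. The paper's own ``proof'' is the discussion immediately preceding the theorem combined with the meta-argument after Question~\ref{qq}: forward via the counting lemmas (Theorem~\ref{th_Temb} and the $\F_{\ge\ell}$-variant of Theorem~\ref{dthm}), backward via compactness (Corollary~\ref{cSz}) plus rigidity (Theorem~\ref{kdet}); you have simply made this explicit, with the subsequence-contradiction argument being the standard unpacking of the paper's ``$\J=\D$'' statement.
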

 
In this section we discussed how to extend the 
subgraph (count) metric to sparse graphs, noting that
there are various possibilities (depending on the choice
of the set $\A$ of admissible graphs), and conjectured
that one particular extension is equivalent to the cut
metric. In the next section we turn to a different
metric, that extends much more easily to sparse graphs.

\section{The partition metric}\label{sec_partition}

As noted in Section~\ref{dense}, for dense graphs there are many
natural metrics that turn out to be equivalent, in the sense 
of generating the same topology. So far
we have focussed on the cut and subgraph (or count) metrics;
we now turn to the {\em partition metric}, introduced
by Borgs, Chayes, Lov\'asz, S\'os and Vesztergombi~\cite{BCLSV:2}.
In the dense case, it turns out to be relatively easy to show
that the partition and cut metrics are equivalent; in this brief section
we show that, under mild assumptions, this equivalence holds
also in the sparse setting, as long as $np\to\infty$.

On the one hand, this result (Theorem~\ref{dpdc}, below) shows that for
graphs with $\omega(n)$ edges, no new questions arise
by considering the partition metric. On the other hand, it reinforces
the conclusion that the cut metric remains extremely natural
for sparse graphs, and gives a way of considering the cut metric
from a very different point of view. There is another, very important,
motivation for introducing partition metrics for sparse
graphs: when we come to extremely spare graphs, with $\Theta(n)$ edges,
the cut metric turns out to make very little sense, while
the partition metric (which is no longer equivalent) remains
natural. This is a major topic in its own right and
will be discussed in a companion paper~\cite{BRsparse2}.

\subsection{Partition matrices and the partition metric}
Turning to the formal definitions, as in the rest of the
paper,
let $p=p(n)$ be a normalizing function and $G_n$ a
graph with $n$ vertices. Let $k\ge 2$ be fixed.
For $n\ge k$ and $\Pi=(P_1,\ldots,P_k)$ a partition of $V(G_n)$
into $k$ non-empty parts, let
$M_\Pi(G_n)=(d_p(P_i,P_j))_{1\le i,j\le k}$ be the matrix encoding
the normalized densities of edges between the parts of $\Pi$ (see \eqref{dpdef}).
Since $M_\Pi(G_n)$ is symmetric, we may think of this
matrix as an element of $\RR^{k(k+1)/2}$.
Set
\[
 \M_k(G_n) = \{ M_\Pi(G_n) \} \subset \RR^{k(k+1)/2},
\]
where $\Pi$ runs over all balanced partitions of $V(G_n)$ into
$k$ parts, i.e., all partitions $(P_1,\ldots,P_k)$ with $|P_i|-|P_j|\le 1$.

As usual, we assume that $G_n$ has $O(pn^2)$ edges. For definiteness,
let us assume that $e(G_n)\le Cpn^2/2$. Since each part of a balanced
partition has size at least $n/(2k)$, the entries of
any $M_\Pi(G_n)\in \M_k(G_n)$ are bounded by $C_k=(2k)^2C$, say.
Thus, $\M_k(G_n)$ is a subset of the compact space $\Mk=[0,C_k]^{k(k+1)/2}$.

Let $\C_0(\Mk)$ denote the set of non-empty compact subsets of $\Mk$,
and let $\dH$ be the Hausdorff metric on $\C_0(\Mk)$,
defined with respect to the $\ell_\infty$ distance, say.
Thus
\[
 \dH(X,Y)=\inf \{ \eps>0: X^{(\eps)}\supset Y,\, Y^{(\eps)}\supset X\},
\]
where $X^{(\eps)}$ denotes the $\eps$-neighbourhood of $X$ in the $\ell_\infty$ metric.
Since $(\Mk,\ell_\infty)$ is compact, by standard results (see, for example, Dugundji~\cite[p.\ 253]{Dugundji}),
the space $(\C_0(\Mk),\dH)$ is compact.
To ensure that the metric we are about to define is a genuine metric,
it is convenient to work with $\C(\Mk)=\C_0(\Mk)\cup\{\emptyset\}$,
setting $\dH(\emptyset,X)=C_k$, say, for any $X\in \C(\Mk)$, so
the empty set is an isolated point in $(\C(\Mk),\dH)$.

Let $\C=\prod_{k\ge 2} \C(\Mk)$, and let 
$\M:\F\mapsto \C$ be the map defined by
\[
 \M(G_n) = (\M_k(G_n))_{k=2}^\infty
\]
for every graph $G_n$ on $n$ vertices, noting that $\M_k(G_n)$ is empty if $k > n$.
Then we may define the {\em partition metric} $\dP$ by
\[
 \dP(G,G') = d(\M(G),\M(G')),
\]
where $d$ is any metric
on $\C$ giving rise to the product topology.
Considering the partition of an $n$ vertex graph into $n$ parts
shows that $\dP$ is a metric on the set $\F$ of isomorphism classes of finite graphs.
Recalling that each space $(\C(\Mk),\dH)$ is compact,
the key property of the partition metric is that
$(G_n)$ is Cauchy with respect to $\dP$ if and only if
there are compact sets $Y_k\subset \Mk$ such that
$\dH(\M_k(G_n),Y_k)\to 0$ for each $k$.
In particular, convergence in $\dP$ is equivalent to convergence
of the set of partition matrices for each fixed $k$. Thus we may
always think of $k$ as fixed and $n$ as much larger than $k$.

In the dense case, a metric equivalent to $\dP$ has been introduced
independently by Borgs, Chayes, Lov\'asz, S\'os and Vesztergombi~\cite{BCLSV:2};
the only difference is that in~\cite{BCLSV:2}, all partitions into
$k$ parts are considered, rather than just balanced partitions.
Of course, one then needs to take care to ensure that the densities
between small parts are counted with an appropriate weight when computing
the distance between
density matrices $\M_k$. Whether one takes all partitions or just
balanced partitions is a matter of taste: it is very easy to see
that convergence in either of the resulting metrics implies convergence in the other.

We may extend the map $\M:\F \to \C$, and hence $\dP$, to bounded
kernels in a natural way: instead
of partitioning the vertex set into $k$ almost equal parts, we partition
$[0,1]$ into $k$ exactly equal parts, and consider the closure of the set of `density
matrices' that may be obtained from $\ka$ using such partitions; we omit the details.
Note that, as shown by Borgs, Chayes, Lov\'asz, S\'os and Vesztergombi~\cite[Example 4.4]{BCLSV:2},
the set of density matrices is not in general closed.

As for the cut metric, it is easy to check that it makes little difference
whether we define $\dP$ for graphs directly, or by going via kernels.
(The corresponding dense result appears in~\cite{BCLSV:2}: the sparse
case here is slightly more complicated due to the possibility of
`high-degree' vertices.)

\begin{lemma}\label{dPsame}
Let $p=p(n)$ satisfy $p\ge 1/n$, and let $(G_n)$ be a sequence
of graphs with $e(G_n)=O(pn^2)$ and $\Delta(G_n)=o(pn^2)$.
Then $\dP(G_n,\ka_{G_n})\to 0$ as $n\to\infty$.
\end{lemma}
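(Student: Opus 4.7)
Since the metric $\dP$ is constructed from the product topology on $\prod_{k \ge 2}(\C(\Mk), \dH)$, it suffices to show that, for each fixed $k \ge 2$, we have $\dH(\M_k(G_n), \M_k(\ka_{G_n})) \to 0$ as $n \to \infty$. Fix $k \ge 2$ throughout. I prove the two directions of the Hausdorff bound separately, relying on the elementary estimate $\int_{S \times [0,1]} \ka_{G_n}(x,y)\,dx\,dy \le \Delta(G_n)\mu(S)/(pn)$, which follows by integrating one variable at a time over each vertex interval $I_v = ((v-1)/n, v/n]$.

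For the direction that every element of $\M_k(G_n)$ is within $o(1)$ of some element of $\M_k(\ka_{G_n})$: given any balanced vertex partition $\Pi' = (P_1', \ldots, P_k')$ of $V(G_n)$, set $\tilde P_i = \bigcup_{v \in P_i'} I_v$ and consider the $[0,1]$-partition $\tilde \Pi = (\tilde P_i)$. A direct computation shows $M_{\tilde \Pi}(\ka_{G_n}) = M_{\Pi'}(G_n)$ exactly. The measures $\mu(\tilde P_i) = |P_i'|/n$ differ from $1/k$ by at most $1/n$, so one may transfer sets of total measure $O(k/n)$ from oversized parts to undersized parts to produce $\Pi$ with $\mu(P_i) = 1/k$ for all $i$. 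By the estimate above, this shifts each $\ka_{G_n}(P_i, P_j)$ by at most $O(\Delta(G_n)/(pn^2)) = o(1/k^2)$, and hence each density entry by $o(1)$.

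For the reverse direction I use a probabilistic rounding. Given a partition $\Pi = (P_1, \ldots, P_k)$ of $[0,1]$ into parts of measure $1/k$, set $a_{vi} = n\mu(I_v \cap P_i) \in [0,1]$ (so $\sum_i a_{vi} = 1$ and $\sum_v a_{vi} = n/k$), and assign each vertex $v$ independently to part $i$ with probability $a_{vi}$. This yields a random partition $\Pi' = (P_1', \ldots, P_k')$ with $\E |P_i'| = n/k$ and $\E\, e_{G_n}(P_i', P_j') = pn^2\,\ka_{G_n}(P_i, P_j)$. A routine second-moment calculation, exploiting pairwise independence of the assignments of distinct vertices and using $\sum_u \deg(u)^2 \le 2\Delta(G_n)\,e(G_n)$ to bound the covariance contributions from pairs of edges sharing a vertex, gives $\Var(e_{G_n}(P_i', P_j')) = O(\Delta(G_n) \cdot pn^2) = o((pn^2)^2)$ under $\Delta(G_n) = o(pn^2)$, while $\Var(|P_i'|) \le n$. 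Chebyshev and a union bound over the $O(k^2)$ entries then produce, with positive probability, a realization of $\Pi'$ satisfying $|e_{G_n}(P_i', P_j') - pn^2\,\ka_{G_n}(P_i, P_j)| = o(pn^2)$ and $||P_i'| - n/k| = O(\sqrt{n \log n})$ for all $i, j$.

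The main obstacle is balancing: we need $||P_i'| - n/k| \le 1$, so we must move $m = O(k\sqrt{n \log n})$ vertices in total. A worst-case vertex has degree up to $\Delta(G_n) = o(pn^2)$, and blindly moving $m$ of them could change some $e_{G_n}(P_a, P_b)$ by $o(pn^{5/2}\sqrt{\log n})$, far more than our $o(pn^2)$ margin. The fix is to move only low-degree vertices: since $\sum_{v \in P_i'} \deg(v) \le 2e(G_n) = O(pn^2)$, a Markov-type bound guarantees that each $P_i'$ contains $\Omega(|P_i'|)$ vertices of degree $O(pnk)$, easily supplying the $O(\sqrt{n \log n})$ moves that part requires. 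Together these moves alter each $e_{G_n}(P_a, P_b)$ by $O(k^2 p n^{3/2} \sqrt{\log n})$, producing a density shift of $O(k^4 \sqrt{(\log n)/n}) = o(1)$ for fixed $k$. The resulting balanced vertex partition $\Pi''$ thus has $M_{\Pi''}(G_n)$ within $o(1)$ of $M_\Pi(\ka_{G_n})$ in every entry, completing the reverse inclusion and hence the proof.
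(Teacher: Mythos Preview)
Your proof is correct and follows essentially the same approach as the paper: both argue for each fixed $k$, handle the easy direction by adjusting a vertex partition to an exact $[0,1]$-partition, and handle the harder direction by the same probabilistic rounding (independent assignment with probabilities $a_{vi}$), bounding the variance of edge counts via $\sum_v \deg(v)^2 \le 2\Delta(G_n)e(G_n)$, and then rebalancing by moving a small number of low-degree vertices. The only cosmetic differences are that the paper fixes a global low-degree set $L$ at the outset (degree $\le Dpn$) and uses it in both directions, whereas you use the crude $\Delta(G_n)$ bound for the forward direction and a per-part Markov argument for the reverse; also your $\sqrt{\log n}$ factor is unnecessary, since for fixed $k$ a plain Chebyshev bound already gives $||P_i'|-n/k|=O(\sqrt{n})$ with probability bounded away from zero.
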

\begin{proof}
By definition, we must show that
$\dH\bb{ \M_k(G_n), \M_k(\ka_{G_n}) }\to 0$ for each $k\ge 1$. Fix $k$.
Since $e(G_n)=O(pn^2)$, there is a constant $D$ such
that at most $n/(2k)$ vertices of $G_n$ have degree more than $Dpn$.
Let $L$ denote the set of `low-degree' vertices, with degree
at most $Dpn$, so $|L|\ge n-n/(2k)$.

We must show that for any density matrix in $\M_k(G_n)$
there is a nearby matrix in $\M_k(\ka_{G_n})$, and vice versa.
The forward implication is trivial: a balanced partition $\Pi$
of $V(G_n)$ corresponds to a partition of $[0,1]$
into sets whose sizes differ by $O(1/n)=o(1)$. Adjusting these
parts slightly, making changes only in subintervals
of $[0,1]$ corresponding to low degree vertices,
the entries of the corresponding density matrix change by $o(1)$.

For the reverse implication, let $\Pi$ be a partition of $[0,1]$
into $k$ parts $P_1,\ldots,P_k$, and let $M\in\M_k(\ka_{G_n})$
be the corresponding density matrix, with entries $m_{ij}$.
For $v\in V(G_n)=[n]$ and $1\le i\le k$, let
$p_{v,i}$ be the fraction of the subinterval of $[0,1]$
corresponding to the vertex $v$ that lies in $P_i$,
noting that $\sum_i p_{v,i}=1$ for each $v$,
and $\sum_v p_{v,i}=n/k$ for each $i$.
Form a random partition $\Pi'=(P_1',\ldots,P_k')$ as follows:
put each vertex $v$ into a random part $P_{i_v}'$ with
$\PR(i_v=i)=p_{v,i}$, with the choices independent for different
vertices $v$.

It is immediate that $\E(|P_i'|)=n/k$ and $\Var(|P_i'|)\le n/k$.
It follows that for some constant $C$ we have
\begin{equation}\label{even}
  \forall i: \bigl| |P_i'| -n/k \bigr|\le C\sqrt{n}
\end{equation}
with probability at least $0.99$.
Writing $v\sim w$ if $vw\in E(G_n)$,
for $1\le i,j\le k$ we have
\begin{multline*}
 \E(e(P_i',P_j')) = \sum_{(v,w)\,:\,v\sim w} \E(1_{i_v=i}1_{i_w=j})
 = \sum_{(v,w)\,:\,v\sim w} p_{v,i}p_{w,j} \\
 = n^2p \int_{P_i\times P_j} \ka_{G_n}(x,y) \dd x\dd y,
\end{multline*}
so the expectation of $e(P_i',P_j')/(n^2p)$ is exactly $m_{ij}/k^2$.
For edges $vw$, $v'w'$ of $G_n$, the random variables
$1_{i_v=i}1_{i_w=j}$ and $1_{i_{v'}=i}1_{i_{w'}=j}$ are
independent unless $vw$ and $v'w'$ share a vertex,
in which case their covariance is at most one.
It follows that $\Var(e(P_i',P_j'))$
is bounded by $2\hom(P_2,G_n)$; the factor $2$ arises
since we may put the common vertex of two incident
edges into $P_i$ or $P_j$.
But $\hom(P_2,G_n)\le 2e(G_n)\Delta(G_n)$, which is $o(n^4p^2)$
by assumption. Hence, for any $\eps$, the probability that we have
\begin{equation}\label{dclose}
 \left| \frac{e(P_i',P_j')}{n^2p} -\frac{m_{ij}}{k^2} \right| \le \eps
\end{equation}
for every $i$ and $j$ with $1\le i,j\le k$ is at least $0.99$, provided
$n$ is large enough.

From the comments above, if $n$ is large enough,
there is a partition $\Pi'$ for which both \eqref{even} and \eqref{dclose}
hold. Starting from such a partition and moving at most $O(\sqrt{n})=o(n)$
vertices of $L$ (the set of low-degree vertices)
between parts, we may find a balanced partition
with almost the same density matrix. In other words,
we may find an element of $\M_k(G_n)$ close to $M$, completing the proof.
\end{proof}

If $np\to \infty$, then the condition of Lemma~\ref{dPsame}
that $\Delta(G_n)=o(n^2p)$ holds trivially, since $\Delta(G_n)\le n=o(n^2p)$.
When $np$ is bounded, this condition is necessary.
Taking $G_n$ to be a star, for example, every partition of $V(G_n)$
has the property that there is one part meeting all edges. But the corresponding
kernel has partitions which are very far from having this property,
namely those
in which, roughly speaking, the central vertex of the star has been split between parts.

\subsection{The relationship between the cut and partition metrics}\label{ss_pdense}

We now turn to the main result
of this section, showing the equivalence of $\dc$ and $\dP$
under mild assumptions. The key idea of the proof is that one can identify
the density matrix corresponding to a weakly $(\eps,p)$-regular partition from
the set of density matrices.

\begin{theorem}\label{dpdc}
Let $np\to\infty$, and let $(G_n)$ be a sequence of graphs with $|G_n|=n$
satisfying the bounded density assumption~\ref{AC}.
Let $\ka$ be a bounded kernel. Then $\dP(G_n,\ka)\to 0$ if and only
if $\dc(G_n,\ka)\to 0$.
\end{theorem}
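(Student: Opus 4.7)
The plan is to handle the two implications separately, in each case passing through the associated kernel $\ka_{G_n}$.

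For the forward direction, suppose $\dc(G_n,\ka)\to 0$; to prove $\dH(\M_k(G_n),\M_k(\ka))\to 0$ for each fixed $k$, I would choose rearrangements $\tau_n$ with $\cn{\ka_{G_n}-\ka^{(\tau_n)}}\le \dc(G_n,\ka)+1/n$. Given a balanced partition $\Pi=(P_1,\ldots,P_k)$ of $V(G_n)$, viewed as a partition of $[0,1]$ into parts of measure $1/k+O(1/n)$, the definition of the cut norm bounds $\bm{\int_{P_i\times P_j}(\ka_{G_n}-\ka^{(\tau_n)})}$ for every $i,j$; dividing by $\mu(P_i)\mu(P_j)=\Theta(1/k^2)$ shows the entries of $M_\Pi(G_n)$ and $M_\Pi(\ka^{(\tau_n)})$ differ by $o(1)$ as $n\to\infty$ with $k$ fixed. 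Perturbing $\Pi$ to have parts of measure exactly $1/k$ moves only mass $O(k/n)$, which, since $\ka$ is bounded, changes each entry by $o(1)$, yielding a matrix in $\M_k(\ka)$ within $o(1)$ of $M_\Pi(G_n)$. The reverse inclusion is symmetric: starting from a $k$-equipartition of $[0,1]$, apply $\tau_n^{-1}$, snap to vertex boundaries, and rebalance by moving at most $O(k)$ vertices; each single-vertex move alters an entry of the density matrix by at most $\deg(v)/(p|P_i||P_j|)=O(k^2/(np))=o(1)$, since $\deg(v)\le n$ and $np\to\infty$.

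For the reverse direction, suppose $\dP(G_n,\ka)\to 0$, and fix an arbitrary subsequence of $(G_n)$. By Corollary~\ref{cSz} applied under Assumption~\ref{AC}, this subsequence has a further subsequence $(G_{n_j})$ with $\dc(G_{n_j},\ka^*)\to 0$ for some bounded kernel $\ka^*$. The forward direction then gives $\dP(G_{n_j},\ka^*)\to 0$, and since also $\dP(G_{n_j},\ka)\to 0$, uniqueness of limits in the product Hausdorff topology gives $\M_k(\ka)=\M_k(\ka^*)$ for every $k$. The dense-case equivalence of the partition and cut metrics established by Borgs, Chayes, Lov\'asz, S\'os and Vesztergombi~\cite{BCLSV:2}, applied to the rescaled bounded kernels $\ka/C$ and $\ka^*/C$ (where $C$ is a common bound), then yields $\dc(\ka,\ka^*)=0$. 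Hence $\dc(G_{n_j},\ka)\to 0$. Because every subsequence admits such a further subsequence, the full sequence converges in $\dc$ to $\ka$.

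The main obstacle is the rebalancing step in the forward direction. Assumption~\ref{AC} bounds densities only for pairs of sets of linear size, and cannot rule out individual vertices of degree close to $n$; moving such a vertex between parts can alter edge counts between two parts by up to $n$, so the normalized change in a density matrix entry is as large as $k^2/(np)$. This is $o(1)$ precisely because of the hypothesis $np\to\infty$, which is therefore essential; the theorem genuinely fails without it, presaging the very different behaviour of $\dc$ and $\dP$ in the extremely sparse regime to be studied in~\cite{BRsparse2}.
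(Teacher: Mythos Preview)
Your forward direction is essentially the paper's, but the ``reverse inclusion'' step has a real gap. After applying $\tau_n^{-1}$ to a $k$-equipartition of $[0,1]$, the resulting parts are arbitrary measurable sets of measure $1/k$; nothing forces them to be close to unions of vertex intervals, so ``snap to vertex boundaries'' is not well defined and certainly does not leave only $O(k)$ vertices to rebalance. (For instance, $\tau_n^{-1}$ could spread each part evenly across every vertex interval, so that any snapping rule may misassign a constant fraction of all vertices.) Your per-move bound $O(k^2/(np))$ is then too coarse to survive the required number of moves. The paper handles precisely this step as Lemma~\ref{dPsame}, showing $\dP(G_n,\ka_{G_n})\to 0$ via random assignment of vertices and a second-moment bound on edge counts, rebalancing by moving only \emph{low-degree} vertices (degree $\le Dpn$), so that $O(\sqrt{n})$ moves each cost $O(k^2/n)$ rather than $O(k^2/(np))$. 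Once you invoke Lemma~\ref{dPsame} in place of the snapping sketch, your forward direction coincides with the paper's.

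Your reverse direction is correct and takes a genuinely different route. The paper argues directly, via an index $\ind(M)=k^{-2}\sum m_{ij}^2$ and ``locally $\eps$-optimal'' density matrices: under Assumption~\ref{AC} these exist with bounded $k$, and by the Frieze--Kannan mechanism in the proof of Lemma~\ref{lSzw} they correspond to weakly $(\eps,p)$-regular partitions; closeness in $\dP$ transfers local-optimality certificates between $G_n$ and $\ka$, yielding closeness in $\dc$. You instead pass subsequentially to a $\dc$-limit $\ka^*$ via Corollary~\ref{cSz}, use the forward direction to get $\dP(G_{n_j},\ka^*)\to 0$, and then appeal to the dense-case equivalence of $\dP$ and $\dc$ on kernels from~\cite{BCLSV:2} to conclude $\ka\sim\ka^*$. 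This is a clean compactness reduction; it trades the paper's self-contained index-increment argument for an external citation. The paper's approach has the side benefit of re-proving the kernel statement from~\cite{BCLSV:2} (as the paragraph after the proof notes), whereas yours makes the sparse theorem logically depend on it.
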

\begin{proof}
Suppose first that $\dc(G_n,\ka)\to 0$, i.e., that $\dc(\ka_{G_n},\ka)\to 0$.
If $\ka_1$ and $\ka_2$ are any kernels with $\dc(\ka_1,\ka_2)<d$,
and $M\in\M_k(\ka_1)$, then there is an $M'\in\M_k(\ka_2)$ whose
entries differ from those of $M$ by at most $k^2d$: one simply takes
the corresponding partition for $\ka_2$, after rearranging so that
$\cn{\ka_1-\ka_2}<d$. It follows that $\dH(\M_k(\ka_1),\M_k(\ka_2))\le k^2\dc(\ka_1,\ka_2)$.
Hence, $\dH(\M_k(\ka_{G_n}),\M_k(\ka))\to 0$. Using Lemma~\ref{dPsame}, it
follows that $\dP(G_n,\ka)\to 0$.

Now suppose that $\dP(G_n,\ka)\to 0$.
By the {\em index}
$\ind(M)$ of a density matrix $M=(m_{ij})\in \M_k(\ka')$
we mean simply $k^{-2}\sum m_{ij}^2$.
Let $f(k,\eps)\ge k$ be a function to be specified later.
A $k$-by-$k$ density matrix $M\in \M_k(\ka')$ is {\em locally $\eps$-optimal} for a
kernel $\ka'$
if
\[
 \sup_{\ell\le f(k,\eps)}\  \sup_{M'\in M_\ell(\ka')} \ind(M') \le \ind(M)+\eps,
\]
i.e., if $M$ has almost maximal index among density matrices with not too many parts;
the definition of local optimality for $M\in \M_k(G_n)$ is similar.

Fix $\eps>0$. Since $(G_n)$ has bounded density, whenever $n$ is large enough
as a function of $k$, any density matrix in $\M_k(G_n)$ has index at most
some constant $C$.
It follows that there is a $K=K(C,\eps)$ such that, for $n$ large enough,
every $G_n$ has some locally optimal density matrix $M_k(n)$ of size at most $K$.
(This statement is a key part of the proof of Szemer\'edi's Lemma.)

Since $\dP(G_n,\ka)\to 0$, if $n$ is large enough,
there is an $M_k'(n)\in \M_k(\ka)$ with all entries within $\eps/(10C)$ of those
of $M_k(n)$. It follows that $\ind(M_k'(n))\ge \ind(M_k(n))-\eps/2$.
Similarly, for $n$ large, every $M'\in \cup_{\ell\le f(k,\eps)} \M_\ell(\ka)$
has all entries within $\eps/(10C)$ of some $M\in \cup_{\ell\le f(k,\eps)} \M_\ell(G_n)$,
which implies
\[
 \ind(M')\le \ind(M)+\eps/2 \le \ind(M_k(n))+3\eps/2 \le \ind(M_k'(n)) +2\eps,
\]
using the assumption that $M_k(n)$ is locally $\eps$-optimal for $G_n$ for the second
inequality. Thus
$M_k'(n)$ is locally $2\eps$-optimal
for $\ka$.

Recall that a partition $\Pi$ of $[0,1]$ is {\em weakly $(\eps,p)$-regular}
with respect to a kernel $\ka'$ if the corresponding averaged
kernel $\ka'/\Pi$ satisfies $\cn{\ka'/\Pi-\ka'}\le\eps$. 
The proof of Lemma~\ref{lSzw} (a sparse form of the Frieze-Kannan form
of Szemer\'edi's Lemma)
shows that if $(G_n)$ has bounded density, then there is a function
$f(k,\eps)$ such that, if $n\ge n_0(k,\eps)$ and $M\in \M_k(G_n)$
is locally $\eps$-optimal, then the corresponding partition of
$\ka_{G_n}$ is weakly $(\eps,p)$-regular; the same applies to $\ka$.
It follows that for $n$ large, identifying each density
matrix with a corresponding kernel, we have
$\dc(\ka_{G_n},M_k(n))$, $\dc(M_k(n),M_k'(n))$ and $\dc(M_k'(n),\ka)$
all of order $O(\eps)$. Since $\eps$ was arbitrary, it follows
that $\dc(\ka_{G_n},\ka)\to 0$, as required.
\end{proof}

In the light of Corollary~\ref{cSz}, Theorem~\ref{dpdc} implies that
a sequence $(G_n)$ satisfying Assumption~\ref{AC}
is Cauchy with respect to $\dP$ if and only if it is Cauchy with
respect to $\dc$.

The bounded density assumption in Theorem~\ref{dpdc}, which is
trivially satisfied in the dense case $p=\Theta(1)$, is
necessary in general. This can be seen by considering, for example,
a graph $G_n$ made up of $n/m$ complete graphs of order $m$, with
$m\sim pn=o(n)$ chosen so that $G_n$ has $pn^2/2$ edges.
By compactness, any sequence with $e(G_n)=O(pn^2)$ has a
subsequence that is Cauchy with respect to $\dP$ (here, in fact, the
original sequence is Cauchy). However, it is easy to check that no
subsequence of $(G_n)$ is Cauchy with respect to $\dc$.

The proof of Theorem~\ref{dpdc} applies just as well to kernels
as to graphs (and one can in any case approximate kernels by
dense graphs), showing that $\dP(\ka_n,\ka)\to 0$ if and only
if $\dc(\ka_n,\ka)\to 0$. It follows that $\dP$ induces
a metric on $\K$, the set of kernels quotiented by equivalence,
and that $\dP$ and $\dc$ give rise to the same topology
on $\K$. This was proved by
Borgs, Chayes, Lov\'asz, S\'os and Vesztergombi~\cite{BCLSV:2}
in their study of the dense case,
as part of their Theorem 3.5.

\section{Discussion and closing remarks}\label{sec_end}

For dense graphs, with $\Theta(n^2)$ edges, the results of
Borgs, Chayes, Lov\'asz, S\'os and
Vesztergombi~\cite{BCLSV:1,BCLSV:2} show that one single metric, say
$\dc$, effectively captures several natural notions of local and
global similarity. Indeed, convergence in $\dc$ is equivalent to
convergence in the partition metric 
$\dP$ (a natural global notion) and to convergence in
$\de$, i.e., convergence of all small subgraph counts,
a natural local notion.
These results apply to all sequences $(G_n)$ of graphs, but
if $G_n$ has $o(n^2)$ edges then they become trivial:
any such sequence is Cauchy with respect to any of the
metrics, and indeed converges to the zero kernel.
To make interesting statements about sparse graphs
one should adapt the metrics so that, roughly speaking,
given an `edge density
function' $p=p(n)$ satisfying $p\to 0$,
one compares a graph $G_n$ with $p\binom{n}{2}$ edges
to the Erd\H os--R\'enyi random graph $G(n,p)$
and its inhomogeneous variants rather than to $K_n$.
Our main aim in this paper has been to introduce
such metrics, and to discuss the relationships between them.
In this final section we turn to a slightly different question,
that of the relationship between metrics and random graph models.

\subsection{Models and metrics}

In the dense case, there is a very natural correspondence
between limit points of sequences converging in $\dc$, and 
the inhomogeneous random graph model $G(n,\ka)$.
In general, given any metric, we can ask whether there
is a corresponding random graph model:
for each metric $d$ on some class of (sparse) graphs satisfying
certain restrictions, we can ask the following question.

\begin{question}\label{qmodel}
Given a metric $d$, can we find a `natural' family of random graph models with the following
two properties:
(i) for each model, the sequence of random graphs $(G_n)$ generated
by the model is
Cauchy with respect to $d$ with probability $1$, and
(ii)
for any sequence $(G_n)$ with $|G_n|=n$ that is Cauchy with respect
to $d$, there is a model from the family
such that, if we interleave $(G_n)$ with a sequence of random
graphs from the model, the resulting sequence is still Cauchy with
probability $1$.
\end{question}

In the above question, we are implicitly assuming a coupling
between the probability spaces on which the graphs $(G_n)$
are defined. There is of course no need to do so: we can
replace `Cauchy with probability $1$' with the less familiar
`Cauchy in probability', which is equivalent to convergence
in probability in the completion; see Kallenberg~\cite[Lemma 4.6]{Kallenberg}.

Although Question~\ref{qmodel} is rather vague, for $d=\dc$
the answer is `yes' in the dense case, since $(G_n)$ is Cauchy
if and only if $\dc(G_n,\ka)\to 0$
for some kernel $\ka$, while the dense inhomogeneous random graphs
$G(n,\ka)$ converge to $\ka$ in $\dc$ with probability $1$.
Thus our family consists of one model $G(n,\ka)$ for each 
kernel $\ka$ (to be precise, for each equivalence
class of kernels under the relation $\sim$ defined in Subsection~\ref{ss_equiv}).
 
In the sparse case we do not have an entirely satisfactory answer
for any of the metrics considered in this paper.
Assuming that $np\to\infty$, there is an almost completely
satisfactory answer for $\dc$: if we impose the bounded density
assumption~\ref{AC}, then Corollary~\ref{cSz}
and Lemma~\ref{lrgcut} show that the sparse inhomogeneous models $G_p(n,\ka)$
answer Question~\ref{qmodel}.
For $\de$, defined with respect to certain restricted sets of subgraphs,
the results in Section~\ref{sec_compar} (in particular, Theorem~\ref{th_sum})
show that once again $G_p(n,\ka)$ answers this question for suitably restricted
sequences.

The extremely sparse case, where $p=\Theta(1/n)$, turns
out to be even more complicated; we shall discuss
this in a forthcoming paper~\cite{BRsparse2}.

There is an even vaguer, but perhaps more important, `mirror image'
of Question~\ref{qmodel}. Suppose that we have a random
graph model, and we would like to test whether it is appropriate
for some network in the real world. Then we would like to have a suitable
metric to compare a `typical' graph from the model with the real-world
network. It is too much to hope that one metric will be appropriate
in all situations; in particular, taking the simple case
in which our model is $G(n,p)$ for
some $p=p(n)\to 0$, the unnormalized metrics
$\dc$, $\de$ or $\dP$, that are very suitable for dense graphs,
will declare any graph with $o(n^2)$ edges to be close to the model.

In general, a random graph model (or family of models)
may suggest an appropriate metric,
or at least properties such a metric should have. For example,
the inhomogeneous models $G_p(n,\ka)$ and the results here
suggest the sparse version of $\dc$. Suppose, however,
that we are trying to model a network with rather few
edges but high `clustering', i.e., many triangles and other small subgraphs.
One possible model is a denser version of the
sparse random graphs with clustering introduced by
Bollob\'as, Janson and Riordan~\cite{BJRclust}: given, for each
fixed graph $F$, a `kernel' $\ka_F:[0,1]^{|F|}\to[0,\infty)$
and a normalizing function $p_F(n)$,
we choose vertex types $x_1,\ldots,x_n$ independently and uniformly at random and then,
for each $F$, 
add each possible copy of $F$ with vertex set $v_1,\ldots,v_k$,
$1\le v_1<v_2<\cdots<v_k\le n$,
with probability $\ka_F(x_{v_1},\ldots,x_{v_k})p_F(n)$.

In this model, a huge family of normalizations
are possible: we can take each $p_F$ to be any function of $n$
bounded by $1$. Of course, certain restrictions will be necessary for the
model to make much sense; otherwise, for example, the copies of some $F_1$
added directly may be swamped by copies of $F_1$ arising as subgraphs
of some $F_2$, in which case there was no point adding any copies of $F_1$
directly. However, there is no doubt that many different normalizations will be
interesting: for example, for any $0<a\le 4/3$,
we can produce graphs with, say, $\Theta(n^{4/3})$
edges and $\Theta(n^a)$ triangles. Indeed,
to do so we need only two kernels, one for edges (which we may take
to generate a bipartite graph if needed), and one for triangles.

If, for some reason, we are considering graphs with, say, around
$n^{4/3}$ edges and $n^{6/5}$ triangles, which is many more triangles than
expected in $G(n,n^{-2/3})$, then the triangles are an important part of the structure,
so in comparing two such graphs we should certainly compare the number of triangles,
normalized by dividing by $n^{6/5}$. This suggests a family of metrics generalizing
$\de$.

For each $F\in \F$ let $N_F=N_F(n)$ be a normalizing function satisfying $0<N_F\le
\infty$. (We allow infinity to include the possibility of totally ignoring
copies of some $F$. In fact, $N_F=n^{|F|+1}$ will do just as well.)
Then we may define a subgraph metric associated to ${\bf N}=(N_F)_{F\in \F}$
by modifying the definition of $\de$ given in Section~\ref{sec_hsp},
using the normalized count $\emb(F,G)/N_F(|G|)$ in place of $s_p(F,G)$.
This metric will only make sense for suitably restricted families of graphs, but
for such families, it will make much better sense than $\de$.

\subsection{Closing Remarks}\label{ssec_final}

The main aim of this paper is to draw attention to the possibility
that there is a rich theory of sparse (quasi-)random graphs waiting to
be explored. The beginnings of such a theory can be found in the
papers of Bollob\'as, Janson and Riordan~\cite{BJR,BJRclust}
in the very sparse case, and of Borgs, Chayes, Lov\'asz, S\'os,
Szegedy and Vesztergombi~\cite{BCLSV:homcount, BCLSSV:stoc, LSz1} in
the dense case; it would be desirable to build a theory
encompassing these two extreme threads. As we have just shown, this
task is unlikely to be easy: there are numerous unexpected
difficulties and pitfalls, and much work has to be done even to
arrive at concrete problems whose solutions would represent genuine
progress in this endeavour. In this paper we have attempted to do
some of this groundwork, and have identified some intriguing problems.

Our main focus has been the introduction of normalized versions
of the metrics $\dc$, $\de$ and $\dP$, adapted to the study
of graphs with $\Theta(pn^2)$ edges, where $p=p(n)\to 0$.
We have shown in Section~\ref{sec_partition} that 
(under a mild assumption) $\dc$ and $\dP$ have the same Cauchy sequences,
and in Section~\ref{sec_Sz} that (again under a mild assumption) these metrics 
have the property that any sequence $(G_n)$ contains a subsequence
converging to a kernel.

Turning to $\de$, things become more difficult.
We have conjectured that if our $p$-normalized subgraph counts are
suitably bounded and $p=p(n)$ is not too small then an appropriate
Cauchy sequence does converge to a kernel (see Conjectures~\ref{q1a}
and \ref{q1}). Tantalizingly, we cannot even prove this convergence
in just about the simplest case, when we know that the limit {\em
has to be} a constant kernel (Conjecture~\ref{q2}).

Section~\ref{sec_compar} is devoted to the relationship
between $\dc$ and $\de$. A sound understanding of the
relationship between these two metrics, the cut and count metrics,
would bring us much closer to a proper theory of sparse
inhomogeneous quasi-random graphs. We have conjectured that under
some natural and not too restrictive conditions, these two metrics
are equivalent in the sense that if $(G_n)$ is a sequence of graphs
that are not too `lumpy' then $(G_n)$ converges to a kernel $\kappa$ in
the $p$-cut metric if and only if it converges to $\kappa$ in the
$p$-count metric (see Conjecture~\ref{cce}). As one of our main
results, we have proved that $p$-cut convergence does imply
$p$-count convergence for a restricted set
of subgraph counts, under a mild assumption on the distribution
of paths of certain lengths (see Theorems~\ref{cthm2} and~\ref{dthm}).

The case of graphs of bounded average degree turns out to be even
more difficult, and will be discussed in a companion paper~\cite{BRsparse2}.

\begin{ack}
The authors would like to thank an anonymous referee
for many detailed suggestions improving the presentation
of the paper.
\end{ack}

\end{document}